\titleformat{\subsection}[runin]{\normalfont\bfseries}{\thesubsection}{0.5em}{}[.]
\titleformat{\subsubsection}[runin]{\normalfont\bfseries}{\thesubsubsection}{0.5em}{}[.]
    \newcommand{\ZZ}{\ensuremath{\mathbf{Z}}}%
  \newcommand{\N}{\ensuremath{\mathbf{N}}}%
\newcommand{\QZ}{\ensuremath{\mathrm{QZ}}}%
\newcommand{\QC}{\ensuremath{\operatorname{QC}}}%
    \newcommand{\LN}{\ensuremath{\mathcal{LN}}}%
        \newcommand{\LC}{\ensuremath{\mathcal{LC}}}%
  \newcommand{\Sym}{\ensuremath{\operatorname{Sym}}}%
\newcommand{\Al}{\ensuremath{\mathbb{A}}}
\newcommand{\Full}{\ensuremath{\mathbb{F}}}
    \newcommand{\acts}{\ensuremath{\curvearrowright}}%
  \newcommand{\sub}{\ensuremath{\mathrm{Sub}}}%
  \newcommand{\homeo}{\ensuremath{\operatorname{Homeo}}}%
    \newcommand{\aut}{\ensuremath{\operatorname{Aut}}}%
    \newcommand{\rist}{\ensuremath{\operatorname{Rist}}}%
		     \newcommand{\Stab}{\ensuremath{\operatorname{Stab}}}%
\newcommand{\Res}{\ensuremath{\operatorname{Res}}}%
\newcommand{\Mon}{\mathrm{Mon}} 
\newcommand{\Comm}{\mathrm{Comm}}
\theoremstyle{definition}
  \newtheorem{defin}{Definition}[section]
\theoremstyle{plain}
  \newtheorem{thm}[defin]{Theorem}
  \newtheorem{main thm}{Theorem}
  \newtheorem{prop}[defin]{Proposition}
    \newtheorem{prop-def}[defin]{Proposition-Definition}
  \newtheorem{cor}[defin]{Corollary}
\newtheorem{lem}[defin]{Lemma}
\theoremstyle{remark}
  \newtheorem{rmq}[defin]{Remark}
  \newtheorem{example}[defin]{Example}
\title{Commensurated subgroups\\ and micro-supported actions}
\author{Pierre-Emmanuel Caprace\thanks{F.R.S.-FNRS Senior Research Associate. \texttt{pe.caprace@uclouvain.be}}}
\author{Adrien Le Boudec\thanks{CNRS Researcher. \texttt{adrien.le-boudec@ens-lyon.fr}}
	\\[0.3cm]with an appendix by Dominik Francoeur\thanks{Labex Milyon Postdoctoral Researcher. \texttt{dominik.francoeur@ens-lyon.fr}}} 
\affil{UCLouvain, 1348 Louvain-la-Neuve, Belgium}
\affil{CNRS - UMPA, ENS Lyon, France}
\date{July 12, 2021}
\begin{document}
	
\newgeometry{margin=1.1in}
	
	\maketitle

\begin{abstract}
	Let $\Gamma$ be a finitely generated group and $X$ be a minimal  compact  $\Gamma$-space. We assume that the $\Gamma$-action is micro-supported, i.e. for every non-empty open subset $U \subseteq X$, there is an element of $\Gamma$ acting non-trivially on $U$ and trivially on the complement $X \setminus U$.  We show that, under suitable assumptions, the existence of certain  commensurated subgroups
	 in $\Gamma$ yields strong restrictions on the dynamics of the $\Gamma$-action: the space $X$ has compressible open subsets, and it is an almost $\Gamma$-boundary. Those properties yield in turn restrictions on the structure of $\Gamma$:  $\Gamma$ is neither amenable nor residually finite.  Among the applications, we show that the (alternating subgroup of the) topological full group associated to a minimal and expansive Cantor action of a finitely generated amenable group has no commensurated subgroups other than the trivial ones. Similarly, every commensurated subgroup of a finitely generated branch group is commensurate to a normal subgroup; the latter assertion  relies on an appendix by Dominik Francoeur, and generalizes a result of Phillip Wesolek on finitely generated just-infinite branch groups. Other applications concern discrete groups acting on the circle, and the centralizer lattice of non-discrete totally disconnected locally compact (tdlc) groups. Our results rely, in an essential way, on recent results on the structure  of tdlc groups, on the dynamics of their micro-supported actions, and on the notion of uniformly recurrent subgroups. 
\end{abstract}

\restoregeometry

%\setcounter{tocdepth}{2}
%{\small 
%\tableofcontents
%}

\section{Introduction}
\subsection{A simplified version of the main result}

Let $\Gamma$ be a group. Two subgroups $\Lambda, \Lambda' \leq \Gamma$ are \textbf{commensurate} if $\Lambda \cap \Lambda'$ has finite index in $\Lambda$ and $ \Lambda'$. A subgroup $\Lambda \leq \Gamma$ is \textbf{commensurated} in $\Gamma$ if all $\Gamma$-conjugates of $\Lambda$ are commensurate, or equivalently if the $\Lambda$-action on the coset space $\Gamma/\Lambda$ has finite orbits. Obviously, every normal subgroup of $\Gamma$ is commensurated, and so is every subgroup of $\Gamma$ that is commensurate to a normal subgroup (e.g. the finite subgroups, and the subgroups of finite index). In general commensurated subgroups need not be commensurate to a normal subgroup. For instance every normal subgroup of the group $\mathrm{SL}(n,\ZZ[1/p])$ is finite or finite index, but $\mathrm{SL}(n,\ZZ)$ is an infinite and infinite index commensurated subgroup. Moreover,  the problems of understanding the normal subgroups and of understanding the commensurated subgroups of a group $\Gamma$ are generally rather independent, and a complete understanding of the normal subgroups does not necessarily provide a description of the commensurated subgroups. This is for instance illustrated by the case of lattices in higher rank semisimple Lie groups: while Margulis' normal subgroup theorem says that every normal subgroup of $\Gamma$ is finite or finite index, the Margulis-Zimmer conjecture on the description of the commensurated subgroups of $\Gamma$ remains open in general (see \cite{Shalom-Willis} for partial answers). See also \S \ref{subsubsec-higman} below for another illustration.

 The study of the commensurated subgroups of a group $\Gamma$ is well-known to be closely related to the study of dense homomorphisms $\Gamma \to G$ from $\Gamma$ to a totally disconnected locally compact group (\textbf{tdlc group} hereafter). Here we call a homomorphism \textit{dense} if it has dense image in $G$. Indeed, if $\Gamma \to G$ is such a homomorphism, then the preimage in $\Gamma$ of every compact open subgroup of $G$ is a commensurated subgroup of $\Gamma$. Conversely if $\Lambda \leq \Gamma$ is a commensurated subgroup, then the Schlichting completion process provides a tdlc group $\Gamma/\! \!/\Lambda$ and a dense homomorphism $\Gamma \to \Gamma/\! \!/\Lambda$ \cite{Schlichting}. We refer to \cite[Section 3]{Shalom-Willis} for a detailed introduction to Schlichting completions (see also \cite{ReidWesolek_Forum19} for additional properties). 

Let $\Gamma$ be a group acting by homeomorphisms on a topological space $X$. The rigid stabilizer $\rist_\Gamma(U)$ of a subset $U \subseteq X$ is the pointwise fixator in $\Gamma$ of the complement of $U$ in $X$. We say that the action of $\Gamma$ on $X$ is \textbf{micro-supported} if $\rist_\Gamma(U)$ acts non-trivially on $X$ for every non-empty open subset $U$ of $X$. The term `micro-supported' was first coined in \cite{CRW-part2}, although the notion it designates has frequently appeared in earlier references, notably in the work of M.~Rubin \cite{Rubin-loc-mov-book}.

In this article we relate the existence of commensurated subgroups of a group $\Gamma$ with the topological dynamics of the micro-supported actions of $\Gamma$ on compact spaces. The following is a simplified version of our main result (see Theorem \ref{thm:Main} for a more comprehensive statement).

\begin{thm}\label{thm:Main-intro}
	Let $\Gamma$ be a finitely generated group such that every proper quotient of $\Gamma$ is virtually nilpotent. Let $X$ be a compact $\Gamma$-space such that the action of $\Gamma$  on $X$ is faithful, minimal and micro-supported. If $\Gamma$ has a commensurated subgroup  which is of infinite index and which is not virtually contained in a normal subgroup of infinite index of $\Gamma$, then the following hold:
	\begin{enumerate}[label=(\roman*)]
		\item \label{it:Main-intro-1} $X$ has a non-empty open subset which is compressible by $\Gamma$. 
		
		\item  \label{it:Main-intro2} $X$ is an almost $\Gamma$-boundary. 
	\end{enumerate}

Conclusion \ref{it:Main-intro-1} implies that $\Gamma$ is monolithic, hence not residually finite; and conclusion \ref{it:Main-intro2} implies that $\Gamma$ is not amenable.
\end{thm}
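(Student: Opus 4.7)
The plan is to reduce the theorem to a statement about a dense action of $\Gamma$ on a tdlc group, and then combine this tdlc structure with the micro-supported dynamics on $X$ via uniformly recurrent subgroups. First I would form the Schlichting completion $G = \Gamma /\! \!/ \Lambda$, with the canonical dense homomorphism $\pi \colon \Gamma \to G$ and compact open subgroup $U = \overline{\pi(\Lambda)}$. The first technical point is to show that the kernel $K = \ker(\pi)$ is trivial, so that $\Gamma$ embeds densely in $G$. If $K$ were non-trivial, then by hypothesis $\Gamma/K$ is a finitely generated virtually nilpotent group, in which every commensurated subgroup is commensurate to a normal subgroup. Applied to the image of $\Lambda$ this would produce a normal subgroup $N \trianglelefteq \Gamma$ containing $K$ such that $\Lambda K$ is commensurate to $N$; using both halves of the hypothesis on $\Lambda$ (infinite index, and not virtually contained in a normal subgroup of infinite index) one derives a contradiction, so $\pi$ is injective.

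Second, I would exploit the dense embedding $\Gamma \hookrightarrow G$ together with the faithful, minimal and micro-supported action of $\Gamma$ on $X$. The commensurated subgroup $\Lambda$ gives a non-trivial uniformly recurrent subgroup $\mathcal{U}_\Lambda$ of $\Gamma$ (the Chabauty closure of the conjugates of $\pi^{-1}(U)$ in the space of subgroups), while the micro-supported action on $X$ provides the stabilizer URS $\mathcal{U}_X$, whose structure is tightly constrained by the machinery developed earlier in the paper (culminating in Theorem~\ref{thm:Main}). Confronting these two URSes should yield a non-empty open subset $V \subseteq X$ that is compressible by $\Gamma$: for every non-empty open $W \subseteq X$, some $\gamma \in \Gamma$ satisfies $\gamma V \subseteq W$. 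The subgroup generated by the $\Gamma$-conjugates of $\rist_\Gamma(V)$ is then a minimal non-trivial normal subgroup of $\Gamma$, so $\Gamma$ is monolithic and hence not residually finite, which proves \ref{it:Main-intro-1}. For \ref{it:Main-intro2}, the compressibility of $V$ together with minimality upgrades $X$ to an almost $\Gamma$-boundary: every probability measure on $X$ can be pushed by the compression dynamics to concentrate on arbitrarily small neighbourhoods of a single point outside a meagre set, which rules out a $\Gamma$-invariant mean on $C(X)$ and so forces $\Gamma$ to be non-amenable.

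The hard part is the second step: converting the algebraic datum of a commensurated subgroup into the dynamical statement that $X$ has a compressible open subset. This is where the interplay between tdlc group theory and URS theory is most delicate, and where the strength of the hypotheses on $\Gamma$ is used essentially. The hypothesis that every proper quotient of $\Gamma$ is virtually nilpotent plays a double role here — it forces the Schlichting completion to be injective on $\Gamma$, and it rules out competing URSes arising from proper quotients which would otherwise obstruct the compression argument.
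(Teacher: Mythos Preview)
Your proposal correctly identifies the overall architecture --- form the Schlichting completion, show the map is injective via the j.n.v.n.\ hypothesis, and use URS machinery --- but the core mechanism is missing, and one step is actually wrong.

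The paper does \emph{not} consider the URS $\mathcal{U}_\Lambda$ coming from conjugates of $\Lambda$; that object plays no role. Instead, it passes from $H = \Gamma/\!\!/\Lambda$ to a carefully chosen quotient $G$ that is monolithic with a compactly generated, non-compact, non-discrete monolith (this uses structure theory of tdlc groups, not just the Schlichting completion itself). It then builds a URS $S_{\varphi,G}(X) \subset \sub(G)$ from the closures $\overline{\varphi(\Gamma_x^0)}$ of the germ stabilizers. The key point is to show this URS is a \emph{faithful, micro-supported} compact $G$-space; once that is established, Theorem~\ref{thm:Stone-dynamics} --- a deep dynamical result about such $G$-actions when $G$ has a monolith of the prescribed type --- yields both compressible clopens \emph{and} the almost-boundary property simultaneously. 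Finally, Rubin's theorem (Theorem~\ref{thm-MS-all-eq}) shows $X$ and $S_{\varphi,G}(X)$ are highly proximally equivalent as $\Gamma$-spaces, and these dynamical properties transfer back to $X$ via Proposition~\ref{prop-invariants-hpeq} and Lemma~\ref{lem-highly-prox-preserve}. Your sketch has no analogue of any of these steps; ``confronting two URSes'' is not a substitute for them.

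Your deduction of \ref{it:Main-intro2} is also incorrect as stated: minimality plus a compressible open set does \emph{not} imply the action is an almost boundary (nor even strongly proximal) in general. In the paper, the almost-boundary property is not derived from compressibility; both conclusions come independently from Theorem~\ref{thm:Stone-dynamics}, which exploits the decomposition of the monolith of $G$ into finitely many topologically simple factors $S_1,\dots,S_d$, each acting minimally and strongly proximally on a clopen piece. Likewise, your argument for monolithicity (normal closure of $\rist_\Gamma(V)$ is minimal) is not quite right; the paper invokes the double commutator lemma, which gives that $[\rist_\Gamma(U),\rist_\Gamma(U)]$ for compressible $U$ lies in every non-trivial normal subgroup.
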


We recall some terminology. A subgroup $\Lambda \leq \Gamma$ is \textbf{virtually contained} in $\Lambda' \leq \Gamma$ if $\Lambda$ has a finite index subgroup that is contained in $\Lambda'$. A group $\Gamma$ is \textbf{monolithic} if the intersection $M$ of all non-trivial normal subgroups of $\Gamma$ is non-trivial. When this holds $M$ is called the \textbf{monolith} of $\Gamma$.

Let $X$ be a compact $\Gamma$-space. A non-empty open subset  $U$ of $X$ is \textbf{compressible} by $\Gamma$ if there exists $x \in X$ such that for every neighbourhood $V$ of $x$, there exists $\gamma \in \Gamma$ such that $\gamma(U) \subset V$. The action of $\Gamma$ on $X$ is \textbf{strongly proximal} if the orbit closure of every probability measure on $X$ contains a Dirac measure, and the action of $\Gamma$ on $X$ is a (topological) \textbf{boundary} if it is minimal and strongly proximal \cite{Furst-bnd-theory}. We will also say that the action of $\Gamma$ on $X$ is \textbf{almost strongly proximal} if $X$ admits a $\Gamma$-invariant clopen partition $X = X_1 \cup \dots \cup X_d$ such that for each $i$, the action of $\Stab_\Gamma(X_i)$ on $X_i$ is strongly proximal; and $X$ is an \textbf{almost boundary} if $X$ is minimal and almost strongly proximal. When this is the case, the clopen set $X_i$ is a $\Stab_\Gamma(X_i)$-boundary for each $i$. 

%\begin{defin}
%A compact $\Gamma$-space $X$ is an \textbf{almost boundary} if the action of $\Gamma$ on $X$ is minimal and if $X$ admits a $\Gamma$-invariant clopen partition $X = X_1 \cup \dots \cup X_d$ such that for each $i$, the action of $\Stab_\Gamma(X_i)$ on $X_i$ is a boundary action.
%\end{defin}

Theorem \ref{thm:Main-intro}, as well as some intermediate results that we establish towards its proof, have several types of applications. The rest of this introduction is aimed at describing them.

\subsection{Topological full groups} If $\Lambda$ is a group acting on a compact space $X$, the topological full group $\Full(\Lambda,X)$ is the group of homeomorphisms $g$ of $X$ such that for every $x \in X$ there exist a neighbourhood $U$ of $x$ and an element $\gamma \in \Lambda$ such that $g(y) = \gamma(y)$ for every $y \in U$. Topological full groups were first studied in detail by Giordano--Putnam--Skau \cite{GPS99} and Matui \cite{Matui06}, and more recently in \cite{Grig-Med,Jus-Mon,Jus-N-dlS,MB-urs-full} (see also the survey \cite{Cor-Bourbaki} for additional references and historical remarks).

We denote by $\Al(\Lambda,X) \leq \Full(\Lambda,X)$ the \textbf{alternating full group}  introduced and studied by Nekrashevych in \cite{Nek-simple-dyn, Nek-pal}. If $ \Lambda$ is a finitely generated group and $ \Lambda \acts X$ is a minimal and expansive action on a Cantor space $X$, Nekrashevych showed that the group $\Al(\Lambda,X)$ is the monolith of $\Full(\Lambda,X)$, and is a finitely generated and simple group (see Section~\ref{subsec-full-gp} below for more details). When $\Lambda = \ZZ$ the alternating full group coincides with the derived subgroup of $\Full(\Lambda,X)$, and in that case finite generation and simplicity of $\Full(\Lambda,X)'$ were previously obtained by Matui in \cite{Matui06}. We refer to Section \ref{subsec-full-gp} for   definitions and details. 

%When $X$ is a Cantor space and the action of $\Lambda$ on $X$ is minimal, the action of $\Al(\Lambda,X)$ on $X$ is minimal and micro-supported.

Applying Theorem \ref{thm:Main-intro}, we obtain the following result.

\begin{thm} \label{thm-fullgroup-intro}
	Let $\Lambda$ be a finitely generated group, and $ \Lambda \acts X$ a minimal and expansive action on a Cantor space $X$ such that $\Al(\Lambda,X) \acts X$ does not admit any compressible open subset. Then every proper commensurated subgroup of the alternating full group $\Al(\Lambda,X)$ is finite.
\end{thm}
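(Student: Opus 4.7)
The plan is to apply Theorem~\ref{thm:Main-intro} to $\Gamma = \Al(\Lambda, X)$ acting on $X$. First I would verify the hypotheses of that theorem. By Nekrashevych's theorem recalled above, under the assumptions on $\Lambda \acts X$ the group $\Al(\Lambda, X)$ is finitely generated and simple; since $X$ is a Cantor space and the action is micro-supported, $\Al(\Lambda, X)$ is moreover infinite. Simplicity implies that the only proper quotient is the trivial group, which is (trivially) virtually nilpotent. The action $\Al(\Lambda, X) \acts X$ is faithful since $\Al(\Lambda, X) \leq \homeo(X)$; it is minimal as a standard consequence of Nekrashevych's analysis of alternating full groups associated to minimal expansive actions; and it is micro-supported, essentially by construction, since for every non-empty open $U \subseteq X$ one can produce alternating-type elements supported in $U$ out of sufficiently small $\Lambda$-pieces contained in $U$ (which exist by expansiveness and minimality).

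Next I would exploit simplicity to control the subgroup structure. Since $\Al(\Lambda, X)$ is infinite and simple, any proper finite-index subgroup would have a non-trivial proper normal core, contradicting simplicity; hence every proper subgroup, and in particular every proper commensurated subgroup, has infinite index. Moreover the only normal subgroup of infinite index is the trivial subgroup, so a subgroup of $\Al(\Lambda, X)$ is virtually contained in some normal subgroup of infinite index if and only if it is finite.

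The conclusion then follows by contradiction. Suppose that $H \leq \Al(\Lambda, X)$ is an infinite proper commensurated subgroup. By the previous paragraph, $H$ has infinite index and is not virtually contained in any normal subgroup of infinite index. Theorem~\ref{thm:Main-intro}\ref{it:Main-intro-1} then yields a non-empty $\Al(\Lambda, X)$-compressible open subset of $X$, directly contradicting the hypothesis of the theorem. Therefore every proper commensurated subgroup of $\Al(\Lambda, X)$ must be finite.

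The main (minor) obstacle will be providing a clean verification that the $\Al(\Lambda, X)$-action on $X$ is both minimal and micro-supported; both facts are available in Nekrashevych's framework, and once they are in place the rest of the argument is a short translation exercise that packages Theorem~\ref{thm:Main-intro} through the simplicity of $\Al(\Lambda, X)$.
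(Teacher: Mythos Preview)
Your proposal is correct and follows essentially the same route as the paper: apply the main theorem to $\Gamma = \Al(\Lambda,X)$, using Nekrashevych's results that $\Gamma$ is finitely generated and simple, and then translate simplicity into the condition on commensurated subgroups to derive a contradiction with the absence of compressible open subsets. The paper additionally records that the action is strongly just-infinite (via Lemma~\ref{lem-alt-distinct-pts}), but this extra ingredient is only used for the stronger conclusion~\ref{it:Main3} of Theorem~\ref{thm:Main} and is not needed for the compressibility conclusion you invoke.
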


We recall that there exist examples of alternating full groups $\Al(\Lambda,X)$ such that the action of $\Al(\Lambda,X)$ on $X$ admits compressible open subsets, and such that $\Al(\Lambda,X)$ admits commensurated subgroups that are infinite and of infinite index. This is for instance the case of the Higman-Thompson groups $V_{d,k}$ from \cite{Hig-fp}, which admit a commensurated subgroup such that the associated Schlichting completion is a Neretin group of almost automorphisms of a tree \cite{Cap-deMedts}. Moreover, it was proven in \cite{Lederle-complet} that $V_{d,k}$ admits infinitely many commensurated subgroups. Although Theorem \ref{thm-fullgroup-intro} does not say anything about such situations,  we prove in Theorem~\ref{thm-full-tdlc-extends} that for every Schlichting completion $G$ of $\Al(\Lambda,X)$, the action of $\Al(\Lambda,X)$ on $X$ extends to a continuous  action of $G$ on $X$. 

An important situation where Theorem \ref{thm-fullgroup-intro} applies is when $X$ admits a probability measure that is invariant under the action of $\Lambda$. Such a probability measure is also invariant under the action of the full group $\Full(\Lambda,X)$. Since the existence of an invariant probability measure prevents the existence of a compressible open subset, we have the following:

\begin{cor} \label{cor-fullgroup-intro}
	Let $\Lambda$ be a finitely generated group, and $ \Lambda \acts X$ be a minimal and expansive action on a Cantor space $X$. If $X$ carries a $\Lambda$-invariant probability measure (e.g.\ if $\Lambda$ is amenable), then every proper commensurated subgroup of the alternating full group $\Al(\Lambda,X)$ is finite. 
\end{cor}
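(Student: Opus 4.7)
The plan is to reduce the corollary to Theorem~\ref{thm-fullgroup-intro} by verifying its extra hypothesis, namely that the action $\Al(\Lambda,X) \acts X$ admits no compressible open subset. Under the stated assumption, the whole argument reduces to (a) propagating $\Lambda$-invariance of a probability measure up to the alternating full group, and (b) observing that such a measure rules out compressibility.

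First I would show that any $\Lambda$-invariant Borel probability measure $\mu$ on $X$ is automatically invariant under $\Full(\Lambda,X)$, and hence under $\Al(\Lambda,X)$. By definition of the topological full group, an element $g \in \Full(\Lambda,X)$ is locally given by elements of $\Lambda$; compactness of $X$ produces a finite clopen partition $X = U_1 \sqcup \cdots \sqcup U_n$ and elements $\gamma_1,\dots,\gamma_n \in \Lambda$ with $g|_{U_i} = \gamma_i|_{U_i}$. For any Borel set $A$ one then computes
\[
\mu(g^{-1}(A)) = \sum_{i=1}^n \mu(\gamma_i^{-1}(A) \cap U_i) = \sum_{i=1}^n \mu(A \cap \gamma_i(U_i)) = \mu(A),
\]
using $\Lambda$-invariance of $\mu$ together with the fact that the sets $g(U_i) = \gamma_i(U_i)$ partition $X$.

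Next I would record two elementary consequences of minimality of $\Lambda \acts X$ on the Cantor space $X$. The measure $\mu$ has full support: for any non-empty open $U$, minimality yields $\bigcup_{\gamma \in \Lambda} \gamma(U) = X$, whence a finite subcover by compactness, so $1 \leq k\mu(U)$ for some $k \geq 1$. The measure $\mu$ has no atoms: any atom would lie in an infinite $\Lambda$-orbit (since $X$ is infinite and the action is minimal, no orbit can be finite) of points with equal positive mass, contradicting $\mu(X)=1$.

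Finally, suppose for contradiction that some non-empty open $U \subseteq X$ is compressible by $\Al(\Lambda,X)$ towards a point $x \in X$. Since $\Al(\Lambda,X)$ preserves $\mu$, any $\gamma \in \Al(\Lambda,X)$ with $\gamma(U) \subseteq V$ satisfies $\mu(U) = \mu(\gamma(U)) \leq \mu(V)$. Letting $V$ shrink along a decreasing sequence of clopen neighborhoods of $x$ gives $\mu(U) \leq \mu(\{x\}) = 0$, contradicting full support. Hence $\Al(\Lambda,X) \acts X$ has no compressible open set, and Theorem~\ref{thm-fullgroup-intro} applies to give the conclusion. The only non-routine point is the extension of invariance from $\Lambda$ to $\Full(\Lambda,X)$, which is handled by the local structure of topological full group elements; the remaining steps are standard facts about invariant measures on minimal Cantor systems.
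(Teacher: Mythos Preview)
Your proof is correct and follows essentially the same approach as the paper: the paper states (in the paragraph preceding the corollary) that a $\Lambda$-invariant probability measure is automatically $\Full(\Lambda,X)$-invariant and that this prevents compressible open subsets, then invokes Theorem~\ref{thm-fullgroup-intro}. You have simply supplied the details the paper leaves implicit (the clopen-partition computation for invariance, full support and atomlessness from minimality, and the measure inequality ruling out compressibility).
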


Corollary \ref{cor-fullgroup-intro} implies that for every minimal and expansive action of $\mathbb{Z}^d$ on a Cantor space, every proper commensurated subgroup of $\Al(\mathbb{Z}^d,X)$ is finite. Recall that when $d=1$, Juschenko and Monod showed that the group $\Full(\mathbb{Z},X)$ is amenable \cite{Jus-Mon}. One motivation for studying specifically the commensurated subgroups of finitely generated infinite simple amenable groups comes from the fact that, if such a group $\Gamma$ were known to admit an infinite proper commensurated subgroup $\Lambda$, then by results of \cite{CaMo-decomp}, the Schlichting completion $\Gamma/\! \!/\Lambda$ would admit as a quotient a compactly generated tdlc group that is non-discrete, topologically simple and amenable (see Proposition 3.6 in \cite{CRW-part2}). As of now, no such example is available (see Question 3 in \cite{CRW-part2}). Hence Corollary~\ref{cor-fullgroup-intro} implies that the above strategy to build such a group cannot work by starting with groups such as $\Al(\Lambda,X)$. Corollary~\ref{cor-fullgroup-intro} also answers a question raised at the end of \cite{Cap-ECM}.	

\begin{rmq}
The assumptions that $\Lambda$ is finitely generated and that the action of $\Lambda$ on $X$ is expansive are both essential; see Example \ref{ex-odom}.
\end{rmq}

\begin{rmq} 
The conclusions of Theorem \ref{thm-fullgroup-intro} and Corollary \ref{cor-fullgroup-intro} hold more generally for an arbitrary subgroup of $\Full(\Lambda,X)$ that contains $\Al(\Lambda,X)$, with appropriate modifications in the conclusion. See Theorem \ref{thm-fullgroup-no-commens}.
\end{rmq}

%and let $\Gamma$ be a subgroup of $\Full(\Lambda,X)$ that contains $\Al(\Lambda,X)$. Then every commensurated subgroup of $\Gamma$ either contains $\Al(\Lambda,X)$ or is finite.

\subsection{Branch groups}

As mentionned above, every subgroup of a group $\Gamma$ that is commensurate to a normal subgroup is commensurated. A group $\Gamma$ in which every commensurated subgroup is commensurate to a normal subgroup may thus be viewed as a group with as few   commensurated subgroups as possible. That property was called \enquote{inner commensurator-normalizer property} in \cite{Shalom-Willis}. It is equivalent to the fact that every Schlichting completion of $\Gamma$ is compact-by-discrete (see Lemma \ref{lem:Schlichting}). Recall that a group $G$ is compact-by-discrete if $G$ admits a compact normal subgroup $K$ such that $G/K$ is discrete.

We refer the reader to Section \ref{subsec-branch} for basic definitions and to \cite{Grig-ji-branch, branch} for a general introduction to branch groups.

% If $T$ is a rooted tree and $\Gamma \leq \aut(T)$ is a branch group, then the action of $\Gamma$ on the boundary $\partial T$ is minimal and micro-supported, and $\partial T$ carries a $\Gamma$-invariant probability measure. Since in addition every proper quotient of a branch group is virtually abelian \cite{Grig-ji-branch}, Theorem \ref{thm:Main-intro} applies to this situation.

\begin{thm} \label{thm-branch-comm-intro}
Let $\Gamma$ be a finitely generated branch group. Then every commensurated subgroup of $\Gamma$ is commensurate to a normal subgroup of $\Gamma$.
\end{thm}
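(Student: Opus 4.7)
The plan is to apply Theorem \ref{thm:Main-intro} to the action of $\Gamma$ on the boundary $X = \partial T$ of the rooted tree on which $\Gamma$ acts as a branch group. By definition of a branch group, this action is faithful, minimal and micro-supported. Crucially, the Bernoulli probability measure $\mu$ on $X$ is $\Gamma$-invariant, non-atomic and of full support. It follows that $X$ admits no non-empty compressible open subset: if $U \neq \emptyset$ were compressed by some sequence $(\gamma_n) \subset \Gamma$ to a point $x \in X$, $\Gamma$-invariance would yield $\mu(U) = \mu(\gamma_n U) \to \mu(\{x\}) = 0$, contradicting $\mu(U) > 0$.

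The appendix by Francoeur supplies the structural hypothesis that every proper quotient of the finitely generated branch group $\Gamma$ is virtually nilpotent, so Theorem \ref{thm:Main-intro} is applicable to $\Gamma \acts X$. Suppose for contradiction that $\Lambda \leq \Gamma$ is a commensurated subgroup not commensurate to any normal subgroup of $\Gamma$. Since $\Gamma$ is itself normal, $\Lambda$ has infinite index. If $\Lambda$ were moreover not virtually contained in any normal subgroup of infinite index of $\Gamma$, conclusion (i) of Theorem \ref{thm:Main-intro} would produce a compressible open subset of $X$, contradicting the first paragraph.

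Thus $\Lambda$ must be virtually contained in some normal subgroup $N \triangleleft \Gamma$ of infinite index. In this case the image of $\Lambda$ in $\Gamma/N$ is finite, and since the finitely generated virtually nilpotent group $\Gamma/N$ is virtually polycyclic, it has the inner commensurator-normalizer property (cf.\ Lemma \ref{lem:Schlichting} and standard facts about polycyclic groups). Using this together with the finiteness of the index $[\Lambda : \Lambda \cap N]$ and the branch-group structural fact that every non-trivial normal subgroup of $\Gamma$ contains some rigid-level commutator subgroup $[\rist_\Gamma(n), \rist_\Gamma(n)]$, one aims to lift the resulting commensuration in $\Gamma/N$ to a normal subgroup of $\Gamma$ commensurate to $\Lambda$, contradicting the assumption.

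The main obstacle is precisely this final lifting step: commensurability in $\Gamma/N$ does not in general lift to commensurability in $\Gamma$, because $N$ may be large. The argument will need to exploit the interaction between $\Lambda$, $N$, and the branch structure carefully — for instance by invoking the more comprehensive form of the main result (Theorem \ref{thm:Main}) or by descending inductively along the normal filtration $(\rist_\Gamma(n)')_{n \geq 1}$, exploiting that their intersection is trivial.
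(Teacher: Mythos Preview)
Your argument has a genuine gap, and it stems from two related issues.

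First, a misattribution: Francoeur's appendix does \emph{not} show that every proper quotient of $\Gamma$ is virtually nilpotent. That fact is Grigorchuk's theorem (Theorem~\ref{thm-branch-normal}): every proper quotient of a branch group is virtually abelian. What Francoeur proves (Corollary~\ref{cor-norm-branch}) is that every \emph{normal subgroup} of a finitely generated branch group is finitely generated. You never invoke this second fact, and it is precisely the missing ingredient.

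Second, and more importantly, you are applying the simplified Theorem~\ref{thm:Main-intro}, whose hypothesis on $\Lambda$ is that it be of infinite index \emph{and not virtually contained in a normal subgroup of infinite index}. As you yourself observe, this forces a case split, and your treatment of the remaining case --- where $\Lambda$ is virtually contained in some infinite-index normal $N$ --- does not go through. The image of $\Lambda$ in $\Gamma/N$ being finite tells you nothing about $\Lambda$ inside $N$; there is no mechanism to lift commensurability from the quotient, and the inductive descent you sketch is not a proof. A commensurated subgroup virtually contained in $N$ can still fail to be commensurate to any normal subgroup of $\Gamma$, absent further input.

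The paper sidesteps this entirely by invoking the full Theorem~\ref{thm:Main}, specifically alternative~\ref{item-MainDyn2}: if \emph{every normal subgroup of $\Gamma$ is finitely generated} and $\Lambda$ is commensurated but not commensurate to a normal subgroup, then $X$ has compressible open subsets. Francoeur's result supplies exactly the finite-generation hypothesis, and the invariant measure on $\partial T$ (which you correctly identified) rules out compressibility. No case distinction is needed. The role of the appendix is thus not what you claim; it is essential, and it feeds into the stronger form of the main theorem rather than the introductory version.
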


Theorem \ref{thm-branch-comm-intro} recovers and extends a result of Wesolek \cite{Wes-branch}, who showed the same result under the additionnal assumption that $\Gamma$ is just-infinite (i.e.\ every non-trivial normal subgroup of $\Gamma$ is of finite index). The proof of Theorem~\ref{thm-branch-comm-intro} relies on Theorem~\ref{thm:Main-intro}. It also uses the fact that every normal subgroup of a finitely generated branch group is finitely generated: that property is established by D.~Francoeur in the appendix.

\subsection{Groups acting on the circle}

We refer the reader to \cite{Ghys-circ} for an introduction to group actions on the circle. The following result does not formally follow from our main result. Its proof rather relies  on intermediate results that we obtain in the course of the proof of Theorem \ref{thm:Main}, suitably combined  with the Ghys--Margulis theorem about the topological dynamics of group actions on the circle (see Theorem \ref{thm-Margulis}).

\begin{thm} \label{thm-circle-intro}
	Let $\Gamma$ be a finitely generated group with a faithful, minimal, micro-supported action on the circle, such that the subgroup of $\Gamma$ generated by the elements that fix pointwise an open interval has finite index in $\Gamma$. Then every commensurated subgroup of $\Gamma$ is commensurate to a normal subgroup of $\Gamma$.
\end{thm}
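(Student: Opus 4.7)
The plan is to argue by contradiction: assume $\Gamma$ admits a commensurated subgroup $\Lambda$ which is \emph{not} commensurate to any normal subgroup of $\Gamma$. As an initial reduction, replace $\Gamma$ by its characteristic finite-index subgroup $\Gamma_0 = \langle \rist_\Gamma(I) : I \subseteq S^1 \text{ an open interval}\rangle$ (which has finite index by hypothesis), and $\Lambda$ by $\Lambda_0 = \Lambda \cap \Gamma_0$. The assumption on $\Lambda$ passes to $\Lambda_0$ in $\Gamma_0$, since commensurated subgroups and commensurability with a normal subgroup behave well under intersection with a finite-index subgroup, and the $\Gamma_0$-action on $S^1$ remains faithful, minimal and micro-supported.

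The next step is to invoke the Ghys--Margulis theorem (Theorem \ref{thm-Margulis}). A $\Gamma_0$-invariant probability measure on $S^1$ would, by minimality, conjugate the action into one by rotations, which is incompatible with the existence of non-trivial elements supported in a proper open interval. Hence the action must be strongly proximal, so $S^1$ is a $\Gamma_0$-boundary.

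I would then apply the intermediate results developed for the proof of Theorem \ref{thm:Main} which relate a commensurated subgroup of a group with a micro-supported boundary action to the topological dynamics of that action. The aim is to deduce that the $\Gamma_0$-action on $S^1$ extends continuously to the Schlichting completion $G = \Gamma_0 /\!/ \Lambda_0$. Granting such an extension, let $U \leq G$ be the compact open subgroup arising as the closure of the image of $\Lambda_0$. Then $U$ is profinite and acts continuously on the connected space $S^1$; but every profinite subgroup of $\homeo(S^1)$ is finite (compact subgroups of $\homeo(S^1)$ being conjugate into $O(2)$, whose totally disconnected subgroups are finite), so some open normal subgroup $V \trianglelefteq U$ acts trivially on $S^1$. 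Faithfulness of the $\Gamma_0$-action yields $\Gamma_0 \cap V = \{1\}$, and since $V$ is open in $G$ and $\Gamma_0$ is dense in $G$, we conclude $V = \{1\}$. Hence $U$ is finite, $\Lambda_0$ is finite, and $\Lambda_0$ is commensurate to the trivial normal subgroup --- contradicting our hypothesis.

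The hard part is the extension step. Promoting the $\Gamma_0$-action on $S^1$ to a continuous $G$-action amounts to ensuring that the Schlichting topology on $\Gamma_0$ is compatible with the dynamics on $S^1$, which in turn can be rephrased as a containment between the URS on $\Gamma_0$ associated to $\Lambda_0$ and the URS of point stabilizers of the $\Gamma_0$-action on $S^1$. Establishing this compatibility --- essentially, that $\Lambda_0$ is virtually contained in some point stabilizer --- is exactly the role played by the intermediate tools from the proof of Theorem \ref{thm:Main} in the circle setting, where the Ghys--Margulis dichotomy replaces the role of the \enquote{virtually nilpotent quotients} hypothesis used in the main theorem.
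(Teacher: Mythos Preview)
Your overall strategy --- extend the $\Gamma_0$-action on $S^1$ to the Schlichting completion $G$, then use that profinite subgroups of $\homeo(S^1)$ are finite --- is appealing, and the endgame would indeed work if the extension were available. But the extension step is a genuine gap, and the paper's proof proceeds quite differently precisely because that extension is \emph{not} established.

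The only extension mechanism in the paper is Proposition~\ref{prop-urs-to-X-Gamma-eq}, which requires two hypotheses: that the action be \emph{strongly just-infinite}, and that the URS $S_{\varphi,G}(X)$ be \emph{infinite}. Neither is available here. Strong just-infiniteness would ask that $\langle (\Gamma_0)_x^0, (\Gamma_0)_y^0\rangle$ have finite index for all $x\neq y$; this is not part of the hypotheses and does not follow from the assumption that $\Gamma^0$ has finite index (an element fixing an interval containing neither $x$ nor $y$ need not lie in either germ stabilizer). More decisively, the paper's proof actually shows that $S_{\varphi,G}(S^1)$ is a \emph{single point}: using Lemma~\ref{lem-circ-commens-min}, the preimage $\Lambda$ of a compact open subgroup acts minimally on $S^1$, hence on $E_\varphi(S^1)$, hence on $S_{\varphi,G}(S^1)$; a compact $G$-space on which every compact open subgroup acts minimally must be a point. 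So the hypothesis ``$S_{\varphi,G}(X)$ infinite'' of Proposition~\ref{prop-urs-to-X-Gamma-eq} fails, and no extension is produced. Your rephrasing of the extension as ``$\Lambda_0$ is virtually contained in some point stabilizer'' is also not what the machinery yields, and is not proved anywhere.

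What the paper does instead: having shown $S_{\varphi,G}(S^1)=\{N\}$, it identifies $N=\overline{\varphi(\Gamma^0)}$, which is cocompact (since $\Gamma^0$ has finite index), hence compactly generated. Then Lemma~\ref{lem:Key} (the Key Lemma) produces an interval $I$ with $\varphi(\rist_\Gamma(I))\subseteq B(G)$; proximality propagates this to all intervals, so $\varphi(\Gamma^0)\subseteq B(G)$, and Theorems~\ref{thm-usakov}--\ref{thm-b(g)-closed} force $G$ to have a compact open normal subgroup $K$. Finally, $\varphi^{-1}(K)$ contains the simple monolith of $\Gamma$ (Proposition~\ref{prop-mon-circle}), which cannot embed in a profinite group --- contradiction. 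The argument thus never extends the action to $G$; it instead exploits the triviality of the URS together with the $B(G)$ machinery.

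A minor point: even granting the extension, your conclusion ``$\Lambda_0$ is finite'' overshoots. The map $\Gamma_0\to G$ need not be injective; what you would actually get is that $G$ is discrete (hence compact-by-discrete), which by Lemma~\ref{lem:Schlichting} gives that $\Lambda_0$ is commensurate to a normal subgroup --- already the desired contradiction.
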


See Example \ref{ex-T-circle} for a family of groups to which this result applies. In particular, we recover   the fact, first established in  \cite{LB-Wes},  that every proper commensurated subgroup of Thompson's group $T$ is finite.

\subsection{Micro-supported actions of locally compact groups}

Our results also have applications to the study of non-discrete locally compact groups. We denote by $\mathscr  S_{\mathrm{td}}$ the class of compactly generated tdlc groups that are non-discrete and topologically simple. We refer to \cite[Appendix A]{CRW-part2} for a description of various families of examples of groups within this class. Recall from \cite{CRW-part1} that every group $G$ in $\mathscr  S_{\mathrm{td}}$ (and more generally every tdlc group satisfying a condition called [A]-semisimplicity, see Section \ref{subsec-LCG}) has an associated $G$-Boolean algebra $\LC(G)$ called the \textbf{centralizer lattice} of $G$, with the property that the action of $G$ on the Stone space $\Omega_G$ of $\LC(G)$ is minimal and micro-supported, and such that every minimal and micro-supported totally disconnected compact $G$-space is a factor of $\Omega_G$ \cite[Theorem~5.18]{CRW-part1}. In Section  \ref{subsec-dense-LC} we obtain a strengthening of this universal property, by showing that  $\Omega_G$ is the maximal highly proximal extension in the sense of Auslander and Glasner \cite{Ausl-Glasn} of any  minimal and micro-supported compact $G$-space (Theorem \ref{thm:Omega_G}). It follows that every (not necessarily totally disconnected) faithful micro-supported compact $G$-space is a factor of $\Omega_G$, and is thus minimal and strongly proximal (see Corollary~\ref{cor:Omega_G}). Those results hold for all groups in the class $\mathscr R$ of \textit{robustly monolithic groups} introduced in \cite{CRW_dense} (the definition is recalled in Section~\ref{subsec-dense-LC} below). That class strictly  contains $\mathscr  S_{\mathrm{td}}$.

In Section  \ref{subsec-dense-LC} we also consider the situation of a pair of groups $(H,G)$ such that there exists a continuous homomorphism $\varphi\colon H \to G$ with dense image.  Examples of such pairs $(H,G)$ naturally arise in the study of the local structure of groups in the class $\mathscr  S_{\mathrm{td}}$, see \cite[Theorem~1.2]{Reid_Sylow} and \cite{CRW_dense}. In this situation, we study the relationship between the centralizer lattice of $H$ and that of $G$. The following is a simplified version of Theorem~\ref{thm:Dense-embedding-simple}.

\begin{thm}\label{thm:tdlc-intro}
	Let $G, H \in \mathscr  S_{\mathrm{td}}$ and  $\varphi \colon H \to G$ be a continuous injective homomorphism with dense image. If $\Omega_H$ is non-trivial, then the $H$-action on $\Omega_G$ is micro-supported,  and there exists a $H$-map $\Omega_H \to \Omega_G$ that is a highly proximal extension. In particular $\Omega_G$ is non-trivial.
\end{thm}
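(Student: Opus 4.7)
The plan is to verify, in sequence: (i) the $H$-action on $\Omega_G$ is minimal; (ii) it is micro-supported; (iii) the highly proximal map then follows from the universal property of $\Omega_H$.

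Step (i) is immediate. The $G$-action on $\Omega_G$ is continuous and minimal, and $\varphi(H)$ is dense in $G$. Hence for every $x \in \Omega_G$, continuity of the orbit map $g \mapsto g \cdot x$ gives $\overline{\varphi(H) \cdot x} \supseteq \overline{G \cdot x} = \Omega_G$, so the $H$-action on $\Omega_G$ is minimal.

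Step (ii) is the main work, and here I use the hypothesis that $\Omega_H$ is non-trivial. I would pick a non-trivial $\beta \in \LC(H)$; by the Stone-duality description of $\LC(H)$ from \cite{CRW-part1}, there is a corresponding non-empty proper clopen $U_\beta \subseteq \Omega_H$ with $\rist_H(U_\beta) = \beta$ acting non-trivially on $U_\beta$. Set $\alpha := \overline{\varphi(\beta)}$, the closure in $G$. The plan is then to argue that $\alpha$ lies in $\LC(G)$, by tracing through the construction of the centralizer lattice from centralizers of closed normal subgroups of open subgroups and checking that the dense embedding $\varphi$ transports the relevant defining structure from $H$ to $G$; this relies on the structural machinery of \cite{CRW-part1,CRW-part2,CRW_dense} and is the delicate part. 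Granting $\alpha \in \LC(G)$, the Stone dual picture yields a non-empty proper clopen $V \subseteq \Omega_G$ with $\rist_G(V) = \alpha$, and $\varphi(\beta)$ is dense in $\alpha$. Continuity of the $G$-action on $\Omega_G$ and density then force some $\gamma \in \varphi(\beta) \subseteq \varphi(H) \cap \rist_G(V)$ to act non-trivially on $V$: indeed, if $a \in \alpha$ moves some $y \in V$, then continuity of $b \mapsto b \cdot y$ and density of $\varphi(\beta)$ in $\alpha$ produce $\gamma \in \varphi(\beta)$ that also moves $y$. Combining this with the minimality established in (i), conjugation by $\varphi(H)$ yields non-trivial rigid stabilizers for a basis of clopens of $\Omega_G$, which gives the micro-support property.

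Step (iii) then applies Theorem~\ref{thm:Omega_G} with $H$ in place of $G$: $\Omega_H$ is the maximal highly proximal extension among minimal micro-supported totally disconnected compact $H$-spaces, and steps (i)--(ii) show that $\Omega_G$ is such an $H$-space, so there exists a highly proximal $H$-equivariant map $\Omega_H \to \Omega_G$. Since a highly proximal map to a one-point space forces the domain to be a single point, the non-triviality of $\Omega_H$ gives that $\Omega_G$ is non-trivial. The hardest step is the centralizer-lattice transfer in (ii): producing, from $\beta \in \LC(H)$, a corresponding element of $\LC(G)$ in an $H$-equivariant way, and guaranteeing that the elements obtained this way are sufficient to generate, after using minimality, a basis of clopens of $\Omega_G$ with non-trivial $H$-rigid stabilizers.
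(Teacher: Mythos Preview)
Your outline has a genuine gap in Step~(ii), and it is not merely a matter of filling in routine details. You want to push a non-trivial $\beta \in \LC(H)$ forward to a non-trivial element $\alpha = [\overline{\varphi(\beta)}]$ of $\LC(G)$. Two obstacles arise. First, for $\overline{\varphi(\beta)}$ to be locally normal in $G$ you need its normalizer to be open in $G$; but $\varphi$ is continuous and injective, not open, so although $N_H(\beta)$ is open in $H$, the closure $\overline{\varphi(N_H(\beta))}$ need not be open in $G$. Second, even if $\overline{\varphi(\beta)}$ were locally normal, you would still need its class in $\LC(G)$ to be neither $0$ nor $\infty$; establishing this amounts to showing $\LC(G)$ is non-trivial, which is precisely the conclusion ``$\Omega_G$ is non-trivial'' that you defer to Step~(iii). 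So Step~(ii), as written, is circular: producing a non-empty proper clopen $V \subset \Omega_G$ presupposes what you are trying to prove.

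The paper avoids this circularity by never attempting a direct transfer $\LC(H) \to \LC(G)$. Instead it builds an intermediate compact $G$-space $Y = S_{\varphi,G}(\Omega_H)$ as a URS of $G$ (Section~\ref{subsec-URS}), arising from the closures $\overline{\varphi(H_x^0)}$ for $x \in \Omega_H$. Because $\Omega_H$ has no non-trivial highly proximal extensions (Theorem~\ref{thm:Omega_G}), $Y$ is a factor of $\Omega_H$. One then shows the $H$-action on $Y$ is faithful, hence micro-supported by Lemma~\ref{lem-MS-factor-faith}; this is where the real work lies, and it uses Lemma~\ref{lem:Key} (or Proposition~\ref{prop-urs-phi-nontrivial-1}) together with $B(G)=1$ to rule out $Y$ being trivial. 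Faithfulness of the $G$-action on $Y$ then follows from Proposition~\ref{prop:FaithfulDenseSubgroup}. Only at that point does one invoke Theorem~\ref{thm:Omega_G} to get a highly proximal $G$-map $\Omega_G \to Y$, transfer micro-supportedness of the $H$-action from $Y$ up to $\Omega_G$ via Proposition~\ref{prop-invariants-hpeq}, and finally apply Theorem~\ref{thm:Omega_G} for $H$ exactly as in your Step~(iii). The URS detour is what makes the argument go through without assuming $\Omega_G$ non-trivial in advance.
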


This result applies for instance when $H$ is the commensurator in $G$ of an infinite pro-$p$ Sylow subgroup of a compact open subgroup of $G$ (Corollary \ref{cor:Localizations}). We refer to Section  \ref{subsec-dense-LC} for details.

\subsection*{Acknowledgements}
We are grateful to Yves de Cornulier for his comments on an earlier version of this paper. This work was micro-supported by a PEPS grant from CNRS and by the project ANR-19-CE40-0008-01 AODynG. We also thank the organizers of the following conferences, and the support of the associated host institutions: \textit{Measurable, Borel, and Topological Dynamics} in October 2019 at CIRM, and \textit{Groups, Dynamics, and Approximation} in December 2019 at the Mathematisches Forschungsinstitut Oberwolfach. 

The work of Dominik Francoeur was performed within the framework of the Labex Milyon (ANR-10- LABX-0070) of Universite de Lyon, within the program "Investissements d'Avenir" (ANR-11-IDEX-0007) operated by the French National Research Agency (ANR).

\section{Preliminaries} \label{sec-prelim}

\subsection{Basic notions}
Let $G$ be a group acting by homeomorphisms on a Hausdorff space $X$. Recall that the action of $G$ on $X$ is micro-supported if the rigid stabilizer $\rist_G(U)$ acts non-trivially on $X$ for every non-empty open subset $U \subseteq X$ (by extension when $G$ is clear from the context we will also say that the $G$-space $X$ is micro-supported). Note that for faithful actions this is equivalent to saying that $\rist_G(U)$ is non-trivial. Note also that if the $G$-action  on $X$ is micro-supported, then $X$ cannot have isolated points. 

The following lemma is immediate from the definitions.

\begin{lem} \label{lem-MS-factor-faith}
	Let $G$ be a group, $X,Y$ topological spaces on which $G$ acts by homeomorphisms, and $\pi \colon Y \to X$ a continuous surjective $G$-equivariant map. Suppose that the action on $Y$ is micro-supported and the action on $X$ is faithful. Then the action on $X$ is micro-supported.
\end{lem}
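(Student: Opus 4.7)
The plan is to reduce the question on $X$ to the given micro-supported property on $Y$ by pulling back a non-empty open subset along the surjective equivariant map $\pi$. Concretely, given a non-empty open subset $U \subseteq X$, I would set $V = \pi^{-1}(U)$; this is open by continuity of $\pi$, and non-empty because $\pi$ is surjective. Applying the hypothesis that $G \acts Y$ is micro-supported to $V$ produces an element $g \in \rist_G(V)$ whose action on $Y$ is non-trivial.

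Next I would verify that $\rist_G(V) \subseteq \rist_G(U)$. Let $x \in X \setminus U$ and choose some $y \in \pi^{-1}(x)$; then $y \in Y \setminus V$, so $g(y) = y$, and by $G$-equivariance of $\pi$ we get
\[
g(x) = g(\pi(y)) = \pi(g(y)) = \pi(y) = x.
\]
Hence $g$ fixes $X \setminus U$ pointwise, so $g \in \rist_G(U)$.

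Finally, I would upgrade "$g$ acts non-trivially on $Y$" to "$g$ acts non-trivially on $X$" using the faithfulness hypothesis. Since $g$ does not act as the identity on $Y$, the element $g$ is not the identity of $G$. Faithfulness of the $G$-action on $X$ then forces $g$ to act non-trivially on $X$ as well. Combined with $g \in \rist_G(U)$, this shows that $\rist_G(U)$ acts non-trivially on $X$, which is exactly the micro-supported property for $G \acts X$.

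There is no real obstacle here: the argument is essentially an unwinding of the definitions, with the only subtle point being that faithfulness on $X$ (rather than on $Y$) is what allows us to transfer the non-triviality of the rigid stabilizer from $Y$ to $X$. This explains why the hypothesis on $X$ is needed, since without it one could only produce elements of $\rist_G(U)$ lying in the kernel of $G \acts X$.
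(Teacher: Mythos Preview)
Your proof is correct and follows essentially the same route as the paper's: pull back $U$ to $V=\pi^{-1}(U)$, use micro-supportedness on $Y$ to get a non-trivial element in $\rist_G(V)\leq \rist_G(U)$, and then invoke faithfulness on $X$. You have merely spelled out in detail the containment $\rist_G(V)\leq \rist_G(U)$ that the paper asserts without proof.
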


\begin{proof}
	If $U$ is a non-empty open subset of $X$, then $V = \pi^{-1}(U)$ is also open and non-empty, so it follows that $\rist_G(V)$ is non-trivial since $Y$ is micro-supported. Since $\rist_G(V) \leq \rist_G(U)$, it follows that $\rist_G(U)$ is non-trivial, and hence $\rist_G(U)$ acts non-trivially on $X$ since the action on $X$ is faithful.
\end{proof}

%When $\mathcal{X}$ is a locally compact space, we will denote by $2^\mathcal{X}$ the space of closed subsets of $\mathcal{X}$, endowed with the Chabauty topology. 

If $G$ is a topological group and $X,Y$ are compact $G$-spaces, a continuous $G$-equivariant map $\pi: Y \to X$ will be called a \textbf{$G$-map}. When $\pi$ is onto, we say that $Y$ is an \textbf{extension} of $X$, and that $X$ is a \textbf{factor} of $Y$. We will also say that $\pi: Y \to X$ is an extension or a factor map. A $G$-map $\pi: Y \to X$ that is also a homeomorphism will be called an isomorphism. 

A non-empty subset $C \subset X$ is \textbf{compressible} if there exists $x \in X$ such that for every neighbourhood $U$ of $x$, there exists $g \in G$ such that $g(C) \subset U$. Note that by definition a compressible subset is necessarily non-empty.

\subsection{Preliminaries on highly proximal extensions}

In the sequel we will make use of the notion of highly proximal extension \cite{Ausl-Glasn}. Let $G$ be a topological group, and $X,Y$ compact $G$-spaces. An extension  $\pi: Y \to X$ is \textbf{highly proximal} if for every non-empty open subset $U \subseteq Y$, there exists $x \in X$ such that $\pi^{-1}(x) \subseteq U$. When $X,Y$ are minimal, this is equivalent to saying that for some (every) $x \in X$,  $\pi^{-1}(x)$ is compressible in $Y$ \cite{Ausl-Glasn}. 

%By minimality, $\pi: Y \to X$ is highly proximal if and only if  $\pi^{-1}(x)$ is compressible for all $x \in X$; if and only  

Auslander and Glasner showed that every minimal compact $G$-space $X$ admits a highly proximal extension $\pi: X^* \to X$ with the property that for any highly proximal extension $p: Y \to X$ there exists a $G$-map $p': X^*  \to Y$ such that $\pi = p \circ p'$, and such that any highly proximal extension of $X^*$ is an isomorphism. Such a $G$-space $X^*$ is necessarily unique up to isomorphism, and is called the \textbf{maximal highly proximal extension of $X$} \cite{Ausl-Glasn}.

Following  \cite{Ausl-Glasn}, we say that $X,Y$ are \textbf{highly proximally equivalent}, denoted $X \sim_{hp} Y$, if $X$ and $Y$ admit a common highly proximal extension. Since any highly proximal extension $Z \to X$ gives rise to an isomorphism between $Z^*$ and $X^*$, being highly proximally equivalent is indeed an equivalence relation. We refer to \cite{Ausl-Glasn} for details.

\begin{lem} \label{lem-highly-prox-preserve}
Let $X,Y$ be minimal compact $G$-spaces that are highly proximally equivalent. Then $X$ is an almost $G$-boundary if and only if $Y$ is an almost $G$-boundary.
\end{lem}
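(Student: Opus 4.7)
The plan is to reduce the lemma to the following assertion: for any highly proximal extension $\pi\colon Z \to X$ of minimal compact $G$-spaces, $X$ is an almost $G$-boundary if and only if $Z$ is. Since $X \sim_{hp} Y$ means by definition that $X$ and $Y$ admit a common highly proximal extension $Z$, and since $Z$ is itself minimal (any orbit closure in $Z$ surjects onto $X$ and, by high proximality of $Z \to X$, must contain an entire fibre, hence be all of $Z$), applying the reduced assertion to $Z \to X$ and to $Z \to Y$ yields the lemma.

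The first ingredient for the reduced assertion is that strong proximality is both preserved and reflected by a highly proximal extension $\pi\colon Z \to X$ of minimal systems. The implication $Z$ strongly proximal $\Rightarrow$ $X$ strongly proximal uses only that the pushforward $\pi_*\colon \P(Z) \to \P(X)$ is $G$-equivariant, continuous and surjective: lift any $\nu \in \P(X)$ to $\tilde\nu \in \P(Z)$, compress $\tilde\nu$ to a Dirac along some net, and push forward. For the converse, given $\mu \in \P(Z)$, first move $\pi_*\mu$ close to a Dirac $\delta_x$; any weak-$*$ cluster point $\mu'$ of the corresponding translates of $\mu$ satisfies $\pi_*\mu' = \delta_x$, so $\mu'$ is supported on $\pi^{-1}(x)$. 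Compressibility of this fibre, guaranteed by high proximality, yields a net compressing $\mu'$ to a Dirac in $Z$, and a diagonal argument then produces the required compression of $\mu$.

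The second ingredient is a canonical bijection between $G$-invariant clopen partitions of $X$ and of $Z$, given by $X_i = \pi(Z_i)$ and $Z_i = \pi^{-1}(X_i)$. Pullbacks clearly yield clopen $G$-partitions. Going the other way, starting from $Z = Z_1 \sqcup \cdots \sqcup Z_d$, the sets $X_i := \pi(Z_i)$ are closed, $G$-permuted and cover $X$; the subtle point is that they are pairwise disjoint. Indeed, if $x \in \pi(Z_i) \cap \pi(Z_j)$ with $i \ne j$, the fibre $\pi^{-1}(x)$ meets both $Z_i$ and $Z_j$; passing to a subnet on which the (finite) permutation of $\{Z_1,\dots,Z_d\}$ induced by the compressing elements $h_\beta$ is constant equal to $\sigma$, compressibility of $\pi^{-1}(x)$ produces a point $z \in Z$ every neighbourhood of which meets both $Z_{\sigma(i)}$ and $Z_{\sigma(j)}$, contradicting that the $Z_k$ are pairwise disjoint clopen sets. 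Hence $\{X_i\}$ is a $G$-invariant clopen partition of $X$, $Z_i = \pi^{-1}(X_i)$, $\Stab_G(X_i) = \Stab_G(Z_i)$, and each restriction $\pi\colon Z_i \to X_i$ is again a highly proximal extension between minimal $\Stab_G(Z_i)$-spaces.

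Combining the two ingredients and applying the first to each restriction $\pi\colon Z_i \to X_i$, almost strong proximality of $X$ (with partition $\{X_i\}$) becomes equivalent to almost strong proximality of $Z$ (with partition $\{Z_i\}$); minimality on both sides being already known, this gives the equivalence of being an almost $G$-boundary. I expect the main obstacle to be the disjointness step in the partition correspondence: the key is to control the permutation of the partition induced by the compressing elements by passing to a subnet, and then exploit that a compressible fibre cannot straddle two disjoint clopen pieces.
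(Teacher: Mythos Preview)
Your proof is correct and follows the same overall strategy as the paper: reduce to the case of a single highly proximal extension, transfer the clopen partition across it, and invoke the stability of strong proximality under highly proximal extensions (which the paper cites from Glasner and you re-derive).

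One point deserves comment. In the converse direction (from the total space down to the base), the paper simply pushes the partition forward and asserts that ``the push-forward of this partition in $X$ via $\pi$ provides a partition of $X$ (although distinct blocks might have the same image)''. This is precisely the step you identified as subtle, and your compressibility argument---passing to a subnet on which the induced permutation of the blocks is constant, and deriving a contradiction from a fibre straddling two clopen pieces---actually proves more than the paper claims: it shows the images $\pi(Z_i)$ are pairwise \emph{disjoint}, so no collapsing can occur. The paper's parenthetical is therefore moot, and its brief justification leaves unaddressed the a priori possibility that two images overlap without being equal; your argument closes that gap cleanly.
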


\begin{proof}
By considering a common highly proximal extension, we see that it is enough to prove the statement for every highly proximal extension $\pi: Y \to X$.

Suppose that the $G$-action on $X$ is almost strongly proximal, and let $X = X_1 \cup \dots \cup X_d$ be a $G$-invariant clopen partition such that for each $i$, the action of $G_i := \Stab_G(X_i)$ on $X_i$ is a boundary action. Let $Y_i = \pi^{-1}(X_i)$. Then $Y = Y_1 \cup \dots \cup Y_d$ is a $G$-invariant clopen partition of $Y$, and the restriction $\pi|_{Y_i} \colon Y_i \to X_i$ is a highly proximal extension between the $G_i$-spaces $Y_i$ and $X_i$. Therefore the minimality of $X_i$ under the action of $G_i$ is inherited by $Y_i$, and so is strong proximality by \cite[Lemma 5.2]{Glasner-compress} (the proof is given there for almost one-to-one extensions, but the same argument applies to highly proximal extensions).

Conversely, since minimality and strong proximality are inherited by factors, it is clear that if $Y$ admits a clopen partition as above, then the push-forward of this partition in $X$ via $\pi$ provides a partition of $X$ (although distinct blocks might have the same image) with the required properties.
\end{proof}

The following shows that the properties of being micro-supported and of having compressible open subsets behave nicely with respect to highly proximal extensions.

\begin{prop} \label{prop-invariants-hpeq}
	Let $\pi \colon Y \to X$ be a highly proximal extension between compact $G$-spaces. Then the following hold:
	\begin{enumerate}[label=(\roman*)]
		\item \label{item-rist-hp-ext} If $A \subseteq X$, then $\rist_G(A) \leq \rist_G(\pi^{-1}(\overline{A}))$.
		\item \label{item-MS-hp-eq} $X$ is micro-supported if and only if $Y$ is micro-supported.
		
	\end{enumerate} 
If in addition $X,Y$ are minimal, then:
	\begin{enumerate}[resume*]
		\item \label{item-compress-hp-ext} If $A \subseteq X$ is compressible, then $\pi^{-1}(A)$ is compressible.
		\item \label{item-compress-hp-open} $X$ admits a compressible open subset if and only if $Y$ does.
	\end{enumerate}
\end{prop}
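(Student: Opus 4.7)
The plan is to establish (i) first via a separation argument exploiting the defining property of highly proximal extensions, then use (i) as the key input for (ii), invoke the minimal-action characterization of highly proximality (each fibre $\pi^{-1}(x)$ is compressible in $Y$) to handle (iii), and derive (iv) from (iii) together with a direct push-forward argument.

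For (i), let $g \in \rist_G(A)$ and take $y \in Y \setminus \pi^{-1}(\overline{A}) = \pi^{-1}(X \setminus \overline{A})$; assume for contradiction $gy \neq y$. Using that $Y$ is Hausdorff and that $\pi^{-1}(X \setminus \overline{A})$ is open, choose an open neighbourhood $W$ of $y$ with $W \subseteq \pi^{-1}(X \setminus \overline{A})$ and $W \cap gW = \emptyset$. By the highly proximal property applied to $W$, there is $x \in X$ with $\pi^{-1}(x) \subseteq W$; necessarily $x \in X \setminus \overline{A} \subseteq X \setminus A$, so $gx = x$, whence $\pi^{-1}(x) = g\pi^{-1}(x) \subseteq W \cap gW$, a contradiction.

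The proof of (ii) splits in two directions. A variant of the above Hausdorff/fibre argument shows that the $G$-kernels on $X$ and on $Y$ coincide, so \enquote{acts non-trivially} is interchangeable under $\pi$. The easy direction ($Y$ micro-supported $\Rightarrow$ $X$ micro-supported) uses that for open $U \subseteq X$ one has $\rist_G(\pi^{-1}(U)) \leq \rist_G(U)$, by equivariance and surjectivity of $\pi$. For the converse, given open $V \subseteq Y$, use the highly proximal property together with regularity of the compact Hausdorff space $X$ to produce an open $A \subseteq X$ with $\pi^{-1}(\overline{A}) \subseteq V$; then (i) yields $\rist_G(A) \leq \rist_G(\pi^{-1}(\overline{A})) \leq \rist_G(V)$, and micro-supportedness of $X$ supplies a nontrivially acting element in $\rist_G(A)$. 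For (iii), let $x$ be a compression point for $A$ in $X$ and, invoking minimality and highly proximality, pick a compression point $y \in Y$ for the fibre $\pi^{-1}(x)$. Given an open neighbourhood $W$ of $y$, first compress the fibre into $W$ by some $g_1$; the set $V := X \setminus \pi(Y \setminus W)$ is then an open neighbourhood of $g_1 x$ satisfying $\pi^{-1}(V) \subseteq W$; now compress $A$ into $g_1^{-1}V$ by some $g_2$, and $G$-equivariance of $\pi$ gives $g_1 g_2 \pi^{-1}(A) = \pi^{-1}(g_1 g_2 A) \subseteq \pi^{-1}(V) \subseteq W$, as required.

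Finally (iv): the forward direction is immediate from (iii) since $\pi^{-1}$ of an open set is open. For the converse, given an open compressible $B \subseteq Y$ with compression point $y$, pulling back open neighbourhoods of $\pi(y)$ via $\pi$ shows that $\pi(B)$ is compressible to $\pi(y)$, and the non-empty open subset $U := X \setminus \pi(Y \setminus B)$ (which exists because $B$ is non-empty open and $\pi$ is highly proximal) is contained in $\pi(B)$, hence is itself compressible. The main conceptual obstacle is (i): a highly proximal extension need not be almost one-to-one, so a priori an element fixing $x \in X$ only preserves the fibre $\pi^{-1}(x)$ rather than fixing it pointwise; the fibre-selection inside a Hausdorff-separated open set is precisely what upgrades pointwise fixing from $X$ to open preimages in $Y$.
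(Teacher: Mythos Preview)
Your proof is correct and follows essentially the same approach as the paper: part~(i) is identical, part~(ii) uses the same regularity/fibre-selection idea (your kernel-coincidence remark is a clean way to phrase what the paper does pointwise), and parts~(iii)--(iv) rest on the same key device $V = X \setminus \pi(Y \setminus W)$. The only cosmetic difference is that in~(iii) you exhibit a specific compression point via a two-step compression, whereas the paper shows directly that $\pi^{-1}(A)$ compresses into \emph{every} non-empty open set (equivalent under minimality); similarly in the converse of~(iv) you use high proximality directly rather than the general fact that factor images of open sets between minimal spaces have non-empty interior.
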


\begin{proof}
	\ref{item-rist-hp-ext}	
	Let $g \in \rist_G(A)$. Suppose for a contradiction that there exists $y \in Y$ such that $y \notin \pi^{-1}(\overline{A})$ and $g(y) \neq y$. Since $\pi^{-1}(\overline{A})$ is closed, we may find an open neighbourhood $W$ of $y$ such that $W \cap g(W) = \varnothing$ and $W \cap \pi^{-1}(\overline{A})  = \varnothing$. Since the extension is highly proximal, there exists $x \in X$ such that $ \pi^{-1}(x) \subseteq W$, and $x \notin A$ because $x$ belongs to $\pi(W)$, which is disjoint from $A$. Moreover we have $g(x) \neq x$ because $ g(\pi^{-1}(x)) \cap \pi^{-1}(x)$ is empty, so we deduce that $g \notin \rist_G(A)$, which is our contradiction.
	
	\ref{item-MS-hp-eq} 
	Suppose $X$ is micro-supported. Let $U$ be a non-empty open subset of $Y$. Denote by $V$ the set of those points $x \in X$ such that $\pi^{-1}(x) \subseteq U$. The subset $V$ is open, and $V$ is non-empty since the extension is  highly proximal. Let now $W$ be a non-empty open subset of $V$ such that $\overline{W} \subset V$. We have $\pi^{-1}(\overline{W}) \subseteq \pi^{-1}(V) \subseteq U$. According to \ref{item-rist-hp-ext}, one has $\rist_G(W) \leq \rist_G(U)$. Now since $X$ is micro-supported, one can find $x \in W$ and $g \in \rist_G(W)$ such that $g(x) \neq x$,  and it follows that $g(y) \neq y$ for every $y \in \pi^{-1}(x)$. So we have shown that  $\rist_G(U)$ acts non-trivially on $U$, and since $U$ was arbitrary, it follows that $Y$ is micro-supported. 
	
	For the converse implication, we let $V$ be a non-empty open subset of $X$, and write $U = \pi^{-1}(V)$. If $Y$ is micro-supported then one can find $g \in \rist_G(U)$ and $W$ an open subset of $U$ such that $W$ and  $g(W)$ are disjoint. Then any point $x \in V$ such that $\pi^{-1}(x) \subseteq W$ is moved by $g$, and since $g \in \rist_G(V)$, it follows that $\rist_G(V)$ acts non-trivially on $V$, and $X$ is micro-supported.
	
	\ref{item-compress-hp-ext} Suppose that $A \subseteq X$ is compressible, and write $B = \pi^{-1}(A)$.  Let $U$ be a non-empty open subset of $Y$, and let $V$ the set of points $x \in X$ such that $\pi^{-1}(x) \subseteq U$. Again $V$ is open and non-empty, so using that $A$ is compressible in $X$ and minimality, we find $g \in G$ such that $g(A) \subseteq V$. It follows that $g(B) \subseteq \pi^{-1}(V) \subseteq U$. Since $U$ was arbitrary, $B$ is compressible.
	
	\ref{item-compress-hp-open} If $U$ is a compressible open subset of $X$, then by $ V = \pi^{-1}(U)$ is a compressible open subset of $Y$ by \ref{item-compress-hp-ext}. Conversely if $V$ is compressible and open in $Y$, then $\pi(V)$ is a compressible subset of $X$. Now the image of an open subset by a factor map between minimal compact $G$-spaces always has non-empty interior, since the target space is covered by finitely many translates of that image. So there exists a non-empty open subset $W$ of $X$ in $\pi'(V)$, and it follows that $W$ is a compressible open subset of $X$.	
\end{proof}

\begin{lem} \label{lem-ext-MS-necessarilyHP}
	Let $\pi \colon Y \to X$ be an extension between faithful micro-supported compact $G$-spaces. Then $\pi$ is highly proximal.
\end{lem}

\begin{proof}
Let $U$ be a non-empty open subset of $Y$. The subgroup $\rist_G(U)$ is non-trivial, and hence acts non trivially on $X$ since the $G$-action on $X$ is faithful. If $x$ is a point of $X$ that is moved by some element $g$ of $\rist_G(U)$, then $g$ sends $\pi^{-1}(x)$ disjoint to itself. Since $g$ is supported in $U$, this implies $\pi^{-1}(x) \subseteq U$. Since $U$ was arbitrary, this proves that $\pi$ is highly proximal.
\end{proof}

When $X$ is a compact $G$-space, we denote by $2^X$ the set of all closed subsets of $X$, endowed with the Hausdorff topology. The space $2^X$ is also a compact $G$-space. Recall that the \textbf{universal minimal flow} of $G$ is the unique minimal compact $G$-space $M$ with the property that every minimal compact $G$-space is a factor of $M$. We are grateful to Todor Tsankov for pointing out to us the reference \cite[Prop.\ 3.4]{Pestov-TAMS}.

\begin{prop} \label{prop-HPM-totdisc}
Let $G$ be a tdlc group, and $X$ a minimal compact $G$-space. Then the maximal highly proximal extension $X^*$ of $X$ is a totally disconnected space.
\end{prop}

\begin{proof}
According to \cite[Theorem I.1]{Ausl-Glasn}, the $G$-space $X^*$ is isomorphic to a subsystem of $2^M$, where $M$ is the universal minimal flow of $G$. Now since the group $G$ is totally disconnected, the compact open subgroups of $G$ form a basis of neighbourhoods of the identity by van Dantzig's theorem, and hence it follows from \cite[Prop.\ 3.4]{Pestov-TAMS} that the compact $G$-space $M$ is totally disconnected. Therefore  $2^M$ is also totally disconnected, and consequently $X^*$ as well by the above property. 
\end{proof}

\subsection{Rubin's theorem}

A given group $\Gamma$ may very well admit minimal and micro-supported actions on compact spaces $X$ and $Y$ that are not isomorphic. For example Thompson's group $T$ admits such actions respectively on the circle and on a Cantor set \cite{CFP}. More generally if $\Gamma$ admits a minimal and micro-supported action on a Cantor set $X$, then by Proposition \ref{prop-invariants-hpeq} the action of $\Gamma$ on the maximal highly proximal extension $X^*$ remains micro-supported, and $X$ and $X^*$ are never isomorphic, because $X$ and $X^*$ are not even homeomorphic (see the discussion following Theorem~\ref{thm:Rubin} below). However we have the following theorem of Rubin \cite{Rubin-loc-mov-book}. If $X$ is a topological space, we denote by $R(X)$ the Boolean algebra of regular open subsets of $X$.  

\begin{thm}\label{thm:Rubin}
	Suppose that a group $\Gamma$ admits a faithful and micro-supported action on topological spaces $X,Y$. Then there exists a $\Gamma$-equivariant isomorphism $R(X) \to R(Y)$.
\end{thm}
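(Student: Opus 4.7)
I would prove the theorem by realising both $R(X)$ and $R(Y)$ as canonically isomorphic to a single group-theoretically defined sub-Boolean-algebra of $\mathrm{Sub}(\Gamma)$, with the rigid stabiliser map providing the isomorphisms. Throughout, let $Z$ stand for either $X$ or $Y$, and consider the map
\[
\Rcal_Z \colon R(Z) \longrightarrow \mathrm{Sub}(\Gamma), \qquad U \longmapsto \rist_\Gamma(U).
\]
It is $\Gamma$-equivariant because $\rist_\Gamma(gU) = g\rist_\Gamma(U)g^{-1}$.

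The first step is to show that $\Rcal_Z$ is an order-embedding. Suppose $U,V \in R(Z)$ with $U \not\subseteq V$. Since $U$ is regular open, $U \subseteq \overline V$ would give $U = \mathrm{int}(U) \subseteq \mathrm{int}(\overline V) = V$, a contradiction; so $U \setminus \overline V$ is a non-empty open set. By micro-support there exists a non-trivial $g \in \rist_\Gamma(U \setminus \overline V) \subseteq \rist_\Gamma(U)$, and since $g$ moves a point outside $V$, faithfulness yields $g \notin \rist_\Gamma(V)$. Next, one verifies the two identities
\[
\rist_\Gamma(U \cap V) = \rist_\Gamma(U) \cap \rist_\Gamma(V) \qquad \text{and} \qquad \rist_\Gamma(U)^{\perp} = \rist_\Gamma(Z \setminus \overline U),
\]
where $\perp$ denotes the centraliser in $\Gamma$. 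The first is immediate. For the centraliser identity, the inclusion $\supseteq$ follows from disjointness of supports, using that fixed-point sets of homeomorphisms of a Hausdorff space are closed. For $\subseteq$, suppose $g$ centralises $\rist_\Gamma(U)$ but moves some $x \in \overline U$; by closedness of fixator sets $g$ would then move some $x \in U$. Choose a neighbourhood $W$ of $x$ with $\overline W \subseteq U$ and $g(\overline W) \cap \overline W = \emptyset$. For $h \in \rist_\Gamma(W)$ one has $ghg^{-1} \in \rist_\Gamma(gW)$, whose support is disjoint from that of $h$; the commutation $ghg^{-1}=h$ forces $h=1$, contradicting micro-support on $W$. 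Hence $g \in \rist_\Gamma(Z \setminus \overline U)$.

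These identities show that $\Rcal_Z$ intertwines the Boolean operations on $R(Z)$ (meet, join, complement $U \mapsto Z\setminus \overline U$) with the intrinsic operations $\cap$, $(\,\cdot\,)^{\perp\perp}$ and $(\,\cdot\,)^{\perp}$ on the lattice
\[
\mathcal B(\Gamma) := \{\,H \leq \Gamma \,:\, H = H^{\perp\perp}\,\},
\]
which is a complete Boolean algebra built purely from $\Gamma$; in particular $\Rcal_Z(R(Z))$ sits inside $\mathcal B(\Gamma)$ as a Boolean subalgebra. Combining the two embeddings $\Rcal_X$ and $\Rcal_Y$ into $\mathcal B(\Gamma)$ will give the desired $\Gamma$-equivariant isomorphism $R(X) \to R(Y)$, provided one can identify their images.

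The main obstacle, and the heart of Rubin's theorem, is exactly this last identification: one must show that the subset of $\mathrm{Sub}(\Gamma)$ picked out by $\Rcal_Z$ does not depend on the choice of faithful micro-supported $\Gamma$-space $Z$. I would argue this by showing that $\Rcal_Z$ is surjective onto $\mathcal B(\Gamma)$: given $H \in \mathcal B(\Gamma)$, one reconstructs an open set $U_H \subseteq Z$ from $H$ (essentially as the union of open supports of elements of $H$), regularises it, and proves $H = \rist_\Gamma(U_H)$ using micro-support to control how $H$ acts locally and the centraliser identity above to control how it interacts with elements lying outside. Once both $\Rcal_X(R(X)) = \mathcal B(\Gamma) = \Rcal_Y(R(Y))$, the composition $\Rcal_Y^{-1} \circ \Rcal_X$ is a $\Gamma$-equivariant Boolean-algebra isomorphism $R(X) \to R(Y)$; equivariance is automatic because both maps intertwine the $\Gamma$-action with conjugation on $\mathrm{Sub}(\Gamma)$. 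The preparatory work on the order-embedding and centraliser identity is essentially formal; the deep content lies in this surjectivity (i.e. in reconstructing regular open sets from bicommutant-closed subgroups), and it is there that the micro-support hypothesis is used in its strongest form.
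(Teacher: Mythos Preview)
The paper does not prove this theorem; it is quoted as a result of Rubin with a reference to \cite{Rubin-loc-mov-book}, so there is no in-paper argument to compare against. I therefore assess your proposal on its own terms.

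Your preparatory work is sound: the order-embedding of $R(Z)$ into $\mathrm{Sub}(\Gamma)$ via rigid stabilisers, and the centraliser identity $\rist_\Gamma(U)^{\perp} = \rist_\Gamma(Z\setminus\overline U)$, are correct and are indeed the first moves in any Rubin-style argument. The problem is precisely where you flag it, but it is worse than a missing detail: your surjectivity claim is false, and so is the assertion that $\mathcal B(\Gamma)=\{H\leq\Gamma : H=H^{\perp\perp}\}$ is a Boolean algebra. Take $\Gamma=\homeo^+(\mathbf S^1)$ with its standard (faithful, micro-supported) action. The rotation subgroup $R\cong SO(2)$ satisfies $R^{\perp}=R$ (any homeomorphism commuting with all rotations is a rotation), hence $R^{\perp\perp}=R$ and $R\in\mathcal B(\Gamma)$. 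But $R$ is not a rigid stabiliser: every non-trivial rotation has full support, so your reconstruction gives $U_R=\mathbf S^1$ and $\rist_\Gamma(U_R)=\Gamma\neq R$. The same example kills the Boolean-algebra claim, since $R\vee R^{\perp}=R\neq\Gamma$, so the complement axiom fails in $\mathcal B(\Gamma)$.

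What Rubin's theorem really says is that the images $\Rcal_X(R(X))$ and $\Rcal_Y(R(Y))$ coincide as subsets of $\mathrm{Sub}(\Gamma)$; but that common image is a \emph{proper} sub-poset of $\mathcal B(\Gamma)$, and the whole difficulty is to describe it in purely group-theoretic terms. Rubin does this by isolating, with commutator and centraliser conditions alone, which elements of $\Gamma$ behave like elements of small support, and then reconstructing the regular open algebra from these; the bicommutant closure is necessary but far from sufficient. Your framework is the right ambient one, but the identification of the target needs this additional, genuinely non-trivial layer rather than the bare double-centraliser lattice.
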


For a compact space $X$, we denote by $\tilde{X} = \mathfrak{S}(R(X))$ the Stone space of $R(X)$. Since $R(X)$ is a complete Boolean algebra, $\tilde{X}$ is a compact extremally disconnected space (the closure of every open subset is open). There is a natural map $\pi_X: \tilde{X} \to X$, which associates to every ultrafilter on $R(X)$ its limit in the space $X$. This map is continuous and surjective, and has the property that every non-empty open subset of $\tilde{X}$ contains a fiber $\pi^{-1}(x)$ for some $x \in X$ \cite{Gleason-proj}. Observe that every homeomorphism $h$ of $X$ induces a homeomorphism of $\tilde{X}$, that we still denote $h$, such that $\pi_X \circ h = h \circ \pi_X$. 

If $\Gamma$ is a discrete group and $X$ a minimal compact $\Gamma$-space, the above discussion says that $\tilde{X}$ is also a minimal compact $\Gamma$-space, and $\pi_X: \tilde{X} \to X$ is a highly proximal extension. Moreover since $\tilde{X}$ is extremally disconnected, it is actually the maximal highly proximal extension of $X$ (\cite[Lemma 2.3]{Gleason-proj}). (See \cite{Zucker-genpt} for a generalization to non-discrete groups). The following is a consequence of Rubin's theorem and Proposition \ref{prop-invariants-hpeq}.

\begin{thm} \label{thm-MS-all-eq}
Let $\Gamma$ be a discrete group, and $X$ a compact $\Gamma$-space  that is faithful and micro-supported. Then: 
\begin{enumerate}[label=(\roman*)]
	\item \label{item-Rubin1} The action of $\Gamma$ on $\tilde{X}$ is faithful and micro-supported.
	\item \label{item-Rubin2} For every compact $\Gamma$-space $Y$ that is faithful and micro-supported, there is a factor map $\tilde{X} \to Y$ that is highly proximal.
	\item \label{item-Rubin3} If $X$ is minimal then $\tilde{X}$ is minimal. Hence all compact $\Gamma$-spaces that are faithful and micro-supported are minimal and highly proximally equivalent.
	\item \label{item-Rubin4} If $X$ is minimal and admits a compressible open subset, then all  compact $\Gamma$-spaces that are faithful and micro-supported admit a compressible open subset.
\end{enumerate}
\end{thm}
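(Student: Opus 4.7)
The plan is to reduce each of the four assertions to Rubin's theorem combined with Proposition \ref{prop-invariants-hpeq} and with the fact recalled just above the statement that the canonical projection $\pi_X \colon \tilde{X} \to X$ is a highly proximal extension.

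For \ref{item-Rubin1}, I would observe that because $\pi_X$ is a surjective $\Gamma$-map, any element of $\Gamma$ acting trivially on $\tilde{X}$ must also act trivially on $X$, so faithfulness transfers from $X$ to $\tilde{X}$; micro-supportedness of $\tilde{X}$ is then Proposition \ref{prop-invariants-hpeq}\ref{item-MS-hp-eq} applied to $\pi_X$. Part \ref{item-Rubin2} is the core of the argument: Rubin's theorem applied to $(X,Y)$ yields a $\Gamma$-equivariant isomorphism $R(X) \to R(Y)$; taking Stone duals turns this into a $\Gamma$-equivariant homeomorphism $\tilde{X} \to \tilde{Y}$, which composed with the canonical highly proximal factor map $\pi_Y \colon \tilde{Y} \to Y$ gives the desired highly proximal factor map $\tilde{X} \to Y$.

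For \ref{item-Rubin3}, the first step is the standard observation that a highly proximal extension of a minimal system is minimal: given a non-empty closed $\Gamma$-invariant $C \subseteq \tilde{X}$, the image $\pi_X(C)$ is closed, $\Gamma$-invariant and non-empty, hence equals $X$ by minimality; if $\tilde{X} \setminus C$ were non-empty, the defining property of a highly proximal extension would furnish an $x \in X$ with $\pi_X^{-1}(x) \subseteq \tilde{X} \setminus C$, contradicting $\pi_X(C) = X$. Granted that $\tilde{X}$ is minimal, any other faithful micro-supported $Y$ is a factor of $\tilde{X}$ by \ref{item-Rubin2} and hence minimal, while $\tilde{X}$ serves as a common highly proximal extension of $X$ and $Y$, so $X \sim_{hp} Y$. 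Part \ref{item-Rubin4} then follows by applying Proposition \ref{prop-invariants-hpeq}\ref{item-compress-hp-open} twice: first to $\pi_X$ to transfer a compressible open subset from $X$ up to $\tilde{X}$, then to the highly proximal factor map $\tilde{X} \to Y$ from \ref{item-Rubin2} to transfer it back down to an arbitrary faithful micro-supported $Y$.

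The only mildly delicate point is the $\Gamma$-equivariance of the Stone-dual homeomorphism $\tilde{X} \to \tilde{Y}$ used in \ref{item-Rubin2}, but this is routine from the functoriality of $\mathfrak{S}(\cdot)$ together with the fact that $\Gamma$ already acts by Boolean algebra automorphisms on both $R(X)$ and $R(Y)$ and that Rubin's isomorphism intertwines these two actions. I do not anticipate any serious obstacle beyond keeping track of which direction in Proposition \ref{prop-invariants-hpeq}\ref{item-compress-hp-open} is being invoked at each step of \ref{item-Rubin4}.
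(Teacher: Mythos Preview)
Your proposal is correct and follows essentially the same route as the paper's proof: faithfulness and micro-supportedness of $\tilde{X}$ via $\pi_X$ and Proposition~\ref{prop-invariants-hpeq}, the factor map in \ref{item-Rubin2} via Rubin's isomorphism $R(X)\cong R(Y)$ dualized to $\tilde{X}\cong\tilde{Y}$ and composed with $\pi_Y$, and \ref{item-Rubin3}--\ref{item-Rubin4} reduced to Proposition~\ref{prop-invariants-hpeq}. Your write-up is in fact slightly more explicit than the paper's (you spell out why a highly proximal extension of a minimal space is minimal, and you track the two separate applications of Proposition~\ref{prop-invariants-hpeq}\ref{item-compress-hp-open} in \ref{item-Rubin4}), but the underlying argument is the same.
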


\begin{proof}
The $\Gamma$-action on $\tilde{X}$ is faithful because the $\Gamma$-action on $X$ is faithful. Since $\pi_X: \tilde{X} \to X$ is highly proximal and $X$ is micro-supported, the action of $\Gamma$ on $\tilde{X}$ is micro-supported by Proposition \ref{prop-invariants-hpeq}. This shows \ref{item-Rubin1}. 

Let $Y$ be a compact $\Gamma$-space that is faithful and micro-supported, and let $\pi_Y: \tilde{Y} \to Y$. If $\varphi: \tilde{X} \to \tilde{Y}$ is an isomorphism that is provided by the conclusion of Rubin's theorem, then $\pi_Y \circ \varphi$ is factor map $\tilde{X} \to Y$ that is highly proximal. So statement \ref{item-Rubin2} holds, and \ref{item-Rubin3} and \ref{item-Rubin4} follow from Proposition \ref{prop-invariants-hpeq}.
\end{proof}

\section{Structure theory of tdlc groups}

\subsection{The $\overline{\text{FC}}$-radical}

\begin{defin}
For a locally compact group $G$, we denote by $B(G)$ the characteristic subgroup of $G$ consisting of the elements with a relatively compact conjugacy class. 
\end{defin}

We will invoke the following result \cite[Theorem 8]{Usakov63}.

\begin{thm} \label{thm-usakov}
	If $G$ is compactly generated tdlc group and $G=B(G)$, then $G$ has a compact open normal subgroup $K$ such that $G/K$ is abelian and torsion-free. 
\end{thm}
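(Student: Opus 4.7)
The plan is to produce a compact open normal subgroup of $G$ by refining a compact open subgroup furnished by van Dantzig's theorem, using compact generation together with the hypothesis $G = B(G)$. Fix a compact open subgroup $U \leq G$. Since $U$ is open and $G$ is compactly generated, a compact generating set of $G$ meets only finitely many left $U$-cosets; accordingly one may write $G = \langle F \cup U \rangle$ for some finite subset $F \subseteq G$. It therefore suffices to find a finite-index open subgroup $V \leq U$ that is simultaneously normal in $U$ and invariant under conjugation by each $f \in F$, as such a $V$ will be normal in $G$.

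The crux, and the step in which the hypothesis $G = B(G)$ is essentially used, is to show that for every $f \in F$ the set of conjugates $\{f^n U f^{-n} : n \in \Z\}$ is finite; equivalently, that $\langle f \rangle \cap N_G(U)$ has finite index in $\langle f \rangle$. The input from $G = B(G)$ is that the conjugacy class $f^G$ is relatively compact in $G$; meanwhile $N_G(U)$ is open (it contains the open subgroup $U$), so the discrete coset space $G/N_G(U)$ receives the compact set $f^G$ as a finite subset. This element-level finiteness is to be propagated by a combinatorial argument — relating the orbit of $U$ under $\langle f\rangle$-conjugation to powers of $f$ that normalize $U$ — into the set-level finiteness of $\{f^n U f^{-n}\}$.

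Granted the crux, the intersection
\[
V_0 \;:=\; \bigcap_{f \in F}\,\bigcap_{n \in \Z} f^n U f^{-n}
\]
is a finite intersection of compact open subgroups of $U$, hence a finite-index open subgroup of $U$, and it is normalized by every $f \in F$. A final passage to the $U$-core of $V_0$ — controlled by the fact that $V_0$, having finite index in $U$, admits only finitely many $U$-conjugates, and combined with the finiteness from the crux to preserve $F$-invariance by alternating a bounded number of times between $U$-core and $F$-intersection — yields a compact open subgroup that is both normal in $U$ and $F$-invariant, and hence normal in $G = \langle F \cup U \rangle$.

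The main obstacle is the crux itself: bridging the gap between the compactness of the single conjugacy class $f^G$ and the set-level finiteness of the $\langle f \rangle$-orbit of $U$ in the Chabauty space of compact open subgroups. The finiteness of the image of $f^G$ in the discrete space $G/N_G(U)$ does not a priori imply that $\langle f \rangle$ meets $N_G(U)$ with finite index — the obstruction being that $f^n$ need not be $G$-conjugate to $f$ — and closing this gap is the essential combinatorial content of Ušakov's original proof.
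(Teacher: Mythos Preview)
The paper does not prove this theorem: it is quoted from U\v{s}akov \cite{Usakov63} without argument. So there is no ``paper's own proof'' to compare against; I will assess your proposal on its own merits.

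Your proposal is not a proof. You explicitly acknowledge that the crux --- passing from relative compactness of the conjugacy class $f^G$ to finiteness of the set $\{f^n U f^{-n} : n \in \Z\}$ --- is left open, and you correctly note that the finiteness of the image of $f^G$ in the discrete space $G/N_G(U)$ does not imply that $\langle f \rangle \cap N_G(U)$ has finite index in $\langle f\rangle$. Since you defer this step to ``U\v{s}akov's original proof'', what you have written is a strategy outline, not an argument.

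There is also an unacknowledged error. Your $V_0 = \bigcap_{f \in F}\bigcap_{n \in \Z} f^n U f^{-n}$ is \emph{not} normalized by each $f \in F$ when $|F| > 1$. The inner intersection $W_f := \bigcap_n f^n U f^{-n}$ is indeed $f$-invariant, but conjugating the full $V_0 = \bigcap_{f'\in F} W_{f'}$ by a fixed $f_0 \in F$ sends $W_{f'}$ (for $f' \neq f_0$) to $\bigcap_n f_0 (f')^n U (f')^{-n} f_0^{-1}$, which need not coincide with any $W_{f''}$. To get an $F$-invariant intersection you would have to intersect over the full $\langle F\rangle$-orbit of $U$, and the finiteness of that orbit is strictly stronger than your crux: finite $\langle f\rangle$-orbits for each $f \in F$ individually do not imply a finite $\langle F\rangle$-orbit. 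The proposed remedy of ``alternating a bounded number of times between $U$-core and $F$-intersection'' does not repair this, since each alternation produces a strictly smaller finite-index subgroup and there is no mechanism forcing the descent to stabilise.

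In short, the outline identifies the right shape of the difficulty but neither resolves it nor correctly handles the step that follows it.
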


We will also need the following, see \cite{Moller-BG}.

\begin{thm} \label{thm-b(g)-closed}
	If $G$ is compactly generated tdlc group, then $B(G)$ is a closed subgroup of $G$.
\end{thm}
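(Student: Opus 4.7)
The plan is to describe $B(G)$ geometrically via a Cayley--Abels graph of $G$, and then close it up using Baire category together with Ušakov's theorem (Theorem~\ref{thm-usakov}).

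First, I would fix a compact open subgroup $U$ of $G$ and a compact symmetric generating set $S$ with $USU=S$, and form the Cayley--Abels graph $\Gamma$ with vertex set $G/U$, basepoint $v_0=U$, and edges $\{hU,\,hsU\}$ for $h \in G$ and $s\in S$. Then $\Gamma$ is connected and locally finite, and $G$ acts on $\Gamma$ by isometries with $\mathrm{Stab}_G(v_0)=U$. Because a subset of $G$ is relatively compact if and only if it is contained in finitely many $U$-cosets, the condition $g\in B(G)$ translates to the displacement bound $D(g):=\sup_{w\in V(\Gamma)} d(w,gw)<\infty$. Setting $K_r:=\{g\in G : D(g)\le r\}$ gives $B(G)=\bigcup_{r\ge 0} K_r$. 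Each $K_r$ is closed as the intersection of the clopen conditions $\{g : gw\in B_r(w)\}$, and in fact compact, since $K_r\subseteq\{g : gv_0\in B_r(v_0)\}$ is contained in a finite union of $U$-cosets. Moreover $K_r=K_r^{-1}$ and $K_rK_s\subseteq K_{r+s}$, so $B(G)$ is a $\sigma$-compact subgroup.

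Now set $H:=\overline{B(G)}$, a closed subgroup of $G$, hence itself tdlc. The key reduction is the following: if $B(G)$ contains some compact open subgroup $V$ of $H$, then $B(G)$ is a union of $V$-cosets in $H$, hence open in $H$; being an open subgroup it is also closed in $H$; and by density $B(G)=H$, and we are done. Thus the entire argument comes down to producing a neighborhood of the identity inside $B(G)$ within $H$. To produce such a neighborhood, I would consider for each $r\ge 0$ the closed subgroup $L_r:=\overline{\langle K_r\rangle}\le H$, which is compactly generated by the compact set $K_r$. Using the relations $K_rK_s\subseteq K_{r+s}$, one argues that every element of $L_r$ has relatively compact $L_r$-conjugacy class inside $L_r$, so that $L_r=B(L_r)$. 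Ušakov's theorem then yields a compact open normal subgroup $W_r\triangleleft L_r$, which is automatically contained in $B(G)$. Since the ascending union $\bigcup_r L_r$ is dense in $H$, a Baire-category argument on the Baire space $H$ shows that some $W_r$ must have nonempty interior in $H$, providing the desired compact open subgroup of $H$ sitting inside $B(G)$.

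The main obstacle I anticipate is the verification that $L_r=B(L_r)$. Relative compactness of the $G$-conjugacy class of an element $g\in K_r$ transfers without loss to relative compactness of its $L_r$-conjugacy class, since $L_r$ carries the subspace topology from $G$; but showing the property is preserved under taking limits in $L_r$ (as opposed to merely in $G$) is delicate, because the pointwise displacement bound on $V(\Gamma)$ does not a priori upgrade to a uniform bound. This is precisely the step where the hypothesis of Ušakov's theorem and the Cayley--Abels geometry interact subtly, and controlling this interplay is the heart of the proof.
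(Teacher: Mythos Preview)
The paper does not prove this theorem; it is quoted from M\"oller without argument, so there is nothing in the paper to compare your proposal against. Your geometric setup---identifying $B(G)$ with the bounded-displacement elements on a Cayley--Abels graph, the compactness and conjugation-invariance of each $K_r$, the sub-additivity $K_rK_s\subseteq K_{r+s}$, and the reduction to producing a compact open subgroup of $H=\overline{B(G)}$ inside $B(G)$---is correct and is indeed how such arguments begin.

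Both of the steps you single out, however, are genuine gaps rather than technicalities. The claim $L_r=B(L_r)$ is circular: from $\langle K_r\rangle\subseteq B(L_r)$ you only obtain that $B(L_r)$ is dense in $L_r$, and upgrading density to equality is precisely the theorem you are proving, applied to the compactly generated group $L_r$. Even granting that step, the assertion that the resulting $W_r\trianglelefteq L_r$ is ``automatically contained in $B(G)$'' is unjustified: elements of $W_r$ have relatively compact $L_r$-conjugacy classes, but since normality is not transitive the $G$-conjugates $gW_rg^{-1}$ may form an unbounded family of compact open normal subgroups of $L_r$, so there is no reason for the $G$-conjugacy class of $w\in W_r$ to be relatively compact. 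Finally, the Baire step fails as written: you only know that $\bigcup_r L_r$ is \emph{dense} in $H$, not that it equals $H$, and Baire category says nothing about a countable dense union of closed sets (compare $\mathbb Q=\bigcup_n\{q_n\}$ inside $\mathbb R$).
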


\subsection{The quasi-center}\label{sec:QZ}

Given a tdlc group $G$, we denote by $\QZ(G)$ the set of elements whose centralizer is open. This is a (possibly non-closed) topologically characteristic subgroup of $G$ called the \textbf{quasi-center}. It contains all discrete normal subgroups of $G$. Moreover, if $H$ is an open subgroup of $G$, we have $\QZ(H)\leq \QZ(G)$.

Given a closed subgroup $H$ of a tdlc group $G$, we denote by $C_G(H)$ the centralizer of $H$ in $G$. We also denote by $\QC_G(H)$ the subgroup of $G$ consisting of those $g \in G$ such that $C_H(g)$ is relatively open in $H$. In particular, if $H$ is open we have $\QC_H(H) = \QZ(H)$ and $\QC_G(H) = \QZ(G)$.

\subsection{The upper structure of compactly generated tdlc groups}

We say that a locally compact group $G$ is \textbf{just-non-compact} if $G$ is not compact and every proper quotient of $G$ by a closed normal subgroup is compact. Just-non-compact groups arise naturally as quotient groups of compactly generated non-compact tdlc groups. 

\begin{prop}\label{prop:JNC-quotients}
	Let $G$ be a non-compact, compactly generated tdlc group. Then $G$ has a closed normal subgroup $N$ such that $G/N$ is just-non-compact. If in addition $G$ does not have any non-trivial finite quotient, then the quotient $G/N$ is compactly generated and topologically simple. 
\end{prop}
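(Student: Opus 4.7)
The plan is to construct $N$ as a maximal element of the family $\mathcal{F}$ of closed normal subgroups $H \leq G$ for which $G/H$ is non-compact, ordered by inclusion. Since $G$ is non-compact, the trivial subgroup lies in $\mathcal{F}$, so the family is non-empty. For an ascending chain $(N_\alpha)$ in $\mathcal{F}$, the candidate upper bound is $N := \overline{\bigcup_\alpha N_\alpha}$, which is again a closed normal subgroup of $G$. The crux of the existence part is to verify that $N \in \mathcal{F}$, i.e.\ that $G/N$ remains non-compact; granting this, Zorn's lemma produces a maximal element $N \in \mathcal{F}$, and maximality directly encodes just-non-compactness of $G/N$: every closed normal subgroup of $G$ strictly containing $N$ lies outside $\mathcal{F}$ and thus has compact quotient, meaning every proper quotient of $G/N$ is compact.

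To verify the chain-closure property, I would exploit compact generation of $G$ via a Cayley--Abels graph $\Gamma$, that is, a connected locally finite graph on which $G$ acts continuously and vertex-transitively with compact open vertex stabilizers. For any closed normal subgroup $H \leq G$, the compactness of $G/H$ is equivalent to the finiteness of the quotient graph $H \backslash \Gamma$. The key observation is that, since vertex stabilizers are open in $G$, the closed subgroup $N$ and its dense subgroup $\bigcup_\alpha N_\alpha$ have the same orbits on the vertex set $V(\Gamma)$: if $n \in N$ sends $v$ to $w$, then the open coset $n \cdot \Stab_G(v)$ meets $\bigcup_\alpha N_\alpha$ by density, producing an element of some $N_\alpha$ sending $v$ to $w$. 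Combining this with connectedness and local finiteness of $\Gamma$, one argues that finiteness of $N \backslash \Gamma$ forces $N_\alpha \backslash \Gamma$ to be finite for some $\alpha$, contradicting $N_\alpha \in \mathcal{F}$. This propagation step, where compact generation of $G$ is used essentially, is the main obstacle in the proof.

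For the second assertion, assume that $G$ admits no non-trivial finite quotient. Then $G/N$ is compactly generated as a continuous image of $G$, and it remains to establish topological simplicity. Let $M$ be a closed normal subgroup of $G/N$ with $M \neq G/N$. By just-non-compactness of $G/N$, the quotient $(G/N)/M$ is compact; being also a continuous quotient of $G$, it is a compactly generated compact tdlc group, i.e.\ a topologically finitely generated profinite group. Any finite quotient of $(G/N)/M$ would be a finite quotient of $G$, hence trivial by hypothesis; but a non-trivial profinite group is residually finite and admits non-trivial finite quotients. Therefore $(G/N)/M$ must be trivial, i.e.\ $M = G/N$, contradicting our choice of $M$. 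Hence $M = \{1\}$, and $G/N$ is topologically simple.
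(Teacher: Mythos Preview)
Your argument for the first assertion is correct and is essentially the standard Zorn's lemma argument (this is what the paper cites from \cite[Proposition~5.2]{CaMo-decomp}). The key point you identify --- that $N$ and $\bigcup_\alpha N_\alpha$ have the same vertex orbits on a Cayley--Abels graph because vertex stabilizers are open, and that finiteness of the quotient graph then propagates down to some $N_{\alpha_0}$ by a connectedness/local-finiteness argument --- is exactly right.

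In the second part you have a logical slip: you assume $M \neq G/N$ and try to conclude $M = \{1\}$, but just-non-compactness tells you nothing about $(G/N)/M$ when $M = \{1\}$ (that quotient is $G/N$, which is non-compact). The correct setup is the reverse: assume $M$ is a \emph{non-trivial} closed normal subgroup of $G/N$; then $(G/N)/M$ is a compact tdlc quotient of $G$, hence profinite, and since a non-trivial profinite group always admits a non-trivial finite quotient while $G$ does not, you get $(G/N)/M = \{1\}$, i.e.\ $M = G/N$. With this correction your argument is valid.

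It is worth noting that your route to the second assertion is more direct than the paper's: the paper invokes the full structure theorem for compactly generated just-non-compact tdlc groups from \cite[Theorem~E]{CaMo-decomp}, whereas your argument uses only the elementary fact that non-trivial profinite groups have non-trivial finite quotients. Under the strong hypothesis that $G$ has no non-trivial finite quotient at all, your shortcut suffices and avoids the heavier structural input.
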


\begin{proof}
The first assertion is \cite[Proposition~5.2]{CaMo-decomp}. The second assertion follows from the first, together with the description of the structure of compactly generated just-non-compact tdlc groups from \cite[Theorem~E]{CaMo-decomp}. 	
\end{proof}

Note however that the just-non-compact quotient group afforded by Proposition~\ref{prop:JNC-quotients} can in general be discrete. In fact, every compactly generated tdlc group with an infinite discrete quotient admits a just-non-compact discrete quotient (because every finitely generated infinite group admits a just-infinite quotient). In order to deal with discrete quotients, we need additional terminology.

Given a locally compact group $G$, the \textbf{discrete residual} of $G$, denoted by $\Res(G)$, is defined as the intersection of all open normal subgroups of $G$. Notice that $\Res(G)$ is a closed topologically characteristic subgroup of $G$. We say that $G$ is \textbf{residually discrete} if $\Res(G)=1$. The following result is due to G.\ Willis \cite{Willis-nilpotent}.

\begin{prop}\label{prop:VirtuallyNilp}
Let $G$ be a compactly generated tdlc group. If $G$ is virtually nilpotent, then $G$ has a basis of identity neighbourhoods consisting of compact open normal subgroups. In particular $G$ is residually discrete and compact-by-discrete. 
\end{prop}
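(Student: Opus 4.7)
Plan:

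The plan is to reduce to the case where $G$ is itself nilpotent, then to induct on the nilpotency class, invoking the structure of compactly generated abelian tdlc groups for the base case and Usakov's theorem (Theorem~\ref{thm-usakov}) for the induction step.

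I would first reduce to $G$ nilpotent. If $H\leq G$ is a nilpotent subgroup of finite index, its closure $\bar H$ is still nilpotent (nilpotency passes to closure in topological groups) and of finite index, and a closed finite-index subgroup of a tdlc group is automatically open, since the finite quotient is Hausdorff and hence discrete. Given a basis of compact open normal subgroups $\{K_i\}$ of $\bar H$, the $G$-cores $\bigcap_{g \in G/\bar H} g K_i g^{-1}$ are finite intersections of compact open subgroups, hence compact open and $G$-normal; they form the desired basis in $G$. So we may assume $G$ is nilpotent.

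Now I would induct on the nilpotency class $c$. The case $c=1$ follows from the structure theorem for compactly generated abelian tdlc groups: $G \cong \mathbb{Z}^n \times C$ with $C$ profinite, and the subgroups $\{0\}^n \times C_j$, with $\{C_j\}$ a basis of open subgroups of $C$, form the required basis. For $c\geq 2$, the closed characteristic subgroup $Z := Z(G)$ yields a quotient $G/Z$ that is compactly generated, tdlc and nilpotent of class $c-1$; by induction $G/Z$ has a basis of compact open normal subgroups $\{\bar K_j\}$. To lift this to $G$, I would simultaneously exploit a basis $\{V_m\}$ of compact open subgroups of the tdlc abelian group $Z$ (existing by van Dantzig's theorem applied to $Z$); these are automatically $G$-normal by centrality. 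One then passes to the quotient $G/V_m$, in which the image of the pullback $\pi^{-1}(\bar K_j)$ has relatively compact conjugacy classes (the center modulo $V_m$ becomes discrete, and the cosets modulo $Z$ sit inside the compact subgroup $\bar K_j$), so that Theorem~\ref{thm-usakov} applies and yields a compact open normal subgroup of $G/V_m$, whose preimage in $G$ meets $\pi^{-1}(\bar K_j)$ in a compact open normal subgroup of $G$. Letting both $\bar K_j$ and $V_m$ shrink produces the required basis.

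The main obstacle is precisely this inductive step: neither the compact open normal subgroups coming from $G/Z$ (which pull back to open normal subgroups that are only compact modulo the possibly non-compact $Z$) nor the central compact open subgroups $V_m$ (which are normal but generally not open in $G$) directly furnish a compact open normal subgroup of $G$. Reconciling the two via Theorem~\ref{thm-usakov}, applied in the quotient $G/V_m$ where the hypothesis $G = B(G)$ can be verified, is the heart of the argument; once a basis of compact open normal subgroups is in hand, residual discreteness (as the intersection of this basis is trivial) and the compact-by-discrete conclusion follow immediately.
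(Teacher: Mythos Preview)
The paper does not supply its own proof of this proposition; it simply attributes the result to Willis \cite{Willis-nilpotent}. So there is nothing to compare against beyond assessing whether your sketch stands on its own.

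Your reduction to the nilpotent case and the abelian base case are fine. The gap is in the inductive step, specifically in the invocation of Theorem~\ref{thm-usakov}. You write that in $G/V_m$ ``the image of the pullback $\pi^{-1}(\bar K_j)$ has relatively compact conjugacy classes \dots\ so that Theorem~\ref{thm-usakov} applies and yields a compact open normal subgroup of $G/V_m$.'' But Usakov's theorem requires the \emph{whole} group to coincide with its own $B(\cdot)$, not merely an open normal subgroup. And even your partial claim is not justified: for $x\in\pi^{-1}(\bar K_j)$ and $g\in G$, the commutator $[g,x]$ depends only on the images $\bar g,\bar x\in G/Z$, but $\bar g$ ranges over all of $G/Z$, not just over the compact set $\bar K_j$. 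The $G/V_m$-conjugacy class of $xV_m$ is therefore contained in $\pi^{-1}(\bar K_j)/V_m$, which is an extension of the discrete group $Z/V_m$ by $\bar K_j$ and hence is not compact unless $Z$ itself is compact. (Concretely, already for the discrete Heisenberg group one has $G\neq B(G)$.)

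If instead you intend to apply Usakov to the open subgroup $H:=\pi^{-1}(\bar K_j)/V_m$ itself, two further issues arise. First, compact generation of $H$ amounts to $Z(G)$ being compactly generated, which you have not established and which is essentially as hard as the proposition. Second, even granting a compact open normal subgroup $L$ of $H$, you only get $L$ normal in $\pi^{-1}(\bar K_j)$; since $G/\pi^{-1}(\bar K_j)\cong (G/Z)/\bar K_j$ is a discrete finitely generated nilpotent group that is typically infinite, the $G$-core of $L$ need not be open. Your final sentence (``whose preimage in $G$ meets $\pi^{-1}(\bar K_j)$ in a compact open normal subgroup of $G$'') papers over exactly this point. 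One elementary fact that does hold without Usakov or compact generation is that a tdlc group which is central-by-compact has arbitrarily small compact open normal subgroups (any compact open subgroup has only finitely many conjugates); this gives such subgroups inside $\pi^{-1}(\bar K_j)$, but the passage to $G$-normality is the real obstacle your sketch does not resolve.
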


The goal for the rest of this section is to establish Theorem~\ref{thm:MonolithicQuotient}, which ensures the existence of a specific kind of quotient groups for compactly generated tdlc groups whose discrete residual is also compactly generated. This requires several intermediate results that are of independent interest.

\begin{prop}[{\cite[Cor.~4.1]{CaMo-decomp}}]\label{prop:SIN}
A compactly generated tdlc group is residually discrete if and only if its compact open normal subgroups form a basis of identity neighbourhoods.
\end{prop}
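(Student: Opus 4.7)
The plan is to treat the two implications separately. The forward direction is essentially formal: if compact open normal subgroups (COINs) form a neighbourhood basis of $1$ in $G$, then their intersection is $\{1\}$, and since every COIN is in particular open and normal, $\Res(G)$ is contained in each of them, whence $\Res(G) = \{1\}$.

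The converse is the substantive direction. Assuming $G$ compactly generated, tdlc, and residually discrete, I would first establish the existence of at least one COIN. Fix a compact open subgroup $V \leq G$ (automatically profinite). The family $\{V \cap N : N \triangleleft G \text{ open}\}$ consists of open subgroups of $V$ whose intersection is trivial because $\Res(G) = \{1\}$; by compactness of $V$, this family is a neighbourhood basis of $1$ in $V$, providing many small open subgroups inside $V$. The work is to promote such an intersection to a subgroup normal in all of $G$. Let $S$ be a compact symmetric generating set and form the iteration $V_0 := V \cap N$ and $V_{n+1} := \bigcap_{s \in S} s V_n s^{-1}$. Each $V_n$ is a finite intersection of compact open subgroups, since the conjugation map $s \mapsto s V_n s^{-1}$ is locally constant with $V_n$ open and $S$ compact, hence takes only finitely many values on $S$; thus each $V_n$ is itself compact open. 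If this sequence stabilises at some $W$, then $W$ is $S$-invariant, hence $G$-invariant, yielding the desired COIN.

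Once a single COIN $K$ is in hand, upgrading to a full identity neighbourhood basis is straightforward. For every open normal $N \triangleleft G$, the subgroup $K \cap N$ is compact (closed in the compact $K$), open in $G$ (an intersection of two opens), and normal in $G$, hence itself a COIN contained in $K$. Since $\bigcap_N (K \cap N) = K \cap \Res(G) = \{1\}$ and $K$ is compact, these subgroups form a neighbourhood basis of $1$ in $K$ by a standard compactness/finite-intersection-property argument; as $K$ is open in $G$, they also form a basis of identity neighbourhoods in $G$.

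The main obstacle is the stabilisation of the iteration $V_n$ at an open limit, rather than at a merely closed subgroup. Without residual discreteness the sequence could strictly decrease forever and intersect to a non-open subgroup, producing no COIN. The termination argument is the technical heart of \cite[Cor.~4.1]{CaMo-decomp} and relies on a careful combined use of compact generation and triviality of the discrete residual, via the structure theory for residually discrete compactly generated tdlc groups developed earlier in that paper.
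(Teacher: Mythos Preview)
The paper itself offers no proof: the proposition is stated with a bare citation to \cite[Cor.~4.1]{CaMo-decomp}. So there is nothing in the paper to compare your sketch against beyond that citation.

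Regarding the sketch itself: the implication ``basis of compact open normal subgroups $\Rightarrow$ residually discrete'' is routine and correct, and your bootstrap from a single compact open normal subgroup $K$ to a full basis via the family $\{K\cap N : N \trianglelefteq G \text{ open}\}$ is also correct. All the content lies in producing one compact open normal subgroup. Your iteration $V_{n+1}=\bigcap_{s\in S}sV_ns^{-1}$ is a legitimate reformulation --- its limit is the normal core $\bigcap_{g\in G}gV_0g^{-1}$, and the chain stabilises precisely when that core is open --- but you then explicitly defer the openness of the core to the cited reference. Since that step is the entire substance of the proposition, your write-up amounts to an accurate reduction to the essential difficulty followed by the same citation the paper already makes; it does not constitute an independent argument. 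In that sense your proposal and the paper's treatment are equivalent: both ultimately rest on \cite{CaMo-decomp} for the nontrivial direction.
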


Recall that a locally compact group $G$ is \textbf{regionally elliptic} if every compact subset of $G$ generates a compact subgroup of $G$ (this property is sometimes also called \enquote{locally elliptic}, or \enquote{topologically locally finite}. We refer to \cite{CRW_dense} for the motivation for the present choice of terminology). The \textbf{regionally elliptic radical} (RE-radical) of $G$ is the largest closed normal subgroup of $G$ that is regionally elliptic. 

\begin{prop}\label{prop:Res(G)-cptly-gen}
Let $G$ be a tdlc group.  If  $G$ and $\Res(G)$ are both compactly generated, then the following assertions hold, where $R_0 = G$ and for all $n \geq 0$, the group $R_{n+1}$ is defined as $R_{n+1} = \Res(R_n)$. 
\begin{enumerate}[label=(\roman*)]
\item $G/R_1$ is compact-by-discrete.
\item $R_1/R_2$ is compact. In particular $G/R_2$ is compact-by-discrete.
\item $R_n = R_2$ for all $n \geq 2$. 
\item $R_n$ is compactly generated for all $n$. 
\end{enumerate}
\end{prop}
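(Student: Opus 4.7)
Since (iii) follows from (i) together with the fact that $R_2$ is a closed cocompact normal subgroup of the compactly generated $R_1$, and (ii) would then give $R_n = R_2$ for $n \geq 2$, the real work reduces to establishing (i) and (ii).

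For (i), the quotient $R_1/R_2 = R_1/\Res(R_1)$ is compactly generated (as a quotient of $R_1$) and residually discrete by construction, so Proposition~\ref{prop:SIN} yields that its compact open normal subgroups form a basis of identity neighborhoods. Assume for contradiction that $R_1/R_2$ is non-compact. Proposition~\ref{prop:JNC-quotients} applied to $R_1/R_2$ then produces a closed normal subgroup $M$ such that $J := (R_1/R_2)/M$ is just-non-compact. For every compact open normal $U \trianglelefteq R_1/R_2$, the image $UM/M$ is a compact open normal subgroup of $J$, hence, by just-non-compactness, either trivial or equal to $J$. The latter case makes $J \cong U/(U \cap M)$ a quotient of the compact $U$, hence compact, contradicting just-non-compactness. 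Therefore $U \leq M$ for every compact open normal $U$, so $M$ contains a basis of identity neighborhoods and is open in $R_1/R_2$; consequently $J$ is a finitely generated, just-infinite discrete group, and the preimage $\tilde M \subsetneq R_1$ is a proper $R_1$-normal open subgroup with $R_1/\tilde M \cong J$. The final step is to rule out the existence of such $\tilde M$ using the hypothesis $R_1 = \Res(G)$: by exploiting the $G$-action on $R_1$ by conjugation together with the SIN structure of $G/R_1$ (which is itself compactly generated and residually discrete, hence SIN by Proposition~\ref{prop:SIN}), one aims to construct a proper $G$-normal refinement of $\tilde M$ whose combination with an open normal subgroup $V \trianglelefteq G$ satisfying $V/R_1$ compact yields a proper closed normal subgroup of $G$ strictly contained in $R_1$ with residually discrete $G$-quotient, contradicting the minimality of $R_1 = \Res(G)$. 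This is the main technical obstacle.

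For (ii), iterating (i) with $R_{n-1}$ in place of $G$ (the compact generation hypotheses being maintained at each step thanks to (iii)) gives $R_n/R_{n+1}$ compact for all $n \geq 1$. To upgrade this to $R_n = R_{n+1}$ for $n \geq 2$, suppose $V \trianglelefteq R_n$ is a proper open normal subgroup and consider its $R_{n-1}$-normal closure $V^{R_{n-1}}$, which is open in $R_n$, normal in $R_{n-1}$, and satisfies $R_n/V^{R_{n-1}}$ discrete and finitely generated. The compact group $R_{n-1}/R_n$ acts continuously on this discrete quotient, and since compact subsets of discrete spaces are finite, all orbits are finite; finite generation then forces the action to factor through a finite quotient of $R_{n-1}/R_n$. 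A straightforward analysis shows that the kernel contains $R_n$, so $R_n/V^{R_{n-1}}$ is central in a finite-index open subgroup of $R_{n-1}/V^{R_{n-1}}$; Theorem~\ref{thm-usakov} applied to this subgroup (every element having relatively compact conjugacy class by virtue of the central cocompact discrete quotient) yields a compact open normal subgroup, inside which the discrete $R_n/V^{R_{n-1}}$ must be finite. Combining finiteness with the definition $R_n = \Res(R_{n-1})$ via the same kind of $R_{n-1}$-conjugation argument as at the end of (i) then forces $V = R_n$, completing (ii).
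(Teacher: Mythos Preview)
Your argument for (i) has a genuine gap that you yourself flag: after producing the proper open $R_1$-normal subgroup $\tilde M$ with $R_1/\tilde M$ just-infinite, you only sketch a plan (``one aims to construct a proper $G$-normal refinement\dots'') and call it ``the main technical obstacle.'' The difficulty is real: $R_1 = \Res(G)$ constrains the open normal subgroups of $G$, not those of $R_1$, and it is not clear how to promote $\tilde M$ to something $G$-normal without losing properness or openness. The paper avoids this entirely by a different mechanism. It takes $N_2 \leq R_1$ so that $N_2/R_2$ is the LE-radical of the SIN group $R_1/R_2$; this makes $N_2$ \emph{characteristic} in $R_1$, hence automatically $G$-normal, and $R_1/N_2$ discrete. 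Taking $N_1$ open normal in $G$ with $N_1/R_1$ compact (so $N_1$ is compactly generated, since $R_1$ is), the quotient $N_1/N_2$ is compactly generated and discrete-by-compact, hence compact-by-discrete by \cite[Lemma~4.4]{BCGM-amen}. Thus $G/N_2$ is compact-by-discrete, forcing $\Res(G)/N_2 = R_1/N_2$ to be compact. Then $N_2$ is compactly generated, so the locally elliptic $N_2/R_2$ is compact, giving (i). The key idea you are missing is to work with a \emph{characteristic} subgroup of $R_1$ (the LE-radical above $R_2$) rather than an arbitrary open normal one.

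Your argument for (ii) inherits the gap from (i) (you invoke ``the same kind of $R_{n-1}$-conjugation argument as at the end of (i)''), and the passage through Theorem~\ref{thm-usakov} is muddled: $R_n$ does not act trivially on $R_n/V^{R_{n-1}}$, so there is no well-defined action of $R_{n-1}/R_n$ there. The paper's route is both shorter and cleaner: repeating the LE-radical construction one level down shows $R_1/R_3$ is compact, hence profinite, hence $\Res(R_1/R_3)=1$; since $R_2/R_3 \leq \Res(R_1/R_3)$, this gives $R_2 = R_3$ directly, and (ii) follows.
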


\begin{proof}
By hypothesis, the groups $R_0$ and $R_1$ are compactly generated. Moreover the group $R_0/R_1$ is residually discrete, so that by Proposition~\ref{prop:SIN}, it is compact-by-discrete. Hence there exists an open normal subgroup $N_1$ of $G$ containing $R_1$ and such that $N_1/R_1$ is compact. 

Since $R_1$ is compactly generated, the group $N_1$ is also compactly generated. Moreover the quotient  $R_1/R_2$ is compactly generated and residually discrete. It then follows from Proposition~\ref{prop:SIN} that $R_1/R_2$ has a compact open normal subgroup. In particular, there exists a closed normal subgroup $N_2$ of $G$ with $R_2 \leq N_2 \leq R_1$ such that $N_2/R_2$ is the RE-radical of $R_1/R_2$. Moreover $N_2/R_2$ is open in $R_1/R_2$. Hence $R_1/N_2$ is discrete, so that the quotient group $N_1/N_2$ is compactly generated and discrete-by-compact. Therefore $N_1/N_2$ is compact-by-discrete (see \cite[Lemma 4.4]{BCGM-amen}). It follows that $G/N_2$ is compact-by-discrete. In particular $\Res(G/N_2)$ is compact. Since $N_2 \leq \Res(G) = R_1$, we have $\Res(G/N_2) = \Res(G)/N_2$. Therefore $R_1/N_2$ is compact. Since $R_1$ is compactly generated, we infer that $N_2$ is compactly generated as well. It follows that the regionally elliptic group $N_2/R_2$ is compactly generated, hence compact. Therefore $R_1/R_2$ is compact, and $R_2$ is also compactly generated. 

It remains to show that $R_3 = R_2$. We know that $R_2/R_3$ is compactly generated and residually discrete. Repeating the arguments of the previous paragraph, we construct a closed normal subgroup $N_3$ of $G$ with $R_3 \leq N_3 \leq R_2$ and such that $N_3/R_3$ is the RE-radical of $R_2/R_3$. Since $R_2/N_3$ is discrete and since $N_1/R_1$ and $R_1/R_2$ are both compact, we infer that $N_1/N_3$ is compactly generated and discrete-by-compact. Therefore $N_1/N_3$ is compact-by-discrete. As above, this implies that $R_1/N_3$ is compact, so that $R_2/N_3$ is also compact, and $N_3$ is compactly generated. This implies in turn that the regionally elliptic group $N_3/R_3$ is compactly generated, hence compact. This finally implies that $R_2/R_3$ is compact. Therefore $R_1/R_3$ is compact, hence profinite, so that $\Res(R_1/R_3) = 1$. Since $R_3 \leq R_2 = \Res(R_1)$, we deduce that $\Res(R_1)/R_3 = 1$. In other words, we have $R_2 = R_3$, as required. 
\end{proof}

A special situation where the hypotheses of Proposition~\ref{prop:Res(G)-cptly-gen} are satisfied is described in the following. 

\begin{cor}\label{cor:SmallOpenNorm}
Let $G$ be a tdlc group all of whose open normal subgroups are compactly generated. Then $G$ and $\Res(G)$ are compactly generated; in particular the conclusions of Proposition~\ref{prop:Res(G)-cptly-gen} hold. 
\end{cor}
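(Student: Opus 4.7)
The plan is short because almost everything is engineered by the preceding proposition. First, note that $G$ is itself an open normal subgroup of $G$, so the hypothesis immediately gives that $G$ is compactly generated. Hence the quotient $G/\Res(G)$ is a compactly generated residually discrete tdlc group, and Proposition~\ref{prop:SIN} applies to it: its compact open normal subgroups form a basis of identity neighbourhoods. Pick any compact open normal subgroup of $G/\Res(G)$ and let $M$ denote its preimage in $G$. Then $M$ is an open normal subgroup of $G$ containing $\Res(G)$, and the quotient $M/\Res(G)$ is compact.

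By the standing hypothesis, $M$ is compactly generated. Thus $\Res(G)$ is a closed normal subgroup of the compactly generated group $M$ with compact quotient, so it is itself compactly generated. (This is a standard fact: if $S$ is a symmetric compact generating set of $M$ containing $1$, and $C$ is a compact subset of $M$ with $M=C\cdot\Res(G)$ and $1\in C$, then the compact set $T:= (C^{-1}SC)\cap\Res(G)$ generates $\Res(G)$, as one sees by writing any $r\in\Res(G)$ as a product $s_1\cdots s_n$ in $S$ and inductively shifting the ``$C$-part'' to the right.) Consequently both $G$ and $\Res(G)$ are compactly generated, and the hypotheses of Proposition~\ref{prop:Res(G)-cptly-gen} are satisfied, which yields the final assertion.

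There is no real obstacle here; the only ingredient beyond Propositions~\ref{prop:SIN} and~\ref{prop:Res(G)-cptly-gen} is the classical lemma that a closed cocompact subgroup of a compactly generated locally compact group is compactly generated, which is a routine exercise from the definitions.
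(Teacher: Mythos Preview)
Your proof is correct and follows exactly the same approach as the paper's: observe that $G$ is open normal in itself, use Proposition~\ref{prop:SIN} to find an open normal subgroup $N$ (your $M$) containing $\Res(G)$ with compact quotient, and conclude that $\Res(G)$ is compactly generated as a cocompact closed subgroup of the compactly generated group $N$. The paper states this last step without justification, whereas you spell out the standard generating-set argument; otherwise the two proofs are identical.
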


\begin{proof}
Clearly $G$ is compactly generated by hypothesis. Since $G/\Res(G)$ is residually discrete, Proposition~\ref{prop:SIN} ensures that there exists an open normal subgroup $N$ of $G$ containing $\Res(G)$ and such that $N/\Res(G)$ is compact. In particular $\Res(G)$ is compactly generated, since $N$ has this property by hypothesis. 
\end{proof}

We shall prove the following result, which is a slight strengthening of \cite[Proposition~II.1]{CaMo-decomp}. A normal subgroup $N$ of $G$ is \textbf{maximal} if $N$ is proper and every normal subgroup $M$ of $G$ containing $N$ is equal to $N$ or $G$; and $N$ is \textbf{minimal} if $N$ is non-trivial and every normal subgroup $M$ of $G$ contained in $N$ is equal to $1$ or $N$. 

\begin{prop}\label{prop:UpperStr}
Let $G$ be a non-trivial compactly generated tdlc group with $\Res(G) = G$. Let $\mathbf{Max}$ (resp. $\mathbf{Min}$) be the collection of maximal (minimal) closed normal subgroups of $G$. Assume that  $\bigcap \mathbf{Max}= 1$. Then:
\begin{enumerate}[label=(\roman*)]
\item\label{it:Upper1} $\mathbf{Min}$ and $\mathbf{Max}$ are finite and non-empty. 
\item\label{it:Upper2} The assignment $N \mapsto C_G(N)$ establishes a bijective correspondence from $\mathbf{Min}$ to $\mathbf{Max}$. Moreover $N$ is the unique element of $\mathbf{Min}$ that is not contained in $C_G(N)$. 
\item\label{it:Upper3} The product of all elements of  $\mathbf{Min}$ is dense in $G$.
\item\label{it:Upper4} Every element of $\mathbf{Min}$ is a non-discrete compactly generated topologically simple tdlc group. 
\end{enumerate}
\end{prop}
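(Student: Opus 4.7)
First I would establish that for every $M \in \mathbf{Max}$, the quotient $G/M$ is a non-discrete, compactly generated, topologically simple tdlc group. Topological simplicity is immediate from the maximality of $M$, and compact generation is inherited from $G$. Non-discreteness uses $\Res(G) = G$ in an essential way: if $G/M$ were discrete, then $M$ would be open and proper, forcing $\Res(G) \leq M \neq G$, contradicting $\Res(G) = G$. This also makes $\mathbf{Max}$ non-empty, since $\bigcap \mathbf{Max} = 1 \neq G$.

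The heart of the proof is to exhibit a \emph{finite} subfamily $\{M_1, \ldots, M_n\} \subseteq \mathbf{Max}$ that is minimal with respect to $\bigcap_i M_i = 1$, and then to analyze $S_i := \bigcap_{j \neq i} M_j$. By minimality each $S_i$ is non-trivial, while $S_i \cap M_i = \bigcap_j M_j = 1$. The commutator relation $[S_i, M_i] \leq S_i \cap M_i = 1$ then yields $S_i \leq C_G(M_i)$. Maximality of $M_i$ ensures $\overline{S_i M_i} = G$, so the restriction of the quotient map $G \to G/M_i$ to $S_i$ has dense image and trivial kernel. Using an open-mapping argument for compactly generated tdlc groups, I would upgrade this to a topological isomorphism $S_i \cong G/M_i$. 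Via this isomorphism, each $S_i$ is minimal as a closed normal subgroup of $G$ (since $G/M_i$ is topologically simple) and inherits all the properties from Step 1, establishing part (iv) for these subgroups.

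Next I would verify $\mathbf{Max} = \{M_1, \ldots, M_n\}$ and $\mathbf{Min} = \{S_1, \ldots, S_n\}$. For any $M \in \mathbf{Max}$ distinct from all the $M_i$, $M \cap S_i$ is a closed normal subgroup of $G$ properly contained in $S_i$, so $M \cap S_i = 1$ by minimality of $S_i$; combined with $\overline{M \cdot S_i} = G$, this identifies $M$ with $C_G(S_i)$ for each $i$, which is impossible for $n \geq 2$. A dual argument shows that any minimal $N$ must fail to lie in some $M_i$, forcing $N \cap M_i = 1$ and $N \leq S_i$, hence $N = S_i$ by minimality. The correspondence in (ii) then reads $S_i \mapsto C_G(S_i) = M_i$, since $C_G(S_i)$ contains the pairwise centralizing subgroups $S_j$ for $j \neq i$, and the identification with $M_i$ follows from the structure analysis. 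The uniqueness of $S_i$ as the element of $\mathbf{Min}$ not contained in $C_G(S_i)$ uses $Z(S_i) = 1$, which holds because a compactly generated non-discrete topologically simple tdlc group cannot be abelian. Finally, density of $\prod_{N \in \mathbf{Min}} N$ in $G$ is immediate, since this is a normal subgroup not contained in any $M_j$ (as $S_j \not\leq M_j$), so its closure equals $G$ by maximality.

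The main obstacle I anticipate is producing a finite subfamily of $\mathbf{Max}$ with trivial intersection. This is where compact generation of $G$ enters in an essential way: the quotient maps $\pi_M$ give a continuous injection of $G$ into the product $\prod_{M \in \mathbf{Max}} G/M$ with dense projections onto each factor. The claim amounts to saying that such an injection forces $\mathbf{Max}$ to be finite, which follows from the fact that a compactly generated tdlc group cannot have dense projections onto infinitely many non-trivial topologically simple tdlc groups simultaneously. A secondary technical point is ensuring that the dense injection $S_i \hookrightarrow G/M_i$ is actually a topological isomorphism, not merely a dense embedding, which requires a careful use of the isomorphism theorems for tdlc groups combined with compact generation.
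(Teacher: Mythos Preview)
Your overall strategy---extracting a finite minimal subfamily $\{M_1,\dots,M_n\}\subseteq\mathbf{Max}$ with trivial intersection and analysing $S_i=\bigcap_{j\neq i}M_j$---is precisely the argument behind \cite[Proposition~II.1]{CaMo-decomp}, which the paper simply invokes as a black box for (i), (iii), and the first half of (ii). So your approach and the paper's diverge mainly in that the paper outsources this part; the finiteness obstacle you flag is exactly what that cited result handles.

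The genuine gap is in your treatment of (iv). You assert that the dense injection $S_i\hookrightarrow G/M_i$ can be upgraded to a topological isomorphism by an ``open-mapping argument''. This is not justified: open mapping requires surjectivity, whereas you only know $\overline{S_iM_i}=G$, not $S_iM_i=G$; a closed normal subgroup can sit densely but properly inside a topologically simple quotient, and compact generation of the target does not rescue you. The paper is explicit about this: it records only that $N\in\mathbf{Min}$ embeds as a \emph{dense} normal subgroup of $G/C_G(N)$, and then establishes compact generation of $N$ by a separate argument using \cite[Proposition~4.1(ii)]{CRW-part2} (finitely many conjugates of a compact relatively open piece of $N$ suffice to generate modulo a compact open subgroup), and topological simplicity of $N$ via \cite[Corollary~D]{CaMo-decomp} together with the observation that two commuting dense subgroups of a Hausdorff group force it to be abelian. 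Without these two ingredients, minimality of $S_i$ is unproven, and your downstream identifications $\mathbf{Min}=\{S_i\}$, $\mathbf{Max}=\{M_i\}$, $C_G(S_i)=M_i$ all rest on it. (A related loose end: your claim that $M\cap S_i\lneq S_i$ for a hypothetical extra $M\in\mathbf{Max}$ presupposes $S_i\not\leq M$, which you have not excluded at that stage.)
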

\begin{proof}
The hypotheses imply that $\mathbf{Max}$ is non-empty. Moreover, for every $M \in \mathbf{Max}$, the quotient $G/M$ is topologically simple and non-discrete since $G = \Res(G)$. 

We claim that $G$ has a trivial center. Indeed, if $z$ were a non-trivial element in $Z(G)$, then there would exist $M \in \mathbf{Max}$ not containing $z$, because  $\bigcap \mathbf{Max}= 1$. The image of $z$ in $G/M$ would be a non-trivial central element, contradicting that $G/M$ is topologically simple and non-discrete (in particular infinite, thus non-abelian). 

All the hypotheses of Proposition~II.1 in  \cite{CaMo-decomp} are thus fulfilled. We deduce that the assertions \ref{it:Upper1}, \ref{it:Upper3}  and the first assertion of  \ref{it:Upper2} hold.  

Let $N \in \mathbf{Min}$. 
Notice first that $N$ is non-abelian. Indeed, since  $\bigcap \mathbf{Max}= 1$ there exists $M \in \mathbf{Max}$ with $N \not \leq M$. Then $N \cap M = 1$ since $N$ is minimal, and hence $M \leq C_G(N)$. Therefore either $M = C_G(N)$ or $G = C_G(N)$. The latter case is excluded, since $G$ has a trivial center as remarked above. Thus $M = C_G(N)$, and $N$ is non-abelian, as claimed. Moreover $C_G(N) \in \mathbf{Max}$, so that the projection map $N \to G/C_G(N)$ is an injective homomorphism with dense image on a compactly generated non-discrete topologically simple tdlc group. As observed in the proof of Proposition~II.1 in  \cite{CaMo-decomp}, we deduce that  $N$ has trivial quasi-center and trivial RE-radical. In particular the group $N$ is non-compact and non-discrete. %It follows that $M \cap N = 1$, where $M = C_G(N)$. Since $M \in \mathbf{Max}$  by \ref{it:Upper2}, we infer that $N$ embeds as a dense normal subgroup of $G/M$. 

Any  $N' \in \mathbf{Min}$ not contained in $M=C_G(N)$ commutes with $M$. If there existed such a group $N'$ distinct from $N$, it would also commute with $N$. Thus $N$ and $N'$ would be two commuting subgroups of $G$, both of which embed as dense normal  subgroup of $G/M$. This is impossible, because a Hausdorff group containing two commuting dense subgroups must be abelian, whereas $G/M$ is topologically simple and non-discrete, hence non-abelian. This confirms that $N$ is the unique element of $\mathbf{Min}$ that is not contained in $C_G(N)$.

Let $\varphi \colon G \to G/M$ be the canonical projection, let $U$ be a compact open subgroup of $G$ and let $V = N \cap U$. Then $\varphi(U)$ is a compact open subgroup of $G/M$, and $\varphi(V)$ is a closed normal subgroup of $\varphi(U)$. Since $N$ is a  minimal normal subgroup of $G$, it is generated by the $G$-conjugacy class of $V$. Equivalently, the group $\varphi(N)$ is generated by the $(G/M)$-conjugacy class of $\varphi(V)$. By Proposition~4.1(ii) in \cite{CRW-part2}, there is a finite subset $B \subset \varphi(N)$ such that $G/M = \langle B \rangle \varphi(U)$. Since $\varphi(U)$ normalizes $\varphi(V)$, it follows that the group $\langle B \rangle $ acts transitively on the $(G/M)$-conjugacy class of $\varphi(V)$. Therefore $\varphi(N)$ is generated by $B \cup \varphi(V)$. Equivalently $N$ is generated by $V \cup (\varphi^{-1}(B) \cap N)$, which is compact. Thus $N$ is compactly generated since $\varphi |_N$ is injective. 

Since $N \in \mathbf{Min}$, it is topologically characteristically simple. We may therefore invoke \cite[Corollary~D]{CaMo-decomp}, ensuring that $N$ has closed normal subgroups $S_1, \dots, S_k$ which are topologically simple, and such that $N = \overline{S_1\dots S_k}$. For each $i$, observe that $\varphi(S_i)$ is a non-trivial subgroup of $G/M$ normalized by $\varphi(N)$, which is dense. Since $G/M$ is topologically simple, we infer that $\varphi(S_i)$ is dense. Since $S_i \cap S_j = 1$ for $i \neq j$, we see that $S_i$ and $S_j$ commute. Since a Hausdorff group containing two commuting dense subgroups must be abelian, we deduce that $k=1$. Therefore $N$ is topologically simple. 
\end{proof}

\begin{rmq}\label{rem:Char-simple}
Let $G$ be a compactly generated, topologically characteristically simple tdlc group. If $G$ is non-compact and non-discrete, then Proposition~\ref{prop:SIN} implies that  $\Res(G) = G$. Moreover \cite[Theorem~A]{CaMo-decomp} ensures that the set $\mathbf{Max}$  of maximal proper closed normal subgroups of $G$ is non-empty. Hence $\bigcap \mathbf{Max}$ is trivial since $G$ is topologically characteristically simple. Therefore Proposition~\ref{prop:UpperStr} applies to $G$. This refines slightly the description in \cite[Corollary~D(iv)]{CaMo-decomp} by ensuring that the minimal normal subgroups of $G$ are compactly generated. 
\end{rmq}

A locally compact group $G$ is \textbf{monolithic} if the intersection $M$ of all non-trivial closed normal subgroups of $G$ is non-trivial. In that case $M$ is called the \textbf{monolith} of $G$. We record the following consequence of Proposition~\ref{prop:UpperStr}. The fact that a non-discrete compactly generated just-non-compact tdlc group is monolithic was established in \cite[Theorem~E(i)]{CaMo-decomp}. The additional conclusions in Proposition~\ref{prop:UpperStr-monolith}(ii) strengthen the latter result. 

\begin{prop} \label{prop:UpperStr-monolith}
Let $G$ be a compactly generated  tdlc group. 
\begin{enumerate}[label=(\roman*)]
\item If $G$ is monolithic and its monolith $M$ of $G$ is non-discrete, non-compact and compactly generated, then $M$ satisfies the conclusions of Proposition~\ref{prop:UpperStr}. 
\item If $G$ is non-discrete and just-non-compact, then it is monolithic, and its  monolith satisfies the conclusions of Proposition~\ref{prop:UpperStr}. 
\end{enumerate}
\end{prop}
\begin{proof}
By definition, the monolith is  topologically characteristically simple. Hence we may invoke Proposition~\ref{prop:UpperStr}, see Remark~\ref{rem:Char-simple}. This proves (i). Assume now that $G$ is non-discrete and just-non-compact. Then the assumptions of (i) are fulfilled by \cite[Theorem~E(i)]{CaMo-decomp}. The conclusion follows. 
\end{proof}

The following observation shows that monolithic groups arise naturally as quotients of groups admitting minimal normal subgroups.

\begin{lem}\label{lem:MonolithicQuotient}
Let $G$ be a locally compact group and $N$ be a minimal non-trivial normal subgroup of $G$. If $N$ is non-abelian, then $G/C_G(N)$ is monolithic, with monolith $\overline{C_G(N)N}/C_G(N)$. 
\end{lem}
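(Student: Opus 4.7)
Let me write $C := C_G(N)$. The plan is to use the Galois correspondence between closed normal subgroups of $G/C$ and closed normal subgroups of $G$ containing $C$, and to exploit the fact that the commutator of two normal subgroups lies in their intersection. First I would verify the basic setup: $C$ is closed (as a centralizer) and normal in $G$ (because $gCg^{-1} = C_G(gNg^{-1}) = C_G(N) = C$), so $G/C$ is a well-defined locally compact group. Moreover $CN$ is a subgroup of $G$ since $C$ and $N$ both normalize one another, and $\overline{CN}$ is a closed normal subgroup of $G$ containing $C$; thus $\overline{CN}/C$ is a closed normal subgroup of $G/C$.

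Next I would check that the purported monolith is non-trivial, which is where the non-abelianness hypothesis enters. If $N \subseteq C = C_G(N)$, then $N$ would centralize itself, i.e.\ $N$ would be abelian; since $N$ is assumed non-abelian, $N \not\subseteq C$, and hence $C \subsetneq \overline{CN}$, so $\overline{CN}/C$ is non-trivial.

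The main step is to show that every non-trivial closed normal subgroup of $G/C$ contains $\overline{CN}/C$. By the correspondence, such a subgroup is $H/C$ for a closed normal subgroup $H$ of $G$ with $C \subsetneq H$, and it suffices to show $N \subseteq H$ (since then $CN \subseteq H$ and $\overline{CN} \subseteq H$). Now $H \cap N$ is a closed normal subgroup of $G$ contained in $N$; by minimality, $H \cap N \in \{1, N\}$. If $H \cap N = 1$, then since $H$ and $N$ are both normal in $G$, the commutator satisfies $[H,N] \subseteq H \cap N = 1$, so $H$ centralizes $N$, i.e.\ $H \subseteq C_G(N) = C$. This contradicts $C \subsetneq H$. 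Hence $H \cap N = N$, giving $N \subseteq H$, as required. Combined with the previous paragraph, this shows that $G/C$ is monolithic with monolith $\overline{CN}/C$.

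The argument is essentially a transcription of the classical fact for abstract groups into the topological setting, and I do not expect a serious obstacle. The only point requiring mild care is the interpretation of \textbf{minimal non-trivial normal subgroup}: the argument uses that $H \cap N$ is again a closed normal subgroup of $G$, so one should read \emph{minimal} as minimal among non-trivial closed normal subgroups of $G$ — consistent with how the paper has used this notion for tdlc groups earlier. Everything else (normality of $C$, that $CN$ is a subgroup, the commutator-in-intersection trick) is routine.
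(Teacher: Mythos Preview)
Your proof is correct and follows essentially the same approach as the paper: both arguments reduce to showing that any closed normal subgroup $H$ of $G$ properly containing $C_G(N)$ must contain $N$, and both use that $[H,N] \subseteq H \cap N$ together with the minimality of $N$ to rule out $H \cap N = 1$. You simply spell out a few more routine details (closedness and normality of $C_G(N)$, the correspondence of closed normal subgroups) than the paper does.
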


\begin{proof}
(Compare the proof of Corollary~3.3 in \cite{CRW-part2}.) Since $N$ is non-abelian, it is not contained in $C_G(N)$, so that $\overline{C_G(N)N}/C_G(N)$ is a non-trivial closed normal subgroup of $G/C_G(N)$. In order to complete the proof, we must show that every closed normal subgroup $M$ of $G$ containing  $C_G(N)$ as a proper subgroup, also contains $N$. If the inclusion $C_G(N) < M$ is strict, then $M$ does not commute with $N$ and, hence $M \cap N \neq 1$. Since $N$ is minimal, it follows that $N \leq M$, as required. 
\end{proof}

\begin{thm}\label{thm:MonolithicQuotient}
Let $G$ be a compactly generated tdlc group such that $\Res(G)$ is also compactly generated. If $G$ is not compact-by-discrete, then $G$ has a closed normal subgroup $N$ such that the quotient $H = G/N$ enjoys the following properties. 
\begin{enumerate}[label=(\roman*)]
\item $H$ is monolithic, and its monolith $M$ is compactly generated, non-compact and non-discrete. 
\item $M$ has closed normal subgroups $S_1, \dots, S_k$ which are compactly generated, topologically simple and non-discrete, and such that $M = \overline{S_1\dots S_k}$.
\item $H/M$ is compact-by-discrete. 
\end{enumerate}
\end{thm}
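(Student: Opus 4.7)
The plan is to find a closed normal subgroup $N$ of $G$ such that the second discrete residual $R_2 = \Res(\Res(G))$ projects into the monolith of $H = G/N$, and then appeal to Proposition~\ref{prop:UpperStr-monolith} for the internal structure of that monolith. Setting $R_1 = \Res(G)$ and $R_2 = \Res(R_1)$, Proposition~\ref{prop:Res(G)-cptly-gen} gives that $R_2$ is compactly generated, $R_1/R_2$ is compact, and $\Res(R_2) = R_2$. The hypothesis that $G$ is not compact-by-discrete forces $R_2 \neq 1$: otherwise $R_1$ would be compactly generated and residually discrete, hence compact-by-discrete by Proposition~\ref{prop:SIN}; the same proposition applied to $G/R_1$ would then make $G$ itself compact-by-discrete, a contradiction. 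The same reasoning shows that $G/R_2$ is compact-by-discrete, a fact that will be used for property~(iii).

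The key step is to find a closed normal subgroup $N_0$ of $G$ that is maximal subject to $N_0 \cap R_2 \neq R_2$. I would establish this by a Zorn-type argument exploiting the compact generation of $R_2$: given an ascending chain $(L_\alpha)$ of closed normal subgroups of $G$ with $L_\alpha \cap R_2 \subsetneq R_2$, one must show that $L_\infty := \overline{\bigcup_\alpha L_\alpha}$ still satisfies $L_\infty \cap R_2 \neq R_2$. Assuming $R_2 \subseteq L_\infty$, a compact generating set $S$ of $R_2$ and a compact open subgroup $U$ of $G$ give $S \subseteq L_\beta U$, hence $R_2 \subseteq L_\beta U$, for some $\beta$. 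The image of $R_2$ in $G/L_\beta$ is then contained in the compact open subgroup $UL_\beta/L_\beta$, and a closer analysis shows that this image has compact closure producing a non-trivial profinite quotient of $R_2$, contradicting $\Res(R_2) = R_2$. Establishing the closedness/compactness of that image is the delicate part of this step.

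Setting $K := \overline{R_2 N_0}/N_0$, the maximality of $N_0$ implies that $K$ is a minimal non-trivial closed normal subgroup of $G/N_0$. It is compactly generated (as a continuous image of $R_2$), non-discrete and non-compact (since $R_2 = \Res(R_2)$ prohibits non-trivial discrete or compact quotients, and $R_2$ maps densely into $K$), and non-abelian: a compactly generated abelian tdlc group has the form $\mathbb{Z}^b \times C$ with $C$ compact by the structure theorem for compactly generated LCA groups, and no such group can be simultaneously non-compact, non-discrete, and topologically characteristically simple. Hence $Z(K) = 1$, and Lemma~\ref{lem:MonolithicQuotient} gives that $H := (G/N_0)/C_{G/N_0}(K)$ is monolithic with monolith $M \cong K$. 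Letting $N$ be the preimage of $C_{G/N_0}(K)$ in $G$, one has $H = G/N$ and property~(i). Property~(ii) follows from Proposition~\ref{prop:UpperStr-monolith} applied to the monolithic group $H$, since its monolith $M \cong K$ is compactly generated, non-compact, and non-discrete. Finally, $R_2$ maps into $M$ under $G \to H$, so $H/M$ is a quotient of $G/R_2$, which is compact-by-discrete; this gives~(iii).

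The principal obstacle is the Zorn step: controlling how the closure of an ascending union of closed normal subgroups of $G$ meets the compactly generated subgroup $R_2$. It is where the compact generation of $R_2$ (from Proposition~\ref{prop:Res(G)-cptly-gen}) plays its essential role, and the argument parallels the upper-structure techniques of~\cite{CaMo-decomp}.
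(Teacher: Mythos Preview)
Your approach is viable and genuinely different from the paper's, but the Zorn step is misdiagnosed. You flag ``establishing the closedness/compactness of that image'' as the delicate part, yet this is neither needed nor quite correct: the closure of the image of $R_2$ in $G/L_\beta$ is not a topological quotient of $R_2$, so it does not directly give a profinite quotient. The actual argument is simpler. Since $\Res(R_2) = R_2$, the group $R_2$ has no proper open normal subgroup; hence any continuous homomorphism from $R_2$ into a profinite group has trivial image (the preimage of each open normal subgroup of the target is open and normal in $R_2$, hence all of $R_2$, and the intersection of those open normals in a profinite group is trivial). Applied to $R_2 \to UL_\beta/L_\beta$, this forces $R_2 \subseteq L_\beta$, the desired contradiction. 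The same observation handles the non-compactness and non-discreteness of $K$: if $K$ were profinite, $R_2 \to K$ would have trivial image, contradicting density and $K \neq 1$; if $K$ were discrete, the kernel of $R_2 \to K$ would be open normal in $R_2$, hence all of $R_2$.

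A second simplification: your passage to $(G/N_0)/C_{G/N_0}(K)$ is unnecessary. Maximality of $N_0$ already forces every non-trivial closed normal subgroup of $G/N_0$ to contain $K$, so $G/N_0$ is itself monolithic with monolith $K$; you can take $H = G/N_0$ directly (the centralizer is trivial once $K$ is non-abelian).

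For comparison, the paper avoids Zorn entirely. It selects a maximal proper closed normal subgroup $M$ of $R_2$ via \cite[Theorem~A]{CaMo-decomp}, sets $L = \bigcap_{g \in G} gMg^{-1}$, and uses Proposition~\ref{prop:UpperStr} to show that $R_2/L$ is a minimal non-trivial closed normal subgroup of $G/L$; the monolithic quotient is then obtained from Lemma~\ref{lem:MonolithicQuotient}. Your route, once the Zorn step is corrected as above, is arguably more direct; the paper's is more constructive and rests on a cited structural theorem rather than an ad hoc maximality argument.
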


\begin{proof}
We start by invoking Proposition~\ref{prop:Res(G)-cptly-gen}. This ensures that $R = \Res(\Res(G))$ is a compactly generated closed normal subgroup of $G$ such that $G/R$ is compact-by-discrete and $\Res(R) = R$. Since $G$ is not compact-by-discrete, we deduce that $R$ is non-trivial. 

It follows from \cite[Theorem~A]{CaMo-decomp} that $R$ admits a maximal proper closed normal subgroup $M_R$. Let $L = \bigcap_{g \in G} gM_Rg^{-1}$. Set $G' = G/L$, $M' = M_R/L$ and $R' = R/L$. Then $R' = \Res(\Res(G'))$ and $\Res(R') = R'$. Moreover $G'/R' \cong G/R$ is compact-by-discrete. Furthermore, by the definition of $R'$, we see that the intersection of all maximal proper closed normal subgroups of $R'$ is trivial. It follows that the structure of $R'$ is subjected to Proposition~\ref{prop:UpperStr}. By the definition of $G'$, the intersection of all $G'$-conjugates of $M'$ is trivial. In view of Proposition~\ref{prop:UpperStr}\ref{it:Upper2}, it follows that every maximal proper closed normal subgroup of $R'$ is conjugate to $M'$ in $G'$. Since every proper closed normal subgroup of $R'$ is contained in a maximal one (by Zorn's lemma, using that $R'$ is compactly generated), it follows that the only closed normal subgroup of $G'$ that is properly contained in $R'$ is the trivial subgroup. In other words, we have established that $R'$ is a minimal non-trivial closed normal subgroup of $G'$. 

We finally set $H = G'/C_{G'}(R')$. Thus $H$ is monolithic with monolith $M:=\overline{C_{G'}(R')R'}/C_{G'}(R')$ by Lemma~\ref{lem:MonolithicQuotient}. Since 
$$H/M = G'/C_{G'}(R') \bigg/ \overline{C_{G'}(R')R'}/C_{G'}(R') \cong G'/ \overline{C_{G'}(R')R'},$$
we infer that $H/M$ is isomorphic to a quotient of $G'/R' $. Since $G'/R' $ is compact-by-discrete, and since every Hausdorff quotient of a compact-by-discrete group is itself compact-by-discrete, we deduce that $H/M$ is compact-by-discrete. Since $R'$ is a quotient of $R$, it is compactly generated. Since it maps onto a dense subgroup of $M$, it follows that $M$ is compactly generated. Since $\Res(R') = R'$, we deduce that $\Res(M) = M$. In particular $M$ possesses maximal proper closed normal subgroups by Theorem~A in \cite{CaMo-decomp}. Since $M$ is minimal normal in $H$, the intersection of all its maximal proper closed normal subgroups is trivial, and the remaining assertion to be established follows from Proposition~\ref{prop:UpperStr}.
\end{proof}

\subsection{The centralizer lattice} \label{subsec-LCG}

Let $G$ be a tdlc group. A subgroup of $G$ whose normalizer is open is called \textbf{locally normal}. 
Following \cite{CRW-part1, CRW-part2}, we say that $G$ is \textbf{[A]-semisimple} if $\QZ(G)= \{1\}$ and if the only abelian locally normal subgroup of $G$ is the trivial subgroup. The notation $\QC_G(M)$ has been recalled in Section~\ref{sec:QZ}. 

\begin{prop}\label{prop:[A]-semisimple}
Let $G$ be a monolithic tdlc group, and assume that the monolith $M$ is compactly generated, non-compact and non-discrete. Then $G$ are $M$ are [A]-semisimple, and $\QC_G(M) = 1$.
\end{prop}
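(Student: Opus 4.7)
The plan is to exploit the fine structure of $M$ given by Proposition~\ref{prop:UpperStr-monolith}: the monolith $M$ has finitely many minimal closed normal subgroups $S_1, \dots, S_k$, each compactly generated, topologically simple, and non-discrete. Distinct $S_i$'s intersect trivially (by minimality), hence pairwise commute, and $M = \overline{S_1\cdots S_k}$. From the proof of Proposition~\ref{prop:UpperStr} I will also extract that both $M$ and each $S_i$ have trivial quasi-center (noted there via \cite{CaMo-decomp}), and that the intersection of the maximal proper closed normal subgroups of $M$ is trivial; in particular $Z(M) \leq \QZ(M) = 1$.

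The first step is $C_G(M) = 1$: this closed normal subgroup of $G$ meets $M$ in $Z(M) = 1$, and if non-trivial it would contain the monolith $M$, making $M$ abelian, contradicting the non-abelianness of each $S_i$. The heart of the argument is then $\QC_G(M) = 1$. Let $g \in \QC_G(M)$ and put $U := C_M(g)$, open in $M$. Conjugation by $g$ permutes the finite set $\{S_1,\dots, S_k\}$; if $g S_i g^{-1} = S_j$ with $j\neq i$, then the $g$-fixed set inside $S_i$ is contained in $S_i \cap S_j = 1$, contradicting that it contains the non-trivial open subgroup $U \cap S_i$ of the non-discrete $S_i$. So $g$ normalizes each $S_i$, and the induced automorphism $\alpha_i$ of $S_i$ fixes the open subgroup $U \cap S_i$ pointwise. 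The triviality of $\QZ(S_i)$ gives that the $S_i$-centralizer of any open subgroup of $S_i$ is trivial, and combined with the topological simplicity of $S_i$ and standard structural results for topologically simple tdlc groups from \cite{CRW-part1, CRW-part2}, this forces $\alpha_i = \mathrm{id}$. Hence $g$ centralizes each $S_i$, therefore the dense subgroup $S_1\cdots S_k$ of $M$, and so $g \in C_G(M) = 1$.

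The [A]-semisimplicity of $G$ now follows. First, $\QZ(G) \leq \QC_G(M) = 1$ is immediate. For the absence of a non-trivial abelian locally normal subgroup, let $A \leq G$ be closed abelian with $V := N_G(A)$ open. Running the parallel argument inside $M$ (noting $\QC_M(M) = \QZ(M) = 1$, and with each $S_i$ being [A]-semisimple by an entirely analogous reasoning applied to $A \cap S_i$) shows that $M$ is itself [A]-semisimple, so $A \cap M = 1$. Since $A$ is normal in $V$ and $M$ is normal in $G$, one checks that the commutator $[V \cap M, A]$ lies in $A \cap M = 1$, so $V \cap M$ centralizes $A$; as $V \cap M$ is open in $M$, every $a \in A$ satisfies $C_M(a) \supseteq V \cap M$ open, placing $a \in \QC_G(M) = 1$. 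Hence $A = 1$. The main obstacle is verifying the automorphism rigidity of each $S_i$ in the second step: the inner case follows easily from $\QZ(S_i)=1$ (any $s \in S_i$ centralizing an open subgroup of $S_i$ lies in $\QZ(S_i)$), but the case of automorphisms of $S_i$ that are not a priori inner requires the more delicate structural theory of topologically simple tdlc groups.
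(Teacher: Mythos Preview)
Your approach is correct in outline, but it takes a detour through the decomposition $M = \overline{S_1\cdots S_k}$ that the paper avoids entirely, and the ``main obstacle'' you flag at the end is actually not an obstacle at all.

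\medskip
\noindent\textbf{On the rigidity step.} The fact that a continuous automorphism $\alpha$ of $S_i$ fixing an open subgroup $V \leq S_i$ pointwise must be the identity is a two-line consequence of $\QZ(S_i)=1$, and needs nothing deeper. For any $s \in S_i$ and any $v \in V \cap s^{-1}Vs$ one has $svs^{-1} \in V$, whence $\alpha(svs^{-1}) = svs^{-1}$; on the other hand $\alpha(svs^{-1}) = \alpha(s)\,v\,\alpha(s)^{-1}$. Comparing gives that $s^{-1}\alpha(s)$ centralizes the open subgroup $V \cap s^{-1}Vs$, so $s^{-1}\alpha(s) \in \QZ(S_i) = 1$. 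Thus $\alpha = \mathrm{id}$, and your argument for $\QC_G(M)=1$ goes through with no appeal to ``delicate structural theory''. The same trick also completes your (somewhat vague) argument for $M$ being [A]-semisimple: once $A \cap S_i = 1$, each $a \in A$ conjugates $S_i$ by an automorphism fixing an open subgroup, hence trivially, so $A \leq C_M(M) = 1$.

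\medskip
\noindent\textbf{Comparison with the paper.} The paper does not pass through the simple factors $S_i$ at all. It observes that $M$, being a minimal normal subgroup, is topologically characteristically simple; then \cite[Proposition~5.6]{CRW-part2} gives $\QZ(M)=1$, and together with \cite[Proposition~6.17]{CRW-part1} this yields [A]-semisimplicity of $M$ directly. For $\QC_G(M)=1$, the paper cites \cite[Proposition~4.4.3]{CRW_dense} to get $\QC_G(M) = C_G(M)$ in one stroke (whose content is essentially the elementary computation above, applied to $M$ rather than to each $S_i$), and then $C_G(M)=1$ follows from monolithicity as you do. The deduction that $G$ is [A]-semisimple from $\QC_G(M)=1$ and [A]-semisimplicity of $M$ is then identical to yours. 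So your route is sound but more circuitous: the passage to the $S_i$'s buys nothing, since the key step reduces to $\QZ=1$ in either case.
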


\begin{proof}
Since $M$ is a minimal normal subgroup of $G$, it is characteristically simple. Since $M$ is non-compact and non-discrete, it follows from \cite[Proposition~5.6]{CRW-part2} that $\QZ(M)=1$. We may then invoke \cite[Proposition~6.17]{CRW-part1}, which implies, together with \cite[Proposition~5.6]{CRW-part2}, that $M$ is  [A]-semisimple. 

The rest of the proof follows the reasoning in the proof of \cite[Proposition~5.1.2]{CRW_dense}. By \cite[Proposition~4.4.3]{CRW_dense}, we have $\QC_G(M) = C_G(M)$. Since $M$ is the monolith of $G$, either $C_G(M) = 1$ or $M \leq C_G(M)$. The second case is excluded, because  $\QZ(M)= 1$. Therefore $\QC_G(M) = 1$. In  particular $\QZ(G) = 1$. 

Let now $A$ be an abelian compact locally normal subgroup of $G$. Let $U$ be a compact open subgroup of $N_G(A)$. Upon replacing $U$ by $AU$, we may assume that $A$ is contained in $U$. Since $A \cap M$ is an abelian compact locally normal subgroup of $M$, and since $M$ is  [A]-semisimple, we have $A \cap M = 1$. In particular $A \cap M \cap U = 1$. Thus $A$ and $M \cap U$ are normal subgroups of $U$ with trivial intersection, hence they commute. It follows that $A$ centralizes an open subgroup of $M$. Hence $A \leq \QC_G(M) = 1$. 
\end{proof}

Two closed subgroups $K, L$ of a tdlc group $G$ are called \textbf{locally equivalent} if $K \cap L$ is relatively open in both $K$ and $L$. 
Following \cite{CRW-part1}, we define the \textbf{structure lattice} $\LN(G)$ of a tdlc group $G$ as the set  of local classes of closed   locally normal subgroups of $G$. The local class of a locally normal subgroup $K$ will be denoted $[K]$. Note that the group $G$ acts by conjugation on $\LN(G)$. The structure lattice $\LN(G)$ contains a canonical subset $\LC(G)$, called the \textbf{centralizer lattice}, consisting of the local classes of centralizers of locally normal subgroups of $G$. It is shown in \cite{CRW-part1} that, if $G$ is  [A]-semisimple, then the map $^\perp: \LC(G) \to \LC(G)$, $[K]^\perp = [C_G(K)]$, is well-defined and the operations \[ [K] \wedge [L] = [K \cap L] \] and \[ [K] \vee [L] = \left( [K]^\perp \wedge  [L]^\perp  \right)^\perp  \] turn $\LC(G)$ into a Boolean algebra. In addition we have the following result. The Stone space of a Boolean algebra $\mathcal A$ will be denoted by $\mathfrak S(\mathcal A)$.

\begin{thm} \label{thm-LCG-MS}
If a tdlc group $G$ is  [A]-semisimple, then the centralizer lattice $\LC(G)$ is a Boolean algebra, and the following hold:
\begin{enumerate}[label=(\roman*)]
 \item If $G$ acts faithfully on $\LC(G)$, then the $G$-action on $\Omega_G = \mathfrak S(\LC(G))$ is continuous and micro-supported.
 \item  Every totally disconnected compact $G$-space on which the $G$-action is faithful and  micro-supported is a $G$-factor of $\Omega_G$.
\end{enumerate}
\end{thm}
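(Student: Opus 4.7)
The plan is to realize both statements as consequences of the Stone duality correspondence between $\LC(G)$ and $\Omega_G$, essentially recovering Theorems~5.2 and~5.18 of \cite{CRW-part1}.

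For (i), the conjugation action of $G$ on $\LN(G)$ descends to an action on $\LC(G)$, and Stone duality then promotes this to an action of $G$ on $\Omega_G$ by homeomorphisms. A subbase of the topology on $\Omega_G$ is given by the clopen sets $U_\alpha = \{\omega \in \Omega_G : \alpha \in \omega\}$ indexed by $\alpha \in \LC(G)$, so continuity of the action map $G \times \Omega_G \to \Omega_G$ reduces to showing that the stabilizer in $G$ of each $\alpha$ is open. This is immediate: if $\alpha = [K]$ with $K$ closed and locally normal, then $N_G(K)$ is open and fixes $\alpha$. For micro-supportedness, given any non-empty open $V \subseteq \Omega_G$, I would shrink $V$ to a basic clopen $U_\alpha$ with $\alpha \neq 0$ and pick a representative $K$ of $\alpha$. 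Any $k \in K$ centralizes $C_G(K)$, so $k$ fixes every ultrafilter $\omega \in \Omega_G$ containing $\alpha^\perp = [C_G(K)]$; in other words $K \leq \rist_G(U_\alpha) \leq \rist_G(V)$. Since $\alpha \neq 0$, the group $K$ is non-discrete, in particular non-trivial, and faithfulness of the $G$-action on $\LC(G)$ translates via Stone duality into faithfulness on $\Omega_G$, so any non-trivial $k \in K$ acts non-trivially on $\Omega_G$, necessarily on $U_\alpha$ since it fixes the complement.

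For (ii), given a faithful and micro-supported action of $G$ on a compact totally disconnected space $X$, I would construct the desired factor map $\Omega_G \to X$ via Stone duality from a $G$-equivariant Boolean homomorphism $\Phi \colon \mathrm{Clop}(X) \to \LC(G)$ defined by $A \mapsto [\rist_G(A)]$. That $\rist_G(A)$ is locally normal follows because it is normalized by the open setwise stabilizer of $A$; equivariance and preservation of binary intersections (via $\rist_G(A \cap B) = \rist_G(A) \cap \rist_G(B)$) are straightforward. The delicate point is preservation of complements: one always has $\rist_G(X \setminus A) \leq C_G(\rist_G(A))$, and the key claim is that these two subgroups have the same local class, so that $\Phi(X \setminus A) = \Phi(A)^\perp$. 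This is precisely where [A]-semisimplicity is indispensable, as it rules out abelian locally normal subgroups that might otherwise slip between $\rist_G(X \setminus A)$ and $C_G(\rist_G(A))$. Injectivity of $\Phi$ follows from micro-support together with faithfulness: if $A \subsetneq B$, then $\rist_G(B \setminus A)$ is non-trivial and sits inside $\rist_G(B)$ while meeting $\rist_G(A)$ only in the identity, which forces $\Phi(A) \neq \Phi(B)$ in the [A]-semisimple setting. Stone duality then converts this injectivity into surjectivity of the induced continuous $G$-map $\Omega_G \to X$.

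The main obstacle is the verification in part (ii) that $\Phi$ preserves complements and is injective: both hinge on the structural consequences of [A]-semisimplicity established in Sections~5 and~6 of \cite{CRW-part1}. Everything else reduces to routine Stone-duality bookkeeping together with the fact that normalizers of locally normal subgroups are open.
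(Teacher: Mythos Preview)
The paper does not prove this theorem; it records it as a citation to Theorems~5.2 and~5.18 of \cite{CRW-part1}. Your sketch is a faithful reconstruction of the strategy behind those results, and the Stone-duality framework together with the map built from rigid stabilizers is exactly the mechanism used there.

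Two small technical points are worth tightening. In part~(ii) you set $\Phi(A) = [\rist_G(A)]$ with target $\LC(G)$, but a priori $[\rist_G(A)]$ lies only in the structure lattice $\LN(G)$; elements of $\LC(G)$ are by definition local classes of \emph{centralizers} of locally normal subgroups. Your complement claim $[\rist_G(X\setminus A)] = [C_G(\rist_G(A))]$, once established, does resolve this by symmetry (it exhibits $\Phi(A)$ itself as a centralizer class), so the issue is already bundled into what you call the delicate point; but it is cleaner to take $\Phi(A) = [C_G^2(\rist_G(A))]$ from the outset, which lands in $\LC(G)$ by construction --- this is the formulation the present paper uses in Proposition~\ref{prop:FaithfulDenseSubgroup}. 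In part~(i), the passage from ``$k \in K$ centralizes $C_G(K)$'' to ``$k$ fixes every ultrafilter containing $\alpha^\perp$'' is not immediate: what you actually use is that $k$ fixes every $\beta \leq \alpha^\perp$ (pick a representative of $\beta$ inside $C_G(K)$, which $k$ then centralizes), and then that an ultrafilter containing $\alpha^\perp$ is determined by its intersection with the principal ideal below $\alpha^\perp$. None of this affects the correctness of your plan.
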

\begin{proof}
This follows from Theorem 5.2 and Theorem 5.18 in \cite{CRW-part1} (see also Theorem~II in loc. cit.).
\end{proof}

The following result addresses the case where the $G$-action on $\LC(G)$ is not necessary faithful. We recall that every Boolean algebra $\mathcal A$ is naturally endowed with a partial order $\leq$, defined by $\alpha \leq \beta$ if $\alpha \wedge \beta = \alpha$ for all $\alpha, \beta \in \mathcal A$. The partial order $\leq$ coincides with the relation of inclusion of clopen subsets of the Stone space $\mathfrak S(\mathcal A)$. Given a subset $H$ of a group $G$, we denote by $C_G^2(H)$ the \textbf{double centralizer} of $H$ in $G$, defined by $C_G^2(H) = C_G(C_G(H))$.

\begin{prop}\label{prop:FaithfulDenseSubgroup}
Let $G$ be an [A]-semisimple tdlc group, $D \leq G$ be a dense subgroup and $Y$ be a totally disconnected compact $G$-space. Assume that the $D$-action on $Y$ is micro-supported and faithful. 

Then the assignment $\alpha \mapsto [C_G^2(\rist_D(\alpha))]$ defines an injective order-preserving $G$-equivariant map from the Boolean algebra $\mathcal A$ of clopen subsets of $Y$ into the centralizer lattice $\LC(G)$.
\end{prop}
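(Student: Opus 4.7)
The plan is to handle well-definedness of the map into $\LC(G)$, order preservation, and injectivity in turn. For well-definedness, I would first show that the inner centralizer $L_\alpha := C_G(\rist_D(\alpha))$ is a locally normal subgroup of $G$, so that $[C_G^2(\rist_D(\alpha))] = [C_G(L_\alpha)]$ belongs to $\LC(G)$ by definition. The key observations are that $\Stab_G(\alpha)$ is open in $G$ (by continuity of the action and $\alpha$ being clopen), that $\Stab_D(\alpha) = D \cap \Stab_G(\alpha)$ is dense in $\Stab_G(\alpha)$ (since $D$ is dense in $G$), and that $\Stab_D(\alpha)$ normalizes $\rist_D(\alpha)$ (via $d\rist_D(\alpha)d^{-1} = \rist_D(d\alpha) = \rist_D(\alpha)$ for $d \in \Stab_D(\alpha) \subseteq D$) and hence $L_\alpha$ as well. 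Taking closures inside the closed subgroup $N_G(L_\alpha)$ then yields $\Stab_G(\alpha) \subseteq N_G(L_\alpha)$. Order preservation is immediate: $\alpha \subseteq \alpha'$ implies $\rist_D(\alpha) \subseteq \rist_D(\alpha')$, hence $C_G^2(\rist_D(\alpha)) \subseteq C_G^2(\rist_D(\alpha'))$, and subgroup inclusion translates to $\leq$ in $\LC(G)$.

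For injectivity I would prove two lemmas. The first asserts that $[C_G^2(\rist_D(\gamma))] \neq 0$ in $\LC(G)$ for every non-empty clopen $\gamma$. By contradiction, if $K := C_G^2(\rist_D(\gamma))$ were discrete, then $K$ is locally normal (as $\Stab_G(\gamma)$ normalizes $L_\gamma$ and hence $C_G(L_\gamma) = K$). Choose a compact open subgroup $V \subseteq N_G(K)$; continuity of the conjugation action $V \times K \to K$, with $V$ compact and $K$ discrete, forces every $V$-orbit in $K$ to be finite. Hence $C_V(x)$ has finite index in $V$ for every $x \in K$, is therefore open in $V$ and so in $G$, which places $K$ inside $\QZ(G) = 1$. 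This contradicts the fact that $\rist_D(\gamma) \subseteq K$ is non-trivial, which holds by the micro-supportedness of the $D$-action.

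The second lemma states that $[C_G^2(\rist_D(\beta_1))] \wedge [C_G^2(\rist_D(\beta_2))] = 0$ for disjoint clopens $\beta_1, \beta_2$. Writing $K_i := C_G^2(\rist_D(\beta_i))$ and $L_i := C_G(\rist_D(\beta_i))$, the disjointness of supports implies $\rist_D(\beta_1) \subseteq L_2$, whence $K_2 = C_G(L_2) \subseteq C_G(\rist_D(\beta_1)) = L_1$. Therefore $K_1 \cap K_2 \subseteq K_1 \cap L_1 = Z(L_1)$ is abelian, and is locally normal in $G$ since its normalizer contains the open subgroup $\Stab_G(\beta_1) \cap \Stab_G(\beta_2)$. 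The second [A]-semisimplicity axiom (triviality of abelian locally normal subgroups) then forces the intersection of $K_1 \cap K_2$ with any normalizing compact open subgroup to be trivial, so $K_1 \cap K_2$ is discrete and $[K_1] \wedge [K_2] = 0$ in $\LC(G)$.

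To conclude injectivity, if $\alpha \neq \alpha'$ are clopen then total disconnectedness of $Y$ gives, up to swapping, a non-empty clopen $\beta \subseteq \alpha \setminus \alpha'$. Were the two images equal, order preservation applied to $\beta \subseteq \alpha$ would give $[C_G^2(\rist_D(\beta))] \leq [C_G^2(\rist_D(\alpha'))]$, while the second lemma applied to the disjoint pair $(\beta, \alpha')$ would give $[C_G^2(\rist_D(\beta))] \wedge [C_G^2(\rist_D(\alpha'))] = 0$; taken together these force $[C_G^2(\rist_D(\beta))] = 0$, contradicting the first lemma. The main obstacle is the well-definedness step: because $\rist_D(\alpha)$ is generally not locally normal in $G$, the local normality of $L_\alpha$ must be extracted from the density of $D$ in $G$ together with continuity of the $G$-action. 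Once this is in hand, injectivity falls out cleanly from the two facets of [A]-semisimplicity: triviality of $\QZ(G)$ drives the first lemma, and triviality of abelian locally normal subgroups drives the second.
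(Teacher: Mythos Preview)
Your proof is correct and follows essentially the same architecture as the paper's: local normality of $C_G(\rist_D(\alpha))$ via density of $\Stab_D(\alpha)$ in the open subgroup $\Stab_G(\alpha)$, order preservation by functoriality of the double centralizer, non-triviality for non-empty $\gamma$ via $\QZ(G)=1$, orthogonality for disjoint clopens, and injectivity by combining the last two. The only notable difference is in the orthogonality step: the paper observes that $C_G^2(\rist_D(\beta_1))$ and $C_G^2(\rist_D(\beta_2))$ commute and then invokes \cite[Theorem~3.19(iii)]{CRW-part1} to conclude their meet is $0$, whereas you give a self-contained argument showing $K_1 \cap K_2 \subseteq Z(L_1)$ is an abelian locally normal subgroup and hence trivial by the second axiom of [A]-semisimplicity---this is exactly the content of the cited result specialized to the present situation, so the two arguments are equivalent.
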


\begin{proof}
Let $\alpha \in \mathcal A$. Since the $G$-action on $Y$ is continuous, the stabilizer $G_\alpha$ is open in $G$. In particular $D_\alpha = D \cap G_\alpha$ is dense in $G_\alpha$. Since $D_\alpha$ normalizes $\rist_D(\alpha)$, we infer that $\overline{\rist_D(\alpha)}$ is normalized by $G_\alpha$. In particular it is a locally normal subgroup of $G$. 

Therefore $C_G(\overline{\rist_D(\alpha)}) = C_G( {\rist_D(\alpha)}) $ is a closed locally normal subgroup as well, and so is also $C_G^2( {\rist_D(\alpha)}) $. By the definition of $\LC(G)$ (see \cite[Definition~5.1]{CRW-part1}), the local classes $[C_G( {\rist_D(\alpha)})]$ and $[C_G^2( {\rist_D(\alpha)})] $ both belong to $\LC(G)$. 

For $\alpha \in \mathcal A$, we set
$$f(\alpha) := [C_G^2( {\rist_D(\alpha)})] \in \LC(G).$$
We claim that if $\alpha$ is non-empty, then $f(\alpha)$ is   a non-zero element of $\LC(G)$.  If $f(\alpha)=0$, then $C_G^2( {\rist_D(\alpha)})$ is discrete, and thus contained in the quasi-center $\QZ(G)$, which is trivial since $G$ is [A]-semisimple by hypothesis. Since $C_G^2( {\rist_D(\alpha)})$ contains $\overline{\rist_D(\alpha)}$, we deduce that $\rist_D(\alpha)$ is trivial, hence $\alpha = \varnothing$ since the $D$-action on $Y$ is micro-supported. This proves the claim. 

Let now $\alpha, \beta \in \mathcal A$ with $\alpha \leq \beta$. Then $\rist_D(\alpha) \leq \rist_D(\beta)$. Therefore we have $C_G^2( {\rist_D(\alpha)}) \leq C_G^2( {\rist_D(\beta)})$ and hence $f(\alpha) \leq f(\beta)$. It follows that the map $f \colon \mathcal A \to \LC(G)$    is order-preserving. This implies in particular that for all $\beta, \gamma \in \mathcal A$, we have $f(\beta \cap \gamma) \leq f(\beta ) \wedge  f(\gamma )$. 

We next claim that if $\beta \cap \gamma = \varnothing$, then $f(\beta ) \wedge  f(\gamma ) = 0$. Indeed, if $\beta$ and $\gamma$ are disjoint, then $\rist_D(\beta)$ and $\rist_D(\gamma)$ commute. Thus $\rist_D(\beta) \leq C_G(\rist_D(\gamma))$, hence $C_G^2(\rist_D(\beta)) \leq C_G(C_G^2(\rist_D(\gamma)))$. Thus the closed locally normal subgroups $C_G^2(\rist_D(\beta))$ and $C_G^2(\rist_D(\gamma))$ commute. In view of \cite[Theorem~3.19(iii)]{CRW-part1}, we deduce that $f(\beta ) \wedge  f(\gamma ) = 0$.

It remains to prove the injectivity of $f$. To this end, let $\alpha, \beta \in \mathcal A$ with $\alpha \neq \beta$. Upon swapping $\alpha$ and $\beta$, we may assume that $\alpha \not \leq \beta$. Equivalently there exists a non-empty $\gamma \leq \alpha$ in $\mathcal A$ which is disjoint from $\beta$. Since $\beta \cap \gamma = \varnothing$, we have $f(\beta ) \wedge  f(\gamma ) = 0$ by the previous claim. Since $\gamma \leq \alpha$ we have $f(\gamma ) \leq f(\alpha )$. If we had $f(\alpha )=f(\beta)$, we would have $ f(\gamma )=  f(\gamma )  \wedge  f(\alpha ) = f(\gamma ) \wedge f(\beta)= 0$. Since $\gamma$ is non-empty, this would contradict the claim established above. Thus  $f(\alpha ) \neq f(\beta )$, as required.
\end{proof}

\subsection{Dynamical features of micro-supported actions of non-discrete groups}

The dynamics of the action of a compactly generated  [A]-semisimple group $G$ on the Stone space $\Omega_G$ of the centralizer lattice of $G$ has been studied in \cite[Theorem~6.19]{CRW-part2} under the assumption that the $G$-action on $\Omega_G$ is faithful.

Recall from Proposition~\ref{prop:[A]-semisimple} that a compactly generated monolithic tdlc group $G$ whose monolith is compactly generated, non-compact and non-discrete is [A]-semisimple. In particular the centralizer lattice $\LC(G)$ is a Boolean algebra. Its Stone space, denoted by $\Omega_G$, is thus a totally disconnected compact $G$-space. 

\begin{thm}\label{thm:Stone-dynamics}
Let $G$ be a compactly generated monolithic tdlc group,  and assume that the monolith $M$ is compactly generated, non-compact and non-discrete. Let $\Omega_G$ be the Stone space of $\LC(G)$. 
 
\begin{enumerate}[label=(\roman*)]
	\item If $\LC(G)$ is infinite, then the $G$-action on $\Omega_G$ is faithful.
\end{enumerate}
 
 Moreover, any  totally disconnected compact $G$-space $X$  on which the $G$-action is faithful and micro-supported is a factor of $\Omega_G$, and enjoys the following properties:
\begin{enumerate}[resume*]	
\item \label{it:Dyn1} The $M$-action on $X$ is micro-supported. 

\item \label{it:Dyn2}  The $G$-action on $X$ has a compressible clopen subset.

\item \label{it:Dyn3} The $G$-action on $X$ is minimal. 

\item \label{it:Dyn4} Let $S_1, \dots, S_d$ be the minimal non-trivial closed normal subgroups of $M$. For each $i$, let $X_i$ be the complement of the fixed-point-set of $S_i$ in $X$. Then $X_i$ is clopen and $X = \bigcup_{i=1}^d X_i$ is a clopen partition of $X$. Furthermore, the $S_i$-action on $X_i$ is minimal, strongly proximal, micro-supported, and has a compressible clopen subset. In particular $M$ is not amenable.
\end{enumerate}
\end{thm}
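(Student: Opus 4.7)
The plan is to exploit the structural framework from Section~3. Propositions \ref{prop:[A]-semisimple} and \ref{prop:UpperStr-monolith} provide that both $G$ and $M$ are [A]-semisimple with $\QC_G(M)=1$, and that the monolith decomposes as $M = \overline{S_1 \cdots S_d}$ where the $S_i$ are compactly generated, non-discrete, topologically simple, pairwise commuting, and pairwise intersect trivially. One verifies that the $S_i$ are all $G$-conjugate (else the product along a proper $G$-orbit would produce a closed normal subgroup of $G$ strictly between $1$ and $M$, contradicting monolithicity), hence each $S_i$ is locally normal in $G$ (normalized by the open finite-index subgroup $G_0 = \bigcap_i N_G(S_i)$), non-compact, and itself [A]-semisimple.

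For (i), the kernel $N$ of $G \to \aut(\LC(G))$ is closed and normal, so either trivial or contains $M$. Assuming the latter, every element of $\LC(G)$ is $M$-invariant, and the crux is to derive a contradiction from $|\LC(G)|=\infty$. The idea is to exploit the near-product structure of $M$ together with the vanishing $\QZ(S_i)=1$ to produce a compact locally normal subgroup $K \leq S_i$ and an element $s \in S_i$ such that $[C_G(K)]$ and $[C_G(sKs^{-1})]$ are distinct in $\LC(G)$, using the abundance of inequivalent compact open subgroups of $S_i$ supplied by $|\LC(G)|=\infty$. For (ii), given a non-empty open $U \subseteq X$, micro-supportedness yields $\rist_G(U) \neq 1$, and this rigid stabilizer is non-discrete, since any discrete locally normal subgroup lies in $\QZ(G)=1$. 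Choosing a compact open subgroup $V \leq G$ normalizing $\rist_G(U)$, both $\rist_G(U)$ and $V \cap M$ are normal in $V$; if their intersection were trivial, they would commute, forcing $\rist_G(U) \leq C_G(V\cap M) \leq \QC_G(M)=1$, a contradiction. Hence $\rist_G(U) \cap M \subseteq \rist_M(U)$ is non-trivial.

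For (iv) and (iii), I would transfer properties of the $G$-action on $\Omega_G$ to its factor $X$. By (i) and Theorem \ref{thm-LCG-MS}, the $G$-action on $\Omega_G$ is continuous, faithful and micro-supported, and Theorem~6.19 of \cite{CRW-part2} yields that this action is minimal with a compressible clopen. Since $X$ is a factor of $\Omega_G$ via some $\pi \colon \Omega_G \to X$ by Theorem \ref{thm-LCG-MS}(ii), minimality descends, giving (iv); and the image under $\pi$ of a compressible clopen of $\Omega_G$ is a compressible compact subset of $X$ whose interior—non-empty by minimality and the standard almost-openness of factor maps between minimal compact $G$-spaces—contains a compressible clopen, giving (iii). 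For (v), set $F_i = \fix_X(S_i)$ and $X_i = X \setminus F_i$. Since $S_i$ is normal in $M$ and $M$ is normal in $G$, $\bigcap_i F_i$ is a closed $G$-invariant set that is pointwise fixed by $M$; by minimality of $X$ and the micro-supportedness of the $M$-action established in (ii), it must be empty, yielding $X = \bigcup_i X_i$. The step I expect to be the main obstacle is proving that each $F_i$ is also open, which I would approach by identifying $F_i$ with the image under $\pi$ of the clopen $A_i \subseteq \Omega_G$ corresponding to $[C_G(S_i)] \in \LC(G)$, exploiting the complementary pair $[S_i]$, $[C_G(S_i)]$ in the centralizer lattice and the fact that $S_i$ itself is locally normal in $G$. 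Finally, micro-supportedness of $S_i$ on $X_i$ is obtained by the argument of (ii) applied inside the [A]-semisimple group $S_i$; the $S_i$-action on $X_i$ is faithful by topological simplicity of $S_i$, so applying the theorem recursively to $(S_i, X_i)$ (the case $d=1$) yields minimality and a compressible clopen; strong proximality then follows, since in the topologically simple case compressibility of one open subset propagates to every non-empty open by minimality and implies strong proximality. Non-amenability of $M$ is a direct consequence of the resulting boundary action of $S_i \leq M$ on the non-trivial space $X_i$.
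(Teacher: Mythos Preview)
Your argument for (ii) matches the paper's (which cites \cite[Proposition~7.1.2(i)]{CRW_dense}; the proof there is the commuting-normal-subgroups trick you describe). There are, however, genuine gaps in (i) and (iv).

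For (iv) you claim Theorem~6.19 of \cite{CRW-part2} gives minimality of $G$ on $\Omega_G$. It does not: that result is a compressibility statement. The remark following the present theorem exhibits $G=\mathrm{Ner}(T)_\xi$, which is compactly generated, [A]-semisimple (being open in $\mathrm{Ner}(T)\in\mathscr S_{\mathrm{td}}$), with a faithful micro-supported action on $\partial T$, yet fixes $\xi$. What fails in that example is compact generation of the monolith---a hypothesis your route to (iv) never uses. The paper proceeds in the opposite order from yours: it applies Theorem~6.19 to the $M$-action on $X$ to obtain the disjoint sets $X_i$ together with compressible clopens $\alpha_i\subset X_i$, then invokes \cite[Theorem~J(ii)]{CRW-part2} for each compactly generated topologically simple $S_i$ to get that $S_i$ acts minimally and strongly proximally on $\overline{X_i}$ (whence $X_i=\overline{X_i}$ is clopen, which also resolves the ``main obstacle'' you flag in (v)), and finally derives (iv) from $G$-transitivity on $\{S_1,\dots,S_d\}$, hence on $\{X_1,\dots,X_d\}$.

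For (i), the step ``$|\LC(G)|=\infty$ supplies an abundance of inequivalent compact open subgroups of $S_i$'' that $S_i$ moves around is not justified: this amounts to asserting that $S_i$ acts non-trivially on its own centralizer lattice, which is the same problem one level down, and you give no reduction from $\LC(G)$ to $\LC(S_i)$. The paper's argument is explicit and different. Assuming $M$ fixes $\LC(G)$, one checks via \cite[Theorem~3.19]{CRW-part1} that $C_M(K)$ is closed and normal in $M$ for every $[K]\in\LC(G)$; minimality of the $S_i$ then forces each $S_i$ either to lie in $C_M(K)$ or to meet it trivially. The resulting map $\omega_i\colon[K]\mapsto\mathbf 1_{S_i\cap C_M(K)=1}$ is shown to be a Boolean homomorphism $\LC(G)\to\{0,1\}$, hence a point of $\Omega_G$; and every nonzero $[K]$ satisfies $\omega_i([K])=1$ for some $i$ (else $K$ centralizes $M$), so $|\Omega_G|\leq d$, contradicting the assumption.
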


%i.e. there exists a non-empty clopen set $\alpha$ in $X$ such that for any non-empty clopen $\beta \subset X$ there exists $g \in G$ with $g\alpha \subset \beta$. 

\begin{proof}
	Recall that the fact the  $S_i$ exist and form a finite set is guaranteed by Proposition \ref{prop:UpperStr-monolith}. Moreover each $S_i$ is a non-discrete compactly generated topologically simple group, and $S_1 \cdots S_d$ is a dense subgroup of $M$.
	
We argue by contradiction and assume that the $G$-action on $\Omega_G$ is not faithful. In particular $M$ acts trivially on $\Omega_G$.

For each $i$, we define  
$$\omega_i \colon \LC(G) \to \{0, 1\} : [K] \mapsto \left\{
\begin{array}{ll}
1 & \text{if } S_i \cap  C_M(K) = 1\\
0 & \text{otherwise.}
\end{array}
\right.$$

Since $M$ acts trivially on $\LC(G)$, for every $[K] \in \LC(G)$ the group $\QC_M(K)$ is equal to $C_M(K)$ and is a closed normal subgroup of $M$ by \cite[Theorem~3.19]{CRW-part1}. In particular since $S_i$ is a minimal normal subgroup  of $M$, either $S_i \leq C_M(K)$ or  $S_i$ intersects $C_M(K)$ trivially. So $\omega_i([K]) = 0$ if and only if $S_i$ commutes with $K$, if and only if $S_i \cap K = 1$ \cite[Theorem~3.19]{CRW-part1}. 

We claim that $\omega_i$ is a homomorphism of Boolean algebras. The previous paragraph implies that  $\omega_i([K]^\perp) = \omega_i([K])^\perp$. Hence it is enough to check that for all $[H], [K] \in \LC(G)$, $\omega_i([H] \wedge [ K]) = \omega_i([H\cap K])$ is equal to $\omega_i([H]) \wedge \omega_i([K])$. Clearly if $\omega_i([H]) = 0$ or $\omega_i([K]) = 0$, i.e. if $S_i$ commutes with $H$ or $S_i$ commutes with $K$, then $S_i$ commutes with $H \cap K$ and $\omega_i([H\cap K])=0$. Moreover if $\omega_i([H]) = \omega_i([K]) = 1$, then $S_i \cap H \cap K \neq 1$. Indeed otherwise $S_i \cap H$ would commute with $K$ by \cite[Theorem~3.19]{CRW-part1}, and hence $S_i \cap H$ would be abelian because by assumption $S_i$ commutes with $C_G(K)$ since $\omega_i([K]) = 1$. Hence $S_i \cap H$ would be an abelian locally normal subgroup of $S_i$, which is non-trivial because $\omega_i([H]) =1$. This contradicts \cite[Theorem 5.3]{CRW-part2}. Hence $S_i \cap H \cap K \neq 1$, and $\omega_i([H\cap K])=1$. This shows the claim. 

The claim implies that $\omega_i$ represents a point in the Stone space of $\LC(G)$, which is $\Omega_G$. 
For $[K] \in \LC(G)$, $K \neq 1$, the subgroup $K$ cannot commute will all the $S_i$, because otherwise $K$ would commute in $M$ and $M$ has trivial centralizer. Hence there exists $i$ such that $\omega_i([K]) = 1$. This means that every non-empty clopen subset of $\Omega_G$ contains a point $\omega_i$ for some $i$. It follows that $\Omega_G = \{\omega_1, \dots, \omega_d\}$. This implies that $\LC(G)$ is finite, a contradiction. Therefore,  the $G$-action  on $\Omega_G$ is indeed faithful.

Let now $X$ be a  totally disconnected compact $G$-space   on which the $G$-action is faithful and micro-supported. By Theorem \ref{thm-LCG-MS}, the space $X$ is a factor of $\Omega_G$. Equivalently, the centralizer lattice $\LC(G)$ contains a $G$-invariant Boolean subalgebra $\mathcal A$ whose Stone dual is $G$-equivariantly homeomorphic to $X$. We may thus identify the clopen subsets of $X$ with the elements of $\mathcal A$. Since the $G$-action on $X$ is faithful, so is the action on $\mathcal A$. 

For every non-empty clopen $\alpha$ of $X$, the rigid stabilizer $\rist_G(\alpha)$ is a non-discrete closed locally normal subgroup of $G$. In view of Proposition~\ref{prop:[A]-semisimple}, we may invoke \cite[Proposition~7.1.2(i)]{CRW_dense} which ensures that $\rist_G(\alpha) \cap M = \rist_M(\alpha)$ is non-discrete. This proves \ref{it:Dyn1}.

By \cite[Proposition~7.3.1]{CRW_dense}, the $G$-action on $X$ has a compressible clopen subset, thus \ref{it:Dyn2} holds. 

To prove \ref{it:Dyn3} and \ref{it:Dyn4},  we let $X_i$ be the complement of the fixed-point-set of $S_i$ in $X$, for each $i \in \{1, \dots, d\}$. Since the $G$-action on $X$ is faithful, so is the $M$-action, and we deduce that $X_i$ is non-empty. Moreover $M$ is  [A]-semisimple by Proposition~\ref{prop:[A]-semisimple}. In view of \ref{it:Dyn1}, we may therefore  invoke \cite[Theorem~6.19]{CRW-part2} for the $M$-action on $X$ (the latter result requires that $M$ is \textit{locally C-stable}, which is a consequence of  [A]-semisimplicity by    \cite[Proposition~6.17]{CRW-part1}). This ensures that the 
$X_i$ are pairwise disjoint. 

We next claim that for each $i \in \{1, \dots, d\}$, the $S_i$-action on $\overline{X_i}$ is micro-supported and has a compressible clopen subset. Indeed, applying   \cite[Lemma~6.11 and Theorem~6.19(i)]{CRW-part2} to the $M$-action on $X$, we obtain,  for each $i$, a non-empty clopen $\alpha_i \subset X_i$ such that for every non-empty clopen $\beta$ in $X$, there exists $i \in \{1, \dots, d\}$ and $g \in M$ with $g\alpha_i \subset \beta$. 

Let us now fix some index $j \in \{1, \dots, d\}$ and a non-empty clopen $\beta$ is contained in $\overline{X_j}$. Notice that $\beta \cap X_k = \varnothing$ for all $
k \neq j$. By the previous paragraph there exists an index $i \in \{1, \dots, d\}$, a non-empty clopen subset $\alpha_i \subset X_i$  and an element  $g \in M$ with $g\alpha_i \subset \beta$. 
Since the $M$-action is continuous and since the product $S_1\dots S_d$ is dense in $M$, we may assume that $g \in S_1\dots S_d$. Let us write $g = g_1 \dots g_d$ with $g_k \in S_k$. In particular the elements $g_k$ commute pairwise. Since $\beta \cap X_k = \varnothing$ for all $
k \neq j$, it follows that $\beta$ is pointwise fixed by $g_k$ for all $k \neq j$. Therefore we have   $g_j \alpha_i \subset (\prod_{k \neq j} g_k^{-1}) \beta  = \beta$.  This implies that $i = j$, thereby   proving that $\alpha_j$ is a compressible clopen subset for the $S_j$-action on the closure $\overline{X_j}$. 

Notice that $\rist_M(\alpha_i)$ is a non-trivial closed locally normal subgroup of $M$. Using \cite[Theorem~6.19(iii)]{CRW-part2}, we deduce that  $\rist_{S_i}(\alpha_i)$ is non-discrete. Since $\alpha_i$ is a compressible clopen subset for the $S_i$-action on  $\overline{X_i}$, it follows that the $S_i$-action on  $\overline{X_i}$ is micro-supported. This proves the claim. 

Since $S_i$ is a compactly generated, non-discrete, topologically simple tdlc group, we may invoke \cite[Theorem~J(ii)]{CRW-part2}, which ensures that the $S_i$-action on  $\overline{X_i}$ is minimal and strongly proximal. In particular $S_i$ does not fix any point of $\overline{X_i}$. Therefore $X_i = \overline{X_i}$, thereby confirming that $X_i$ is clopen in $X$. Since the $M$-action on $X$ is micro-supported by \ref{it:Dyn2}, and since $M$ acts trivially on the open set $X \setminus (\bigcup_{i=1}^d X_i)$, we deduce that   $X = \bigcup_{i=1}^d X_i$. Thus \ref{it:Dyn4} holds. 

We finally observe that the clopen subsets $X_i$ are the minimal non-empty $M$-invariant closed subsets of $X$. In particular every non-empty $G$-invariant closed subset contains $X_i$ for some $i$. Since $M$ is the monolith of $G$, the $G$-action by conjugation permutes the set $\{S_1, \dots, S_d\}$ transitively. Therefore the $G$-action on $X$ permutes the sets $\{X_1, \dots, X_d\}$ transitively. In view of  \ref{it:Dyn4}, we deduce that \ref{it:Dyn3} holds.
\end{proof}

\begin{rmq}
We emphasize that the compact generation of the monolith is essential for assertion~\ref{it:Dyn3} in Theorem~\ref{thm:Stone-dynamics}. An example illustrating this matter of fact is provided by the group $G = \mathrm{Ner}(T)_\xi$, where $T$ is the $d$-regular tree, $d \geq 3$, $\xi$ is an end of $T$, and  $\mathrm{Ner}(T)$ is the Neretin group of $T$ (which acts on $\partial T$). The $G$-action on $\partial T$ is faithful and micro-supported. Moreover, the group $G$ is monolithic, but its monolith is not compactly generated. The $G$-action on $\partial T$ has a compressible clopen subset, but it is not minimal as it fixes $\xi$.
\end{rmq}

\section{Dense embeddings of groups with a micro-supported action} \label{sec-embed-micro}

In the sequel when $\Gamma$ is a group acting on a topological space $X$, we will denote by $\Gamma_x$ the stabilizer of $x \in X$ in $\Gamma$, and by $\Gamma_x^0$ the elements $\gamma \in \Gamma$ such that $\gamma$ fixes pointwise a neighbourhood of $x$ in $X$. Observe that $\Gamma_x^0$ is a normal subgroup of $\Gamma_x$. 

\subsection{The general setting} \label{sec:GeneralSetting}

The setting of the current section will be the following:

\begin{enumerate}[label=\alph*)]
	\item $L$ is a locally compact group, and $X$ is a compact $L$-space such that the action of $L$ on $X$ is faithful, minimal and micro-supported.
	\item  $\varphi: L \to G$ is a continuous injective homomorphism from $L$ into a second countable tdlc group $G$, such that $\varphi(L)$ is dense in $G$.
\end{enumerate}

We emphasize that, while the group $G$ is typically non-discrete, the group $L$ is allowed to be discrete. While the applications in Section \ref{sec-applic}  that will be obtained in \S \ref{subsec-full-gp}-\ref{subsec-branch}-\ref{subsec-circle} only deal with the case where $L$ is a discrete group, the setting of \S \ref{subsec-dense-LC} requires $L$ to be non-discrete, whence this choice for the current section. Notice that if the group $L$ is generated by a subset $S$, then $G$ is generated by $\varphi(S)$ together with any neighbourhood of the identity because $\varphi(L)$ is dense in $G$. In particular if $L$ is compactly generated then $G$ is compactly generated as well.

\subsection{Construction of a URS in $G$} \label{subsec-URS}

Let $G$ be a locally compact group, and denote by $\sub(G)$ the space of closed subgroups of $G$, endowed with the Chabauty topology. The space $\sub(G)$ is compact, and is metrizable  if $G$ is second countable. A  closed minimal $G$-invariant subspace of $\sub(G)$ is called a \textbf{uniformly recurrent subgroup} (URS) of $G$ \cite{GW-urs}. 

Let $X$ be a compact space. Recall that a map $\psi \colon X \to \sub(G)$ is \textbf{lower semi-continuous} if for every open subset $U \subset G$, the set of $x \in X$ such that $\psi(x) \cap U \neq \varnothing$ is open in $X$. This is equivalent to saying that for every net $(x_i)$ in $X$ converging to $x$ and such that the net $(\psi(x_i))$ converges to $H$, one has $\psi(x) \subseteq H$. 

In the sequel we consider $L,X,\varphi,G$ as in the general setting. Observe that the group $L$ has a continuous conjugation action on $\sub(G)$ defined via $\varphi$.

%and the map $x \mapsto \overline{\varphi(L_x^0)}$ is clearly 

\begin{lem} \label{lem-Gammax0-lsc}
For $L,X,\varphi,G$ as in the general setting, the map $\psi : X \to \sub(G)$, $x \mapsto \overline{\varphi(L_x^0)}$, is lower semi-continuous and $L$-equivariant.
\end{lem}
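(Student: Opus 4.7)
The plan is to verify the two required properties separately, using the interpretation that $L_x^0$ denotes the subgroup of $g \in L$ that fix some open neighbourhood of $x$ pointwise (i.e.\ the ``germ stabilizer'' at $x$).

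For $L$-equivariance, conjugation by $h \in L$ sends an open neighbourhood $V$ of $x$ on which $g$ acts trivially to the open neighbourhood $hV$ of $hx$ on which $hgh^{-1}$ acts trivially, yielding the set-theoretic identity $h L_x^0 h^{-1} = L_{hx}^0$. Applying the homomorphism $\varphi$ and taking closures gives $\varphi(h) \overline{\varphi(L_x^0)} \varphi(h)^{-1} = \overline{\varphi(L_{hx}^0)}$. Since by construction the $L$-action on $\sub(G)$ is $h \cdot K = \varphi(h) K \varphi(h)^{-1}$, this reads $h \cdot \psi(x) = \psi(hx)$, as required.

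For lower semi-continuity I would use the net characterization recalled just before the lemma. Suppose $(x_i)$ is a net in $X$ with $x_i \to x$ and $\psi(x_i) \to H$ in $\sub(G)$; the goal is $\psi(x) \subseteq H$. Since $H$ is closed, it suffices to prove $\varphi(L_x^0) \subseteq H$. Fix $g \in L_x^0$ and choose an open neighbourhood $V$ of $x$ that $g$ fixes pointwise. The convergence $x_i \to x$ guarantees $x_i \in V$ eventually, and $V$ is then a neighbourhood of $x_i$ fixed pointwise by $g$; hence $g \in L_{x_i}^0$ and $\varphi(g) \in \psi(x_i)$ for all sufficiently large $i$. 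The Chabauty convergence $\psi(x_i) \to H$ then forces $\varphi(g) \in H$, because any element lying in $\psi(x_i)$ past some index is itself a limit of elements in $\psi(x_i)$ and therefore belongs to $H$.

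No substantive obstacle is anticipated: the argument is the routine germ-stabilizer URS construction, and rests only on the continuity of the $L$-action on $X$ together with elementary properties of the Chabauty topology. The one point that deserves care is that the semicontinuity hinges on the definition of $L_x^0$ using an \emph{open} neighbourhood of $x$; indeed for the full point stabilizer $L_x$ the analogous statement typically fails, which is precisely the reason for passing to $L_x^0$.
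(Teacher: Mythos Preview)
Your proof is correct and follows essentially the same approach as the paper's. The only cosmetic difference is that you use the net characterization of lower semi-continuity (recalled just before the lemma), whereas the paper uses the equivalent open-set formulation directly; in both cases the key observation is identical, namely that an element $g$ fixing a neighbourhood $V$ of $x$ pointwise satisfies $g \in L_y^0$ for every $y \in V$.
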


\begin{proof}
Let $x \in X$ and $U$ an open subset of $G$ such that $\psi(x) \cap U \neq \varnothing$. Then $\varphi(L_x^0) \cap U \neq \varnothing$, and if $\gamma \in L_x^0$ is such that $\varphi(\gamma) \in U$, then by definition of $L_x^0$ there is an open neighbourhood $V$ of $x$ in $X$ on which $\gamma$ acts trivially. Then $\gamma \in L_y^0$ for every $y \in V$, and it follows that $\varphi(L_y^0) \cap U \neq \varnothing$ for every $y \in V$. This shows lower semi-continuity. That $\psi$ is $L$-equivariant follows from the definitions.
\end{proof}

In the sequel we  denote by $X_\varphi \subseteq X$ the set of points where the map $x \mapsto \overline{\varphi(L_x^0)}$ is continuous. Since the group $G$ is second countable, the space $\sub(G)$ is metrizable, hence by a general property of semi-continuity \cite[Th.~VII]{Kuratowski1928}, it follows from Lemma \ref{lem-Gammax0-lsc} that $X_\varphi$ is a dense subset of $X$.  In the sequel we write

% Note that for simplicity the notation $X^{(0)}$ does not explicitely mention $\varphi, L,G$, but these will be clear from the context. 

\[ F_{\varphi}(X) := \overline{ \left\{ \left( x, \overline{\varphi(L_x^0)}\right)  \, : \, x \in X \right\}} \subseteq X \times \sub(G), \]

\[ E_{\varphi}(X) := \overline{ \left\{ \left( x, \overline{\varphi(L_x^0)}\right)  \, : \, x \in X_\varphi \right\}} \subseteq F_{\varphi}(X), \]

\[ T_{\varphi,G}(X) := \overline{ \left\{ \overline{\varphi(L_x^0)} \, : \, x \in X \right\}} \, \, \text{and} \, \, S_{\varphi,G}(X) := \overline{ \left\{ \overline{\varphi(L_x^0)} \, : \, x \in X_\varphi \right\}}. \]

%\[ S_{\varphi,G}(X) := \overline{ \left\{ \overline{\varphi(L_x^0)} \, : \, x \in X_\varphi \right\}} \subseteq T_{\varphi,G}(X). \]

Note that $L$ acts diagonally on  $X \times \sub(G)$, and $ F_{\varphi}(X)$ and $E_{\varphi}(X)$ are closed and $L$-invariant. In the following statement we denote respectively by $p_1,p_2$ the projections from $ X \times \sub(G)$ to the first and second factor. Recall that an extension $\pi: Y \to X$ between minimal compact $G$-spaces is \textbf{almost one-to-one} if the set of $y \in Y$ such that $\pi^{-1}(\pi(y)) = \{y\}$ is dense in $Y$. Note that every almost one-to-one extension is highly proximal.

% then one has $p_1(E_{\varphi}(X)) = X$ and $p_2(E_{\varphi}(X)) = S_{\varphi,G}(X)$.

\begin{prop} \label{prop-def-S-phi-G}
	The following hold:
	\begin{enumerate}[label=(\roman*)]
		\item $E_{\varphi}(X)$ is the unique minimal closed $L$-invariant subset of $F_{\varphi}(X)$, and $S_{\varphi,G}(X)$ is the unique minimal closed $G$-invariant subset of $T_{\varphi,G}(X)$. In particular  $S_{\varphi,G}(X)$ is a URS of $G$.
		\item The extension $p_1 : E_{\varphi}(X) \to X$ is almost one-to-one. Moreover, one has $p_2(E_{\varphi}(X)) = S_{\varphi,G}(X)$.
	\end{enumerate}

\end{prop}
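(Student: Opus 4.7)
The plan is to exploit the lower semi-continuity of $\psi \colon X \to \sub(G)$, $x \mapsto \overline{\varphi(L_x^0)}$, established in Lemma \ref{lem-Gammax0-lsc}, together with its continuity at points of $X_\varphi$. First I would verify that $X_\varphi$ is $L$-invariant: given $\gamma \in L$ and a net $y_i \to \gamma x$ with $x \in X_\varphi$, the net $\gamma^{-1} y_i$ converges to $x$, so $\psi(\gamma^{-1} y_i) \to \psi(x)$ by continuity; applying conjugation by $\varphi(\gamma)$ (which is continuous on $\sub(G)$) and using $L$-equivariance then gives $\psi(y_i) \to \psi(\gamma x)$. In particular $\{(x,\psi(x)) : x \in X_\varphi\}$, and hence $E_\varphi(X)$ and $S_{\varphi,G}(X)$, are $L$-invariant. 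The key elementary observation is: if $(x, H) \in F_\varphi(X)$ and $x \in X_\varphi$, then $H = \psi(x)$; indeed writing $(x, H) = \lim (x_i, \psi(x_i))$, continuity of $\psi$ at $x$ forces $\psi(x_i) \to \psi(x)$, so $H = \psi(x)$. Thus the fiber of $F_\varphi(X)$ above each $x \in X_\varphi$ is the singleton $\{(x, \psi(x))\}$, and coincides with the corresponding fiber of $E_\varphi(X)$.

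For the minimality statement in (i), let $E' \subseteq F_\varphi(X)$ be any non-empty closed $L$-invariant subset. Then $p_1(E')$ is closed and $L$-invariant in $X$, hence equals $X$ by minimality of the $L$-action. For every $x \in X_\varphi$, $E'$ thus contains some pair $(x,H)$, which by the singleton property must be $(x,\psi(x))$. Since $E'$ is closed, it follows that $E' \supseteq \overline{\{(x,\psi(x)) : x \in X_\varphi\}} = E_\varphi(X)$. Specialising to $E' \subseteq E_\varphi(X)$ shows that $E_\varphi(X)$ is itself $L$-minimal; the same inclusion applied to an arbitrary $L$-minimal subset of $F_\varphi(X)$ shows that $E_\varphi(X)$ is the unique such subset.

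To transfer this to the $\sub(G)$-side, I would observe that $T_{\varphi,G}(X)$ and $S_{\varphi,G}(X)$ are closed $\varphi(L)$-invariant subsets of $\sub(G)$ under conjugation, hence $G$-invariant since $\varphi(L)$ is dense in $G$ and the conjugation action of $G$ on $\sub(G)$ is continuous. Given a non-empty closed $G$-invariant subset $S \subseteq T_{\varphi,G}(X)$, the preimage $p_2^{-1}(S) \cap F_\varphi(X)$ is closed and $L$-invariant, hence contains $E_\varphi(X)$ by the previous paragraph. Therefore $S \supseteq p_2(E_\varphi(X))$. Since $E_\varphi(X)$ is compact and $p_2$ is continuous, $p_2(E_\varphi(X))$ is closed and equals $\overline{\psi(X_\varphi)} = S_{\varphi,G}(X)$. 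Applied to $S = S_{\varphi,G}(X)$ itself, this shows $S_{\varphi,G}(X)$ is $G$-minimal; applied to an arbitrary $S$, it shows $S_{\varphi,G}(X)$ is the unique minimal $G$-invariant closed subset of $T_{\varphi,G}(X)$, and in particular a URS of $G$.

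Part (ii) is now essentially automatic. The identity $p_2(E_\varphi(X)) = S_{\varphi,G}(X)$ has just been verified, and the almost one-to-one character of $p_1 \colon E_\varphi(X) \to X$ follows from the fiber singleton property, since the set $\{(x,\psi(x)) : x \in X_\varphi\}$ of points of $E_\varphi(X)$ with singleton $p_1$-fiber is dense in $E_\varphi(X)$ by the very definition of the latter. I expect the main conceptual point to lie in the fiber singleton observation combined with minimality of the $L$-action on $X$; once these are in hand the remainder reduces to formal manipulation of closed invariant subsets and density of $\varphi(L)$ in $G$.
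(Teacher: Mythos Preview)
Your proof is correct and self-contained. The paper itself simply refers to Theorem~2.3 in \cite{Glasner-compress} for the semi-continuity argument, adding only the observation that $S_{\varphi,G}(X)$ is $G$-invariant (not just $L$-invariant) by density of $\varphi(L)$ in $G$; your argument spells out explicitly the standard reasoning behind that citation.
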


% It is also lower semi continuous by Lemma \ref{lem-Gammax0-lsc}, so it follows that there exists $X_\varphi \subseteq X$ such that $S_{\varphi,G}(X)$ is the unique minimal $L$-invariant closet subset of $\overline{ \left\{ \overline{\varphi(L_x^0)} \, : \, x \in X \right\}}$. Since $\varphi$ has dense image in $G$, $S_{\varphi,G}(X)$ must be $G$-invariant. 

\begin{proof}
See Theorem 2.3 in \cite{Glasner-compress}. The only additional observation that is needed here is that $S_{\varphi,G}(X)$ is indeed $G$-invariant, but this is clear since $S_{\varphi,G}(X)$ is $L$-invariant and $L$ has dense image in $G$.
\end{proof}

%\begin{defin} \label{def-action-admissible}
%	Let $L$ be a locally compact group, and $X$ a compact $L$-space. 	The action of $L$ on $X$ is \textbf{admissible} if for every closed subgroup $H \leq L$ such that the normalizer of $H$ in $L$ has finite index and such that there exists $x \in X$ with $L_x^0 \leq H$, then $H$ must be cocompact in $L$.
%\end{defin}

\subsection{Conditions ensuring that $S_{\varphi,G}(X)$ is infinite}

The goal of this section is to exhibit certain conditions ensuring that the space $S_{\varphi,G}(X)$ constructed in Section \ref{subsec-URS} is not degenerate (i.e.\ is not a finite set), and also conditions ensuring that the $G$-action  on $S_{\varphi,G}(X)$ is faithful.

The following key lemma is the starting point of our discussion. 

\begin{lem}\label{lem:Key}
	Let $\Gamma$ be a group with a faithful action on a Hausdorff space $X$. Let also $\varphi \colon \Gamma \to G$ be an injective homomorphism to a locally compact group $G$. 
	
	Let $x \in X$ and $H$ be a subgroup of the centralizer $C_G(\varphi(\Gamma_x^0))$. If the closed subgroup $J = \overline{\varphi(\Gamma_x^0)H} \leq G$ is compactly generated, then there is a non-empty open subset $V \subset X$ such that $\overline{\varphi(\rist_\Gamma(V))} \leq B(J)$.
\end{lem}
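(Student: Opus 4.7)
The strategy is to pick $V$ small enough that every $\gamma \in \rist_\Gamma(V)$ commutes with a finite set of elements of $\varphi(\Gamma_x^0) H$ which cocompactly generates $J$; this forces $\varphi(\gamma)$ to have cocompact centralizer in $J$, hence a compact $J$-conjugacy class, hence membership in $B(J)$. (The proof is carried out in the tdlc setting implicit throughout the paper.)

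The key preliminary reduction is that for any non-empty open $V \subseteq X$ with $x \notin \overline V$, every $\gamma \in \rist_\Gamma(V)$ fixes pointwise the open neighborhood $X \setminus \overline V$ of $x$, so $\rist_\Gamma(V) \subseteq \Gamma_x^0$ and $\varphi(\gamma)$ is centralized by $H$. In the degenerate case where $x$ is isolated in $X$ one can take $V = \{x\}$: then $\rist_\Gamma(V) = \{1\}$ and the conclusion is trivial. So assume $x$ is not isolated.

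To exploit the compact generation of $J$, fix a compact open subgroup $U \leq J$ together with a finite symmetric subset $T \subseteq J$ which is a union of $U$-$U$ double cosets and generates $J$ along with $U$; standardly $UT = TU$, whence $J = \langle T \rangle U$. Because $H$ centralizes $\varphi(\Gamma_x^0)$, the product $\varphi(\Gamma_x^0) H$ is a subgroup of $G$, dense in $J$ by definition. Pick a finite symmetric set $\tilde T = \{\varphi(\delta_1) h_1, \dots, \varphi(\delta_n) h_n\} \subseteq \varphi(\Gamma_x^0) H \cap T$ containing one element of every left $U$-coset of $T$, so that $\tilde T U = T$. Then $U \tilde T \subseteq UT = T = \tilde T U$, whence by induction on word length $U \langle \tilde T \rangle \subseteq \langle \tilde T \rangle U$; consequently $\langle \tilde T \rangle U$ is a subgroup of $J$ containing both $T$ and $U$, and therefore $\langle \tilde T \rangle U = J$.

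Each $\delta_i$ fixes pointwise an open neighborhood $W_i$ of $x$, and $W = \bigcap_{i=1}^n W_i$ is again such a neighborhood. Since $x$ is not isolated there exists $y \in W \setminus \{x\}$; by Hausdorffness, pick disjoint open sets $V_1 \ni y$ and $V_2 \ni x$ and set $V = V_1 \cap W$: this is a non-empty open subset of $W$ with $\overline V \subseteq X \setminus V_2$, so $x \notin \overline V$. The inclusion $V \subseteq W_i$ forces $\delta_i$ to fix $V$ pointwise, hence $\delta_i(V) = V$ and $\delta_i(X \setminus V) = X \setminus V$. For $\gamma \in \rist_\Gamma(V)$, $\gamma$ fixes $X \setminus V$ pointwise and preserves $V$ setwise; a case-check on $z \in V$ versus $z \in X \setminus V$ then yields $\gamma \delta_i(z) = \delta_i \gamma(z)$ for every $z \in X$, so $\gamma$ and $\delta_i$ commute in $\Gamma$ by faithfulness. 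Together with the fact that $H$ centralizes $\varphi(\gamma)$, this shows $\varphi(\gamma)$ commutes with every $\varphi(\delta_i) h_i$, hence with $\langle \tilde T \rangle$. Therefore $C_J(\varphi(\gamma)) \cdot U \supseteq \langle \tilde T \rangle U = J$, so $C_J(\varphi(\gamma))$ is cocompact in $J$, the quotient $J/C_J(\varphi(\gamma))$ is compact, and the $J$-conjugacy class of $\varphi(\gamma)$---the continuous image of this compact quotient under $j\, C_J(\varphi(\gamma)) \mapsto j\varphi(\gamma)j^{-1}$---is compact. Hence $\varphi(\gamma) \in B(J)$ for every $\gamma \in \rist_\Gamma(V)$, and since $B(J)$ is closed by Theorem~\ref{thm-b(g)-closed} we conclude $\overline{\varphi(\rist_\Gamma(V))} \leq B(J)$. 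The most delicate step is the construction of $\tilde T$ with $\langle \tilde T \rangle U = J$: it relies essentially on the double-coset invariance $UTU = T$, since a naive coset-by-coset perturbation of $T$ inside the dense subgroup would not in general give a subgroup of the form $\langle \tilde T \rangle U$.
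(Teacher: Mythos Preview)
Your proof is correct (in the tdlc setting you assume), but the route is genuinely different from the paper's. The paper argues via the Chabauty topology: writing $\Gamma_x^0 = \bigcup_{C \in \mathcal F_x} \rist_\Gamma(C)$ as a directed union over closed sets $C \not\ni x$, the closures $\overline{\varphi(\rist_\Gamma(C))H}$ form a net converging to $J$ in $\sub(J)$, and compact generation of $J$ gives a Chabauty neighbourhood of cocompact subgroups, so some $\overline{\varphi(\rist_\Gamma(C))H}$ is cocompact; then one chooses $V$ disjoint from $C$ with $x \notin \overline V$, so that $\rist_\Gamma(V)$ centralizes $\rist_\Gamma(C)$ and lies in $\Gamma_x^0$. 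Your approach replaces this soft compactness argument by an explicit construction: you perturb a finite double-coset generating set $T$ for $(J,U)$ to a finite set $\tilde T \subset \varphi(\Gamma_x^0)H$ with $\langle \tilde T\rangle U = J$, and then take $V$ inside the common fixed neighbourhood of the finitely many $\delta_i$. The paper's argument is shorter and works for arbitrary locally compact $G$ (no compact open $U$ is needed); yours is more elementary in that it avoids Chabauty space entirely and makes the cocompact centralizing subgroup completely explicit.

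Two minor remarks. First, the symmetry of $\tilde T$ is asserted but not actually needed (and not obviously arrangeable as stated): your key inclusion $U\tilde T^{-1} \subseteq \tilde T U$ follows from $T = T^{-1}$ and $\tilde T U = T = UT$ alone. Second, the appeal to Theorem~\ref{thm-b(g)-closed} at the end is superfluous: since every $\varphi(\gamma)$ centralizes the \emph{same} cocompact subgroup $\langle \tilde T\rangle$, so does the closure $\overline{\varphi(\rist_\Gamma(V))}$, which places it in $B(J)$ directly.
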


\begin{proof}
	Let us denote by $\mathcal{F}_x$ the family of closed subsets of $X$ not containing $x$. By definition one has 
	\[ \Gamma_x^0 = \bigcup_{C \in \mathcal{F}_x} \rist_\Gamma(C). \] 
	Since $\overline{\varphi(\Gamma_x^0)H} = J$, we deduce that $J$ belongs to the closure in $\sub(J)$ of the set 
	\[ \left\{ \overline{\varphi( \rist_\Gamma(C))H} \, : \, C \in \mathcal{F}_x \right\}. \] 
	By hypothesis, the group $J$ is compactly generated, and thus it admits a Chabauty neighbourhood consisting of cocompact subgroups (see \cite[VIII.5.3, Proposition 6]{Bourb-int-7-8}). Therefore we may find $C \in \mathcal{F}_x$ such that $\overline{\varphi( \rist_\Gamma(C))H}$ is cocompact in $J$. Let $U$ be the complement of $C$ in $X$. Since $X$ is Hausdorff, we may find a non-empty open subset $V \subset U$ such that  $U \setminus V$ is a neighbourhood of $x$. Then  $\rist_\Gamma(V)$ centralizes $\rist_\Gamma(C)$ because $V$ and $C$ are disjoint. Moreover, since $H$  centralizes $ \varphi(\Gamma_x^0)$ by hypothesis, and since $\varphi(\rist_\Gamma(V)) \leq \varphi(\Gamma_x^0)$, we deduce that $\overline{\varphi(\rist_\Gamma(V))}$ centralizes $\overline{\varphi( \rist_\Gamma(C))H}$. Since the latter is cocompact in $J$, it follows that $\overline{\varphi(\rist_\Gamma(V))} \leq B(J)$.
\end{proof}

\begin{prop} \label{prop-urs-phi-nontrivial-1}
	Let $L,X,\varphi,G$ as in the general setting, and assume that $G$ is compactly generated. Suppose also that the action of $L$ on $X$ has the property that for every closed subgroup $H \leq L$ such that the normalizer of $H$ in $L$ has finite index and such that there exists $x \in X$ with $L_x^0 \leq H$, then $H$ must be cocompact in $L$.

	If $S_{\varphi,G}(X)$ is finite, then there exists a non-empty open subset $V \subset X$ such that $\overline{\varphi(\rist_L(V))} \leq B(G)$. 
\end{prop}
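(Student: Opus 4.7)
The plan is to reduce the problem to the Key Lemma (Lemma~\ref{lem:Key}) by producing a closed subgroup of the form $J = \overline{\varphi(L_x^0) \cdot H}$, with $H \leq C_G(\varphi(L_x^0))$, that is both compactly generated and cocompact in $G$. Cocompactness then lets us upgrade the conclusion of the Key Lemma from $B(J)$ to $B(G)$.

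First I would pick a point $x \in X_\varphi$, which exists since $X_\varphi$ is dense in $X$, and set $K := \overline{\varphi(L_x^0)}$. By construction $K \in S_{\varphi,G}(X)$, and since this URS is assumed finite and is $G$-invariant by Proposition~\ref{prop-def-S-phi-G}, the normalizer $N_G(K)$ has finite index in $G$. Being closed of finite index, it is open in the tdlc group $G$, and since $\varphi(L)$ is dense in $G$, the preimage $\varphi^{-1}(N_G(K))$ is of finite index in $L$.

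Next, introduce $J := \overline{K \cdot C_G(K)}$ and $\tilde H := \varphi^{-1}(J)$. As $N_G(K)$ normalizes both $K$ and $C_G(K)$, it normalizes $J$, so $\tilde H$ is normalized by the finite index subgroup $\varphi^{-1}(N_G(K))$; in particular $N_L(\tilde H)$ has finite index in $L$. Moreover $L_x^0 \leq \tilde H$ since $\varphi(L_x^0) \subseteq K \subseteq J$. The standing hypothesis on the $L$-action on $X$ thus applies to $\tilde H$ and forces it to be cocompact in $L$. Writing $L = C \tilde H$ for some compact $C \subset L$, density of $\varphi(L)$ then yields
\[ G = \overline{\varphi(L)} = \overline{\varphi(C)\,\varphi(\tilde H)} \subseteq \varphi(C) \cdot J, \]
so that $J$ itself is cocompact in $G$. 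Being a closed cocompact subgroup of a compactly generated group, $J$ is itself compactly generated.

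The Key Lemma, applied with $H = C_G(K)$, now produces a non-empty open $V \subset X$ with $\overline{\varphi(\rist_L(V))} \leq B(J)$. To conclude, one observes that $B(J) \subseteq B(G)$ whenever $J$ is cocompact in $G$: writing $G = K_0 J$ with $K_0$ compact, any $G$-conjugate of $j \in J$ has the form $k (h j h^{-1}) k^{-1}$ with $k \in K_0$ and $h \in J$, and hence lies in $K_0 \cdot (\text{$J$-conjugacy class of } j) \cdot K_0^{-1}$, which is relatively compact whenever the $J$-conjugacy class of $j$ is. Combining yields $\overline{\varphi(\rist_L(V))} \leq B(G)$, as required. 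The main subtlety in this outline is the correct choice of $J$: it has to be simultaneously small enough to centralize the rigid stabilizer that the Key Lemma will produce, and large enough to be cocompact in $G$. The coincidence works because the finiteness of $S_{\varphi,G}(X)$ makes $N_G(K)$, and hence the associated $\tilde H$, as ``normal'' as possible, precisely in the way the hypothesis on the $L$-action on $X$ demands.
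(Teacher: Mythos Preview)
Your proof is correct and follows essentially the same route as the paper's: use finiteness of the URS to get a finite-index normalizer, invoke the hypothesis on the $L$-action to force cocompactness, deduce compact generation, apply the Key Lemma, and upgrade from $B(J)$ to $B(G)$ via cocompactness. The only difference is that you enlarge $K = \overline{\varphi(L_x^0)}$ to $J = \overline{K\,C_G(K)}$ before applying the hypothesis, whereas the paper applies it directly to $\varphi^{-1}(K)$ and then invokes Lemma~\ref{lem:Key} with the trivial choice $H = 1$; since $K$ itself already has finite-index normalizer and contains $\varphi(L_x^0)$, the detour through the centralizer is unnecessary.
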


\begin{proof}
By assumption there exists a closed subgroup $J \leq G$ such that $N_G(J)$ has finite index in $G$ and \[ S_{\varphi,G}(X) = \left\{J,g_1Jg_1^{-1}, \ldots,g_n J g_n^{-1}\right\} \] is the conjugacy class of $J$ in $G$. Let $H = \varphi^{-1}(J)$. By definition of $S_{\varphi,G}(X)$ we may find $x \in X$ such that $\overline{\varphi(L_x^0)} = J$, so in particular $L_x^0 \leq H$. Moreover the normalizer of $H$ in $L$ has finite index in $L$, so it follows from our assumption that $H$ must be cocompact in $L$. Since $L$ has dense image in $G$, this implies that $\overline{\varphi(H)}$ is cocompact in $G$. It follows that $J$ is also cocompact in $G$, so in particular $J$ is compactly generated. The conclusion follows from Lemma~\ref{lem:Key} using the fact that $B(J) \leq B(G)$ because $J$ is cocompact in $G$.
\end{proof}

\begin{defin}
	A group $L$ is \textbf{just non virtually nilpotent} (j.n.v.n.) if $L$ is not virtually nilpotent and every proper quotient of $L$ is virtually nilpotent. 
\end{defin}

\begin{defin}
	The action of $L$ on a minimal compact $L$-space $X$ is \textbf{strongly just-infinite} if for every distinct points $x, y$ in $X$, the subgroup of $L$ generated by $L_x^0$ and $L_y^0$ has finite index in $L$.
\end{defin}

Note that this condition implies that for every non-injective $L$-map $X \to Y$ that is a factor, the space $Y$ must be finite. This explains the choice of terminology.

\begin{prop} \label{prop-urs-phi-nontrivial-2}
Let $L,X,\varphi,G$ as in the general setting. Assume that $L$ is j.n.v.n., that the action of $L$ on $X$ is strongly just-infinite, and that the group $G$ is compactly generated. If $S_{\varphi,G}(X)$ is finite, then $G$ is compact-by-discrete.
\end{prop}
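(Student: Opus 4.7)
The plan is to verify the hypothesis of Proposition~\ref{prop-urs-phi-nontrivial-1}, extract an open subset $V$ with $\overline{\varphi(\rist_L(V))} \leq B(G)$, and then argue that $N := \varphi^{-1}(B(G))$ has finite index in $L$. Once this is achieved, Theorem~\ref{thm-usakov} applied to $B(G)$ will yield the conclusion.

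First I would check the hypothesis of Proposition~\ref{prop-urs-phi-nontrivial-1}. Suppose $H \leq L$ is closed with $N_L(H)$ of finite index in $L$ and $L_x^0 \leq H$ for some $x \in X$. The action being micro-supported forces $X$ to have no isolated points and hence to be infinite, so by minimality the $L$-orbit of $x$---and therefore the $N_L(H)$-orbit of $x$---is infinite. For every $g \in N_L(H)$ we have $L_{gx}^0 = g L_x^0 g^{-1} \leq g H g^{-1} = H$. Picking two distinct points $y_1, y_2$ in the $N_L(H)$-orbit of $x$, the strongly just-infinite hypothesis makes $\langle L_{y_1}^0, L_{y_2}^0\rangle \leq H$ of finite index in $L$, so $H$ has finite index and is therefore cocompact. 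Proposition~\ref{prop-urs-phi-nontrivial-1} now yields the desired open set $V$.

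Set $N = \varphi^{-1}(B(G))$; since $B(G)$ is closed by Theorem~\ref{thm-b(g)-closed} and characteristic in $G$, the set $N$ is a closed normal subgroup of $L$ containing the non-trivial subgroup $\rist_L(V)$. The crux---and main obstacle---is to show that $L/N$ is finite. Normality of $N$ already gives $\rist_L(gV) \leq N$ for every $g \in L$, and the strategy is to upgrade this to the inclusion $L_y^0 \leq N$ for every $y \in X$. This upgrade exploits the flexibility in Lemma~\ref{lem:Key}: by varying the base point and choosing the closed set $C$ carefully, the open set delivered by the construction can be placed inside essentially arbitrary opens of $X$, so that enough germ elements are captured in $N$. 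Once $L_y^0 \leq N$ for every $y$, the strongly just-infinite condition yields $\langle L_{y_1}^0, L_{y_2}^0 \rangle \leq N$ of finite index in $L$ for any two distinct points, whence $[L:N] < \infty$. The j.n.v.n.\ hypothesis intervenes here by preventing an infinite virtually nilpotent proper quotient of $L$ from escaping this chain of reductions.

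Finally, since $\varphi(L)$ is dense in $G$ and $L/N$ is finite, the quotient $G/B(G)$ is finite as well. Hence $B(G)$ is open and of finite index in $G$, and in particular compactly generated. A direct check, using that $B(G)$ is characteristic and of finite index in $G$, gives $B(B(G)) = B(G)$. Theorem~\ref{thm-usakov} applied to $B(G)$ then produces a compact open normal subgroup $K \leq B(G)$. Since $[G : N_G(K)] \leq [G : B(G)] < \infty$, the subgroup $K_0 := \bigcap_{g \in G} g K g^{-1}$ is a finite intersection of compact open subgroups of $G$, hence itself compact, open and normal in $G$, so $G$ is compact-by-discrete.
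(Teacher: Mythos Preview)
Your verification of the hypothesis of Proposition~\ref{prop-urs-phi-nontrivial-1} is correct and matches what the paper does (the paper just asserts this without spelling it out). The last part of your argument---once you have $[L:N]<\infty$---is also fine. The genuine problem is the ``crux'' step in the middle.

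You claim that from $\rist_L(gV)\leq N$ for all $g\in L$ you can upgrade to $L_y^0\leq N$ for every $y$, by ``varying the base point'' in Lemma~\ref{lem:Key}. But Lemma~\ref{lem:Key} only outputs a \emph{small} open set $V$ sitting inside the specific neighbourhood $U=X\setminus C$ of the chosen base point, where $C$ is dictated by the cocompactness step and not under your control. Elements of $L_y^0$ are supported on \emph{large} sets (complements of small neighbourhoods of $y$), and there is no mechanism here to express such elements in terms of the $\rist_L(gV)$. Your remark that j.n.v.n.\ ``prevents an infinite virtually nilpotent proper quotient'' is also off: j.n.v.n.\ says exactly that proper quotients \emph{are} virtually nilpotent, and such quotients can certainly be infinite, so this does not by itself force $L/N$ to be finite.

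The paper avoids this difficulty entirely by working with a much smaller normal subgroup. From Proposition~\ref{prop-urs-phi-nontrivial-1} it extracts a single non-trivial $\gamma\in L$ with $\varphi(\gamma)\in B(G)$, and sets $N=\overline{\langle\!\langle \varphi(\gamma)\rangle\!\rangle_G}$. Since $\varphi(\gamma)$ has relatively compact conjugacy class, $N$ is compactly generated with $N=B(N)$, so Theorem~\ref{thm-usakov} makes $N$ compact-by-discrete. Now j.n.v.n.\ is used on the quotient: $\varphi^{-1}(N)$ is a non-trivial normal subgroup of $L$, so the dense image of $L$ in $G/N$ is virtually nilpotent, hence $G/N$ is residually discrete by Proposition~\ref{prop:VirtuallyNilp}. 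A structural lemma (discrete-by-\{residually discrete\} implies residually discrete for compactly generated tdlc groups) then gives that $G$ is residually discrete, hence compact-by-discrete by Proposition~\ref{prop:SIN}. The key difference is that the paper never tries to show $B(G)$ is large; it shows a small piece of it already forces the right structure on $G$ via the quotient.
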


\begin{proof}
By hypothesis $L$ is j.n.v.n., hence it is infinite. Since the $L$-action on $X$ is minimal and faithful, we infer that  every $L$-orbit on $X$ is infinite. Therefore, the hypothesis that the $L$-action on $X$ is strongly just-infinite implies that all the assumptions of Proposition~\ref{prop-urs-phi-nontrivial-1} are satisfied. Therefore, if $S_{\varphi,G}(X)$ is finite, then Proposition~\ref{prop-urs-phi-nontrivial-1} yields an element $\gamma \in L$ such that $\varphi(\gamma) \in B(G)$. Let $N = \overline{\langle \langle \varphi(\gamma) \rangle \rangle_G}$. Thus $N$ is a compactly generated closed normal subgroup of $G$ contained in $B(G)$. It follows from Theorem~\ref{thm-usakov} that $N$ admits a compact normal subgroup $K$ such that $N/K$ is discrete and torsion-free abelian. In particular $K$ is characteristic, and hence normal in $G$. Since $N \cap \varphi(L)$ is non-trivial, it follows that the image of $L$ in $G/N$ is virtually nilpotent, and hence that $G/N$ is residually discrete by Proposition~\ref{prop:VirtuallyNilp}. Using \cite[Proposition 2.2]{CLB19}, we know that a compactly generated tdlc group that is discrete-by-\{residually discrete\} must be residually discrete, and hence compact-by-discrete by Proposition~\ref{prop:SIN}. Applying this to $G/K$, we infer that $G/K$ is compact-by-discrete. Therefore $G$ also has this property, as desired. 
\end{proof}

\begin{prop}\label{prop:S-phi-G-Faithful}
	Let $L,X,\varphi,G$ as in the general setting. Assume that $L$ is j.n.v.n.\, and that $G$ is compactly generated, monolithic, with a non-discrete, non-compact, compactly generated monolith $M$, and such that $G/M$ is compacy-by-discrete. Then the action of $G$ on $S_{\varphi, G}(X)$ is faithful.
\end{prop}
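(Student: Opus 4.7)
The plan is to argue by contradiction, assuming the kernel $K = \ker(G \curvearrowright S_{\varphi, G}(X))$ is non-trivial. Monolithicity of $G$ then forces $M \leq K$, so that $G/K$ is compact-by-discrete as a Hausdorff quotient of $G/M$. I would split the analysis according to whether $\varphi(L) \leq K$.

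In the case $\varphi(L) \leq K$, density of $\varphi(L)$ in $G$ forces $K = G$, so $M = G$, the group $G$ is topologically simple, and $S_{\varphi, G}(X) = \{J\}$ with $J \trianglelefteq G$, hence $J \in \{1, G\}$. The possibility $J = 1$ would give $L_x^0 = 1$ for every $x \in X_\varphi$, contradicting micro-supportedness (for any non-empty open $V$ disjoint from a neighbourhood of $x$, $\rist_L(V) \leq L_x^0$ must be non-trivial). When $J = G$, Lemma~\ref{lem:Key} applied with $H = \{1\}$ yields $\overline{\varphi(\rist_L(V))} \leq B(G)$ for some non-empty open $V$; topological simplicity of $G$ together with Theorem~\ref{thm-usakov} force $B(G) = 1$ (a compact open normal subgroup would contradict either non-discreteness or non-compactness of the simple group $G$), hence $\rist_L(V) = 1$, again a contradiction.

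In the case $\varphi(L) \not\leq K$, the proper normal subgroup $\varphi^{-1}(K) \lneq L$ has virtually nilpotent quotient $L/\varphi^{-1}(K)$ by the j.n.v.n.\ hypothesis. Since the closure of a virtually nilpotent subgroup in a Hausdorff topological group is virtually nilpotent, and $L/\varphi^{-1}(K)$ embeds densely in $G/K$, the group $G/K$ is itself virtually nilpotent. Moreover $\varphi^{-1}(K) \neq 1$, for otherwise $L$ itself would be virtually nilpotent, contradicting j.n.v.n. Proposition~\ref{prop:VirtuallyNilp} then furnishes a basis of identity neighbourhoods of $G/K$ consisting of compact open normal subgroups.

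The main obstacle lies in extracting a contradiction from this second case. My plan is to adapt the endgame of the proof of Proposition~\ref{prop-urs-phi-nontrivial-2}: find $x \in X_\varphi$ and $H \leq C_G(\varphi(L_x^0))$ such that $J := \overline{\varphi(L_x^0)\, H}$ is compactly generated, then Lemma~\ref{lem:Key} supplies a non-empty open $V \subseteq X$ with $\overline{\varphi(\rist_L(V))} \leq B(J) \leq B(G)$. Taking a non-trivial $\gamma \in \rist_L(V)$, the normal closure $N = \overline{\langle\langle \varphi(\gamma) \rangle\rangle_G}$ is a compactly generated closed normal subgroup of $G$ contained in $B(G)$; Theorem~\ref{thm-usakov} renders $N$ compact-by-discrete, and combining with Proposition~\ref{prop:VirtuallyNilp} (via the fact that the image of $L$ in $G/N$ is virtually nilpotent) one concludes that $G$ itself is compact-by-discrete, contradicting the non-compact, non-discrete monolith $M$. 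A natural candidate for $H$ is $C_M(J_x)$: using the decomposition $M = \overline{S_1 \cdots S_k}$ from Proposition~\ref{prop:UpperStr-monolith} and the fact that each $S_i \leq K$ normalizes $J_x$, topological simplicity of the $S_i$ identifies $C_M(J_x)$ with $\overline{\prod_{i \notin I_x} S_i}$ for $I_x = \{i : S_i \leq J_x\}$, which is compactly generated and sits in $C_G(J_x) = C_G(\varphi(L_x^0))$; the subtle remaining point is guaranteeing that $\overline{\varphi(L_x^0)\, C_M(J_x)}$ is itself compactly generated, which I expect to follow from the compact open normal subgroup structure of $G/K$ obtained above.
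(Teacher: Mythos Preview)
Your overall architecture is close to the paper's, but there are two genuine gaps and one unnecessary complication.

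\textbf{The case split is not needed, and Case~1 is flawed.} The inference ``$K=G$, so $M=G$'' is simply false: $K=G$ only says the action is trivial, and the monolith $M$ can be a proper subgroup of $G$. Consequently your dichotomy $J\in\{1,G\}$ does not exhaust the possibilities. The paper avoids this entirely: it never splits on whether $\varphi(L)\leq K$, and the single argument below handles all cases at once.

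\textbf{The step $B(J)\leq B(G)$ is unjustified.} In your Case~2 endgame you write $\overline{\varphi(\rist_L(V))}\leq B(J)\leq B(G)$ and then push to a contradiction via $B(G)$. But $B(J)\leq B(G)$ requires something like cocompactness of $J$ in $G$, which you have not established (and which need not hold, since $G/M$ may have an infinite discrete quotient). The paper does \emph{not} pass through $B(G)$. Instead it observes that $M\leq J$, that $B(M)=1$ (because $M$ is compactly generated, non-compact, non-discrete and characteristically simple, so Theorem~\ref{thm-usakov} would otherwise force a compact open normal subgroup), and hence $B(J)\cap M=1$. Since $M$ is normal in $J$, this gives $[B(J),M]\leq B(J)\cap M=1$, so $B(J)\leq C_G(M)$. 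Non-triviality of $B(J)$ then forces $M\leq C_G(M)$ by monolithicity, i.e.\ $M$ is abelian, which is absurd. This is both shorter and avoids the gap.

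\textbf{Compact generation of $J$ comes from $G/M$, not $G/K$.} You correctly flag this as the subtle point, but your proposed route via the structure of $G/K$ is looking at the wrong quotient: one needs $J/M$ to be compactly generated, and $J/M$ lives in $G/M$, not in $G/K$. The paper's argument is that every discrete quotient of $G$ is a proper quotient of $L$ (since $G$ is non-discrete and $\varphi(L)$ is dense), hence finitely generated virtually nilpotent by the j.n.v.n.\ hypothesis; thus $G/M$ is compact-by-\{finitely generated virtually nilpotent\}, and every closed subgroup of such a group is compactly generated. In particular $J/M$ is compactly generated, and since $M$ is compactly generated so is $J$.

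Finally, your choice $H=C_M(J_x)$ is essentially the paper's $H=\langle S_i : S_i\cap J_x=1\rangle$; the key structural fact is that each $S_i$ either lies in $J_x$ or intersects it trivially (because $M\cap J_x$ is normal in $M$), and that $J_x$ permutes the set $\{S_i : S_i\cap J_x=1\}$, so that $H$ and $J_x$ normalize each other with trivial intersection and $M\leq \overline{J_x H}=J$.
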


\begin{proof}
	By Proposition \ref{prop:UpperStr-monolith}, the set of minimal normal subgroups of $M$ is finite and non-empty. Its elements are denoted by $S_1, \dots, S_d$. Thus $S_i$ is a compactly generated, non-discrete, topologically simple tdlc group. Moreover we have $B(M)=1$. 
	
	% and $M \cap \overline{\varphi(\Gamma_x^0)}$ is non-trivial because $M$ has trivial centralizer in $G$.
	
	Suppose the action of $G$ on $S_{\varphi, G}(X)$ is not faithful. Then $M$ acts trivially on $S_{\varphi, G}(X)$. In the sequel we fix a point $x$ such that $\overline{\varphi(L_x^0)}$ belongs to  $S_{\varphi, G}(X)$. The subgroup $M \cap \overline{\varphi(L_x^0)}$ is normal in $M$ because $M$ normalizes  $\overline{\varphi(L_x^0)}$. Therefore by minimality of $S_1, \dots, S_d$, for each $i$ we have either $S_i \leq \overline{\varphi(L_x^0)}$ or $S_i \cap \overline{\varphi(L_x^0)} = 1$. Let $I $ be the set of those $i \in \{1, \dots , d\}$ such that $S_i \cap \overline{\varphi(L_x^0)} = 1$, and set $H = \langle \bigcup_{i \in I} S_i \rangle$. Observe that $H$ centralizes $\varphi(L_x^0)$, and that $M = \overline{S_1 \dots S_d}$ is entirely contained in $J = \overline{\varphi(L_x^0) H}$. 
	
By hypothesis, the group $G/M$ is compact-by-discrete. Since $G$ is not discrete and $L$ has dense image in $G$, every discrete quotient of $G$ is a proper quotient of $L$, and hence is finitely generated virtually nilpotent. Therefore the group $G/M$ is compactly generated and compact-by-\{discrete finitely generated virtually nilpotent\}. It follows that every closed subgroup of $G/M$ is compactly generated. In particular $J/M$ is compactly generated, and thus also $J$ since  $M$ is compactly generated. Lemma~\ref{lem:Key} therefore implies that $B(J)$ is non-trivial. Since $B(M)$ is trivial, we have $B(J) \cap M = 1$ so that $B(J) \leq C_G(M)$. It follows that $C_G(M)$ is non-trivial. Since $M$ is the monolith of $G$, we must have $M \leq C_G(M)$, so $M$ is abelian. In particular $M$ is compact-by-discrete, and we have already seen above that this is prevented by the hypotheses.
\end{proof}

%\subsection{Consequences of the fact that $S_{\varphi,G}(X)$ is continuous}

%\subsection{The general situation}

\subsection{Existence of continuous extensions}

The proof of the following proposition is inspired by \cite[\S 4.1.3]{LBMB-sub-dyn} (see also \cite[Proposition 7.18]{MB-urs-full}).

\begin{prop} \label{prop-urs-to-X-Gamma-eq}
Let $L,X,\varphi,G$ as in the general setting, and assume that the action of $L$ on $X$ is strongly just-infinite, and that $S_{\varphi,G}(X)$ is infinite. Then the following hold:
\begin{enumerate}[label=(\roman*)]
	\item \label{item-factor-S-to-X} There exists a $L$-map $\psi: S_{\varphi,G}(X) \to X$, and $\psi$ is almost one-to-one.
	\item \label{item-action-extends} The action of $L$ on $X$ extends to a continuous $G$-action, and the map $\psi: S_{\varphi,G}(X) \to X$ is a $G$-map.
	\item \label{item-action-extends-faith} If the $G$-action on $S_{\varphi,G}(X)$ is faithful then the $G$-action on $X$ is faithful.
\end{enumerate} 
\end{prop}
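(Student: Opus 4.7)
The plan hinges on the double fibration
\[
X \xleftarrow{p_1} E_\varphi(X) \xrightarrow{p_2} S_{\varphi,G}(X)
\]
from Proposition~\ref{prop-def-S-phi-G}, in which $p_1$ is an almost one-to-one $L$-factor map and $p_2$ is a continuous $L$-equivariant surjection. I would first upgrade $p_2$ to a homeomorphism, then transport the conjugation action of $G$ on $S_{\varphi,G}(X)$ back to $E_\varphi(X)$ through $p_2^{-1}$, and finally descend it through $p_1$ to $X$; the descent is the main obstacle.

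For \ref{item-factor-S-to-X} the key step is to show $p_2$ is injective. Suppose to the contrary that $(x, H), (y, H) \in E_\varphi(X)$ with $x \neq y$. By Lemma~\ref{lem-Gammax0-lsc} and the definition of $E_\varphi(X)$ as a closure of pairs $(z, \overline{\varphi(L_z^0)})$ with $z \in X_\varphi$, lower semi-continuity of $z \mapsto \overline{\varphi(L_z^0)}$ forces $\overline{\varphi(L_x^0)} \leq H$ and $\overline{\varphi(L_y^0)} \leq H$, so $H$ contains the closed subgroup $\overline{\varphi(\langle L_x^0, L_y^0\rangle)}$. The strongly just-infinite hypothesis makes $\langle L_x^0, L_y^0\rangle$ finite-index in $L$; injectivity of $\varphi$ and density of $\varphi(L)$ in $G$ then show via coset decomposition that this closure is a finite-index (hence open) subgroup of $G$. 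Therefore $H$ is finite-index in $G$ and has only finitely many $G$-conjugates, so the $G$-orbit closure of $H$ in $\sub(G)$ is finite; by $G$-minimality, it equals $S_{\varphi,G}(X)$, contradicting the hypothesis that the latter is infinite. Thus $p_2$ is a continuous bijection between compact Hausdorff spaces, hence a homeomorphism, and I set $\psi := p_1 \circ p_2^{-1}$; it inherits continuity and $L$-equivariance and is almost one-to-one because $p_1$ is.

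For \ref{item-action-extends}, I transport the continuous conjugation action of $G$ on $S_{\varphi,G}(X) \subseteq \sub(G)$ through $p_2^{-1}$ to a continuous $G$-action on $E_\varphi(X)$. Since $p_2$ intertwines the diagonal $L$-action with conjugation precomposed with $\varphi$, this extends the diagonal $L$-action, and concretely $g \cdot (x, H)$ is the unique element of $E_\varphi(X)$ with second coordinate $gHg^{-1}$. The crux is to show its first coordinate depends only on $x$: choosing $\ell_n \in L$ with $\varphi(\ell_n) \to g$, continuity on $E_\varphi(X)$ yields $\ell_n \cdot (x, H) = (\ell_n \cdot x,\, \varphi(\ell_n) H \varphi(\ell_n)^{-1}) \to g \cdot (x, H)$, so the first coordinate $p_1(g \cdot (x, H)) = \lim \ell_n \cdot x$ does not see $H$. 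This lets me define $g \cdot x := p_1(g \cdot \xi)$ for any $\xi \in p_1^{-1}(x)$, a set-theoretic action extending the $L$-action, with the group axioms inherited from those on $E_\varphi(X)$. Joint continuity then follows from the factorization
\[
\alpha \circ (\mathrm{id}_G \times p_1) = p_1 \circ \beta,
\]
where $\alpha, \beta$ denote the $G$-actions on $X$ and $E_\varphi(X)$: the right-hand side is continuous, and $\mathrm{id}_G \times p_1$ is a quotient map because $p_1$ is proper between compact Hausdorff spaces and $G \times X$ is locally compact Hausdorff. Finally, $\psi = p_1 \circ p_2^{-1}$ is a $G$-map by direct computation using that the $G$-actions on $E_\varphi(X)$ and $S_{\varphi,G}(X)$ are identified via $p_2$.

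For \ref{item-action-extends-faith} suppose $g \in G$ acts trivially on $X$; then the formula above gives $g \cdot (x, H) = (x, gHg^{-1})$ for every $(x, H) \in E_\varphi(X)$. On the subset $E_0 = p_1^{-1}(X_0) \subseteq E_\varphi(X)$ of points whose $p_1$-fiber is a singleton, this equality forces $g \cdot \xi = \xi$. The set $E_0$ is $L$-invariant and non-empty (because $X_0 = \{x \in X : |p_1^{-1}(x)| = 1\}$ is dense), hence dense in $E_\varphi(X)$ by $L$-minimality of the latter. Density and continuity of the $G$-action then extend this fixation to all of $E_\varphi(X)$, so $g$ acts trivially on $S_{\varphi,G}(X) = p_2(E_\varphi(X))$; faithfulness of the latter action forces $g = e$.
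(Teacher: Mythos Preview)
Your proof is correct. Part \ref{item-factor-S-to-X} is essentially the paper's argument recast: the paper defines $\psi(K)$ directly as the unique $x$ with $\overline{\varphi(L_x^0)} \leq K$, while you realize the same map as $p_1 \circ p_2^{-1}$ after showing $p_2$ is a bijection; the uniqueness/injectivity step is identical in both (strongly just-infinite forces a finite-index $H$, contradicting infiniteness of $S_{\varphi,G}(X)$).

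The genuine divergence is in \ref{item-action-extends}. The paper invokes Gelfand duality in a single sentence: $L$-invariant $C^*$-subalgebras of $C(S_{\varphi,G}(X))$ are automatically $G$-invariant by density, so the factor $X$ inherits a $G$-action. You instead carry the $G$-action through $p_2^{-1}$ to $E_\varphi(X)$ and then descend it through $p_1$ by hand, using the approximating sequence $(\ell_n)$ to see that the first coordinate of $g\cdot(x,H)$ is independent of $H$, and the quotient-map property of $\mathrm{id}_G \times p_1$ (valid because $G$ is locally compact and $p_1$ is perfect) for continuity. Your route is more elementary and makes the mechanism of the extension completely explicit, at the cost of some length; the paper's route is a one-liner but imports the Gelfand machinery. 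Both are sound; note that your sequence argument silently uses that $G$ is first countable, which is guaranteed by the second-countability hypothesis in the general setting.

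For \ref{item-action-extends-faith} the paper merely says it ``easily follows from the fact that $\psi$ is almost one-to-one''; your argument via the dense set $E_0$ of singleton fibers is exactly how one unpacks that remark.
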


\begin{proof}
\ref{item-factor-S-to-X} For $K \in S_{\varphi,G}(X)$, we wish to define $\psi(K)$ to be the unique $x \in X$ such that $\overline{\varphi(L_x^0)} \leq K$. In order to see that $\psi$ is well-defined, we have to show that such a point $x$ exists and is unique. Recall that there exists a dense subset of points $K \in S_{\varphi,G}(X)$ which are of the form $\overline{\varphi(L_x^0)}$ for some $x \in X$. So for these points the existence is clear, and the general case where $K$ is arbitrary follows by semi-continuity (Lemma~\ref{lem-Gammax0-lsc}). Suppose now that there is $K$ for which this point $x$ is not unique. Then by the assumption that the action of $L$ on $X$ is strongly just-infinite and the fact that $L$ has dense image in $G$, it follows that $K$ is actually a finite index subgroup of $G$. Hence $K$ has a finite conjugacy class in $G$, which contradicts the assumption that $S_{\varphi,G}(X)$ is continuous. So $x$ must be unique, and $\psi$ is well-defined. 

By semi-continuity again, if $(K_n)$ converges to $K$ in $S_{\varphi,G}(X)$ and $x_n = \psi(K_n)$ converges to $x$, then $\overline{\varphi(L_x^0)} \leq K$. Since $\psi(K)$ is the unique point with this property,  it follows that $x = \psi(K)$. Hence  the map $\psi$ is continuous. Note that $\psi$ is clearly $L$-equivariant.

To show that $\psi$ is almost one-to-one, denote by $X_\varphi$ the dense $G_\delta$-set of points where the map $x \mapsto \overline{\varphi(L_x^0)}$ is continuous (Proposition \ref{prop-def-S-phi-G}). Then we claim that $\psi^{-1}(\left\{x\right\}) = \left\{ \overline{\varphi(L_x^0)}\right\}$ for all $x$ in $X_\varphi$. Indeed, suppose that $K$ is such that $\overline{\varphi(L_x^0)} \leq K$. Since $K$ is in $S_{\varphi,G}(X)$, we may find a net $(x_i)$ in $X_\varphi$ such that $\overline{\varphi(L_{x_i}^0)}$ converges to $K$, and we may assume that $(x_i)$ converges to some $y$ in $X$. By semi-continuity $\overline{\varphi(L_y^0)} \leq K$, so it follows that actually $y = x$, and consequently $K = \overline{\varphi(L_x^0)}$ since $x \in X_\varphi$. This proves the claim.

\ref{item-action-extends} The Gelfand correspondence establishes a bijection between $L$-invariant $C^\ast$-sub-algebras of $C(S_{\varphi,G}(X))$ (the continuous complex valued functions on $S_{\varphi,G}(X)$) and $L$-equivariant factors of $S_{\varphi,G}(X)$. The group $G$ acts on $S_{\varphi,G}(X)$, and by density of $L$ in $G$ any $L$-invariant $C^\ast$-sub-algebra of of $C(S_{\varphi,G}(X))$ must be $G$-invariant. Hence by duality, the $L$-action on $X$ extends to a continuous $G$-action, and the map $\psi: S_{\varphi,G}(X) \to X$ is $G$-equivariant.

Finally \ref{item-action-extends-faith} easily follows from the fact that $\psi$ is almost one-to-one.
\end{proof}
	
The following result follows from the combination of Proposition \ref{prop:S-phi-G-Faithful} and Proposition \ref{prop-urs-to-X-Gamma-eq}.
	
\begin{cor} \label{cor-faith-exten}
	Let $L,X,\varphi,G$ as in the general setting. Assume that $L$ is j.n.v.n.\, and that the action of $L$ on $X$ is strongly just-infinite. Assume also that $G$ is compactly generated, monolithic, with a non-discrete, non-compact, compactly generated monolith $M$, and such that $G/M$ is compacy-by-discrete. Then the $L$-action on $X$ extends to a  continuous faithful $G$-action.
\end{cor}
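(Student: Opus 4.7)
The plan is to obtain the corollary as a direct combination of the two cited propositions, with a short contradiction argument serving as the bridge: the hypotheses on the monolith $M$ force $S_{\varphi,G}(X)$ to be infinite, which is exactly what allows Proposition~\ref{prop-urs-to-X-Gamma-eq} to be applied.

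First, I would show that $S_{\varphi,G}(X)$ is infinite. Suppose for contradiction that it is finite. The hypotheses of Proposition~\ref{prop-urs-phi-nontrivial-2}, namely that $L$ is j.n.v.n., that the $L$-action on $X$ is strongly just-infinite, and that $G$ is compactly generated, are all part of the assumptions of the corollary. Hence that proposition applies and yields that $G$ is compact-by-discrete. On the other hand, a compact-by-discrete group cannot admit a closed normal subgroup that is both non-compact and non-discrete, since such a subgroup would have to meet the compact open normal subgroup in an open subgroup of itself and a compact one, forcing it to be compact-by-discrete in turn. This contradicts the existence of the monolith $M$ in $G$, so $S_{\varphi,G}(X)$ must be infinite.

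Next, I would invoke Proposition~\ref{prop:S-phi-G-Faithful}, whose hypotheses (on $L$, on $G$, and on $M$) match those of the corollary verbatim, to conclude that the $G$-action on $S_{\varphi,G}(X)$ is faithful.

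Finally, with $S_{\varphi,G}(X)$ infinite and the $L$-action on $X$ strongly just-infinite, Proposition~\ref{prop-urs-to-X-Gamma-eq}\ref{item-action-extends} produces a continuous $G$-action on $X$ extending the $L$-action, and part~\ref{item-action-extends-faith} of the same proposition, combined with the faithfulness just established on $S_{\varphi,G}(X)$, gives that this extended $G$-action on $X$ is faithful. The only step requiring any thought is the infiniteness of $S_{\varphi,G}(X)$, which is genuinely a consequence of the non-triviality of the monolith; everything else is the verification that our hypothesis list matches those of the two cited results.
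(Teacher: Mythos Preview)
Your overall strategy is sound, and it is essentially the paper's approach: combine Proposition~\ref{prop:S-phi-G-Faithful} with Proposition~\ref{prop-urs-to-X-Gamma-eq}, the only extra ingredient being the infiniteness of $S_{\varphi,G}(X)$. However, your argument for that infiniteness contains a genuine gap. You claim that a compact-by-discrete group cannot contain a closed normal subgroup that is simultaneously non-compact and non-discrete, on the grounds that such a subgroup would itself be compact-by-discrete. The implication ``compact-by-discrete $\Rightarrow$ compact or discrete'' is false: for instance $\mathbf Z_p \times \mathbf Z$ is compact-by-discrete, non-compact and non-discrete. So the contradiction you derive from Proposition~\ref{prop-urs-phi-nontrivial-2} is not justified as written.

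The fix is immediate once you use that $M$ is the \emph{monolith}, not merely some closed normal subgroup. If $G$ were compact-by-discrete with compact open normal subgroup $K$, then either $K=1$, forcing $G$ (hence $M$) to be discrete, or $K\neq 1$, in which case $M\leq K$ by the defining property of the monolith, forcing $M$ to be compact. Either case contradicts the hypotheses on $M$. Alternatively, and this is closer to what the paper's one-line proof has in mind, you can bypass Proposition~\ref{prop-urs-phi-nontrivial-2} entirely: apply Proposition~\ref{prop:S-phi-G-Faithful} first to get that $G$ acts faithfully on $S_{\varphi,G}(X)$; since $G$ is infinite (it contains the non-compact $M$), a faithful continuous action on a finite set is impossible, so $S_{\varphi,G}(X)$ is infinite, and Proposition~\ref{prop-urs-to-X-Gamma-eq} then applies directly.
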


%\begin{cor}\label{cor:Unique-JNC-quotient}
%	
%	Let $L,X,\varphi,G$ as in the general setting. Assume that $L$ is compactly generated and j.n.v.n.\, and that the action of $L$ on $X$ is strongly just-infinite.
%	
%	Then $G$ has at most one  closed normal subgroup $N$ such that $G/N$ has a continuous dense embedding in a non-discrete,  just-non-compact, tdlc group. In particular $G$ has at most one  closed normal subgroup $N$ such that $G/N$ is non-discrete and  just-non-compact.
%\end{cor}
%
%\begin{proof}
%	Suppose that $N_1, N_2$ are distinct closed normal subgroups such that $G/N_i$ admits a continuous injective homomorphism with dense image in a non-discrete  just-non-compact tdlc group $G_i$. Each $G_i$ is monolithic with a non-compact, non-discrete monolith by \cite[Theorem~E]{CaMo-decomp}. The image of $L$ in $G_i$ must be dense and injective (otherise $G_i$ would be virtually nilpotent). Therefore, we may invoke Corollary \ref{cor-faith-exten}, ensuring that the $L$-action on $X$ extends to a faithful $G_i$-action. This yields two  continuous extensions of the $L$-action on $X$ to the group $G$, which must be distinct since their respective kernels are $N_1$ and $N_2$. This is impossible, because $L$ has dense image in $G$.
%\end{proof}

\section{Commensurated subgroups and micro-supported actions} \label{sec-commens-micro}

\subsection{Schlichting completions}

%Recall that two subgroups $\Lambda, \Lambda' \leq \Gamma$ are \textbf{commensurate} if $\Lambda \cap \Lambda'$ has finite index in $\Lambda$ and $ \Lambda'$. Also $\Lambda \leq \Gamma$ is \textbf{commensurated} in $\Gamma$ is all $\Gamma$-conjugates of $\Lambda$ are commensurate.

Let $\Gamma$ be a group and $\Lambda \leq \Gamma$ a commensurated subgroup. We denote by $\Gamma/\! \!/\Lambda$ the \textbf{Schlichting completion} of the pair $(\Gamma, \Lambda)$, defined as the closure of the natural image of $\Gamma$ in the symmetric group $\Sym(\Gamma/\Lambda)$ endowed with the topology of pointwise convergence. The Schlichting completion is a tdlc group. Notice moreover that if $\Gamma$ is countable, then $\Sym(\Gamma/\Lambda)$ is a Polish group, so that $\Gamma/\! \!/\Lambda$ is Polish as well. Thus the Schlichting completion of any countable group is a second countable tdlc group. For more information on this construction we refer to \cite{ReidWesolek_Forum19, Schlichting, Shalom-Willis}. 

Clearly, every normal subgroup of $\Gamma$ is commensurated. More generally, every subgroup that is commensurate to a normal subgroup is commensurated. Such subgroups are considered as \enquote{trivial examples} of commensurated subgroups. They are characterized as follows. 

\begin{lem}\label{lem:Schlichting}
Let $\Gamma$ be a group and $\Lambda \leq \Gamma$ a commensurated subgroup. Then $\Lambda$ is commensurate to a normal subgroup of $\Gamma$ if and only if the  Schlichting completion $\Gamma/\! \!/\Lambda$  is compact-by-discrete.
\end{lem}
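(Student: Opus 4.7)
Write $\pi \colon \Gamma \to G := \Gamma/\!\!/\Lambda$ for the canonical homomorphism with dense image, and $U$ for the closure of $\pi(\Lambda)$ in $G$. By construction of the Schlichting completion, $U$ is a compact open subgroup of $G$ satisfying $\pi^{-1}(U) = \Lambda$; moreover $\ker \pi = \bigcap_{g \in \Gamma} g\Lambda g^{-1}$ is contained in $\Lambda$. I would prove the two implications separately; both amount to bookkeeping between subgroups of $\Gamma$ and their closures in $G$, with the compact open subgroup $U$ playing the bridging role.

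For the forward direction, suppose $\Lambda$ is commensurate to a normal subgroup $N \trianglelefteq \Gamma$, and set $K := \overline{\pi(N)}$. Density of $\pi(\Gamma)$ together with normality of $\pi(N)$ in $\pi(\Gamma)$, plus the general fact that the normalizer of a closed subgroup in a Hausdorff group is closed, force $K$ to be normal in $G$. The finiteness $[N : N \cap \Lambda] < \infty$ gives a finite set $F \subset \pi(\Gamma)$ with $\pi(N) \subseteq F\cdot U$, so $K \subseteq F\cdot U$ is compact. The finiteness $[\Lambda : \Lambda \cap N] < \infty$ passes similarly to the closures to yield $[U : U \cap K] < \infty$; hence the open subgroup $UK \leq G$ contains $K$ with finite index, so $K$ is itself open in $G$. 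Thus $K$ is a compact open normal subgroup of $G$, and $G$ is compact-by-discrete.

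For the converse, suppose $G$ has a compact open normal subgroup $K$, and set $N := \pi^{-1}(K)$, which is normal in $\Gamma$. Since $K$ is open and $U$ is compact, $[U : U \cap K] < \infty$; since $U$ is open and $K$ is compact, $[K : U \cap K] < \infty$. Both indices transfer to $\Gamma$: the map $\pi^{-1}$ preserves the index of closed subgroups of $G$ containing $\ker\pi$, and $\ker \pi \subseteq \Lambda \cap N$ is contained in all the relevant subgroups. This yields $[\Lambda : \Lambda \cap N] = [U : U \cap K] < \infty$ and $[N : \Lambda \cap N] = [K : U \cap K] < \infty$, so $\Lambda$ and $N$ are commensurate, as required.

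There is no genuinely hard step; the only point requiring a touch of care is verifying that $K = \overline{\pi(N)}$ is actually normal in $G$ in the forward direction, for which one uses that the normalizer of a closed subgroup is closed (so normalization by the dense subgroup $\pi(\Gamma)$ propagates to all of $G$), and confirming that indices under $\pi^{-1}$ behave as expected despite the possible non-triviality of $\ker\pi$.
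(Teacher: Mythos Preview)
Your proof is correct and follows essentially the same approach as the paper's: in both directions you use the same construction (closure of $\pi(N)$ for the forward implication, preimage of the compact open normal subgroup for the converse), simply spelling out in detail the compactness, openness, and index computations that the paper leaves implicit. The only minor imprecision is your phrasing about $\pi^{-1}$ ``preserving indices of closed subgroups of $G$ containing $\ker\pi$''---what actually makes this work is that the relevant subgroups $U$, $K$, $U\cap K$ are \emph{open} and $\pi(\Gamma)$ is dense, so the induced maps on coset spaces are bijections; but since only finiteness of the indices is needed, even the trivial injectivity direction already suffices.
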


\begin{proof}
If  $\Gamma/\! \!/\Lambda$  is compact-by-discrete, i.e.\ if $\Gamma/\! \!/\Lambda$ has a compact open normal subgroup, then the preimage of that subgroup in $\Gamma$ is a normal subgroup that is commensurate with $\Lambda$. 

Conversely, let $N$ be a normal subgroup of $\Gamma$ that is commensurate with $\Lambda$. Denote by $\varphi \colon \Gamma \to \Gamma/\! \!/\Lambda$ the canonical homomorphism, whose image is dense. Then the closure $\overline{\varphi(N)}$    is a closed normal subgroup of $\Gamma/\! \!/\Lambda$ that is commensurate with $\overline{\varphi(\Lambda)}$. The latter is a  compact  open subgroup, so that $\overline{\varphi(N)}$  is compact and open as well. Thus $\Gamma/\! \!/\Lambda$ is compact-by-discrete. 
\end{proof}

\begin{lem}\label{lem:Schliching2}
Let $\Gamma$ be a finitely generated j.n.v.n.\ group. Let $\Lambda \leq \Gamma$ be a commensurated subgroup which is not commensurate to a normal subgroup. Then the homomorphism $\Gamma \to \Gamma/\! \!/\Lambda$ is injective. 

If in addition $\Lambda$ is not virtually contained in a normal subgroup of infinite  index in $\Gamma$, then $H = \Gamma/\! \!/\Lambda$ admits a non-discrete, compactly generated, just-non-compact quotient group $G$, and the natural  homomorphism $\varphi \colon \Gamma \to G $ is injective with dense image. 
\end{lem}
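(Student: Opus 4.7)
The plan for both assertions is to analyze the normal core of $\Lambda$ in $\Gamma$, namely $N := \Core_\Gamma(\Lambda) = \bigcap_{g \in \Gamma} g\Lambda g^{-1}$, which is exactly the kernel of the natural homomorphism $\Gamma \to H := \Gamma/\!\!/\Lambda$. For the first assertion I argue by contradiction, assuming $N \neq 1$. The j.n.v.n.\ hypothesis then forces $\Gamma/N$ to be virtually nilpotent, so it contains a nilpotent subgroup $N_1/N$ of finite index $m$. Since $\Gamma/N$ is dense in $H$ and nilpotency of class $k$ is a closed condition (the $(k+1)$-fold iterated commutator map is continuous), the closure $\overline{N_1/N} \leq H$ remains nilpotent; writing $\Gamma/N$ as a union of $m$ cosets of $N_1/N$ and taking closures in $H$ shows $\overline{N_1/N}$ has index at most $m$ in $H$. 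Moreover $H$ is compactly generated, since $\Gamma$ is finitely generated and has dense image. Hence $H$ is a compactly generated virtually nilpotent tdlc group, and Proposition \ref{prop:VirtuallyNilp} makes it compact-by-discrete; Lemma \ref{lem:Schlichting} then yields that $\Lambda$ is commensurate to a normal subgroup of $\Gamma$, a contradiction. Therefore $N = 1$, proving the first assertion.

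Turning to the second assertion: by what was just shown, $\Gamma \hookrightarrow H$ densely, and by Lemma \ref{lem:Schlichting} the group $H$ is neither compact-by-discrete nor, a fortiori, compact. Proposition \ref{prop:JNC-quotients} therefore supplies a closed normal subgroup $N' \trianglelefteq H$ such that $G := H/N'$ is just-non-compact, and $G$ is compactly generated as a quotient of $H$. To show $G$ is non-discrete I invoke the additional hypothesis: if $G$ were discrete, then $N'$ would be open in $H$, so the compact open subgroup $U := \overline{\varphi(\Lambda)}$ would map to a finite subgroup of $G$, i.e.\ $[U : U \cap N'] < \infty$. Pulling back, the normal subgroup $M := \varphi^{-1}(N') \trianglelefteq \Gamma$ satisfies $[\Lambda : \Lambda \cap M] < \infty$, while $[\Gamma : M] = |G| = \infty$ since $G$ is discrete and just-non-compact. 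This contradicts the hypothesis that $\Lambda$ is not virtually contained in a normal subgroup of infinite index. Finally, the kernel of $\varphi \colon \Gamma \to G$ is $M = \Gamma \cap N'$, a normal subgroup of $\Gamma$; if $M \neq 1$, the j.n.v.n.\ hypothesis makes $\Gamma/M$ virtually nilpotent, and since it embeds densely into $G$, the same closure argument as in the first paragraph shows $G$ is a compactly generated virtually nilpotent tdlc group, hence compact-by-discrete by Proposition \ref{prop:VirtuallyNilp}. But a non-discrete just-non-compact tdlc group cannot be compact-by-discrete: the compact open normal subgroup would have to be proper (else $G$ is compact), so its discrete quotient would also be compact by just-non-compactness, hence finite, forcing $G$ compact, a contradiction. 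Thus $M = 1$, $\varphi$ is injective, and density of $\varphi(\Gamma)$ follows from density of $\Gamma$ in $H$.

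The main technical step I anticipate is the passage from ``dense virtually nilpotent subgroup'' to ``virtually nilpotent ambient tdlc group'', which relies on the elementary but pivotal observations that nilpotency of a fixed class and finite index both survive closure in a dense subgroup. Once this is in hand, Proposition \ref{prop:VirtuallyNilp} does the structural work, and the remainder is careful bookkeeping of the correspondence between closed normal subgroups of $H$ or $G$ and normal subgroups of $\Gamma$, together with the compatibility of just-non-compactness with the additional no-virtual-containment hypothesis.
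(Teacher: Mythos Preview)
Your proof is correct and follows essentially the same approach as the paper's: both arguments use Lemma~\ref{lem:Schlichting} and Proposition~\ref{prop:VirtuallyNilp} to rule out a non-injective map via the j.n.v.n.\ hypothesis, then invoke Proposition~\ref{prop:JNC-quotients} and the no-virtual-containment hypothesis to control the just-non-compact quotient. You are more explicit than the paper about why a dense virtually nilpotent subgroup forces the ambient tdlc group to be virtually nilpotent, and about why a non-discrete just-non-compact group cannot be compact-by-discrete; the paper leaves these as implicit. One notational slip to clean up: you use $\varphi$ for both $\Gamma \to H$ and $\Gamma \to G$, so that $U := \overline{\varphi(\Lambda)}$ and $M := \varphi^{-1}(N')$ only make sense if $\varphi$ there denotes the map to $H$, whereas in the final paragraph $\varphi$ is the composite to $G$; distinguishing the two maps (or writing $M$ as the preimage of $N'$ in $\Gamma$ under $\Gamma \hookrightarrow H$) would remove the ambiguity.
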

\begin{proof}
By Lemma~\ref{lem:Schlichting}, the Schlichting completion $H = \Gamma/\! \!/\Lambda$ is not compact-by-discrete.  Since $\Gamma$ is finitely generated, $H$ is compactly generated. If the canonical homomorphism $\Gamma \to H$ were not injective, then $H$ would be virtually nilpotent, hence compact-by-discrete in view of Proposition~\ref{prop:VirtuallyNilp}.

Since $H$ is compactly generated, it has a closed normal subgroup $N$ such that the quotient $G = H/N$ is just-non-compact (see \cite[Proposition~5.2]{CaMo-decomp}). We denote by $\varphi$ the composite homomorphism $\Gamma \to H \to G$. By construction $\varphi$ has dense image.

Assume now that $\Lambda$ is not virtually contained in a normal subgroup of infinite  index in $\Gamma$. 
We claim that $G$ is non-discrete. Indeed, otherwise $N$ would be open in $H$, and hence its  pre-image in $\Gamma$  would be a normal subgroup of infinite index that contains a finite index subgroup of $\Lambda$. 

Finally, we observe that the map $\varphi$ is injective, because otherwise $\varphi(\Gamma)$ would be virtually nilpotent, and hence $G$ would be virtually nilpotent as well. In view of Proposition~\ref{prop:VirtuallyNilp}, this implies that $G$ is discrete, which is not the case. 
\end{proof}

\begin{prop}\label{prop:Schliching3}
Let $\Gamma$ be a j.n.v.n.\ group such that every normal subgroup of $\Gamma$ is finitely generated. Let $\Lambda \leq \Gamma$ be a commensurated subgroup which is not commensurate to a normal subgroup of $\Gamma$.

Then  $H = \Gamma/\! \!/\Lambda$ has a closed normal subgroup such that the quotient group $G$ satisfies all the conclusions of Theorem~\ref{thm:MonolithicQuotient}. Moreover the natural homomorphism $\varphi \colon \Gamma \to G $ is injective with dense image. 
\end{prop}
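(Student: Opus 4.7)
The plan is to apply Theorem~\ref{thm:MonolithicQuotient} to the Schlichting completion $H = \Gamma /\!/ \Lambda$. Since $\Gamma$ is itself a normal subgroup of $\Gamma$, the hypothesis forces $\Gamma$ to be finitely generated, so $H$ is compactly generated. Lemma~\ref{lem:Schlichting} moreover ensures that $H$ is not compact-by-discrete, because $\Lambda$ is not commensurate to a normal subgroup.

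The main technical step is to verify that $\Res(H)$ is compactly generated, and I would deduce this from Corollary~\ref{cor:SmallOpenNorm}. Let $\iota \colon \Gamma \to H$ denote the canonical homomorphism (with dense image), and let $K$ be any open normal subgroup of $H$. Then $N = \iota^{-1}(K)$ is a normal subgroup of $\Gamma$, hence finitely generated by hypothesis. Density of $\iota(\Gamma)$ in $H$ implies that $\iota(N) = \iota(\Gamma) \cap K$ is dense in $K$, and combining a finite generating set for $\iota(N)$ with a compact open subgroup of $K$ (supplied by van Dantzig's theorem) produces a compactly generated subgroup of $K$ that is both open and dense, hence equal to $K$. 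Thus every open normal subgroup of $H$ is compactly generated, and Corollary~\ref{cor:SmallOpenNorm} yields that $\Res(H)$ is compactly generated. Theorem~\ref{thm:MonolithicQuotient} therefore applies and produces a closed normal subgroup of $H$ whose quotient $G$ satisfies conclusions (i)--(iii); the composite $\varphi \colon \Gamma \to H \to G$ has dense image by construction.

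For the injectivity of $\varphi$, I would argue by contradiction. If $\ker \varphi$ were non-trivial, the j.n.v.n.\ hypothesis would make $\Gamma / \ker \varphi \cong \varphi(\Gamma)$ virtually nilpotent. The closure $G = \overline{\varphi(\Gamma)}$ would then inherit virtual nilpotency: closures of nilpotent subgroups are nilpotent by the usual lower-central-series argument, and for any finite-index subgroup $\Gamma_0 \leq \varphi(\Gamma)$, the finitely many cosets of $\overline{\Gamma_0}$ in $G$ cover $G$ by density, giving $[G : \overline{\Gamma_0}] \leq [\varphi(\Gamma) : \Gamma_0]$. Proposition~\ref{prop:VirtuallyNilp} would then force $G$ to be compact-by-discrete, contradicting the fact that its monolith is non-compact and non-discrete. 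The principal obstacle in the plan is the first step: the passage from \enquote{every normal subgroup of $\Gamma$ is finitely generated} to \enquote{every open normal subgroup of $H$ is compactly generated} is where the hypothesis, strictly stronger than that of Lemma~\ref{lem:Schliching2}, is genuinely needed.
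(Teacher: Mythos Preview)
Your proof is correct and follows essentially the same route as the paper's: use Lemma~\ref{lem:Schlichting} to see that $H$ is not compact-by-discrete, verify that every open normal subgroup of $H$ is compactly generated (so that Corollary~\ref{cor:SmallOpenNorm} gives $\Res(H)$ compactly generated), apply Theorem~\ref{thm:MonolithicQuotient}, and derive injectivity from the j.n.v.n.\ hypothesis together with Proposition~\ref{prop:VirtuallyNilp}. You have in fact spelled out several steps (the density of $\iota(N)$ in $K$, and the passage from $\varphi(\Gamma)$ virtually nilpotent to $G$ virtually nilpotent) that the paper leaves implicit.
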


\begin{proof}
By Lemma~\ref{lem:Schlichting}, the Schlichting completion $H = \Gamma/\! \!/\Lambda$ is not compact-by-discrete. Since every normal subgroup of $\Gamma$ is finitely generated, it follows that every open normal subgroup of $H$ is compactly generated. In particular $\Res(H)$ is compactly generated, by Corollary~\ref{cor:SmallOpenNorm}. Therefore, we may invoke Theorem~\ref{thm:MonolithicQuotient} to build a monolithic quotient $G$ of $H$ with the required properties. The composite map $\varphi \colon \Gamma \to H \to G$ has dense image. If $\varphi$ were not injective, then $G$ would be virtually nilpotent, hence compact-by-discrete, which violates the conclusions of Theorem~\ref{thm:MonolithicQuotient}.
\end{proof}

\subsection{The main result}

Theorem \ref{thm:Main-intro} from the introduction is a consequence of the following more comprehensive statement.

\begin{thm}\label{thm:Main}
Let $\Gamma$ be a finitely generated j.n.v.n.\ group, and let $X$ be a compact $\Gamma$-space such that the action of $\Gamma$  on $X$ is faithful, minimal and micro-supported. Assume  moreover that at least one of the following conditions holds. 
\begin{enumerate}[label=(\arabic*)]
\item \label{item-MainDyn1} $\Gamma$ has a commensurated subgroup $\Lambda$ that is of infinite index and that is not virtually contained in a normal subgroup of infinite index of $\Gamma$.

\item \label{item-MainDyn2} Every normal subgroup of $\Gamma$ is finitely generated, and  $\Gamma$ has a commensurated subgroup $\Lambda$ that is not commensurate to a normal subgroup. 

\end{enumerate}
Then the following assertions hold. 
\begin{enumerate}[label=(\roman*)]
	
\item \label{it:Main1} The action of $\Gamma$ on $X$ is an almost boundary and has compressible open subsets. Furthermore $\Gamma$ is monolithic, hence not residually finite. 

\item  \label{it:Main2}  There exists a compact $\Gamma$-space $Y$ with $Y \sim_{hp} X$, such that the $\Gamma$-action on $Y$ extends to a continuous $H$-action, where $H = \Gamma/\! \!/\Lambda$, and the quotient group $G= H/K$ of $H$ by the kernel of that action is monolithic with a non-discrete, non-amenable, compactly generated monolith $M$. Furthermore $M$ coincides with $\Res(G)$. In particular $G/M$ is compact-by-discrete.  

\item  \label{it:Main3}  
If in addition the $\Gamma$-action on $X$ is strongly just-infinite, then one can take $Y=X$ in \ref{it:Main2}.
\end{enumerate}

\end{thm}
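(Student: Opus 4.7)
The plan is to embed $\Gamma$ densely into a tdlc group $G$ with the structure provided by Theorem~\ref{thm:MonolithicQuotient}, and then transfer the rich dynamics of $G$ on the Stone space $\Omega_G$ of its centralizer lattice back to $X$ through a highly proximal extension.

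\textbf{Step 1 (a monolithic dense embedding).} Under hypothesis~\ref{item-MainDyn2}, Proposition~\ref{prop:Schliching3} directly produces an injective continuous homomorphism $\varphi \colon \Gamma \to G$ with dense image, where $G = H/K$ is a quotient of $H = \Gamma /\!\!/ \Lambda$ satisfying all conclusions of Theorem~\ref{thm:MonolithicQuotient}: $G$ is compactly generated and monolithic, with non-discrete, non-compact, compactly generated monolith $M$ and compact-by-discrete $G/M$. Under hypothesis~\ref{item-MainDyn1}, Lemma~\ref{lem:Schliching2} provides a non-discrete, compactly generated, just-non-compact quotient $G_0$ of $H$. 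Since $G_0$ is non-discrete and non-compact, its compact radical is trivial, so $G_0$ is not compact-by-discrete; Proposition~\ref{prop:SIN} then forces $\Res(G_0)$ to be non-trivial and hence cocompact (as $G_0$ is just-non-compact), so compactly generated. Theorem~\ref{thm:MonolithicQuotient} therefore applies to $G_0$, yielding a further quotient $G$ with the same properties as above. In both cases Proposition~\ref{prop:[A]-semisimple} guarantees that $G$ and $M$ are [A]-semisimple, so that $\LC(G)$ is a Boolean algebra and $\Omega_G = \mathfrak S(\LC(G))$ is a totally disconnected compact $G$-space.

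\textbf{Step 2 (linking $X$ to $\Omega_G$).} Replace $X$ by the Stone space $\tilde X$ of $R(X)$, on which, by Theorem~\ref{thm-MS-all-eq}, the $\Gamma$-action is faithful, minimal and micro-supported, and for which the canonical map $\tilde X \to X$ is a highly proximal extension. Proposition~\ref{prop:FaithfulDenseSubgroup}, suitably adapted so as to use only the $\Gamma$-action rather than a pre-existing $G$-action on $\tilde X$, shows that the assignment
\[
\alpha \mapsto f(\alpha) := \bigl[C_G^2\bigl(\overline{\varphi(\rist_\Gamma(\alpha))}\bigr)\bigr]
\]
defines an injective, order-preserving, $\Gamma$-equivariant map $f \colon R(X) \hookrightarrow \LC(G)$. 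The image $\mathcal A$ is a $\Gamma$-invariant Boolean subalgebra of $\LC(G)$, and is automatically $G$-invariant by density of $\varphi(\Gamma)$ and continuity of the $G$-action on $\LC(G)$. Under Stone duality, the $G$-equivariant inclusion $\mathcal A \hookrightarrow \LC(G)$ becomes a continuous $G$-equivariant surjection $\pi \colon \Omega_G \twoheadrightarrow \tilde X$, which simultaneously endows $\tilde X$ with a continuous $G$-action extending the $\Gamma$-action and realizes $\tilde X$ as a $G$-factor of $\Omega_G$.

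\textbf{Step 3 (transfer and conclusion).} Since $\mathcal A$ is non-trivial, $\LC(G)$ is infinite, and Theorem~\ref{thm:Stone-dynamics} applies: the $G$-action on $\Omega_G$ is faithful, minimal and micro-supported, admits a compressible clopen subset, and makes $\Omega_G$ an almost $G$-boundary. Both properties pass through the factor $\pi$, and by Proposition~\ref{prop-invariants-hpeq}\ref{item-compress-hp-open} and Lemma~\ref{lem-highly-prox-preserve} they transfer across the highly proximal extension $\tilde X \to X$, yielding assertion~\ref{it:Main1}. Monolithicity of $\Gamma$ is a consequence of compressibility and micro-support via a Nekrashevych-style argument: the subgroup generated by commutators of rigid stabilizers of disjoint open sets is a non-trivial simple subgroup contained in every non-trivial normal subgroup of $\Gamma$; the j.n.v.n.\ hypothesis forces this monolith to be infinite, precluding residual finiteness. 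Non-amenability follows from Theorem~\ref{thm:Stone-dynamics}\ref{it:Dyn4}. Taking $Y := \tilde X$ with its constructed $G$-action establishes~\ref{it:Main2}, the $H$-action being obtained via $G = H/K$; the identity $M = \Res(G)$ follows from $\Res(M) = M$ together with the residual discreteness of $G/M$ (whose compact part is profinite and whose quotient by this compact part is discrete). Finally, for~\ref{it:Main3} under the strongly just-infinite hypothesis, Proposition~\ref{prop-urs-phi-nontrivial-2} implies that $S_{\varphi, G}(X)$ is infinite (otherwise $G$ would be compact-by-discrete, contradicting Step~1), and Proposition~\ref{prop-urs-to-X-Gamma-eq}\ref{item-action-extends} then extends the $\Gamma$-action on $X$ itself to a continuous $G$-action, so $Y = X$ is admissible.

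The main obstacle lies in Step~2, where Proposition~\ref{prop:FaithfulDenseSubgroup} must be invoked before a $G$-action on $\tilde X$ has been constructed. Verifying that $\overline{\varphi(\rist_\Gamma(\alpha))}$ is locally normal in $G$, and that $f$ is well-defined and injective, must be done using only the $\Gamma$-action together with the structural information about $G$ obtained in Step~1; the $G$-action on $\tilde X$ then emerges \emph{a posteriori} from the $G$-equivariant embedding $\mathcal A \hookrightarrow \LC(G)$ via Stone duality.
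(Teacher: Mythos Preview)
Your Step~2 contains a genuine circularity that cannot be resolved as you suggest. Proposition~\ref{prop:FaithfulDenseSubgroup} requires $Y$ to be a compact $G$-space, not merely a $\Gamma$-space, and this hypothesis is used in an essential way: the proof needs the stabilizer $G_\alpha$ of a clopen set $\alpha$ to be \emph{open} in $G$, so that $\Gamma \cap G_\alpha$ is dense in $G_\alpha$ and hence $\overline{\varphi(\rist_\Gamma(\alpha))}$ is normalized by an open subgroup. Without a pre-existing $G$-action on $\tilde X$, there is no reason for $\overline{\varphi(\Stab_\Gamma(\alpha))}$ to be open in $G$; establishing this openness is precisely equivalent to extending the $\Gamma$-action on the clopen sets of $\tilde X$ to a continuous $G$-action, which is what you are trying to construct. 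So the ``adaptation'' you propose would have to prove the conclusion before it can define the map $f$. There is a second, independent difficulty: even granting well-definedness, Proposition~\ref{prop:FaithfulDenseSubgroup} only asserts that $f$ is injective and order-preserving, not that it is a Boolean homomorphism. In particular $f(\alpha^c) = f(\alpha)^\perp$ is not proved there (only the inequality $f(\alpha^c)\leq f(\alpha)^\perp$ follows from the commutation of disjoint rigid stabilizers), so the image $\mathcal A$ need not be a Boolean subalgebra and Stone duality does not give you a factor map $\Omega_G \to \tilde X$.

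The paper circumvents this by producing a $G$-space directly, without passing through $\LC(G)$. It takes $Y := S_{\varphi,G}(X) \subset \sub(G)$, the URS built in \S\ref{subsec-URS}; this is a $G$-space by construction, since $G$ acts on $\sub(G)$ by conjugation. Faithfulness of the $G$-action on $Y$ is then obtained by a separate argument (Proposition~\ref{prop:S-phi-G-Faithful}), which exploits Lemma~\ref{lem:Key} and the specific monolithic structure of $G$ from Step~1. Once $Y$ is a faithful $G$-space, the $\Gamma$-action on $Y$ is shown to be micro-supported via the almost one-to-one extension $E_\varphi(X) \to X$ from Proposition~\ref{prop-def-S-phi-G} together with Proposition~\ref{prop-invariants-hpeq} and Lemma~\ref{lem-MS-factor-faith}; at that point Theorem~\ref{thm:Stone-dynamics} applies to the $G$-action on $Y$, and the dynamical conclusions transfer to $X$ by highly proximal equivalence (Theorem~\ref{thm-MS-all-eq}). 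Your Steps~1 and~3 are essentially correct and align with the paper, but the bridge in Step~2 must go through $S_{\varphi,G}(X)$ rather than through a direct embedding into $\LC(G)$.
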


%\begin{proof}
%Follows by combining Theorem~\ref{thm:Main}(ii) and (iii) and Theorem~\ref{thm:Stone-dynamics}.
%\end{proof}

\begin{proof}
We form the Schlichting completion $H = \Gamma/\! \!/\Lambda$, and we invoke  Lemma~\ref{lem:Schliching2} or Proposition~\ref{prop:Schliching3} depending on whether we are in situation \ref{item-MainDyn1} or \ref{item-MainDyn2}. In either case, Theorem~\ref{thm:MonolithicQuotient} can be applied (in the just-non-compact case, see also Proposition~\ref{prop:UpperStr-monolith}), we find a quotient $G$ of $H$ and an embedding $\varphi \colon \Gamma \to G$ with dense image and such that $G$ is monolithic with a compactly generated,  non-compact, non-discrete monolith $M$, and $G/M$ is compact-by-discrete.

Consider the space $Y := S_{\varphi,G}(X)$ constructed in Section \ref{subsec-URS}. The  group $G$ satisfies all the assumptions of Proposition \ref{prop:S-phi-G-Faithful}, so it follows that the $G$-action on $Y$ is faithful. Now consider the $\Gamma$-action on $E_{\varphi}(X)$. The extension $E_{\varphi}(X) \to X$ is almost one-to-one by Proposition \ref{prop-def-S-phi-G}, and hence it is highly proximal. Since the $\Gamma$-action on $X$ is micro-supported, Proposition \ref{prop-invariants-hpeq} implies that the $\Gamma$-action on $E_{\varphi}(X)$ is micro-supported. Since the $\Gamma$-action on $Y$ is also faithful (as the $G$-action is), it follows that the $\Gamma$-action on $Y$ is micro-supported by Lemma \ref{lem-MS-factor-faith}. Hence the $G$-action on $Y$ is   micro-supported. Since in addition $G$ is monolithic with a compactly generated,  non-compact and non-discrete monolith, we have shown that all the assumptions of Theorem \ref{thm:Stone-dynamics} are satisfied. The latter therefore implies that the action of $G$ on $Y$ is an almost boundary and has compressible clopen subsets. Since $\Gamma$ is dense in $G$, it follows that the action of $\Gamma$ on $Y$ is also an almost boundary with compressible clopen subsets. Now the $\Gamma$-spaces $X$ and $Y$ are highly proximally equivalent by Theorem \ref{thm-MS-all-eq}, and by Lemma~\ref{lem-highly-prox-preserve} and Proposition \ref{prop-invariants-hpeq} the property of being an almost boundary with compressible open subsets is invariant under highly proximal equivalence. Hence the space $X$ also has this property. That the group $\Gamma$ is monolithic then follows from the double commutator lemma, see Proposition I in \cite{CRW-part2}. So we have shown that \ref{it:Main1} holds.

% The additional dynamical properties about $Y$ asserted in \ref{it:Main2} are garanted by Theorem \ref{thm:Stone-dynamics}. 

Since the action of $G$ on $Y$ is faithful and micro-supported, by the double commutator lemma again we see that for every non-trivial normal subgroup $N$, there exists a non-empty clopen subset $U \subseteq Y$ such that $N \geq \rist_G(U)'$. In particular $N$ has a non-trivial intersection with the image of $\Gamma$. Therefore $G/N$ is virtually nilpotent. In particular, if $N$ is closed, then $G/N$ is residually discrete by Proposition~\ref{prop:VirtuallyNilp}, so that $\Res(G) \leq N$. Since $G$ is not residually discrete, we infer that  the monolith $M$ of $G$ coincides with $\Res(G)$. In particular $G/M$ is compact-by-discrete in view of Proposition~\ref{prop:Res(G)-cptly-gen}. That $M$ is not amenable follows from Theorem \ref{thm:Stone-dynamics}. So  \ref{it:Main2} is proved.

%If $M$ were amenable, then $M$ would fix a probability measure on $Y$. As observed above, $M$ has a nontrivial intersection with the image of $\Gamma$, hence $G/M$ is virtually nilpotent, hence amenable. Thus the amenability of $M$ is equivalent to the amenability of $G$. Since $G$ has a minimal action on $Y$ with a compressible open set, the $G$-action cannot fix any probability measure on $Y$. Hence $M$ is not amenable, and \ref{it:Main2} is proved.

Finally if in addition the $\Gamma$-action on $X$ is strongly just-infinite, then by Proposition~\ref{prop-urs-to-X-Gamma-eq} the action of $\Gamma$ on $X$ extends to a $G$-action. Moreover Proposition~\ref{prop-urs-to-X-Gamma-eq}\ref{item-action-extends-faith} ensures that the $G$-action on $X$ is fatifhul since the  $G$-action on $Y$ is faithful by \ref{it:Main2}. This shows \ref{it:Main3}.
\end{proof}

%Theorem~\ref{thm:Main} has at least two possible directions of applications:
%
%\begin{enumerate}[label=\alph*)]
%	\item If the action of $\Gamma$ on $X$ is known not to satisfy conclusion \ref{it:Main1}, then by the theorem we deduce that $\Gamma$ does not admit any commensurated subgroup as in \ref{item-MainDyn1} or \ref{item-MainDyn2}. See \S \ref{subsec-full-gp} and \S \ref{subsec-branch} for illustrations.
%	\item On the other hand, when the group $\Gamma$ does admit commensurated subgroups as in the assumptions, then conclusion \ref{it:Main3} of the theorem tells us that the action of $\Gamma$ on $X$ extends to the associated Schlichting completions (via a quotient satisfying additional properties). Examples where such a situation happens are given in \S \ref{subsubsec-higman}. 
%\end{enumerate} 

Theorem~\ref{thm:Main} has at least two possible directions of applications. The first one is when the action of $\Gamma$ on $X$ is known not to satisfy conclusion \ref{it:Main1}. Then by the theorem we deduce that $\Gamma$ does not admit any commensurated subgroup as in \ref{item-MainDyn1} or \ref{item-MainDyn2}. For example we deduce the following result. See \S \ref{subsec-full-gp} and \S \ref{subsec-branch} for other illustrations.

\begin{cor}
Let $\Gamma$ be a finitely generated j.n.v.n.\ group of intermediate growth, and suppose that there exists a compact $\Gamma$-space $X$ such that the action of $\Gamma$  on $X$ is faithful, minimal and micro-supported. Then every commensurated subgroup of $\Gamma$ is commensurate to a normal subgroup. 
\end{cor}

\begin{proof}
By \cite{Rosset} every j.n.v.n.\ group $\Gamma$ of intermediate growth has the property that all normal subgroups of $\Gamma$ are finitely generated. So the statement follows from the previous theorem.
\end{proof}

The second direction of application of Theorem~\ref{thm:Main} is when the group $\Gamma$ does admit commensurated subgroups as in the assumptions. Then the conclusion \ref{it:Main3} of the theorem tells us that the action of $\Gamma$ on $X$ extends to the associated Schlichting completions (via a quotient satisfying additional properties). Examples where such a situation happens are given in \S \ref{subsubsec-higman}.

\begin{rmq} \label{rmq-not-all-extend}
	In the setting of the above theorem, it is not true that all micro-supported actions of $\Gamma$ will extend to the Schlichting completion; see Remark \ref{rmq-not-all-extend-V}. This illustrates that the assumption in \ref{it:Main3} requiring that the action is strongly just-infinite cannot be removed. 
\end{rmq}

\section{Applications to discrete groups} \label{sec-applic}

In this section we apply the results of Sections \ref{sec-embed-micro} and \ref{sec-commens-micro} to several classes of groups admitting a micro-supported action.

\subsection{Topological full groups} \label{subsec-full-gp}

\subsubsection{Preliminaries}

If $\Lambda$ is a group acting on a compact space $X$, we denote by  $\Full(\Lambda,X)$  the associated topological full group. Recall that $\Full(\Lambda,X)$ is the group of homeomorphisms $g$ of $X$ such that for every $x \in X$ there exist a neighbourhood $U$ of $x$ and an element $\gamma \in \Lambda$ such that $g(y) = \gamma(y)$ for every $y \in U$. 

Given a non-empty clopen subset $U$ of $X$ and elements $\gamma_1,\ldots,\gamma_n \in \Lambda$ such that $\gamma_1(U),\ldots,\gamma_n(U)$ are pairwise disjoint, there is an injective group homomorphism $\mathrm{Sym}(n) \to \Full(\Lambda,X)$, $\sigma \mapsto g_\sigma$, where $g_\sigma$ is defined by $g_\sigma(x) = \gamma_{\sigma(i)} \gamma_i^{-1} (x)$ if $x \in \gamma_i(U)$, and $g_\sigma(x) = x$ if $x \notin \bigcup_i \gamma_i(U)$. The \textbf{alternating full group} $\Al(\Lambda,X)$, introduced by Nekrashevych in \cite{Nek-simple-dyn}, is the subgroup of $\Full(\Lambda,X)$ generated by the images of the alternating groups $\mathrm{Alt}(n)$ under all such homomorphisms. 

The following is \cite[Theorem 4.1]{Nek-simple-dyn} (see also \cite{Matui06}). 

\begin{thm} \label{thm-nek-alt-simple}
Let $ \Lambda \acts X$ be a minimal action on a Cantor space $X$.Then every non-trivial subgroup of  $\Full(\Lambda,X)$ that is normalized by $\Al(\Lambda,X)$ contains $\Al(\Lambda,X)$. In particular $\Al(\Lambda,X)$ is simple and is contained in every non-trivial normal subgroup of $\Full(\Lambda,X)$.
\end{thm}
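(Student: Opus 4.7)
The plan is to deploy the classical \emph{double commutator trick} combined with a propagation argument using minimality of the action.

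Let $N \leq \Full(\Lambda,X)$ be a non-trivial subgroup normalized by $\Al(\Lambda,X)$, and fix $g \in N \setminus \{1\}$. Since $g$ coincides locally with elements of $\Lambda$ and $X$ is a Cantor space, I would first locate a non-empty clopen set $U \subseteq X$ on which $g$ agrees with a single $\gamma \in \Lambda$ and for which $g(U) \cap U = \varnothing$. A direct computation shows that for any $a \in \Al(\Lambda,X)$ supported in $U$, the commutator $[g,a] = g a g^{-1} a^{-1}$ lies in $N$ (because $a$ normalizes $N$), equals $a^{-1}$ on $U$, equals $g a g^{-1}$ on $g(U)$, and is the identity elsewhere. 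Commuting once more with any $b \in \Al(\Lambda,X)$ supported in $U$ then produces an element $[[g,a],b] \in N$ which is supported in $U$ and coincides there with the commutator $[a^{-1},b]$.

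Since $X$ is Cantor and $\Lambda \acts X$ is minimal, $U$ contains a clopen $V$ together with elements $\gamma_1, \dots, \gamma_5 \in \Lambda$ such that the translates $\gamma_i(V)$ are pairwise disjoint and contained in $U$; this yields an embedded subgroup $K \cong \Alt(5)$ of $\Al(\Lambda,X)$ supported in $U$. Specializing the calculation above to $a,b \in K$ places every commutator $[a^{-1},b]$ in $N$, and perfectness of $\Alt(5)$ upgrades this to $K \leq N$. In particular $N \cap \Al(\Lambda,X)$ is a non-trivial subgroup of $\Al(\Lambda,X)$ that is normal in $\Al(\Lambda,X)$ (since $\Al(\Lambda,X)$ normalizes $N$).

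The main obstacle is the final step, which is to upgrade this partial inclusion to $\Al(\Lambda,X) \leq N$. The group $\Al(\Lambda,X)$ is by definition generated by images $\Alt(n) \hookrightarrow \Al(\Lambda,X)$ attached to families of pairwise disjoint clopen translates, so it suffices to show that every such generator belongs to $N$. The strategy is: using minimality of $\Lambda \acts X$, any given finite family of pairwise disjoint clopen translates can be moved into $U$ by a suitable element $\beta \in \Al(\Lambda,X)$ (built as a product of $3$-cycles on other disjoint translates); conjugation by $\beta$ preserves $N$, so the problem reduces to generators supported inside $U$. These in turn are reached by iterating the double commutator argument, now applied to $\Al(\Lambda,X)$-conjugates of $g$ (which all lie in $N$), to accumulate further alternating subgroups inside $N$ with prescribed supports in $U$. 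Once this propagation is carried out, one obtains $\Al(\Lambda,X) \leq N$. The last two assertions are then immediate: taking $N = \Al(\Lambda,X)$ itself yields simplicity, and any non-trivial normal subgroup of $\Full(\Lambda,X)$ is in particular normalized by $\Al(\Lambda,X)$, so it contains $\Al(\Lambda,X)$.
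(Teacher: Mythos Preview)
The paper does not prove this theorem; it is quoted from Nekrashevych \cite{Nek-simple-dyn} (Theorem~4.1 there, with antecedents in \cite{Matui06}). So there is no proof in the paper to compare against, and your task is really to reconstruct Nekrashevych's argument.

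Your double commutator computation is correct and is exactly the standard opening: for $a,b \in \Al(\Lambda,X)$ supported in a clopen $U$ with $g(U)\cap U=\varnothing$, one has $[[g,a],b]=[a^{-1},b]$ on $U$ and identity elsewhere, hence $N$ contains the derived subgroup of any alternating group supported in $U$. Since $\Alt(5)$ is perfect, this yields a copy of $\Alt(5)$ inside $N$, so $N\cap \Al(\Lambda,X)$ is a non-trivial normal subgroup of $\Al(\Lambda,X)$.

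The gap is in your propagation step. The assertion that ``any given finite family of pairwise disjoint clopen translates can be moved into $U$ by a suitable element $\beta\in\Al(\Lambda,X)$'' is false in general: if $\Lambda\acts X$ preserves a probability measure (which is exactly the situation of Corollary~\ref{cor-fullgroup-intro}), then so does $\Al(\Lambda,X)$, and a family whose union has measure larger than $\mu(U)$ cannot be mapped into $U$ by any element of $\Full(\Lambda,X)$. What is missing is a \emph{refinement} step: every $3$-cycle on translates $\gamma_1(V),\gamma_2(V),\gamma_3(V)$ decomposes as a commuting product of $3$-cycles on translates of the pieces of any clopen partition of $V$; after refining $V$ sufficiently, each small $3$-cycle is supported in a set small enough to be conjugated (by an explicit element of $\Al(\Lambda,X)$, built from further $3$-cycles using minimality) to a $3$-cycle supported in $U$. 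One then uses that $N$ already contains \emph{all} $3$-cycles supported in $U$ (this follows from the first step, once you observe that any two sufficiently small $3$-cycles in $U$ are $\Al(\Lambda,X)$-conjugate). This refine-then-conjugate mechanism is the substance of Nekrashevych's proof, and your sketch elides it. The sentence about ``iterating the double commutator argument on conjugates of $g$'' is also unnecessary once $K\cong\Alt(5)\leq N$ is in hand; from that point on one works entirely inside $\Al(\Lambda,X)$.
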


Recall that an action $ \Lambda \acts X$ on a Cantor space $X$ is \textbf{expansive} if there exist a compatible metric $d$ and $\delta > 0$ such that for every $x \neq y \in X$ there exists $\gamma \in \Lambda$ such that $d(\gamma(x),\gamma(y)) \geq \delta$. For the following, see Proposition 5.7 and Theorem 5.10 in \cite{Nek-simple-dyn}. 

\begin{thm} \label{thm-nek-alt-fg}
	Let $ \Lambda$ be a finitely generated group, and $ \Lambda \acts X$ a minimal and expansive action on a Cantor space $X$. Then the group $\Al(\Lambda,X)$ is finitely generated.
\end{thm}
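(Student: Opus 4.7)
The goal is to exhibit an explicit finite symmetric generating set for $\Al(\Lambda,X)$. Fix a finite symmetric generating set $S \subseteq \Lambda$ containing the identity, a compatible metric $d$, and an expansiveness constant $\delta > 0$. Expansiveness together with minimality yields a finite clopen partition $\mathcal{P} = \{P_1, \dots, P_k\}$ with $\mathrm{diam}(P_i) < \delta$, such that the itinerary map $X \to \mathcal{P}^\Lambda$, $x \mapsto (\gamma \mapsto P_{i(\gamma,x)})$ defined by $\gamma x \in P_{i(\gamma,x)}$, is an injective $\Lambda$-equivariant embedding onto a subshift. Denote by $B_n$ the ball of radius $n$ in $\Lambda$ with respect to $S$, and by $\mathcal{P}_n$ the common refinement of the partitions $\{\gamma^{-1}\mathcal{P} : \gamma \in B_n\}$; expansiveness guarantees that $\{\mathcal{P}_n\}_n$ is a clopen base of $X$. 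Take as candidate generating set the finite collection $\mathcal{G}$ of all 3-cycles in $\Al(\Lambda,X)$ that cyclically permute three pairwise disjoint clopens of the form $C$, $sC$, $tC$, with $C \in \mathcal{P}_1$ and $s, t \in B_3$. Let $\Al_0 = \langle \mathcal{G} \rangle$.

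To prove $\Al_0 = \Al(\Lambda,X)$, I would argue by induction on $n$ that every 3-cycle permuting three pairwise disjoint $\Lambda$-translates of a cylinder $U \in \mathcal{P}_n$ lies in $\Al_0$. The base case $n = 1$ is essentially built into the definition of $\mathcal{G}$. For the inductive step, a 3-cycle on translates of $U \in \mathcal{P}_{n+1}$ is analysed inside the parent cylinder $U' \in \mathcal{P}_n$ containing $U$: writing $U' \setminus U$ as a union of depth-$(n+1)$ cylinders and using commutator identities in finite alternating groups (which let one replace a 3-cycle by a product of 3-cycles on overlapping triples, using the auxiliary cylinders as bridges), rewrite the original 3-cycle as a product of 3-cycles whose $\Lambda$-cocycles are $B_3$-moves, up to permutations of depth-$(n+1)$ cylinders inside $U'$. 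The latter belong to $\Al_0$ by the inductive hypothesis applied at depth $n$. Since every element of $\Al(\Lambda,X)$ is a finite product of 3-cycles on clopen triples, and every clopen is a finite union of cylinders of bounded depth, compactness yields $\Al_0 = \Al(\Lambda,X)$.

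The main obstacle is the realignment step. Given arbitrary $\gamma, \gamma' \in \Lambda$ carrying $U \in \mathcal{P}_{n+1}$ to pairwise disjoint clopens, one must express the pair $(\gamma, \gamma')$, modulo the rigid stabilizer of $U'$ realized inside $\Al_0$, as a pair of $B_3$-moves. Expansiveness is what makes this bootstrapping possible: it forces the cocycle governing the $\Lambda$-action on cylinders of bounded depth to have image locally controlled by $S$, so only finitely many alignment classes arise at each level, and these are absorbed into the finite set $\mathcal{G}$. Without expansiveness the alignment data proliferates without bound---indeed, finite generation of $\Al(\Lambda,X)$ is known to fail for non-expansive minimal actions such as odometers---so expansiveness is not a technical convenience but the precise hypothesis that closes the induction.
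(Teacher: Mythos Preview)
The paper does not prove this theorem; it simply cites Nekrashevych \cite{Nek-simple-dyn}, Proposition~5.7 and Theorem~5.10. So there is no proof in the paper to compare against, only the original reference.

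Your outline follows the same overall strategy as Nekrashevych's argument: encode $X$ as a subshift via expansiveness, take as candidate generators a finite set of 3-cycles built from low-depth cylinders and short group elements, and then show by induction on cylinder depth that all 3-cycles lie in the subgroup they generate. That is the right architecture.

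However, what you have written is a proof \emph{plan}, not a proof. You yourself identify the realignment step as ``the main obstacle'' and then offer only a heuristic: that expansiveness ``forces the cocycle governing the $\Lambda$-action on cylinders of bounded depth to have image locally controlled by $S$''. This sentence does not carry precise content. A 3-cycle on $U, \gamma U, \gamma' U$ with $U \in \mathcal P_{n+1}$ involves \emph{arbitrary} $\gamma, \gamma' \in \Lambda$, and nothing you have said explains concretely how to rewrite it, modulo elements already known to lie in $\Al_0$, as a product of 3-cycles whose translations lie in $B_3$. In Nekrashevych's proof this is exactly where the work is: one shows that the depth-$(n{+}1)$ cylinders inside a depth-$n$ cylinder can be permuted using only generators (via an auxiliary ``bridging'' argument through disjoint cylinders and minimality), and that an arbitrary long $\Lambda$-translation can be shortened one generator at a time using the inductive hypothesis. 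Your paragraph gestures at both ingredients but executes neither.

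There is also a smaller issue: your base case is not ``essentially built into the definition of $\mathcal G$''. The set $\mathcal G$ contains only 3-cycles on $C, sC, tC$ with $s, t \in B_3$, whereas the inductive statement you want at $n=1$ concerns 3-cycles on arbitrary $\Lambda$-translates of $C \in \mathcal P_1$. So the base case already requires the realignment argument, and you have not supplied it.

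In short: correct strategy, but the decisive step is asserted rather than proved. If you want a self-contained argument, you should look at how Nekrashevych handles the shortening of translations (his use of multisections and the explicit commutator manipulations in \S5 of \cite{Nek-simple-dyn}) and reproduce that mechanism here.
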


\subsubsection{Absence of commensurated subgroups}

The goal of this paragraph is to prove Theorem \ref{thm-fullgroup-no-commens}. We will use the following lemma.

\begin{lem} \label{lem-alt-distinct-pts}
	Let $ \Lambda \acts X$ be a minimal action on a Cantor space $X$, and let $\Gamma = \Al(\Lambda,X)$. Then the $\Gamma$-action on $X$ is minimal. Moreover, for all  $x, y \in X$ with $x \neq y$, we have   $\left\langle  \Gamma_x^0, \Gamma_y^0 \right\rangle = \Gamma$.
\end{lem}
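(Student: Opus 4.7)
The plan is to show that every generator of $\Gamma = \Al(\Lambda, X)$ lies in $H := \langle \Gamma_x^0, \Gamma_y^0 \rangle$. By construction $\Al(\Lambda, X)$ is generated by the elements $g_\sigma$ attached to tuples $(U; \gamma_1, \dots, \gamma_n)$ with $\gamma_i(U)$ pairwise disjoint clopen and $\sigma \in \Alt(n)$. Since alternating groups are generated by $3$-cycles, and we may discard indices outside the support of $\sigma$, it suffices to handle a single $g = g_\sigma$ with $\sigma$ a $3$-cycle on three pairwise disjoint clopen sets; after replacing $U$ by $\gamma_1(U)$ we may assume $\gamma_1 = \mathrm{id}$, and then set $D = U \sqcup \gamma_2(U) \sqcup \gamma_3(U)$ for the support of $g$.

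I would proceed by case analysis. If at most one of $x, y$ lies in $D$, then $g$ fixes a full neighborhood of the other, so $g \in \Gamma_x^0 \cup \Gamma_y^0 \subseteq H$. If both $x, y \in D$ but their preimages $x_U, y_U \in U$ differ (equivalently, $x$ and $y$ lie in distinct $g$-orbits), then choose a clopen partition $U = U_x \sqcup U_y \sqcup U_0$ with $x_U \in U_x$ and $y_U \in U_y$. This factors $g = g_x \cdot g_y \cdot g_0$ as a commuting product of three $3$-cycles supported on the $g$-orbits of $U_x, U_y, U_0$; one then verifies that $g_x \in \Gamma_y^0$, $g_y \in \Gamma_x^0$, and $g_0 \in \Gamma_x^0 \cap \Gamma_y^0$, so $g \in H$.

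The crux is the remaining case, where both $x, y \in D$ satisfy $x_U = y_U =: u$, so that $\{x, y\}$ sits inside the single $g$-orbit $\{u, \gamma_2(u), \gamma_3(u)\}$; write $x = \gamma_{i_0}(u)$ and $y = \gamma_{i_1}(u)$ with $i_0 \neq i_1$. By minimality of $\Lambda \acts X$ together with the fact that $X$ is infinite, the orbit $\Lambda \cdot u$ is infinite, so there exists $\gamma_4 \in \Lambda$ with $\gamma_4(u) \notin \{u, \gamma_2(u), \gamma_3(u)\}$. Shrinking $U$ to a sufficiently small clopen neighborhood $U'$ of $u$, we may assume the four sets $\gamma_i(U')$, $i=1,2,3,4$, are pairwise disjoint. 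Splitting $U = U' \sqcup (U \setminus U')$ then factors $g = g' \cdot g''$ into commuting $3$-cycles, with $g''$ supported off the $g$-orbit of $u$ and hence $g'' \in \Gamma_x^0 \cap \Gamma_y^0$.

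To handle $g'$, I would exploit the copy of $\Alt(4)$ inside $\Al(\Lambda, X)$ attached to the four pieces $\gamma_i(U')$: inside this copy, $g'$ is the image of a $3$-cycle of $\Alt(3) \subseteq \Alt(4)$, while the two subgroups consisting of alternating elements fixing the $i_0$-th (resp.\ $i_1$-th) piece pointwise are contained in $\Gamma_x^0$ and $\Gamma_y^0$ respectively. These are two distinct order-$3$ subgroups of $\Alt(4)$ with trivial intersection, and since $\Alt(4)$ has no subgroup of order~$6$, they must generate all of $\Alt(4)$; in particular $g' \in H$. The main obstacle is precisely this final case: no identity within $\Alt(3)$ alone splits a single $3$-cycle into a product of permutations fixing two distinct coordinates, so it is essential to use minimality to enlarge the $3$-cycle configuration into an $\Alt(4)$-setup.
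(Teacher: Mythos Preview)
Your proof is correct and takes a genuinely different route from the paper's.

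The paper does not verify the generators directly. Instead it uses Nekrashevych's simplicity theorem for $\Al(\Lambda,X)$ as a black box: it shows that for each $z$ in the $\Gamma$-orbit of $x$ there is an element of $H$ sending $x$ to $z$ (constructed as a single $3$-cycle on clopens around $x$, $z$, and an auxiliary point $z'$, all chosen disjoint from $y$, so that this $3$-cycle lies in $\Gamma_y^0$). This gives $\Gamma_z^0 \leq H$ for every such $z$, hence $H$ contains the normal closure of $\Gamma_x^0$, and simplicity finishes.

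Your argument is more elementary in that it avoids invoking simplicity altogether, at the cost of a longer case analysis. The decisive trick---enlarging the three-piece configuration to four pieces via minimality and then using that two distinct point stabilizers in $\Alt(4)$ generate the whole group---is a nice self-contained substitute for the global structural input the paper relies on. Conversely, the paper's approach is shorter and makes transparent why simplicity is the natural engine here: once you know $\Gamma$ is simple, you only need $H$ to contain \emph{some} nontrivial normal subgroup, and conjugating $\Gamma_x^0$ around by elements of $\Gamma_y^0$ already achieves that with almost no computation.
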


\begin{proof}
The minimality of the $\Gamma$-action follows from \cite[Lemma 3.2]{Nek-simple-dyn}. For the second statement, let $H = \left\langle  \Gamma_x^0, \Gamma_y^0 \right\rangle$, and let $O_x$ be the $\Gamma$-orbit of $x$ in $X$. We check that $H$ contains $\Gamma_z^0$ for every $z \in O_x$. That will imply that $H$ contains the normal subgroup generated by $\Gamma_x^0$, and hence that $H = \Gamma$ because $\Gamma$ is simple by Theorem \ref{thm-nek-alt-simple}.

	So given $z \in O_x$, we wish to show that $H$ contains $\Gamma_z^0$. Clearly we may assume that $z$ is distinct from $x$ and $y$, and it is enough to show that there exists $g \in H$ such that $g(x) = z$, because then $\Gamma_z^0 = g \Gamma_x^0 g^{-1} \leq H$. If $z'$ is a point in $O_x$ different from $x,y,z$, it is possible to find three disjoint clopen subsets $U_x,U_{z},U_{z'}$ containing respectively $x,z,z'$ and not containing $y$, and elements $\gamma_1,\gamma_2,\gamma_3 \in \Lambda$ such that $\gamma_1(U_x) = U_z$ and $\gamma_1(x) = z$, $\gamma_2(U_z) = U_{z'}$ and $\gamma_3(U_{z'}) = U_x$. The homeomorphism $g$ of $X$ that coincides on $U_x,U_{z}$ and $U_{z'}$ respectively with $\gamma_1,\gamma_2$ and $\gamma_3$ and which acts trivially outside $U_x \cup U_{z} \cup U_{z'}$ is an element of $\Gamma$ by definition of the alternating full group, and by construction $g$ acts trivially on a neighbourhood of $y$. Hence $g$ belongs to $H$, and $g(x) = z$, so the statement is proved.
\end{proof}

\begin{thm} \label{thm-fullgroup-no-commens}
	Let $\Lambda$ be a finitely generated group, and $ \Lambda \acts X$ a minimal and expansive action on a Cantor space $X$ such that $\Al(\Lambda,X) \acts X$ does not admit any compressible open subset. Let $\Gamma$ be a subgroup of $\Full(\Lambda,X)$ that contains $\Al(\Lambda,X)$. Then every commensurated subgroup of $\Gamma$ is either finite or contains $\Al(\Lambda,X)$.
\end{thm}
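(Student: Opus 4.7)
The strategy is to apply Theorem~\ref{thm:Main} to the alternating full group $\Al(\Lambda, X)$ and to analyze the position of this subgroup inside $\Gamma$.

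First I would record the standard properties of $\Al := \Al(\Lambda, X)$: by Theorem~\ref{thm-nek-alt-fg} it is finitely generated, by Theorem~\ref{thm-nek-alt-simple} it is simple and infinite (as it acts micro-supportedly on a Cantor space, which has no isolated points), hence j.n.v.n.; its action on $X$ is faithful, minimal and micro-supported; it is characteristic in $\Full(\Lambda, X)$ (since the alternating full group is defined by a canonical construction), so normal in $\Gamma$; and by a standard small-support argument, the centralizer $C_{\homeo(X)}(\Al)$ is trivial. Applying Theorem~\ref{thm:Main} to $\Al$, and observing that in the simple group $\Al$ the only proper normal subgroup is $\{1\}$ so that being virtually contained in a proper normal subgroup of infinite index just means being finite, we deduce from the hypothesis that $X$ has no $\Al$-compressible open subset that any commensurated subgroup of $\Al$ is either finite or equal to $\Al$.

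Now let $N$ be a commensurated subgroup of $\Gamma$. Since $\Al$ is normal in $\Gamma$, the intersection $N \cap \Al$ is commensurated in $\Al$: for $a \in \Al$, $[N : aNa^{-1} \cap N] < \infty$, and intersecting with the normal subgroup $\Al$ yields $a(N \cap \Al)a^{-1} \cap (N \cap \Al) = aNa^{-1} \cap N \cap \Al$ of finite index in $N \cap \Al$. By the previous paragraph, $N \cap \Al$ is either finite or equal to $\Al$; in the latter case $\Al \leq N$ and the conclusion holds.

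The remaining case, where $N$ is infinite but $N \cap \Al$ is finite, must be ruled out. I would pick $g \in N$ whose image in $\Gamma/\Al$ has infinite order (if no such $g$ exists, then $N/(N \cap \Al)$ is a torsion commensurated subgroup of $\Gamma/\Al$; a separate argument via the Schlichting completion, exploiting the embedding $\Gamma/\Al \hookrightarrow \operatorname{Out}(\Al)$ afforded by the vanishing of $C_\Gamma(\Al)$, should force $N/(N \cap \Al)$ to be finite, whence $N$ is finite). The subgroup $H := \langle g, \Al \rangle$ is finitely generated, and since any non-trivial normal subgroup of $H$ either contains $\Al$ or commutes with $\Al$ (and is therefore trivial, by the vanishing of the centralizer), the group $H$ is j.n.v.n.\ with $H/\Al$ infinite cyclic. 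The $H$-action on $X$ remains faithful, minimal and micro-supported. The subgroup $M := N \cap H$ is commensurated in $H$, infinite (it contains $g$, which has infinite order modulo $\Al$), of infinite index in $H$, and not virtually contained in the unique non-trivial proper normal subgroup $\Al$ (since $M \cap \Al$ is finite). Theorem~\ref{thm:Main} applied to $H$ thus shows that $X$ is an almost $H$-boundary with a compressible open subset. The main obstacle I anticipate is the transfer of this conclusion into an $\Al$-compressibility statement contradicting the hypothesis: the key idea is that $\Al$ is co-amenable in $H$ (as $H/\Al$ is cyclic, hence amenable), so that strong proximality of the $H$-action should descend to strong proximality of the $\Al$-action, and combined with the explicit tdlc structure produced by Theorem~\ref{thm:Main}\ref{it:Main2}, one should be able to extract an $\Al$-compressible clopen subset of $X$, contradicting the hypothesis.
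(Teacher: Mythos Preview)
Your first two paragraphs are correct and coincide with the paper's proof: apply Theorem~\ref{thm:Main} to the simple group $\Al(\Lambda,X)$ to conclude that its commensurated subgroups are finite or everything, then observe that $N\cap \Al(\Lambda,X)$ is commensurated in $\Al(\Lambda,X)$.

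The third paragraph, however, has genuine gaps. Your Case~B (no $g\in N$ has infinite order modulo $\Al(\Lambda,X)$) is not handled: the proposed route through $\operatorname{Out}(\Al(\Lambda,X))$ is too vague, and there is no reason a priori why a torsion commensurated subgroup of $\Gamma/\Al(\Lambda,X)$ should be finite, since $\Gamma$ is not assumed finitely generated and $\Full(\Lambda,X)/\Al(\Lambda,X)$ can be complicated. Your Case~A is closer to workable---one can indeed push the tdlc structure from Theorem~\ref{thm:Main}\ref{it:Main2} to show that $\Al(\Lambda,X)$ is dense in a closed subgroup containing the topologically simple monolith, and then use continuity to transfer compressibility---but this is substantially more work than your sketch indicates, and the co-amenability argument for strong proximality alone does \emph{not} yield compressible open subsets.

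The paper's argument for this final step is far more direct and avoids the case split entirely. Once $\Sigma\cap\Al(\Lambda,X)$ is known to be finite, observe that for every $a\in\Al(\Lambda,X)$ and every $s$ in the finite-index subgroup $\Sigma\cap a^{-1}\Sigma a$, the commutator $[a,s]$ lies in $\Sigma$ (since $asa^{-1}\in\Sigma$) and in $\Al(\Lambda,X)$ (since $\Al(\Lambda,X)$ is normal), hence in the finite set $\Sigma\cap\Al(\Lambda,X)$. A standard cocycle argument then shows that $C_\Sigma(a)$ has finite index in $\Sigma$. Since $\Al(\Lambda,X)$ is finitely generated, the whole of $\Al(\Lambda,X)$ centralizes a finite-index subgroup of $\Sigma$; but $C_{\Full(\Lambda,X)}(\Al(\Lambda,X))=1$, so $\Sigma$ is finite. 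This three-line argument replaces your entire third paragraph.
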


\begin{proof}
	We first treat the case $\Gamma = \Al(\Lambda,X)$. Recall that under the present assumptions, $\Gamma$ is a finitely generated simple group (Theorems \ref{thm-nek-alt-simple} and \ref{thm-nek-alt-fg}), and the $\Gamma$-action  on $X$ is minimal and strongly just-infinite by Lemma \ref{lem-alt-distinct-pts}. Hence all the assumptions of Theorem \ref{thm:Main} are verified. Hence according to part  \ref{it:Main1} of this theorem, if $\Gamma$ has a commensurated subgroup that is of infinite index and not virtually contained in a normal subgroup of infinite index, then $\Gamma \acts X$ admits a compressible open subset. By our assumption this is not the case. So $\Gamma$ has no commensurated subgroup as above, and since $\Gamma$ is simple this is equivalent to saying that every commensurated subgroup of $\Gamma$ is either finite or equal to $\Gamma$.

	We now consider an arbitrary subgroup $\Gamma$ of $\Full(\Lambda,X)$ that contains $\Al(\Lambda,X)$. Suppose that $\Sigma$ is a commensurated subgroup of $\Gamma$ that does not contain $\Al(\Lambda,X)$. Then $\Sigma ' = \Sigma \cap \Al(\Lambda,X)$ is a proper commensurated subgroup of $\Al(\Lambda,X)$, and hence is finite according to the previous paragraph. Therefore since $\Al(\Lambda,X)$ is normal in $\Gamma$, we infer that every element of $\Al(\Lambda,X)$ centralizes a finite index subgroup of $\Sigma$. Since $\Al(\Lambda,X)$ is a finitely generated group, it follows that the entire $\Al(\Lambda,X)$ centralizes a finite index subgroup of $\Sigma$. Since $\Al(\Lambda,X)$ has trivial centralizer in $\Full(\Lambda,X)$ by Theorem \ref{thm-nek-alt-simple}, the subgroup $\Sigma$ must be finite, as desired.
\end{proof}

Note that Theorem \ref{thm-fullgroup-intro} and Corollary \ref{cor-fullgroup-intro} follow from Theorem \ref{thm-fullgroup-no-commens}. In the following example we illustrate that these results fail if we remove one of the assumptions.

\begin{example} \label{ex-odom}
Let $\varphi$ denote the odometer on $X := \mathbb{Z}_p$, i.e.\ the homeomorphism of $\mathbb{Z}_p$ defined by $x \mapsto x+1$, and write $\Lambda = \left\langle \varphi \right\rangle$. The topological full group $\Full(\Lambda,X)$ is considered in details in \cite[Example 4.6]{Grig-Med}. It is an infinite group such that every finitely generated subgroup is virtually abelian.

The action of $\Lambda$ on $X$ is minimal but not expansive. If $T_p$ is the $p$-regular rooted tree naturally associated to $\mathbb{Z}_p$ (the coset tree associated to the sequence of subgroups $(p \mathbb{Z}_p, p^2 \mathbb{Z}_p, \ldots)$), then $\mathrm{Aut}(T_p)$ is naturally a subgroup of $\mathrm{Homeo}(X)$. The subgroup $\Sigma := \mathrm{Aut}(T_p) \cap \Full(\Lambda,X)$ of $\Full(\Lambda,X)$ is easily seen to be infinite, of infinite index, and commensurated in $\Full(\Lambda,X)$; and $\Sigma \cap \Al(\Lambda,X)$ is also infinite, of infinite index, and commensurated in $\Al(\Lambda,X)$. Hence this example shows that the expansivity of the action of $ \Lambda$ on $X$ in Theorem \ref{thm-fullgroup-intro} and Corollary \ref{cor-fullgroup-intro} is an essential assumption. 

Now remark that the action of $\Full(\Lambda,X)$ is minimal and expansive, but the group $\Full(\Lambda,X)$ is not finitely generated. Since $\Full(\Lambda,X)$ is equal to its own topological full group, the previous paragraph also shows that Theorem \ref{thm-fullgroup-intro} and Corollary \ref{cor-fullgroup-intro} also fail without the finite generation assumption on the original acting group.
\end{example}

\subsubsection{Extension of the action to the ambient group}

This paragraph deals with the situation where there exist non-discrete tdlc groups into which the group $\Al(\Lambda,X)$ embeds densely. Examples of such groups are the Higman-Thompson groups, or topological full group associated with a one-sided shift of finite type; see Section \ref{subsubsec-higman}.  

\begin{thm} \label{thm-full-tdlc-extends}
Let $\Lambda$ be a finitely generated group, and $ \Lambda \acts X$ a minimal and expansive action on a Cantor space $X$. Assume that $G$ is a tdlc group into which $\Al(\Lambda,X)$ embeds as a dense subgroup. Then the action of $\Al(\Lambda,X)$ on $X$ extends to an action of $G$ on $X$. 
\end{thm}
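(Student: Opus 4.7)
The plan is to apply Proposition~\ref{prop-urs-to-X-Gamma-eq} to the inclusion $\varphi \colon \Gamma \hookrightarrow G$, where $\Gamma := \Al(\Lambda,X)$. Collecting the prerequisites: $\Gamma$ is finitely generated and simple by Theorems~\ref{thm-nek-alt-fg} and~\ref{thm-nek-alt-simple}, and infinite because alternating $3$-cycles supported in arbitrarily small clopens exist (using minimality and expansiveness of the $\Lambda$-action on the Cantor space $X$); hence $\Gamma$ is j.n.v.n. The same observation shows that the $\Gamma$-action on $X$ is minimal and micro-supported. It is faithful by construction, and strongly just-infinite by Lemma~\ref{lem-alt-distinct-pts}.

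Before the main step, I would dispatch two degenerate cases. If $G$ is discrete then density forces $G = \Gamma$ and the conclusion is trivial. If $G$ is compact, any compact open subgroup $U \leq G$ has finite index, so $\Gamma \cap U$ has finite index in $\Gamma$; but a simple infinite group has no proper finite-index subgroup (the normal core would be trivial and $\Gamma$ would embed into a finite symmetric group), forcing $\Gamma \leq U$ and hence $G = U$ by density. As compact open subgroups form a neighborhood basis at $1$, this would make $G$ trivial, contradicting $\Gamma \neq 1$. So henceforth $G$ is assumed neither discrete nor compact. To reduce to the second countable setting of Section~\ref{sec:GeneralSetting}, observe that $G = \Gamma U$ for any compact open $U$ (by density), so $G$ is $\sigma$-compact. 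The Kakutani--Kodaira theorem then yields a compact normal subgroup $N \leq G$ with $G/N$ second countable; by simplicity, $\Gamma \cap N \in \{1, \Gamma\}$, and the case $\Gamma \cap N = \Gamma$ gives $G = N$ compact, ruled out above. So $\Gamma$ embeds densely in the second countable tdlc group $G' := G/N$, and any continuous $G'$-action extending the $\Gamma$-action on $X$ pulls back along $G \twoheadrightarrow G'$ to a $G$-action with the desired property. Hence we may assume $G$ itself is second countable.

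The remaining task is to check that the URS $S_{\varphi, G}(X)$ is infinite and then invoke Proposition~\ref{prop-urs-to-X-Gamma-eq}\ref{item-action-extends}. Suppose for contradiction that $S_{\varphi, G}(X)$ is finite. Proposition~\ref{prop-urs-phi-nontrivial-2} applies ($\Gamma$ is j.n.v.n., and $G$ is compactly generated since it contains a dense finitely generated subgroup) and produces a compact open normal subgroup $K \leq G$ with $G/K$ discrete. Simplicity of $\Gamma$ forces $\Gamma \cap K \in \{1, \Gamma\}$. The case $\Gamma \cap K = \Gamma$ gives $G = \overline{\Gamma} = K$ compact, excluded; the case $\Gamma \cap K = 1$ means that the identity neighborhood $K$ meets $\Gamma$ only at $1$, so $\Gamma$ is discrete in $G$, hence closed, hence equal to $G$ by density, forcing $G$ discrete, also excluded. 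Therefore $S_{\varphi, G}(X)$ is infinite and Proposition~\ref{prop-urs-to-X-Gamma-eq}\ref{item-action-extends} delivers the sought continuous $G$-action on $X$.

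The main obstacle, in my view, is the delicate handling of the degenerate cases where $G$ is compact or discrete, on which Proposition~\ref{prop-urs-to-X-Gamma-eq} is silent: one has to exploit the simplicity and infiniteness of $\Al(\Lambda, X)$ both to rule these out and to reach the second countable setting via Kakutani--Kodaira. Once these are dispatched, the rest is a direct application of the machinery of Section~\ref{sec-embed-micro}.
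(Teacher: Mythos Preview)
Your argument is correct and takes a genuinely different route from the paper's. The paper first passes to a non-discrete, topologically simple quotient $Q$ of $G$ using Proposition~\ref{prop:JNC-quotients} (exploiting that $\Al(\Lambda,X)$ is simple, so $G$ has no non-trivial finite quotient), and then applies Corollary~\ref{cor-faith-exten} to $Q$ --- which internally invokes Proposition~\ref{prop:S-phi-G-Faithful} and its monolithic hypotheses --- before pulling the action back to $G$. You instead stay with $G$ itself: after the Kakutani--Kodaira reduction, you use Proposition~\ref{prop-urs-phi-nontrivial-2} to rule out $S_{\varphi,G}(X)$ being finite and then apply Proposition~\ref{prop-urs-to-X-Gamma-eq}\ref{item-action-extends} directly. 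This bypasses both the structure-theoretic step (existence of a topologically simple quotient) and the monolithic machinery of Corollary~\ref{cor-faith-exten}, at the cost of handling the compact and discrete degenerate cases by hand. Your proof is also more explicit about second countability, which the paper's proof leaves implicit.

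One small point: after replacing $G$ by $G' = G/N$, you invoke ``excluded'' for $G'$ compact or discrete, but those cases were only dispatched for the original $G$. The passage is harmless --- if $G'$ were compact then so would $G$ be (compact-by-compact), and if $G'$ were discrete then $N$ would be open with $\Gamma\cap N=1$, forcing $\Gamma$ discrete and $G=\Gamma$ discrete --- but it is worth one sentence.
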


\begin{proof}
Clearly we may assume that $G$ is not discrete. The group $\Al(\Lambda,X)$ is finitely generated by Theorem \ref{thm-nek-alt-fg}, so $G$ is compactly generated. Moreover $\Al(\Lambda,X)$ is simple by Theorem \ref{thm-nek-alt-simple}, hence $G$ does not have any non-trivial discrete quotient. Therefore, by Proposition \ref{prop:JNC-quotients}, the group $G$ has a topologically simple quotient $Q$, and $\Al(\Lambda,X)$ embeds densely in $Q$. Since the action of $\Al(\Lambda,X)$ on $X$ is strongly just-infinite by Lemma \ref{lem-alt-distinct-pts}, we can apply Corollary \ref{cor-faith-exten}, which says that the action of $\Al(\Lambda,X)$ extends to the group $Q$. Since $Q$ is a quotient of $G$, in particular we have shown that the action extends to an action of $G$.
\end{proof}

\subsubsection{Examples of dense embeddings of topological full groups} \label{subsubsec-higman}

We denote by $V_{d,k}$ the Higman-Thompson group with parameters  $d \geq 2$ and $k \geq 1$ acting on the Cantor space $X_{d,k} = \{1,\ldots, k \} \cdot  \{1,\ldots, d \}^\mathbb{N}$ by prefix replacement. We refer to \cite{Hig-fp} for a precise definition of these groups. Alternatively, $V_{d,k}$ can be defined as the topological full group of a certain one-sided shift of finite type \cite{Matui-fullgp-shift}. In particular $V_{d,k}$ is equal to its own topological full group. 

The group $V_{d,k}$ admits an embedding with dense image in a non-discrete tdlc group, namely the group $\mathrm{AAut}(T_{d,k})$ of almost-automorphisms of the quasi-regular rooted tree $T_{d,k}$ \cite{Cap-deMedts}. In particular $V_{d,k}$ admits an infinite and infinite index commensurated subgroup. This fact has been recently generalized by Waltraud Lederle, who showed that more generally every topological full group associated with a one-sided shift of finite type admits infinitely many  pairwise non-commensurate commensurated subgroups \cite{Lederle-complet}. 

The group $V_{d,k}$ is well-known to be finitely generated and simple (see \cite{Hig-fp}), and it is easy to see that its action on the Cantor $X_{d,k}$ is strongly just-infinite. Hence it follows that $V_{d,k}$ satisfies all the assumptions of Theorem \ref{thm:Main}\ref{it:Main3}, so that for every dense embedding of $V_{d,k}$ into a tdlc group $G$, the action of $V_{d,k}$ on $X_{d,k}$ extends to $G$.

\begin{rmq} \label{rmq-not-all-extend-V}
In the above situation, it is not true that every minimal and micro-supported action of $V_{d,k}$ extends to $G$, and hence the assumption in Theorem~\ref{thm:Main}\ref{it:Main3} that the action is strongly just infinite is necessary.  For example in the case of $G = \mathrm{AAut}(T_{d,2})$ mentioned above (into which the group $V_{d,2}$ embeds densely), the group $G$ admits a unique minimal and micro-supported action. This follows from the fact that $G$ and the group $\mathrm{Aut}(T_{d+1})$ of automorphisms of the non-rooted regular tree of degree $d+1$ have isomorphic open subgroups (and hence have the same centralizer lattice) together with the fact that $\mathrm{Aut}(T_{d+1})$ has a unique minimal and micro-supported action \cite[Theorem B.2]{CRW-part2}. So it follows that the action of $V_{d,2}$ on $X_{d,2}$ is actually the unique minimal and micro-supported action that extends to $G$.
\end{rmq}

\subsection{Branch groups} \label{subsec-branch}

%\subsubsection{Preliminaries on branch groups}

Let $T$ be a locally finite rooted tree, and $\Gamma$ a group of automorphisms of $T$. We denote by $\partial T$ the boundary of $T$. Given a vertex $v\in T$, we denote by $T_v$ the subtree of $T$ that is below $v$, and by $\partial T_v$ the associated clopen subset in $\partial T$. Note that the $\Gamma$-action on $T$ extends to an action by homeomorphisms on $\partial T$. We will denote by $\rist_\Gamma(v) := \rist_\Gamma(\partial T_v)$ the rigid stabilizer of $v$. For $n \geq 1$, we also denote by $\rist_\Gamma(n)$ the rigid stabilizer of level $n$, that is the subgroup generated by $\rist_\Gamma(v)$ when $v$ ranges over vertices of level $n$. Recall that the action of $\Gamma$ is branch if $\Gamma$ acts transitively on each level of the tree (or, equivalently, if $\Gamma$ acts minimally on $\partial T$) and if $\rist_\Gamma(n)$ has finite index in $\Gamma$ for all $n \geq 1$.

We will invoke the following result of Grigorchuk \cite{Grig-ji-branch}.

\begin{thm} \label{thm-branch-normal}
Let $\Gamma \leq \aut(T)$ be a branch group, and $N$ a non-trivial normal subgroup of $\Gamma$. Then there exists a level $n \geq 1$ such that $\rist_\Gamma(n)' \leq N$. In particular every proper quotient of $\Gamma$ is virtually abelian.
\end{thm}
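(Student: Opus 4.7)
The plan is to follow the classical commutator-trick proof, exploiting the fact that a branch action has enough room at deep levels to separate the supports of conjugates of a given element.

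The first step will be to pick a non-trivial $g \in N$ and to find, at a sufficiently deep level $n$, a vertex $v$ such that $g(v) \neq v$ (and hence, since $g$ preserves levels, the subtrees $T_v$ and $T_{g(v)}$ are disjoint). Such a $v$ exists because $g$ is non-trivial and $\Gamma$ acts faithfully on $T$: go deep enough for $g$ to displace at least one vertex of that level. I will also use the standard observation that at level $n$ the group $\rist_\Gamma(n)$ is the (internal) direct product $\prod_u \rist_\Gamma(u)$ over the vertices $u$ at level $n$, so that $\rist_\Gamma(n)' = \prod_u \rist_\Gamma(u)'$.

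The heart of the argument is the computation of double commutators. For $x \in \rist_\Gamma(v)$, the element $[g,x] = g x g^{-1} x^{-1}$ is supported on $\partial T_v \cup \partial T_{g(v)}$, and a direct check shows it acts as $x^{-1}$ on $\partial T_v$ and as $g x g^{-1}$ on $\partial T_{g(v)}$. Taking a second $y \in \rist_\Gamma(v)$, the commutator $[[g,x],y]$ lies in $N$ (because $[g,x]\in N$ and $N\trianglelefteq \Gamma$); on $\partial T_{g(v)}$ the element $y$ is trivial, so the commutator is trivial there, while on $\partial T_v$ we obtain exactly $[x^{-1},y]$. Thus $[x^{-1},y] \in N$ for all $x,y \in \rist_\Gamma(v)$, and letting $x$ range over $\rist_\Gamma(v)$ shows $\rist_\Gamma(v)' \leq N$. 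I expect this computation to be the crucial step: the disjointness of $\partial T_v$ and $\partial T_{g(v)}$ is what decouples the two coordinates and turns a triple commutator into a plain commutator in $\rist_\Gamma(v)$.

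The remaining step is to promote this from a single vertex to the whole level. Since $\Gamma$ acts transitively on level $n$ (the first half of the branch hypothesis) and $N$ is normal in $\Gamma$, conjugating yields $\rist_\Gamma(u)' \leq N$ for every vertex $u$ at level $n$; combined with the direct product decomposition of $\rist_\Gamma(n)$, this gives $\rist_\Gamma(n)' \leq N$. Finally, the ``in particular'' follows because $\rist_\Gamma(n)$ has finite index in $\Gamma$ (the second half of the branch hypothesis), so its image in $\Gamma/N$ is an abelian subgroup of finite index, making $\Gamma/N$ virtually abelian for every non-trivial normal subgroup $N$.
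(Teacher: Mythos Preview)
Your argument is correct and is exactly the classical double-commutator proof. The paper itself does not prove Theorem~\ref{thm-branch-normal}; it is quoted without proof as a result of Grigorchuk~\cite{Grig-ji-branch}. (The appendix does contain a closely related \enquote{triple commutator lemma}, Lemma~\ref{lemma:TripleCommutatorLemma}, which refines the same idea to produce a \emph{finitely generated} subgroup of $N$ containing $\rist_\Gamma'(v)$, but that lemma is used for a different purpose and is not invoked to establish Theorem~\ref{thm-branch-normal}.)
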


We now give the proof of Theorem \ref{thm-branch-comm-intro} by applying Theorem~\ref{thm:Main}.

\begin{proof}[Proof of Theorem \ref{thm-branch-comm-intro}]
The group $\Gamma$ is j.n.v.n.\ by Theorem \ref{thm-branch-normal}. If $T$ is a rooted tree on which $\Gamma$ has a faithful branch action, then the action of $\Gamma$ on $X = \partial T$ is faithful, minimal and micro-supported. Since in addition every normal subgroup of $\Gamma$ is finitely generated by Corollary \ref{cor-norm-branch} of the appendix, it follows from part~\ref{item-MainDyn2} of Theorem~\ref{thm:Main} that if $\Gamma$ had a commensurated subgroup that is not commensurate to a normal subgroup, then conclusion \ref{it:Main1} of the theorem would hold; which is clearly not the case here. So every commensurated subgroup of $\Gamma$ is commensurate to a normal subgroup, and the statement is proved.
\end{proof}

 If $\Gamma$ is a finitely generated just-infinite branch group, Theorem \ref{thm-branch-comm-intro} says that every commensurated subgroup of $\Gamma$ is finite or of finite index. In this special case this result has been proved by different means by Wesolek in \cite{Wes-branch}. Hence Theorem \ref{thm-branch-comm-intro} extends Wesolek's result to arbitrary finitely generated branch groups.

\subsection{Groups acting on the circle} \label{subsec-circle}

We denote by $\homeo(\mathbf{S}^1)$ the homeomorphism group of the circle $\mathbf{S}^1$, and by  $\homeo^+(\mathbf{S}^1)$ the subgroup of index two consisting of orientation preserving homeomorphisms. Given a subgroup $ \Gamma \leq \homeo(\mathbf{S}^1)$, we denote by $\aut_{\Gamma}(\mathbf{S}^1)$ the centralizer of $\Gamma$ in $\homeo^+(\mathbf{S}^1)$. Recall the following well-known classification: either $\Gamma$ has a finite orbit in $\mathbf{S}^1$; or $\Gamma$ admits an exceptional minimal set (i.e.\ there exists a unique closed non-empty minimal $\Gamma$-invariant subset $K\subset \mathbf{S}^1$, and $K$ is homeomorphic to a Cantor set); or $ \Gamma$ acts minimally on $\mathbf{S}^1$. See proposition 5.6 in \cite{Ghys-circ}. Moreover for minimal actions we have the following result (see \S 5.2 in \cite{Ghys-circ}).

\begin{thm} \label{thm-Margulis}
	Assume that $\Gamma \le \homeo^+ (\mathbf{S}^1)$ acts minimally on $\mathbf{S}^1$. Then either $\aut_{\Gamma}(\mathbf{S}^1)$ is infinite and  $\Gamma$ is conjugate to a group of rotations, or $\aut_{\Gamma}(\mathbf{S}^1)$ is a finite cyclic group, and the action of $\Gamma$ on the topological circle  $\aut_{\Gamma}(\mathbf{S}^1) \backslash \mathbf{S}^1$ is proximal.
\end{thm}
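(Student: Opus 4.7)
Set $A := \aut_\Gamma(\mathbf{S}^1)$. The first observation is that $A$ acts freely on $\mathbf{S}^1$: if $a \in A$ fixes some $x$, then $a$ fixes the entire orbit $\Gamma x$ (because $a$ commutes with $\Gamma$), which is dense by minimality, so $a = \mathrm{id}$. Moreover $A$ is closed in $\homeo^+(\mathbf{S}^1)$, being a centralizer.

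Next, I would classify the nontrivial elements of $A$ up to topological conjugacy in $\homeo^+(\mathbf{S}^1)$. Any $a \in A \setminus \{\mathrm{id}\}$ acts freely, and so does every power $a^n$ with $n \neq 0$. If $a$ has finite order, it is classically conjugate to a rational rotation. If $a$ has infinite order, then the rotation number $\rho(a)$ must be irrational, since otherwise some power $a^q$ would have rotation number zero and hence a fixed point. By the Denjoy--Poincar\'e classification, $a$ is then either conjugate to an irrational rotation or admits an exceptional minimal Cantor set $K \subsetneq \mathbf{S}^1$; in the latter case $K$ is the unique $a$-minimal closed set and is therefore $\Gamma$-invariant (because $\Gamma$ centralizes $a$), contradicting minimality of $\Gamma$. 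Hence every nontrivial element of $A$ is conjugate to a rotation.

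The dichotomy is driven by whether $A$ contains an element of infinite order. If it does, pick such an $a$: it is conjugate to an irrational rotation, hence uniquely ergodic. Its unique invariant probability measure $\mu$ is $\Gamma$-invariant, since $\gamma_*\mu$ is $a$-invariant for every $\gamma \in \Gamma$ by commutation. Minimality of $\Gamma$ ensures $\mu$ has full support and no atoms, so the cumulative distribution function $x \mapsto \mu([x_0,x])$ is a homeomorphism conjugating $\Gamma$ into $\mathrm{SO}(2)$; thus $\Gamma$ is conjugate to a group of rotations, and $A$ is infinite since it contains $\langle a \rangle$. If instead $A$ is a torsion group, I would invoke the classical H\"older--Ghys fact that a closed subgroup of $\homeo^+(\mathbf{S}^1)$ acting freely is topologically conjugate to a closed subgroup of $\mathrm{SO}(2)$; combined with the observation that the only closed torsion subgroups of $\mathrm{SO}(2)$ are finite cyclic, this forces $A$ to be finite cyclic. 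The quotient $A \backslash \mathbf{S}^1$ is then a topological circle on which $\Gamma$ acts minimally. This action preserves no probability measure: any such $\nu$ would lift, by uniform averaging over $A$-orbits, to an $A$- and $\Gamma$-invariant measure on $\mathbf{S}^1$, and the CDF argument above would then force $\Gamma$ into $\mathrm{SO}(2)$ and $A \supseteq \mathrm{SO}(2)$, contradicting finiteness of $A$. Invoking Margulis's alternative for minimal actions on $\mathbf{S}^1$ --- that every such action either preserves a probability measure or is proximal --- yields proximality of the $\Gamma$-action on $A \backslash \mathbf{S}^1$.

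The main obstacle is Margulis's alternative itself, which is the deep core of the Ghys--Margulis theorem; it is established through a delicate analysis of $\Gamma$-orbits on the compact convex space of probability measures on $\mathbf{S}^1$, exploiting the cyclic order of the circle to convert the absence of invariant measures into an explicit proximal pair.
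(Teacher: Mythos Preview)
The paper does not give its own proof of this statement; it is quoted as a known result with a reference to \S 5.2 of Ghys's survey \cite{Ghys-circ}. Your sketch is correct and follows essentially the standard route laid out in that reference: freeness of the centralizer action together with H\"older's theorem pins down the structure of $A=\aut_\Gamma(\mathbf{S}^1)$, the unique-ergodicity/CDF argument handles the case where $A$ is infinite, and Margulis's alternative (a minimal action on $\mathbf{S}^1$ without invariant probability measure is proximal) delivers proximality on $A\backslash\mathbf{S}^1$ in the finite case. You rightly flag that last step as the substantive one; everything preceding it is a clean reduction, and there is nothing to compare against in the paper itself.
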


We will use the following lemma.

\begin{lem} \label{lem-circ-commens-min}
Let $ \Gamma$ be a subgroup of $\homeo^+(\mathbf{S}^1)$ that is minimal and not conjugated to a group of rotations, and let $\Lambda$ be an infinite commensurated subgroup of $\Gamma$. Then the action of $\Lambda$ on $\mathbf{S}^1$ is minimal.
\end{lem}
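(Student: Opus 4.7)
I would argue by contradiction, supposing the $\Lambda$-action on $\mathbf{S}^1$ is not minimal. By the classical trichotomy for subgroups of $\homeo^+(\mathbf{S}^1)$ (see \cite[Proposition~5.6]{Ghys-circ}), $\Lambda$ then either has a finite orbit, or admits a unique minimal $\Lambda$-invariant closed subset $K \subsetneq \mathbf{S}^1$ homeomorphic to a Cantor set. In both situations, one may take $U$ to be the complement of a minimal $\Lambda$-invariant closed subset, producing a non-empty open proper $\Lambda$-invariant subset $U \subsetneq \mathbf{S}^1$.

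The common reduction is a \emph{covering argument} exploiting the commensuration of $\Lambda$ in $\Gamma$: since $\bigcup_{g \in \Gamma} gU$ is open, non-empty, and $\Gamma$-invariant, it equals $\mathbf{S}^1$ by minimality of $\Gamma$; compactness then yields a finite subcover $\mathbf{S}^1 = g_1 U \cup \cdots \cup g_k U$. The subgroup $\Lambda_0 := \bigcap_{i=1}^k (\Lambda \cap g_i \Lambda g_i^{-1})$ is then of finite index in $\Lambda$ (and in particular still infinite), and preserves each $g_i U$; taking complements, $\bigcap_i g_i C = \varnothing$, where $C = \mathbf{S}^1 \setminus U$.

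In the finite orbit case, starting from a finite $\Lambda$-orbit $F_0$ and applying the commensuration, every point of the dense $\Gamma$-orbit of $F_0$ has a finite $\Lambda$-orbit. A Baire category argument applied to the closed $\Lambda$-invariant subsets $U_n = \{y \in \mathbf{S}^1 : |\Lambda y| \leq n\}$, combined with a second application of the covering argument, shows that \emph{every} point of $\mathbf{S}^1$ has a finite $\Lambda$-orbit. A standard analysis of orientation-preserving homeomorphisms (using rotation numbers and the monotonicity of homeomorphisms of an arc) then forces each element of $\Lambda$ to be of finite order, so $\Lambda$ is an infinite torsion subgroup of $\homeo^+(\mathbf{S}^1)$. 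Combined with Theorem~\ref{thm-Margulis}---which guarantees strong proximality, hence the absence of an invariant probability measure, of the $\Gamma$-action on $\aut_\Gamma(\mathbf{S}^1) \backslash \mathbf{S}^1$---an averaging argument over the $\Gamma$-orbit of a $\Lambda$-invariant measure supported on a finite $\Lambda$-orbit forces $\Gamma$ to be conjugate to a subgroup of $SO(2)$, contradicting the hypothesis.

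The exceptional minimal set case is treated in a similar spirit: taking $U = \mathbf{S}^1 \setminus K$ in the covering argument gives $\bigcap_i g_i K = \varnothing$. A combinatorial analysis of the $\Lambda_0$-minimal closed subsets inside the Cantor sets $g_i K$---using the dichotomy that any $\Lambda_0$-minimal subset is either entirely contained in, or disjoint from, each closed $\Lambda_0$-invariant subset---combined with the $\Gamma$-minimality of $\mathbf{S}^1$, then yields the contradiction. I expect this latter case to be the main technical obstacle, since it requires delicate bookkeeping of how the finitely many $\Lambda_0$-minimal Cantor subsets of the $g_i K$ interact and are permuted under the action of $\Gamma$; the finite orbit case, while still technical, follows a more transparent template once the torsion property of $\Lambda$ is established.
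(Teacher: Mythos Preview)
Your approach diverges substantially from the paper's, and both cases have genuine gaps.

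\textbf{The finite orbit case.} Your Baire/covering argument correctly yields that all $\Lambda$-orbits have size bounded by some $N$. But the final ``averaging argument over the $\Gamma$-orbit of a $\Lambda$-invariant measure'' is not justified: pushing a $\Lambda$-invariant measure by $g \in \Gamma$ gives a $g\Lambda g^{-1}$-invariant measure, not a $\Lambda$-invariant one, and there is no mechanism (absent amenability of $\Gamma$) to average these into a $\Gamma$-invariant measure. In fact a much more direct conclusion is available from your own setup: once all $\Lambda$-orbits are finite, pick any $x$; the stabilizer $\Lambda_x$ has finite index and acts orientation-preservingly on the arc $\mathbf{S}^1 \setminus \{x\}$ with all orbits finite, hence trivially (monotonicity forces every periodic point to be fixed). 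Thus $\Lambda_x = 1$ and $\Lambda$ is finite, contradicting the hypothesis. So your route is salvageable here, but as written the key step is wrong.

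\textbf{The Cantor case.} You concede this is ``the main technical obstacle'' and give only a vague plan involving ``delicate bookkeeping'' of $\Lambda_0$-minimal subsets of the $g_iK$. This is not a proof, and it is not clear the bookkeeping terminates in a contradiction.

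\textbf{How the paper proceeds.} The paper's argument is much shorter because it exploits \emph{proximality} (via Theorem~\ref{thm-Margulis}) rather than compactness: a single well-chosen conjugate suffices instead of a finite cover. After reducing to the proximal case, for the finite orbit case one passes to a finite-index subgroup with non-empty fixed set $F \subsetneq \mathbf{S}^1$, finds (by looking at the action on a complementary interval) an interval $I$ that every finite-index subgroup of $\Lambda$ can displace off itself, and then uses proximality to find $\gamma$ with $\gamma(F) \subset I$; then $\Lambda \cap \gamma\Lambda\gamma^{-1}$ fixes $\gamma(F) \subset I$, contradicting the choice of $I$. For the Cantor case one uses proximality to find $\gamma$ compressing $K$ into a single complementary gap $J$; then $\Lambda \cap \gamma\Lambda\gamma^{-1}$ stabilizes both $K$ and $\gamma(K) \subset J$, hence stabilizes $J$, hence fixes the two endpoints of $J$ in $K$ --- impossible for a finite-index subgroup of $\Lambda$ since $K$ is the unique (infinite) $\Lambda$-minimal set. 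Both cases take a few lines.
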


\begin{proof}
We have to show that $\Lambda$ cannot have a finite orbit or an exceptional minimal set. Upon passing to the action on $\aut_{\Gamma}(\mathbf{S}^1) \backslash \mathbf{S}^1$, by Theorem \ref{thm-Margulis} we may assume that the action of $\Gamma$ on $\mathbf{S}^1$ is proximal. Note that here this is equivalent to saying that every proper closed subset is compressible.

 Suppose that $\Lambda$ has a finite orbit in $\mathbf{S}^1$. Then upon passing to a finite index subgroup, we may assume that the set $F$ of $\Lambda$-fixed points is non-empty. Note that $F$ is not the entire circle since $\Lambda$ is infinite. Consider the action of $\Lambda$ on a $\Lambda$-invariant open interval in the complement of $F$. By considering powers of a suitable element, we see that there exists a non-empty open interval $I$ such that every finite index subgroup of $\Lambda$ contains an element $\lambda$ such that $\lambda(I)$ and $I$ are disjoint. Fix such an interval $I$. By minimality and proximality of the $\Gamma$-action, there is $\gamma \in \Gamma$ such that $\gamma(F) \subset I$. It follows that $\Lambda \cap \gamma \Lambda \gamma^{-1}$ is a finite index subgroup  of $\Lambda$ that fixes $\gamma(F) \subset I$, and we obtain a contradiction with the definition of $I$.

Suppose now that $\Lambda$ has an exceptional minimal set $K$. Again by minimality and proximality, we can find $\gamma \in \Gamma$ such that $\gamma(K)$ lies in a connected component $J$ of the complement of $K$ in $\mathbf{S}^1$. Then the subgroup $\Lambda \cap \gamma \Lambda \gamma^{-1}$ stabilizes $K$ and $\gamma(K)$. Hence $\Lambda \cap \gamma \Lambda \gamma^{-1}$ stabilizes $J$, and therefore cannot be of finite index in $\Gamma$. Contradiction.
\end{proof}

Whenever $\Gamma$ is a subgroup of $\homeo(\mathbf{S}^1)$, we denote by $\Gamma^0$ the subgroup of $\Gamma$ generated by the $\Gamma_x^0$, $x \in \mathbf{S}^1$. Equivalently, $\Gamma^0$ is the subgroup of $\Gamma$ generated by the elements that fix pointwise an open interval in $\mathbf{S}^1$.

\begin{prop} \label{prop-mon-circle}
Let $ \Gamma$ be a subgroup of $\homeo^+(\mathbf{S}^1)$ that is minimal and micro-supported. Then $\Gamma$ is monolithic, with monolith $M = [\Gamma^0,\Gamma^0]$, and $M$ is simple.
\end{prop}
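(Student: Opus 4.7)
The plan is to deduce the proposition from the double commutator lemma for faithful micro-supported actions (Proposition~I of \cite{CRW-part2}), which is already invoked earlier in the proof of Theorem~\ref{thm:Main}. Its hypotheses---a faithful micro-supported action on a Hausdorff space---are provided here because $\Gamma \leq \homeo^+(\mathbf{S}^1)$ ensures faithfulness and micro-supportedness is assumed.

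As preliminary observations, $\Gamma^0$ is normal in $\Gamma$ (conjugating an element fixing an open interval $I$ by $\gamma$ yields an element fixing $\gamma I$), so $M := [\Gamma^0,\Gamma^0]$ is normal as well. Non-triviality of $M$ is then checked by picking, via micro-supportedness, an open arc $I$, a non-trivial $g \in \rist_\Gamma(I)$, and then a non-empty open $W \subseteq I$ with $W \cup gW \subseteq I$ and $gW \cap W = \varnothing$, together with a non-trivial $h \in \rist_\Gamma(W)$. The commutator $[g,h]$ lies in $[\rist_\Gamma(I), \rist_\Gamma(I)] \leq M$ and restricts to $h^{-1}$ on $W$ (because $ghg^{-1}$ is supported in $gW$ and hence acts trivially on $W$), which shows $[g,h] \neq 1$.

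For the core monolith statement, I would argue as follows. Given any non-trivial normal subgroup $N \trianglelefteq \Gamma$, pick $g \in N \setminus \{1\}$ and an open $U \subseteq \mathbf{S}^1$ with $gU \cap U = \varnothing$. For any $h, h' \in \rist_\Gamma(U)$, the disjoint-support commutation gives $[h', [g,h]] = [h', h^{-1}]$ on $U$ and trivial elsewhere, while $[g,h] \in N$ by normality; hence $\rist_\Gamma(U)' \leq N$. Conjugating, $\rist_\Gamma(\gamma U)' \leq N$ for every $\gamma \in \Gamma$. Combined with minimality of the $\Gamma$-action (which transports $U$ across $\mathbf{S}^1$) and with the fact that commutators of disjointly supported elements vanish, one concludes $M \leq N$. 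For the simplicity of $M$, the same scheme applied relatively inside $M$---taking $h, h' \in [\rist_\Gamma(U), \rist_\Gamma(U)] \leq M$, so that $[h', [g, h]] \in N'$ whenever $g \in N' \trianglelefteq M$---yields $\rist_\Gamma(U)'' \leq N'$, and the same transfer forces $N' = M$.

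The main obstacle is the passage from $\rist_\Gamma(U)' \leq N$ to $M \leq N$: one must control commutators in $\Gamma^0$ whose supports are not contained in any single translate of $U$. I would lift the resolution of this step verbatim from the proof of Proposition~I in \cite{CRW-part2}, which handles it abstractly for faithful micro-supported actions on Hausdorff spaces and therefore requires no specific geometric feature of $\mathbf{S}^1$.
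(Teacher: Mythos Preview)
Your argument has a genuine gap at exactly the point you flag as the ``main obstacle''. You assert that Proposition~I of \cite{CRW-part2} handles the passage from $\rist_\Gamma(U)' \leq N$ to $M \leq N$ ``abstractly for faithful micro-supported actions on Hausdorff spaces'', but that is not what it does: that proposition (and its invocation in the proof of Theorem~\ref{thm:Main}) requires the action to have \emph{compressible open subsets}. Minimality alone only lets you translate $U$ to cover $\mathbf{S}^1$; it does not let you move an arbitrary proper arc \emph{inside} $U$, which is what you need to control commutators of elements of $\Gamma^0$ whose support is large. Without compressibility you cannot conclude $[\Gamma^0,\Gamma^0] \leq N$, and the same defect propagates to your simplicity argument for $M$.

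The missing ingredient is precisely what the paper supplies: since the action is micro-supported, $\aut_\Gamma(\mathbf{S}^1)$ is trivial, and then Theorem~\ref{thm-Margulis} (Ghys--Margulis) forces the $\Gamma$-action on $\mathbf{S}^1$ to be proximal. On the circle, minimal and proximal implies that every proper closed subset is compressible, which is exactly the hypothesis needed to feed into the general result (the paper cites Proposition~4.6 of \cite{LB-ame-urs-latt}; Proposition~I of \cite{CRW-part2} plays the analogous role). So your overall strategy is sound, but it cannot avoid the circle-specific step through Theorem~\ref{thm-Margulis}; once you insert that, your sketch and the paper's proof converge.
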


\begin{proof}
Since the action of $\Gamma$ on the circle is micro-supported, the group $\aut_{\Gamma}(\mathbf{S}^1)$ must be trivial, and hence the action of $\Gamma$ is proximal by Theorem \ref{thm-Margulis}. Since here proximality is equivalent to the fact that every proper closed subset is compressible, the statement then follows from a general result about such actions, see Proposition 4.6 in \cite{LB-ame-urs-latt}.
\end{proof}

We are now ready to prove Theorem \ref{thm-circle-intro} from the introduction.

\begin{proof}[Proof of  Theorem \ref{thm-circle-intro}]
First observe that since the action of $\Gamma$ on the circle is micro-supported, the group $\aut_{\Gamma}(\mathbf{S}^1)$ must be trivial, and hence the action of $\Gamma$ is proximal by Theorem \ref{thm-Margulis}.
	
Suppose that $\varphi: \Gamma \to G$ is a dense embedding of $\Gamma$ into a tdlc group $G$. We aim to show that $G$ is discrete. Upon passing to a subgroup of index at most two, we may assume that $\Gamma$ acts on $\mathbf{S}^1$ by preserving the orientation. We argue by contradiction and assume that $G$ is not discrete. Let $U$ be a compact open subgroup of $G$, and $\Lambda = \varphi^{-1}(U)$. The subgroup $U$ is an infinite commensurated subgroup of $\Gamma$, and hence acts minimally on the circle by Lemma \ref{lem-circ-commens-min}. Consider the space $E_\varphi(\mathbf{S}^1)$ and $S_{\varphi,G}(\mathbf{S}^1)$ constructed in Section \ref{subsec-URS}. Since the extension $E_\varphi(\mathbf{S}^1) \to \mathbf{S}^1$ is almost one-to-one by Proposition \ref{prop-def-S-phi-G}, $\Lambda$ also acts minimally on $E_\varphi(\mathbf{S}^1)$, and hence also on $S_{\varphi,G}(\mathbf{S}^1)$. Therefore $S_{\varphi,G}(\mathbf{S}^1)$ is a compact $G$-space on which every compact open subgroup of $G$ acts minimally. This is possible only if $S_{\varphi,G}(\mathbf{S}^1)$ is a point, so $S_{\varphi,G}(\mathbf{S}^1) = \left\lbrace N\right\rbrace $ for some closed normal subgroup $N$ of $G$. By definition of $S_{\varphi,G}(\mathbf{S}^1)$, it follows that there is a dense subset $X_\varphi \subseteq \mathbf{S}^1$ such that $\overline{\varphi(\Gamma_x^0)} = N$ for every $x \in X_\varphi$. Now if $z \in \mathbf{S}^1$ is arbitrary and $\gamma$ is an element of $\Gamma_z^0$, then by density we can find  $x \in X_\varphi$ such that $\gamma \in \Gamma_z^0$. Hence it follows that $N$ contains $\varphi(\Gamma^0)$, and hence $N = \overline{\varphi(\Gamma^0)}$. Since $\Gamma^0$ has finite index in $\Gamma$ by assumption, it follows that $N$ has finite index in $G$, and that $N$ is compactly generated. According to Lemma \ref{lem:Key} we can find an open interval $I$ such that $\varphi(\rist_\Gamma(I))$ lies in $B(G)$. By proximality of the $\Gamma$-action and since $B(G)$ is a normal subgroup of $G$, it follows that $\varphi(\rist_\Gamma(J))$ lies in $B(G)$ for every open interval $J$, ie $\varphi(\Gamma^0) \leq B(G)$. Since in addition $B(G)$ is a closed subgroup of $G$ by Theorem \ref{thm-b(g)-closed} since $G$ is compactly generated, it follows that $B(G)$ is a finite index open subgroup of $G$. So by Theorem \ref{thm-usakov}, we deduce that $G$ admits a compact open normal subgroup $K$. Consider the subgroup $\varphi^{-1}(K)$. It is a non-trivial normal subgroup of $\Gamma$, and hence contains the monolith $M$ of $\Gamma$, and $M$ is simple by Proposition \ref{prop-mon-circle}. Since no simple group can embed into a profinite group, we have obtained a contradiction.

Now let $\Lambda$ be a commensurated subgroup of $\Gamma$, and assume for a contradiction that $\Lambda$ is not commensurate to a normal subgroup of $\Gamma$. Since $\Gamma^0$ has finite index in $\Gamma$ by assumption, every proper quotient of $\Gamma$ is virtually abelian by Proposition \ref{prop-mon-circle}. Hence by Lemma \ref{lem:Schliching2} the homomorphism $\Gamma \to \Gamma/\! \!/\Lambda$ is injective. But we have seen in by the previous paragraph that this implies that $\Gamma/\! \!/\Lambda$ is discrete, so we have obtained a contradiction. So $\Lambda$ must be commensurate to a normal subgroup of $\Gamma$, and the statement is proved.
\end{proof}

 \begin{example} \label{ex-T-circle}
Fix two integers $\ell, k \geq 1$. Consider some integers $n_i \geq 2$, and write $\overline{n} = (n_1,\ldots,n_k)$. Let $P$ be the multiplicative group of $\mathbf{R}^+_{>0}$ generated by $n_1,\ldots,n_k$, and we denote by $A = \mathbf{Z}[1/n_1,\ldots,1/n_k]$ the ring of $\overline{n}$-adic rationals, ie rational numbers whose denominator is in $P$. Denote by $T(\ell,A,P)$ the group of piecewise linear homeomorphisms of $\mathbf{R} / \ell \mathbf{Z}$ with finitely many breakpoints, all in $A$, with slopes in $P$, and which preserve the $\overline{n}$-adic rationals. In case $\ell=k=1$ and $n_1=2$, the group $T(1,\mathbf{Z}[1/2],2^{\mathbf{Z}})$ is Thompson's group $T$. 

If we write $\Gamma = T(\ell,A,P)$, the quotient $\Gamma / \Gamma^0$ is finite, and was described in \cite{Stein-PL} (see Theorem 5.2 and Lemma 5.4 there). Hence Theorem \ref{thm-circle-intro} applies to this family of groups. In the case $\ell=k=1$ and $n_1=2$, we recover the fact from \cite{LB-Wes} that every proper commensurated subgroup of Thompson's group $T$ is finite.
 \end{example}

\section{Dense embeddings between tdlc groups} \label{subsec-dense-LC}

In this final section, we apply the results of this paper to the situation where $\varphi \colon H \to G$ is a continuous homomorphisms with dense image and $G, H$ are both non-discrete tdlc groups. General results on dense embeddings into such topologically simple groups have been established in \cite{CRW_dense}. We start by recalling some terminology from loc. cit. 

\subsection{Robustly monolithic groups} 

Following \cite{CRW_dense}, we say that a tdlc group $G$ is \textbf{expansive} if there exists a compact open subgroup $U \leq G$ such that $\bigcap_{g\in G} gUg^{-1} = 1$. When $G$ is compactly generated, this is equivalent to asking that $G$ admits a Cayley-Abels graph on which it acts faithfully. We say that $G$ is \textbf{regionally expansive} if $G$ contains a compactly generated open subgroup that is expansive. Notice that any such group $G$ is first countable. We will also use the following terminology from \cite{CRW_dense}.

\begin{defin}
We say that a tdlc group $G$ is \textbf{robustly monolithic} if $G$ is monolithic, and its monolith is non-discrete, regionally expansive and topologically simple. The class of robustly monolithic tdlc groups is denoted by $\mathscr R$.
\end{defin}

By definition a group in $\mathscr R$ is necessarily non-discrete. It turns out that a group in $\mathscr R$ is itself regionally expansive \cite[Proposition 5.1.2]{CRW_dense}, and hence in particular first countable.

 Notice the inclusion $\mathscr  S_{\mathrm{td}} \subset \mathscr R$, where $\mathscr  S_{\mathrm{td}}$ is the class of compactly generated tdlc groups that are non-discrete and topologically simple. The main motivation to introduce the class $ \mathscr R$ is provided by \cite[Theorem~1.1.2]{CRW_dense}, which ensures the class $\mathscr R$ is stable under taking dense locally compact subgroups. More precisely, given a continuous injective homomorphism of tdlc groups $\varphi \colon H \to G$ with dense image such that $H$ is non-discrete and $G \in \mathscr R$, then $H \in \mathscr R$. In particular, every non-discrete dense locally compact subgroup of a group in  $\mathscr  S_{\mathrm{td}}$ belongs to $\mathscr R$ (but it may fail to be in $\mathscr  S_{\mathrm{td}}$). 
 
We recall the following result  \cite[Theorem~1.2.5]{CRW_dense}.

\begin{thm}\label{thm:[A]-semisimple}
	Every group  $G \in \mathscr R$ is [A]-semisimple.
\end{thm}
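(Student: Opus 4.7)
The plan is to follow the scheme of Proposition~\ref{prop:[A]-semisimple} word-for-word, with the regional expansiveness of the monolith playing the role of compact generation. Let $M$ denote the monolith of $G$. By hypothesis, $M$ is non-discrete, topologically simple, and regionally expansive. Since every compact topologically simple tdlc group is cyclic of prime order (hence discrete), $M$ is automatically non-compact as well.

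First I would establish that $\QZ(M) = 1$. Since $M$ is topologically simple, it is in particular topologically characteristically simple, so the hypotheses of the relevant structural result on the quasi-center of characteristically simple non-compact non-discrete tdlc groups (the analogue of \cite[Proposition~5.6]{CRW-part2} that applies in the regionally expansive setting, as developed in \cite{CRW_dense}) are met. This yields $\QZ(M) = 1$, and combined with \cite[Proposition~6.17]{CRW-part1} it follows that $M$ itself is [A]-semisimple.

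Next I would show $\QC_G(M) = 1$. By \cite[Proposition~4.4.3]{CRW_dense}, $\QC_G(M) = C_G(M)$. Since $M$ is a minimal non-trivial closed normal subgroup of $G$, either $M \cap C_G(M) = 1$ or $M \leq C_G(M)$. The latter would make $M$ abelian, forcing $\QZ(M) = M \neq 1$, a contradiction. So $M \cap C_G(M) = 1$, and since $M$ is the monolith this forces $C_G(M) = 1$. Consequently $\QZ(G) = 1$ as well: any $g \in \QZ(G)$ has an open centralizer in $G$, so $C_M(g)$ is open in $M$, whence $g \in \QC_G(M) = 1$.

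Finally I would rule out non-trivial abelian locally normal subgroups $A \leq G$. Fixing a compact open subgroup $U$ of $N_G(A)$ and replacing it by $AU$, I may assume $A \leq U$. Then $A \cap M$ is an abelian compact locally normal subgroup of $M$, which is trivial by [A]-semisimplicity of $M$. Hence $A$ and $M \cap U$ are normal subgroups of $U$ with trivial intersection, so they commute; this shows that $A$ centralizes the open subgroup $M \cap U$ of $M$, so $A \leq \QC_G(M) = 1$. The only step where regional expansiveness (as opposed to compact generation) is genuinely used is the first one — invoking the correct generalization of \cite[Proposition~5.6]{CRW-part2} for regionally expansive characteristically simple groups is the main substantive input; once $\QZ(M) = 1$ is in hand, the remaining arguments are purely algebraic transcriptions of the proof of Proposition~\ref{prop:[A]-semisimple}.
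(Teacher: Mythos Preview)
The paper does not actually prove this statement: it is simply recalled as \cite[Theorem~1.2.5]{CRW_dense}, with no argument given. Your proposal is therefore not a reproduction of the paper's proof but an attempted reconstruction, modelled on Proposition~\ref{prop:[A]-semisimple}.

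The algebraic steps of your reconstruction (deducing $\QC_G(M)=C_G(M)=1$ from $\QZ(M)=1$, then $\QZ(G)=1$, then ruling out abelian locally normal subgroups by intersecting with $M$) are correct and do transcribe verbatim from Proposition~\ref{prop:[A]-semisimple}. However, the one genuinely non-formal step---establishing $\QZ(M)=1$ for the non-discrete, topologically simple, regionally expansive monolith $M$---you defer to ``the analogue of \cite[Proposition~5.6]{CRW-part2} \dots\ as developed in \cite{CRW_dense}'' without naming a precise statement. That is exactly the substantive content of the theorem, and it lives in \cite{CRW_dense}; so you have not avoided the external reference, only relocated it. In effect your proposal amounts to: ``cite \cite{CRW_dense} for the key lemma, then run the proof of Proposition~\ref{prop:[A]-semisimple}''---which is a reasonable sketch of how \cite[Theorem~1.2.5]{CRW_dense} is proved, but is not more self-contained than the paper's bare citation.
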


The following observation will be used in the next subsections.
	
	\begin{prop} \label{prop-R-B(G)trivial}
		Every group $G \in \mathscr R$ satisfies $B(G) = 1$.
	\end{prop}
	
	\begin{proof}
		Suppose first that $G$ is compactly generated. In that case $B(G)$ is a closed subgroup of $G$ (Theorem \ref{thm-b(g)-closed}). If $B(G)$ is non-trivial, then $B(G)$ contains the monolith $M$ of $G$, and it follows from Theorem \ref{thm-usakov} that every closed compactly generated subgroup subgroup of $M$ is compact-by discrete. But by the definition of the class $\mathscr R$, we have $M \in \mathscr R$, so by  \cite[Th.~1.1.3]{CRW_dense} the group $M$ contains a compactly generated open subgroup $O$ such that $O \in \mathscr R$. Such a subgroup $O$ cannot be compact-by discrete, which is absurd. 
		
		In the general case, we invoke again \cite[Th.~1.1.3]{CRW_dense}, which provides a compactly generated open subgroup $O$ of $G$ such that $O \in \mathscr R$. By the previous paragraph we have that $B(O)$ is trivial, so $B(G)$ intersects $O$ trivially. Hence $B(G)$ is a discrete normal subgroup of $G$, which must therefore be trivial since $G$ has trivial quasi-center by Theorem \ref{thm:[A]-semisimple}.
	\end{proof}

\subsection{$\Omega_G$ as a maximal highly proximal extension}

As a consequence of \ref{thm:[A]-semisimple}, the centralizer lattice $\LC(G)$ is a Boolean algebra, and its Stone space $\Omega_G$ satisfies the universal property from Theorem~\ref{thm-LCG-MS}. That universal property implies in particular that if $X$ is a  totally disconnected compact $G$-space on which the $G$-action is faithful and micro-supported, then that $G$-action on $X$ is minimal and strongly proximal. In this section we highlight additional features of the $G$-space $\Omega_G$, showing in particular that its universal property holds among all faithful micro-supported compact $G$-spaces, not only the totally disconnected ones. We start with the following.

\begin{prop}\label{prop:micro-comm->minimal}
	Let $G  \in \mathscr R$ and $X$ be a compact $G$-space. If the  $G$-action on $X$ is faithful and micro-supported, then the $\Mon(G)$-action is micro-supported and  minimal. In particular the $G$-action is minimal. 
\end{prop}

\begin{proof}
We first claim that the action of the monolith $M = \Mon(G)$ of $G$ on $X$ is also micro-supported. Indeed, otherwise we can find a non-empty open subset $V$ of $X$ such that $M \cap \rist_G(V) = \rist_M(V)$ is trivial. Let $W \subseteq V$ be a non-empty open subset with $\overline W \subseteq V$. Then the set $\{g \in G \mid g\overline W \subseteq V\}$ is an identity neighbourhood in $G$. Therefore, it contains a compact open subgroup $U$ of $G$. Set $V' = \bigcup_{u \in U} uW$. Hence $V'$ is a $U$-invariant non-empty open subset of $X$ which is entirely contained in $V$. It follows that $\rist_G(V')$ is normalized by $U$, hence it is a locally normal subgroup of $G$. Moreover, it is non-trivial since the $G$-action is micro-supported, and the intersection  $M  \cap \rist_G(V')$ is trivial since $V' \subseteq V$ and $\rist_M(V)$ is trivial. It follows that  $M$ and $\rist_G(V')$ are two locally normal subgroups of $G$ that intersect trivially. Hence they commute by  \cite[Th. 3.19 (iii)]{CRW-part1}.  In particular $C_G(M)$ is a non-trivial normal subgroup of $G$, and hence $M \leq C_G(M)$. So $M$ is abelian, contradicting the hypothesis that $G \in \mathscr R$. This proves the claim. 

In view of the claim,  we may assume without loss of generality that $M=G$, i.e.\ $G$ is topologically simple. By \cite[Th.~1.1.3]{CRW_dense}, we can find a compactly generated open subgroup $O$ of $G$ such that $O \in \mathscr R$. As above, if the $O$-action on $X$ were not micro-supported, we could find a locally normal subgroup of $G$ of the form $\rist_G(V)$ that intersects $O$ trivially. In particular $\rist_G(V)$ would be discrete, and hence contained in $\QZ(G)$ by Lemma 3.2 in \cite{CRW-part1}. Since $\QZ(G)$ is trivial by Theorem~\ref{thm:[A]-semisimple}, we infer that $\rist_G(V)$ is also trivial, a contradiction. Hence the $O$-action on $X$ is micro-supported. 

Suppose for a contradiction that there exists $x \in X$ such that the orbit closure $Y$ of $x$ is a proper subset of $X$. Then $\rist_G(X \setminus Y)$ is a non-trivial closed normal subgroup of $G$, and hence $\rist_G(X \setminus Y) = G$.  We infer that $G$ acts trivially on $Y$, and hence $G$ fixes $x$. The subgroup $G_x^0$ is therefore a non-trivial normal subgroup of $G$, and hence $G_x^0$ is dense in $G$. Since $O$ is open, it follows that $O_x^0 = G_x^0 \cap O$ is dense in $O$.  Since $O$ is compactly generated and its action on $X$ is micro-supported, we may apply Lemma \ref{lem:Key} to the group $O$ (with $\varphi: O \to O$ the identity and $H$ the trivial subgroup). By the conclusion of this lemma we deduce in particular that $B(O)$ is non-trivial. This provides a contradiction with Proposition \ref{prop-R-B(G)trivial}, which finishes the proof.
\end{proof}

The following result refines the universal property of $\Omega_G$. The notions of maximal highly proximal extension and highly proximal equivalence have been defined in Section \ref{sec-prelim}.

\begin{thm}\label{thm:Omega_G}
Let $G \in \mathscr R$. Then all faithful, micro-supported compact $G$-spaces are minimal and highly proximally equivalent. Their common maximal highly proximal extension is $\Omega_G$.
\end{thm}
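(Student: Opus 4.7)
The plan is to reduce the theorem to three facts concerning $\Omega_G$ and its relation to an arbitrary faithful, micro-supported, totally disconnected compact $G$-space $Y$: (A) $\Omega_G$ itself is a minimal, faithful, micro-supported, totally disconnected compact $G$-space; (B) for every such $Y$, the canonical factor map $\pi \colon \Omega_G \to Y$ afforded by Theorem~\ref{thm-LCG-MS}(ii) is a highly proximal extension; and (C) $\Omega_G$ admits no non-trivial highly proximal extension. Granting the three facts, minimality of each $Y$ follows from (A) together with (B), the common extension $\Omega_G$ supplied by (B) witnesses that all such spaces are highly proximally equivalent, and (C) identifies $\Omega_G$ as their maximal highly proximal extension.

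For fact (A), I would invoke results on robustly monolithic groups from \cite{CRW_dense} ensuring that every $G \in \mathscr R$ is [A]-semisimple and acts faithfully on $\LC(G)$; Theorem~\ref{thm-LCG-MS}(i) then provides continuity, faithfulness, and micro-supportedness of the $G$-action on $\Omega_G$. Minimality is obtained by reducing to the compactly generated case via a compactly generated open subgroup of the monolith (available because $G \in \mathscr R$ is regionally expansive), and applying the conclusions of Theorem~\ref{thm:Stone-dynamics}\ref{it:Dyn3}. For fact (C), the key point is extremal disconnectedness of $\Omega_G$: since $\LC(G)$ is a complete Boolean algebra (see \cite{CRW-part1}), its Stone dual $\Omega_G$ is extremally disconnected, so Gleason's projectivity (cf.\ \cite[Lemma~2.3]{Gleason-proj}) rules out any proper highly proximal extension of $\Omega_G$.

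The crux of the proof is fact (B). Dually, the factor $\pi$ corresponds to a $G$-invariant Boolean subalgebra $\mathcal A \hookrightarrow \LC(G)$, which I identify via Proposition~\ref{prop:FaithfulDenseSubgroup} (applied with $D = G$) as the image of the order-preserving injection $f \colon \alpha \mapsto [C_G^2(\rist_G(\alpha))]$. Since a basic clopen of $\Omega_G$ is of the form $\hat\beta$ for a non-zero $\beta \in \LC(G)$, high proximality of $\pi$ is equivalent to the order-density statement: for every non-zero $\beta \in \LC(G)$, there exists a non-empty clopen $\alpha \subseteq Y$ with $f(\alpha) \leq \beta$. To establish this, I write $\beta = [K]$ for a non-discrete closed locally normal subgroup $K$, and I exploit faithfulness of the $G$-action on $Y$ together with micro-supportedness to produce a sufficiently small non-empty clopen $\alpha \subseteq Y$ whose rigid stabilizer $\rist_G(\alpha)$ is locally contained in $K$; this yields $f(\alpha) = [C_G^2(\rist_G(\alpha))] \leq [K]$ in $\LC(G)$.

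Producing such an $\alpha$ is the delicate step, and the main obstacle of the argument. The idea is to pass to the compact open normalizer of $K$ in $G$ and to refine the clopen partition of $Y$ along its $K$-dynamics, using that $K$ acts non-trivially on $Y$ and that stabilizers of clopens of $Y$ are open in $G$, until a clopen $\alpha$ is reached whose rigid stabilizer is trapped inside $K$ up to a compact open subgroup. The subtlety lies in ensuring, simultaneously, that $\alpha$ is non-empty, that $\rist_G(\alpha)$ falls in the right local class, and that micro-supportedness is preserved throughout the refinement.
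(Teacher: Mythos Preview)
Your overall decomposition into facts (A), (B), (C) is sound, and (A) is fine. But there are real problems with (B) and (C).

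For (C), you assert that $\LC(G)$ is a complete Boolean algebra and cite \cite{CRW-part1}, but that reference establishes only that $\LC(G)$ is a Boolean algebra; completeness is not claimed there, and it is not obvious. The paper avoids this issue entirely: instead of invoking extremal disconnectedness, it verifies the universal property of the maximal highly proximal extension directly. Given any highly proximal extension $Y \to X$, Proposition~\ref{prop-invariants-hpeq} ensures that the $G$-action on $Y$ is again micro-supported, so Theorem~\ref{thm-LCG-MS}(ii) yields a factor map $\Omega_G \to Y$. This is both simpler and self-contained.

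The serious gap is in (B). You correctly identify that high proximality of $\pi \colon \Omega_G \to Y$ amounts to an order-density statement: for every non-zero $\beta = [K] \in \LC(G)$ there should exist a non-empty clopen $\alpha \subseteq Y$ with $f(\alpha) \leq \beta$. But your proposed mechanism---``refine the clopen partition of $Y$ along its $K$-dynamics\ldots until a clopen $\alpha$ is reached whose rigid stabilizer is trapped inside $K$ up to a compact open subgroup''---is not a proof. Knowing that $K$ acts non-trivially on $Y$ and that clopen stabilizers are open gives you no handle on forcing $\rist_G(\alpha)$ to be \emph{locally contained in} $K$; rigid stabilizers of smaller clopens get smaller, but there is no a~priori reason they should eventually fall into the local class of an arbitrary locally normal $K$. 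The paper's proof of this step (Proposition~\ref{prop-S-ext-HP}) goes through a genuinely different idea: it invokes \cite[Theorem~6.19]{CRW-part2}, applied inside a suitable compactly generated open subgroup, to produce a \emph{compressible} clopen $\alpha_0 \in \mathcal A_X$ with the property that for every $\beta \in \LC(G)$ there exists $g$ with $g\alpha_0 \leq \beta$. Compressibility is precisely what furnishes the order-density you need, and it is a non-trivial structural input that your refinement sketch does not supply.
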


\begin{proof}
Let $X$ be a compact $G$-space on which the $G$-action is faithful and micro-supported, and let $X^*$ be the maximal highly proximal extension of $X$. We shall prove that $X^*$ is isomorphic to $\Omega_G$. 

By Proposition~\ref{prop:micro-comm->minimal}, the $G$-action on $X$ is minimal. Hence the the $G$-action on $X^*$ is minimal, and it is micro-supported by Proposition \ref{prop-invariants-hpeq}. Moreover $X^*$  is totally disconnected by Proposition \ref{prop-HPM-totdisc}. Since the $G$-action on $X^*$ is also faithful, we may apply Theorem~\ref{thm-LCG-MS}, which ensures that there exists a factor $G$-map $\pi: \Omega_G \to X^*$. By Lemma \ref{lem-ext-MS-necessarilyHP}, $\pi: \Omega_G \to X^*$ is necessarily highly proximal. Since $X^*$ is by definition the maximal highly proximal extension of $X$, the map $\pi: \Omega_G \to X^*$ must be an isomorphism.
\end{proof}

Combining the results above with  \cite[Th.~7.3.3]{CRW_dense}, we obtain the following. 

\begin{cor}\label{cor:Omega_G}
Let $G \in \mathscr R$ and $X$ be a compact $G$-space on which the $G$-action is faithful and micro-supported. Then the $\Mon(G)$-action (hence also the $G$-action) on $X$ is micro-supported, minimal and strongly proximal. Moreover it has a compressible open set. 
\end{cor}
\begin{proof}
By Proposition~\ref{prop:micro-comm->minimal}, the $\Mon(G)$-space $X$ is minimal and micro-supported. By 
 Theorem~\ref{thm:Omega_G},  there is a factor $G$-map  $\Omega_G \to X$ which is highly proximal. 
 %By \cite[Prop.~7.1.2]{CRW_dense}, the $\Mon(G)$-action on $\Omega_G$ is micro-supported.  Moreover, by
 By \cite[Th.~7.3.3]{CRW_dense}, the $\Mon(G)$-action on $\Omega_G$ is minimal, strongly proximal and has a compressible open set. The required conclusions  now follow  from  Proposition~\ref{prop-invariants-hpeq}, keeping  in mind that strong proximality is inherited by factors.
\end{proof}

\subsection{Behavior of $\Omega_G$ with respect to dense embeddings}

\begin{thm}\label{thm:Dense-embedding-simple}
Let $G, H \in \mathscr R$ and  $\varphi \colon H \to G$ be a continuous injective homomorphism with dense image, and assume that $G$ is compactly generated. Suppose in addition that $H/\Mon(H)$ is compact, or that $G/\Mon(G)$ is compact.

If $\Omega_H$ is non-trivial, then the $H$-action on $\Omega_G$ is faithful and micro-supported, and there exists a $H$-map $\Omega_H \to \Omega_G$ that is a highly proximal extension.
\end{thm}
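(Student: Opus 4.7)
The strategy is to establish that $\Omega_G$, regarded as an $H$-space through the composition of $\varphi$ with the $G$-action, is a faithful, minimal, and micro-supported totally disconnected compact $H$-space. Once these properties are verified, Theorem~\ref{thm:Omega_G} applied to $H$ (using the hypothesis that $\Omega_H$ is non-trivial) identifies $\Omega_H$ as the maximal highly proximal extension of $\Omega_G$ in the category of $H$-spaces, and thus produces the required $H$-map $\Omega_H \to \Omega_G$ as a highly proximal extension. Minimality of the $H$-action on $\Omega_G$ is automatic from minimality of the $G$-action (by \cite[Theorem~1.2.6]{CRW_dense}, which underlies the proof of Theorem~\ref{thm:Omega_G}) together with the density of $\varphi(H)$ in $G$.

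To establish micro-supportedness, I apply the URS construction of Section~\ref{subsec-URS} in the general setting with $L=H$ and $X=\Omega_H$, which is a faithful minimal micro-supported totally disconnected compact $H$-space by Theorem~\ref{thm:Omega_G} applied to $H$. This yields a $G$-URS $S_{\varphi,G}(\Omega_H)\subseteq \sub(G)$ together with an almost-one-to-one $H$-map $E_\varphi(\Omega_H) \to \Omega_H$ (Proposition~\ref{prop-def-S-phi-G}). The monolith hypothesis is used to rule out degeneration: in the case $H/\Mon(H)$ compact, $\overline{\varphi(\Mon(H))}$ is cocompact in $G$, so the germ subgroups $\overline{\varphi(H_x^0)}$ cannot stabilize to a single cocompact subgroup as $x$ varies across $\Omega_H$; in the case $G/\Mon(G)$ compact, $\Mon(G)$ is compactly generated and lies in $\mathscr S_{\mathrm{td}}$, and Theorem~\ref{thm:Stone-dynamics} applied to $\Mon(G)$ provides the faithful micro-supported structure on $\Omega_G$ directly. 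Having extracted a totally disconnected $G$-factor $Y$ from $S_{\varphi,G}(\Omega_H)$, Proposition~\ref{prop:FaithfulDenseSubgroup} with $D=\varphi(H)$ embeds the Boolean algebra of clopens of $Y$ into $\LC(G)$. This forces $\Omega_G$ to be non-trivial, and since $Y$ is a $G$-factor of $\Omega_G$ by Theorem~\ref{thm:Omega_G}, pulling back the micro-supported $H$-structure through the chain of maps connecting $\Omega_H$ to $\Omega_G$ yields micro-supportedness of the $H$-action on $\Omega_G$.

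Faithfulness of the $H$-action on $\Omega_G$ then follows. The kernel of $H\to\homeo(\Omega_G)$ is a closed normal subgroup of $H$; if non-trivial, it contains $\Mon(H)$. In the case $H/\Mon(H)$ compact, this would force the $G$-action on $\Omega_G$ to factor through the compact quotient $G/\overline{\varphi(\Mon(H))}$, which by strong proximality of that action (Theorem~\ref{thm:Stone-dynamics}) would collapse $\Omega_G$ to a point, contradicting the non-triviality already established. In the case $G/\Mon(G)$ compact, $\Mon(G) \in \mathscr S_{\mathrm{td}}$ acts faithfully and micro-supportedly on $\Omega_G$ by Theorem~\ref{thm:Stone-dynamics}, so the $G$-action is faithful, and since $\varphi$ is injective, so is the $H$-action. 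The main obstacle will be the non-triviality step for $S_{\varphi,G}(\Omega_H)$ in the case where only $H/\Mon(H)$ is assumed compact: ruling out that the germ subgroups $\overline{\varphi(H_x^0)}$ stabilize to a common cocompact subgroup of $G$ as $x$ ranges over $\Omega_H$ requires a careful combination of the cocompactness of $\varphi(\Mon(H))$ in $G$ with the structural properties of $\Mon(H) \in \mathscr R$ and its action on $\Omega_H$.
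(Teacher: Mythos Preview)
Your overall architecture matches the paper's: build $Y := S_{\varphi,G}(\Omega_H)$, show the $H$-action on $Y$ is faithful and micro-supported, use Proposition~\ref{prop:FaithfulDenseSubgroup} to force $\Omega_G$ non-trivial, then invoke Theorem~\ref{thm:Omega_G} (twice) and Proposition~\ref{prop-invariants-hpeq} to transport micro-supportedness back to $\Omega_G$. However, the step you flag as ``the main obstacle'' is genuinely missing, and your treatment of the other case is also incomplete.

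The crux is proving that the $H$-action on $Y$ is faithful. You gesture at this but do not prove it. The paper argues as follows. Suppose the $H$-action on $Y$ is not faithful; then $\Mon(H)$ acts trivially, hence so does $N := \overline{\varphi(\Mon(H))}$. Under either cocompactness hypothesis, $G/N$ is compact, so the $G$-action on $Y$ factors through a compact group. Since the $H$-action on $\Omega_H$ is strongly proximal (by \cite[Theorem~1.2.6]{CRW_dense}), so is its factor $Y$; the only strongly proximal action of a compact group is trivial, so $Y$ is a single point. Now the two cases are handled by Lemma~\ref{lem:Key} (when $G/\Mon(G)$ is compact, every non-trivial closed normal subgroup of $G$ is cocompact hence compactly generated) and by Proposition~\ref{prop-urs-phi-nontrivial-1} (when $H/\Mon(H)$ is compact, after checking that any non-trivial closed $J \leq H$ with $N_H(J)$ of finite index must contain $\Mon(H)$ and hence be cocompact). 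Either way one obtains a non-trivial element of $B(G)$, contradicting $G \in \mathscr R$. This strong-proximality-plus-$B(G)$ argument is the missing idea.

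Separately, your proposed shortcut in the case $G/\Mon(G)$ compact --- applying Theorem~\ref{thm:Stone-dynamics} to get ``the faithful micro-supported structure on $\Omega_G$ directly'' --- only yields that the $G$-action (or $\Mon(G)$-action) on $\Omega_G$ is micro-supported. It says nothing about rigid stabilizers computed in $H$. You cannot bypass the URS machinery here: the passage through $Y$ is precisely what transports micro-supportedness from $\Omega_H$ (where it holds for $H$) to $\Omega_G$, via the chain $\Omega_H \cong E_\varphi(\Omega_H) \to Y \leftarrow \Omega_G$ combined with Lemma~\ref{lem-MS-factor-faith} and Proposition~\ref{prop-invariants-hpeq}.
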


\begin{proof}
Since $\Omega_H$ is non-trivial, it follows from \cite[Proposition~7.2.3]{CRW_dense} that the $H$-action on $\Omega_H$ is faithful. Hence by Theorem \ref{thm-LCG-MS} it is micro-supported, and by \cite[Theorem~1.2.6]{CRW_dense} it is minimal and strongly proximal. Since $G$ is second countable, the conditions from Section~\ref{sec:GeneralSetting} are thus fulfilled.

Consider the spaces $E_\varphi(\Omega_H)$ and $Y := S_{\varphi,G}(\Omega_H)$ from Section \ref{subsec-URS}. Recall that $E_\varphi(\Omega_H)$ is an almost one-to-one extension of $\Omega_H$ by  Proposition \ref{prop-def-S-phi-G}. Since $\Omega_H$ admits no non-trivial highly proximal extension by Theorem~\ref{thm:Omega_G}, it follows that the map from $E_\varphi(\Omega_H)$ to $\Omega_H$ is actually an isomorphism and that $Y$ is a factor of $\Omega_H$. 

We claim that the $H$-action on $Y$ is faithful. Suppose for a contradiction that the $H$-action on $Y$ is not faithful. Then the $G$-action on $Y$ is not faithful either and the subgroup $N = \overline{\varphi(\Mon(H))} \leq G$ acts trivially. If $H/\Mon(H)$ is compact then $G/N$ is compact since $H$ is dense in $G$, and if $G/\Mon(G)$ is compact then $G/N$ is also compact since $N$ contains $\Mon(G)$. Hence in both situations the $G$-action on $Y$ factors through a compact group. On the other hand, recall that the $H$-action on $\Omega_H$ is strongly proximal. Since strong proximality passes to factors, the $H$-action on $Y$ is strongly proximal. Since the only strongly proximal action of a compact group is the trivial action, it follows that $Y$ is a singleton. In other words $J = \overline{\varphi(H^0_x)}$ is a normal subgroup of $G$ for all $x \in \Omega_H$. 

In the case where $G/\Mon(G)$ is compact,  every non-trivial closed normal subgroup of $G$ is cocompact, so in particular compactly generated. Hence in this situation we can apply Lemma~\ref{lem:Key}, which ensures that $B(J) \leq B(G)$ is non-trivial. We derive a contradiction as the hypothesis that $G \in \mathscr R$ implies that $B(G)$ is trivial  (otherwise by the same argument as in the proof of Proposition \ref{prop-urs-phi-nontrivial-2}, we would obtain a closed normal subgroup of $G$ that is compact-by-discrete, which is impossible here). Hence it only remains to obtain a contradiction when $H/\Mon(H)$ is compact. Let $T \leq H$ be a non-trivial closed subgroup whose normalizer $N_H(T)$ is of finite index. Then $N_H(T)$ contains an open normal subgroup of $H$, and hence we have   $\Mon(H) \leq N_H(T)$. Thus $T$ and $\Mon(H)$ normalize each other. In particular $T \cap \Mon(H)$ is a closed normal subgroup $\Mon(H)$. Since the latter is topologically simple, with a trivial centralizer in $H$, it follows that $\Mon(H)$ is contained in $T$, and hence $T$ is cocompact in $H$. Hence we may invoke Proposition~\ref{prop-urs-phi-nontrivial-1}, and again since $B(G)$ is trivial we have reached a contradiction. Consequently, the $H$-action on $Y$ is faithful, and  it follows from Lemma~\ref{lem-MS-factor-faith} that the $H$-action on $Y$ is also micro-supported. In particular, the $G$-action on $Y$ is micro-supported as well. 

We claim that the $G$-action  on $Y$ is also faithful. According to the previous paragraph, we are in position to apply Proposition~\ref{prop:FaithfulDenseSubgroup}, which provides a $G$-equivariant order-preserving injective map $f \colon \mathcal A \to \LC(G)$, where  $\mathcal A$ is   the Boolean algebra of clopen subsets of $Y$. If the $G$-action on $Y$ were not faithful, then $\Mon(G)$ would act trivially on $Y$, hence also on $f(\mathcal A)$. On the other hand, by \cite[Proposition~7.2.3]{CRW_dense}, the only $\Mon(G)$-fixed points in $\LC(G)$ are the trivial ones $0$ and $\infty$, a contradiction. This proves the claim. 

We may now invoke Theorem~\ref{thm:Omega_G}, and   deduce that there is $G$-map $\Omega_G \to Y$ that is a highly proximal extension. Since the $H$-action on $Y$ is micro-supported, by Proposition~\ref{prop-invariants-hpeq} the $H$-action on $\Omega_G$ is also micro-supported, and the statement follows by applying again Theorem~\ref{thm:Omega_G}.
\end{proof}

Recall that if $K$ is a compact subgroup of a tdlc group $G$, the commensurator $\Comm_G(K)$ carries a unique tdlc group topology such that the inclusion map $K \to \Comm_G(K)$ is continuous and open (see for instance \cite[Lemma 5.13]{CRW-part2}). We denote by $G_{(K)}$ the group $\Comm_G(K)$ endowed with that topology.

Combining the above theorem with a result from \cite{CRW_dense}, we derive the following consequence. 

\begin{cor}\label{cor:Localizations}
Let $G \in \mathscr S_{\mathrm{td}}$ and $K \leq G$ be an infinite compact subgroup. Assume that one of the following conditions holds:
\begin{enumerate}[label=(\arabic*)]
\item $K$ is locally normal and $\Comm_G(K) = G$. 

\item $K$ is a pro-$p$ Sylow subgroup of some compact open subgroup of $G$, for some prime $p$. 
\end{enumerate}

Then there exists a $G_{(K)}$-map $\Omega_{G_{(K)}} \to \Omega_G$ that is a highly proximal extension.

\end{cor}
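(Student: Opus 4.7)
The plan is to derive this corollary as a direct application of Theorem~\ref{thm:Dense-embedding-simple}, taking $H = G_{(K)}$ and letting $\varphi \colon H \to G$ be the natural inclusion map. By the defining property of the topology on $G_{(K)}$, the compact subgroup $K$ is open in $H$; since $K$ is infinite, $H$ is a non-discrete tdlc group, and $\varphi$ is continuous and injective by construction. It then suffices to verify the four hypotheses of Theorem~\ref{thm:Dense-embedding-simple} for the pair $(H,G)$ and to read off the conclusion.

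The first step is to verify that $\varphi(H) = \Comm_G(K)$ is dense in $G$. In case~(1) this is immediate since $\Comm_G(K) = G$ by assumption. In case~(2), the density of the commensurator of an infinite pro-$p$ Sylow subgroup of a compact open subgroup of a group in $\mathscr S_{\mathrm{td}}$ is the content of \cite[Theorem~1.2]{Reid_Sylow}. Next, since $G \in \mathscr S_{\mathrm{td}} \subset \mathscr R$ and $\varphi$ is a continuous injection with dense image from a non-discrete tdlc group, the stability statement recalled above from \cite{CRW_dense} yields $H \in \mathscr R$. Moreover $G$ is compactly generated by definition of $\mathscr S_{\mathrm{td}}$; and being topologically simple, it satisfies $\Mon(G) = G$, so $G/\Mon(G)$ is trivial, in particular compact.

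The last hypothesis of Theorem~\ref{thm:Dense-embedding-simple} to be checked is that $\Omega_H$ is non-trivial, which amounts to showing that the centralizer lattice $\LC(H)$ contains elements other than $0$ and $\infty$. I would deduce this from the local structure of $K$ inside $H$: in either case, one exhibits a proper non-trivial closed locally normal subgroup of $H$ derived from $K$ whose double centralizer yields a non-degenerate element of $\LC(H)$; the requisite analysis of commensurator groups and their centralizer lattices is carried out in \cite{CRW_dense}. Granted these four verifications, Theorem~\ref{thm:Dense-embedding-simple} delivers an $H$-equivariant highly proximal factor map $\Omega_H \to \Omega_G$, which is exactly the conclusion of the corollary.

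I expect the main obstacles to be genuinely external to the present paper: the density claim in case~(2) is the deep input of Reid's Sylow theorem \cite{Reid_Sylow}, while the non-triviality of $\Omega_H$ rests on the fine analysis of centralizer lattices of commensurator-type groups from \cite{CRW_dense}. Once these two ingredients are invoked, the remainder of the argument is a matter of assembling the hypotheses needed to feed Theorem~\ref{thm:Dense-embedding-simple}.
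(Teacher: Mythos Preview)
Your reduction to Theorem~\ref{thm:Dense-embedding-simple} is the right idea and matches part of the paper's argument, but there is a genuine gap in the step where you assert that $\Omega_H = \Omega_{G_{(K)}}$ is non-trivial. This is not something you can extract from the local structure of $K$ alone: exhibiting a proper non-trivial locally normal subgroup of $H$ only gives a non-trivial element of the structure lattice $\LN(H)$, not of the centralizer lattice $\LC(H)$. In fact the non-triviality of $\Omega_H$ can simply fail. The paper observes immediately after the corollary that when $G \in \mathscr S_{\mathrm{td}}$ is of atomic type, every compact locally normal subgroup $K$ is commensurated and has $\LC(K) = \LC(G_{(K)})$ trivial; so your hypothesis for Theorem~\ref{thm:Dense-embedding-simple} is not available in that case, and no amount of analysis from \cite{CRW_dense} will produce it.

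The paper handles this by a case split rather than by trying to force $\Omega_H$ to be non-trivial. If both $\LC(G)$ and $\LC(G_{(K)})$ are trivial, both Stone spaces are singletons and the statement is vacuous. If $\LC(G_{(K)})$ is non-trivial, one applies Theorem~\ref{thm:Dense-embedding-simple} exactly as you propose. If instead $\LC(G)$ is non-trivial, the paper bypasses Theorem~\ref{thm:Dense-embedding-simple} entirely: it shows that the $G_{(K)}$-action on $\Omega_G$ is faithful and micro-supported (trivially in case~(1) since $G_{(K)} = G$ as abstract groups; via \cite[Theorem~8.4.1]{CRW_dense} in case~(2)), and then Theorem~\ref{thm:Omega_G} applied to $G_{(K)} \in \mathscr R$ gives the highly proximal map $\Omega_{G_{(K)}} \to \Omega_G$ directly. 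Your write-up would be correct if you replaced the unjustified non-triviality claim with this trichotomy.
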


\begin{proof}
If (1) holds, we have $\Comm_G(K) = G$ as abstract groups. If (2) holds, we know from \cite[Theorem~1.2]{Reid_Sylow} that $\Comm_G(K)$ is dense in $G$. In either case by \cite[Theorem~1.1.2]{CRW_dense}, we have $G_{(K)} \in \mathscr R$. 

Observe that $\LC(K) = \LC(G_{(K)})$. Clearly there is nothing to prove if $\LC(G)$ and $\LC(G_{(K)})$ are both trivial. Assume first that $\LC(G)$ is non-trivial. If (1) holds, then the action of $G_{(K)}$ on $\Omega_G$ is faithful and micro-supported as $G_{(K)} = G$ as abstract groups. If (2) holds then the same is true according to \cite[Theorem~8.4.1]{CRW_dense}. Hence in either case the statement follows from Theorem \ref{thm:Omega_G}. Conversely if $\LC(K)$ is non-trivial, then we are in position to apply Theorem \ref{thm:Dense-embedding-simple} since $G$ lies in $\mathscr S_{\mathrm{td}}$, which gives the statement. 
\end{proof}

%Since $G$ is second countable and $G_{(K)}$ is dense, we deduce that $G_{(K)}$ has a second countable open subgroup $H$ containing $K$ which is dense in $G$. By \cite[Theorem~1.1.2]{CRW_dense}, we have $H \in \mathscr R$. We denote by $M$ the monolith of $H$, which is topologically simple by the definition of $\mathscr R$. By \cite[Proposition~7.1.2(iii)]{CRW_dense}, the centralizer lattice $\LC(H) = \LC(K)$ embeds as a Boolean subalgebra of $\LC(M)$, so that $\LC(M)$ is non-trivial. Notice that $M$ is normalized by $H$, which is dense in $G$. Since $G$ is topologically simple, it follows that $M$ is dense in $G$. Theorem~\ref{thm:Dense-embedding-simple} now ensures that $\LC(G)$ is non-trivial. 

It follows in particular that if $G \in \mathscr S_{\mathrm{td}}$ is of \textbf{atomic type} in the terminology of \cite[Theorem~F]{CRW-part2}, then every  locally normal subgroup $K$ of $G$ has a trivial centralizer lattice. Indeed, by \cite[Theorem F]{CRW-part2}, if the group $G$ is of atomic type, then its action on the structure lattice $\LN(G)$ is trivial. This implies that every compact locally normal subgroup $K$ is commensurated in $G$, and that the centralizer lattice $\LC(G)$ is trivial (since otherwise $G$ would act non-trivially, hence faithfully, on $\LC(G)$ by \cite[Theorem J]{CRW-part2}; its action on $\LN(G)$ is thus a fortiori faithful). Hence by the above theorem $\LC(K)$ is trivial.

\appendix

\section[Normal subgroups of f.g. branch groups are f.g., by D.~Francoeur]{Normal subgroups of finitely generated branch groups are finitely generated --- by Dominik Francoeur}

Our goal is to   prove that every normal subgroup of a finitely generated branch group is finitely generated, see Corollary~\ref{cor-norm-branch}. We begin with a lemma, which is a variation of what is sometimes known as the \emph{double commutator lemma}. This result has appeared in various forms and degrees of generality over the years. It states that for every non-trivial normal subgroup $N$ of a group $G$ of homeomorphisms of a Hausdorff space $X$, there exists a non-empty open subset $U\subseteq X$ such that $\rist'_G(U)\leq N$, where $\rist'_G(U)$ is the derived subgroup of the rigid stabiliser $\rist_G(U)$. In the following lemma, we prove that under some stronger conditions on the action of $G$, we can find in $N$ not only $\rist'_G(U)$, but a finitely generated subgroup containing it. As the proof involves taking three commutators instead of the usual two, it was suggested to us that we name it the \emph{triple commutator lemma}.

\begin{lem}[Triple commutator lemma]\label{lemma:TripleCommutatorLemma}
	Let $G$ be a group with a micro-supported faithful action by homeomorphisms on a Hausdorff space $X$. Suppose that there exists a base $B$ for the topology on $X$ such that for every open subset $U\in B$, the rigid stabiliser $\rist_G(U)$ is finitely generated. Then, for every non-trivial normal subgroup $N\trianglelefteq G$, there exists a non-empty open subset $U\in B$ and a finitely generated subgroup $H\leq N$ such that $\rist'_G(U)\leq H\leq N$.
\end{lem}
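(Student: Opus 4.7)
The plan is to construct $H$ from a single non-trivial element $n \in N$ via a commutator calculus that traps the relevant elements of $\rist_G(U)$ inside $N$. Pick such an $n$; by faithfulness of the action there is a point $x \in X$ with $n(x) \neq x$, and since $X$ is Hausdorff and $B$ is a base I may choose $U \in B$ containing $x$ with $nU \cap U = \emptyset$. Then the rigid stabilisers $\rist_G(U)$ and $\rist_G(nU) = n\rist_G(U)n^{-1}$ have disjoint supports and so commute elementwise in $G$. By the hypothesis on the base, $\rist_G(U)$ is finitely generated, say by $g_1, \dots, g_k$.

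The computational heart of the argument is the identity
\[
[[g, n], h] \;=\; [g, h] \qquad \text{for all } g, h \in \rist_G(U),
\]
which follows by writing $[g, n] = g \cdot (n g^{-1} n^{-1})$, commuting the factor $n g^{-1} n^{-1} \in \rist_G(nU)$ past $h$ inside the outer commutator, and collapsing the middle product $n g^{-1} n^{-1} \cdot n g n^{-1} = 1$. This realises every commutator of $\rist_G(U)$ as the outer commutator of the element $[g, n] \in N$ with an element of $\rist_G(U)$, which is the "triple commutator" of the lemma's name: the full expression involves the three elements $g, n, h$ arranged as a commutator inside a commutator.

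I would then define $H$ to be the subgroup of $G$ generated by the $k + k^2$ elements
\[
\{\,[g_i, n] : 1 \leq i \leq k\,\} \;\cup\; \{\,g_j [g_i, n] g_j^{-1} : 1 \leq i, j \leq k\,\}.
\]
Each generator lies in $N$ because $[g_i, n] = (g_i n g_i^{-1}) \cdot n^{-1}$ is a product of two elements of $N$, and conjugates of elements of the normal subgroup $N$ remain in $N$. Hence $H \leq N$, and $H$ is finitely generated by construction. The identity immediately yields
\[
[g_i, g_j] \;=\; [[g_i, n], g_j] \;=\; [g_i, n] \cdot g_j[g_i, n]^{-1} g_j^{-1} \;\in\; H
\]
for every pair of generators.

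The main obstacle is to upgrade this inclusion of the primitive commutators $[g_i, g_j]$ to the full derived subgroup $\rist_G(U)' = \langle [g, h] : g, h \in \rist_G(U)\rangle$, which also contains every $\rist_G(U)$-conjugate $w[g_i, g_j]w^{-1}$. The naive $H$ need not be closed under conjugation by $\rist_G(U)$, and its normal closure in $\rist_G(U)$ is typically not finitely generated. Resolving this should use the hypothesis more finely: since \emph{every} $V \in B$ has finitely generated rigid stabiliser, one may shrink $U$ to a well-chosen smaller $V \in B$, apply the triple commutator construction to a generating set of $\rist_G(V)$ adapted to its interaction with $n$, and produce a finitely generated subgroup of $N$ capturing all of $\rist_G(V)'$ — this $V$ and the corresponding enlarged $H$ then fulfil the conclusion of the lemma. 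I expect the delicate point of the argument to be this passage from the primitive commutators to the entire derived subgroup while preserving finite generation of $H$.
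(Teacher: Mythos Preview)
Your identity $[[g,n],h]=[g,h]$ is correct, and you correctly recognise the obstacle: from the $k+k^2$ generators you only obtain the primitive commutators $[g_i,g_j]$, not their $\rist_G(U)$-conjugates. The fix you propose --- shrinking $U$ --- does not help, and the gap is genuine. The difficulty is structural: with only two disjoint translates $U$ and $nU$, if you try to symmetrise by replacing $h$ with a second product $P_h = h\cdot b$ (where $b\in\rist_G(nU)$), then $[P_g,P_h] = [g,h]\cdot[a,b]$ with $a,b\in\rist_G(nU)$, and the cross-term $[a,b]$ does not vanish. No amount of shrinking eliminates this. Notice also that your argument never invokes the micro-supported hypothesis.

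The paper's proof uses that hypothesis precisely to dissolve the cross-term. One first passes to a smaller open set so that micro-supportedness yields an element $t\in\rist_G(nU)$ with $U$, $nU$, $tnU$ pairwise disjoint. One then takes $H$ to be generated by the elements $[n,r_i^{-1}]$ and $[tnt^{-1},r_i^{-1}]$. For any word $r=r_{i_1}\cdots r_{i_k}$, the product $P_r:=\prod_j [n,r_{i_j}^{-1}]\in H$ has the form $r\cdot a$ with $a\in\rist_G(nU)$, and likewise $Q_s:=\prod_j [tnt^{-1},s_{i_j}^{-1}]\in H$ has the form $s\cdot b$ with $b\in\rist_G(tnU)$. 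Now $a$ and $b$ lie in rigid stabilisers of \emph{disjoint} sets, so they commute with each other as well as with $r,s$, and the triple commutator collapses:
\[
[P_r,Q_s]=[r\cdot a,\,s\cdot b]=[r,s].
\]
Since both $P_r$ and $Q_s$ lie in $H$, so does $[r,s]$ for every $r,s\in\rist_G(U)$, giving $\rist_G'(U)\le H$ directly. The third disjoint translate, supplied by micro-supportedness, is the missing idea.
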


\begin{proof}
	Let $g\in N$ be a non-trivial element of $N$. Since the action of $G$ on $X$ is faithful, there exists an open subset $V\subseteq X$ such that $gV\ne V$. Since $X$ is Hausdorff, replacing $V$ by a smaller open subset if necessary, we can assume that $gV\cap V = \varnothing$.
	
	As the action of $G$ on $X$ is micro-supported, there must exist an element $t\in \rist_G(gV)$ and an open subset $W\subseteq gV$ such that $tW\cap W = \varnothing$. Let us choose a non-empty basic open subset $U\in B$ contained in $g^{-1}W$. We then have that $U, gU$ and $tgU$ are three pairwise disjoint open subsets of $X$.
	
	By our hypothesis, $\rist_G(U)$ is finitely generated. Let $\{r_1,\dots, r_n\}$ be a finite symmetric generating set for $\rist_G(U)$, and let $H\leq G$ be the subgroup generated by
	\[\left\{[g,r_1^{-1}], \dots, [g, r_n^{-1}]\right\} \cup \left\{ [tgt^{-1},r_1^{-1}], \dots, [tgt^{-1}, r_n^{-1}]\right\}.\]
	The fact that $N$ is normal implies directly that $H\leq N$. We will now show that $\rist'_G(U)\leq H$.
	
	Let $r\in \rist_G(U)$ be an arbitrary element. Since $\{r_1,\dots, r_n\}$ is a symmetric generating set for $\rist_G(U)$, there exist $1\leq i_1,\dots, i_k \leq n$ such that $r=r_{i_1}\dots r_{i_k}$. For $1\leq i \leq n$, we have $[g,r_i^{-1}] = (gr_i^{-1}g^{-1})r_i$, with $r_i\in \rist_G(U)$ and $gr_i^{-1}g^{-1}\in \rist_G(gU)$. Since $U$ and $gU$ are disjoint open subsets of $X$, the elements of $\rist_G(U)$ commute with the elements of $\rist_G(gU)$. Thus, we have
	\[[g, r^{-1}]=gr^{-1}g^{-1}r=[g,r_{i_1}^{-1}]\dots [g, r_{i_k}^{-1}]\in H.\]
	Using the same notation, we also have, for $1\leq i \leq n$,
	\[[tgt^{-1}, r_i^{-1}] = (tgt^{-1}r_i^{-1}tg^{-1}t^{-1})r_i = ((tg)r_i^{-1}(tg)^{-1})r_i,\]
	where the last equality comes from the fact that $t$ and $r_i$ commute, since $t\in \rist_G(gV)$ and $r_i\in rist_G(U)\leq \rist_G(V)$, with $V\cap gV=\varnothing$. Note that we have $(tg)r_i^{-1}(tg)^{-1} \in \rist_G(tgU)$ and $r_i\in \rist_G(U)$. Since $U\cap tgU = \varnothing$, elements of $\rist_G(U)$ and $\rist_G(tgU)$ commute. Therefore, as above, we have
	\[[tg,r^{-1}]=(tg)r^{-1}(tg)^{-1}r=[tgt^{-1},r_{i_1}^{-1}]\dots [tgt^{-1}, r_{i_k}^{-1}]\in H.\]
	
	It follows that for all $r,s\in \rist_G(U)$, we have
	\[[[g,r^{-1}],[tg,s^{-1}]]=[(gr^{-1}g^{-1})r, ((tg)s^{-1}(tg)^{-1})s] \in H.\]
	However, since $gr^{-1}g^{-1} \in \rist_G(gU)$ and $(tg)s^{-1}(tg)^{-1}\in \rist_G(tgU)$, we get, using the fact that $U, gU$ and $tgU$ are pairwise disjoint sets, that $gr^{-1}g^{-1}$ commutes with $r$, $s$ and $(tg)s^{-1}(tg)^{-1}$, and that $(tg)s^{-1}(tg)^{-1}$ commutes with $r$, $s$ and $gr^{-1}g^{-1}$. It follows that
	\[[[g,r^{-1}],[tg,s^{-1}]]=[r,s],\]
	and so $[r,s]\in H$. The result follows immediately from the fact that $\rist'_G(U)$ is generated by elements of the form $[r,s]$ with $r,s\in \rist_G(U)$.
\end{proof}

Using this lemma, we can conclude that every normal subgroup of a finitely generated branch group is itself finitely generated. In fact, we will prove a slightly more general result. Let us first recall a few definitions.

\begin{defin}
	A group $G$ is said to be \textbf{Noetherian}, or to satisfy the maximum condition on subgroups, if every subgroup of $G$ is finitely generated.
\end{defin}

\begin{defin}
	Let $T$ be a spherically homogeneous locally finite rooted tree $T$. A subgroup $G\leq \text{Aut}(T)$ of the group of automorphisms of $T$ is said to be a \textbf{weakly branch group} if it acts transitively on each level of the tree and if for all vertex $v\in T$, the rigid stabiliser $\rist_G(v)$ is non-trivial.
\end{defin}

\begin{thm}\label{thm:NormalSubgroupsAreFG}
	Let $G$ be a finitely generated weakly branch group acting on a spherically homogeneous locally finite rooted tree $T$. If $\rist_G(n)$ is finitely generated and if $G/\rist'_G(n)$ is Noetherian for all $n\in \N$, then every normal subgroup  $N\trianglelefteq G$  is finitely generated.
\end{thm}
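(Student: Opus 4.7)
The plan is to apply the triple commutator lemma (Lemma \ref{lemma:TripleCommutatorLemma}) to the natural action of $G$ on the boundary $\partial T$, with the base $B = \{\partial T_v : v \in T\}$ of clopen subsets attached to vertices. First I would verify the hypotheses: the weakly branch assumption gives that every $\rist_G(v)$ is non-trivial, so for any non-empty open subset $U \subseteq \partial T$ one has $\rist_G(U) \supseteq \rist_G(v) \neq 1$ for some $v$ with $\partial T_v \subseteq U$, proving that the action is micro-supported; faithfulness on $\partial T$ is standard. The key hypothesis to check is that $\rist_G(\partial T_v) = \rist_G(v)$ is finitely generated. Since the rigid stabilizers $\rist_G(u)$ for distinct vertices $u$ at level $n$ commute (disjoint supports) and intersect trivially, we have $\rist_G(n) = \prod_{u} \rist_G(u)$ as an internal direct product; thus $\rist_G(v)$ is a direct factor of the finitely generated group $\rist_G(n)$, hence finitely generated as the image under a projection.

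The case $N = 1$ being trivial, suppose $N \trianglelefteq G$ is non-trivial. Applying the triple commutator lemma yields a vertex $v$, say at level $n$, and a finitely generated subgroup $H \leq N$ with $\rist'_G(v) \leq H$. Next I would exploit normality of $N$ together with the fact that $G$ acts transitively on level $n$: for each vertex $u$ at level $n$, pick $g_u \in G$ with $g_u v = u$, and set $H_u = g_u H g_u^{-1} \leq N$. Each $H_u$ is finitely generated and contains $g_u \rist'_G(v) g_u^{-1} = \rist'_G(u)$. Since the tree is locally finite, there are only finitely many vertices at level $n$, so $H' := \langle H_u : u \text{ at level } n\rangle$ is a finitely generated subgroup of $N$. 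The derived subgroups $\rist'_G(u)$ pairwise commute (again by disjoint supports), hence
\[
\rist'_G(n) \;=\; [\rist_G(n), \rist_G(n)] \;=\; \prod_{u \text{ at level } n} \rist'_G(u) \;\leq\; H' \;\leq\; N.
\]

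To conclude, observe that $G/H'$ is a quotient of $G/\rist'_G(n)$, which is Noetherian by hypothesis; hence $G/H'$ is Noetherian, so the subgroup $N/H'$ is finitely generated. Choosing finitely many coset representatives $n_1, \dots, n_k \in N$ for $N/H'$ and combining them with a finite generating set of $H'$ produces a finite generating set of $N$, as required. The only mildly delicate step is the passage from $\rist'_G(v) \leq N$ at a single vertex to $\rist'_G(n) \leq H' \leq N$ with $H'$ finitely generated; the argument above resolves this by using finiteness of each level together with the direct product decomposition of $\rist_G(n)$, which is essential both to get $\rist_G(v)$ finitely generated in the first place and to identify $\rist'_G(n)$ with $\prod_u \rist'_G(u)$.
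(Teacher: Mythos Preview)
Your proof is correct and follows essentially the same route as the paper's: apply the triple commutator lemma to obtain a finitely generated $H \leq N$ containing $\rist'_G(v)$, conjugate by a transversal of level $n$ to get a finitely generated subgroup of $N$ containing $\rist'_G(n)$, and finish using the Noetherian hypothesis on $G/\rist'_G(n)$. You are in fact slightly more careful than the paper in one place: you explicitly derive the finite generation of each $\rist_G(v)$ from that of $\rist_G(n)$ via the direct product decomposition, a step the paper asserts without justification.
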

\begin{proof}
	If $N=1$, then it is obviously finitely generated. Let us now assume that $N\ne 1$.
	
	Since $G$ is a weakly branch group, its action on the boundary of the rooted tree $T$, which is homeomorphic to the Cantor set, is micro-supported. If we assume that $\rist_G(n)$ is finitely generated for all $n\in \N$, it then satisfies the hypotheses of Lemma \ref{lemma:TripleCommutatorLemma}. Therefore, there must exist a vertex $v\in T$ on level $n\in \N$ and a finitely generated subgroup $H\leq N$ such that $\rist'_G(v) \leq H$. Let $\{t_1, \dots, t_k\}$ be a transversal of $\mathrm{St}_G(v)$, so that $\rist'_G(n)=\prod_{i=1}^{k}\rist'_G(t_iv)$. Then, the subgroup generated by $\bigcup_{i=1}^{k}t_iHt_i^{-1}$ is a finitely generated subgroup of $N$, since $H$ is finitely generated and $N$ is normal, and it contains $\rist'_G(n)$, since it contains $t_i\rist'_G(v)t_i^{-1}=\rist'_G(t_iv)$ for all $1\leq i \leq k$.
	
	Let us consider the quotient $G/\rist'_G(n)$. By assumption, every subgroup of this group is finitely generated. In particular, $N/\rist'_G(n)$ is finitely generated. Since $\rist'_G(n)$ is contained in a finitely generated subgroup of $N$, we conclude that $N$ itself must also be finitely generated.
\end{proof}

\begin{cor} \label{cor-norm-branch}
	Every normal subgroup of a finitely generated branch group is finitely generated.
\end{cor}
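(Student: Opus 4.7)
The plan is to deduce Corollary~\ref{cor-norm-branch} directly from Theorem~\ref{thm:NormalSubgroupsAreFG}. A branch group $G$ acting on a spherically homogeneous locally finite rooted tree $T$ is in particular weakly branch, so it only remains to verify the two hypotheses: that $\rist_G(n)$ is finitely generated for every level $n$, and that the quotient $G/\rist'_G(n)$ is Noetherian for every $n$.

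For the first hypothesis, I would use the defining property of a branch group: for every $n$, the rigid stabiliser $\rist_G(n)$ has finite index in $G$. Since $G$ is finitely generated, every subgroup of finite index is also finitely generated (by the standard Schreier argument), which gives what is needed. For the second hypothesis, I would observe that inside $G/\rist'_G(n)$ the image of $\rist_G(n)$, namely $\rist_G(n)/\rist'_G(n)$, is abelian and of finite index. Thus $G/\rist'_G(n)$ is a finitely generated virtually abelian group, hence polycyclic, and every polycyclic group satisfies the maximum condition on subgroups, so $G/\rist'_G(n)$ is Noetherian.

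With both hypotheses verified, Theorem~\ref{thm:NormalSubgroupsAreFG} applies to $G$ and yields that every normal subgroup $N\trianglelefteq G$ is finitely generated. I do not anticipate a genuine obstacle: the only thing to be slightly careful about is that the two verifications above rest on well-known facts (finite-index subgroups of finitely generated groups are finitely generated; finitely generated virtually abelian groups are Noetherian), both of which are standard. The non-trivial content is entirely encapsulated in the triple commutator lemma (Lemma~\ref{lemma:TripleCommutatorLemma}) and Theorem~\ref{thm:NormalSubgroupsAreFG}, which are already established in the appendix.
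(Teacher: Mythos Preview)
Your proposal is correct and follows essentially the same route as the paper's own proof: verify that a finitely generated branch group satisfies the hypotheses of Theorem~\ref{thm:NormalSubgroupsAreFG} by noting that each $\rist_G(n)$ has finite index (hence is finitely generated) and that $G/\rist'_G(n)$ is finitely generated virtually abelian (hence Noetherian). Your extra remarks about Schreier and polycyclicity simply spell out what the paper leaves implicit.
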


\begin{proof}
	It suffices to check that a finitely generated branch group $G$ satisfies all the hypotheses of Theorem \ref{thm:NormalSubgroupsAreFG}. Such a group is obviously weakly branch. Furthermore, for all $n\in \N$, $\rist_G(n)$ is of finite index in $G$ by definition, and thus is finitely generated. The only thing left to verify is that $G/\rist'_G(n)$ is Noetherian for all $n\in \N$. However, since $\rist_G(n)$ is of finite index in $G$, the group $G/\rist'_G(n)$ is a finitely generated virtually abelian group, and it is well-known that such a group is Noetherian.
\end{proof}

{\small 
\bibliographystyle{abbrv}
\bibliography{MicroComm}
}
\end{document}